\documentclass[11pt]{amsart}
\usepackage[T1]{fontenc}
\usepackage{lmodern}
\usepackage{microtype}
\usepackage{amssymb,mathtools}
\usepackage{mathrsfs}
\usepackage{enumitem, amsmath}
\usepackage[hidelinks]{hyperref}
\usepackage{aliascnt}
\usepackage[nameinlink,noabbrev]{cleveref}

\hypersetup{
  pdftitle={Asymptotic Classes via Definable Quotients},
  pdfauthor={Mostafa Mirabi},
  pdfsubject={Finite-dimensional asymptotic classes and definable quotients},
  pdfkeywords={asymptotic classes, definable quotients, interpretations,
    finite fields, incidence structures}
}

\newtheorem{theorem}{Theorem}[section]

\newaliascnt{proposition}{theorem}
\newtheorem{proposition}[proposition]{Proposition}
\aliascntresetthe{proposition}
\newaliascnt{lemma}{theorem}
\newtheorem{lemma}[lemma]{Lemma}
\aliascntresetthe{lemma}

\newaliascnt{corollary}{theorem}
\newtheorem{corollary}[corollary]{Corollary}
\aliascntresetthe{corollary}

\theoremstyle{definition}
\newaliascnt{definition}{theorem}
\newtheorem{definition}[definition]{Definition}
\aliascntresetthe{definition}

\newaliascnt{example}{theorem}
\newtheorem{example}[example]{Example}
\aliascntresetthe{example}

\theoremstyle{remark}
\newaliascnt{remark}{theorem}
\newtheorem{remark}[remark]{Remark}
\aliascntresetthe{remark}

\newcommand{\C}{\mathcal{C}}
\newcommand{\D}{\mathcal{D}}
\newcommand{\Eclass}{\mathcal{E}}
\newcommand{\Gclass}{\mathcal{G}}
\newcommand{\I}{\mathcal{I}}
\newcommand{\N}{\mathbb{N}}
\newcommand{\R}{\mathbb{R}}
\newcommand{\F}{\mathbb{F}}
\newcommand{\eps}{\varepsilon}
\newcommand{\abs}[1]{\lvert #1\rvert}
\newcommand{\set}[1]{\{#1\}}
\newcommand{\defeq}{\mathrel{:=}}
\newcommand{\Th}{\operatorname{Th}}
\newcommand{\Aut}{\operatorname{Aut}}
\newcommand{\SU}{\operatorname{SU}}
\newcommand{\lcm}{\operatorname{lcm}}
\AtBeginDocument{%
  \setlength{\abovedisplayskip}{6pt plus 2pt minus 2pt}%
  \setlength{\belowdisplayskip}{6pt plus 2pt minus 2pt}%
  \setlength{\abovedisplayshortskip}{3pt plus 2pt minus 1pt}%
  \setlength{\belowdisplayshortskip}{4pt plus 2pt minus 2pt}%
}

\title[Asymptotic Classes via Definable Quotients]
{Asymptotic Classes via Definable Quotients}

\author{Mostafa Mirabi}
\address{The Taft School, Watertown, CT 06795, USA\\
and Wesleyan University, Middletown, CT 06459, USA}
\email{mmirabi@wesleyan.edu}
\urladdr{https://sites.google.com/site/mostafamirabi}

\subjclass[2020]{03C13, 03C45}
\keywords{asymptotic classes, definable quotients, interpretations,
finite structures, incidence structures}
\date{}

\begin{document}

\begin{abstract}
We study when finite-dimensional asymptoticity transfers through uniform
interpretations and definable quotients. Ambient counting yields finitely
many asymptotic alternatives, but the corresponding parameter cells may
fail to be definable in the interpreted language; in general, one obtains
only weak asymptoticity. We isolate \emph{trace reflection}, a one-sided
descent condition that removes this obstruction. For interpretations with
definable selectors, trace reflection transfers asymptoticity with explicit
denominator bounds. For general definable quotients, invariant asymptotic
profiles replace definable choice and yield a quotient-transfer theorem. We
identify an intrinsic visible denominator and prove that it is the least
possible asymptotic denominator, with a coprime-witness criterion for when
the interpretation-dependent bound is sharp. We also establish formula-wise,
syntactic, and semantic descent criteria, prove composition theorems, and
show that trace reflection is strictly weaker than uniform weak
bi-interpretability. A framed-profile argument removes the noncanonical
coordinate parameters introduced by uniform finite-field reconstruction. As
applications, we study projective quotient traces and prove that, for every
fixed $k\geq2$, the pure incidence structures arising from graphs of
polynomials of degree less than $k$ over finite fields form a full
$k$-dimensional asymptotic class. The denominator $k$ is minimal, and every
member is $K_{k,n}$-free for all $n\geq2$.
\end{abstract}

\maketitle

\section{Introduction}

Let $M$ be a finite structure and let $\varphi(x;\bar y)$ be a formula.  In
an asymptotic class, the sizes of the fibers $\varphi(M;\bar b)$ have only
finitely many possible leading terms:
\[
  \abs{\varphi(M;\bar b)}
  =\mu\abs M^\alpha+o(\abs M^\alpha).
\]
The possible pairs $(\alpha,\mu)$ depend only on $\varphi$, and the parameter
tuples producing each pair form a parameter-free definable set.  This
definability requirement is essential.  If the numerical alternatives exist
but the corresponding parameter sets are not uniformly definable, one has
only a \emph{weak} asymptotic class.

Finite fields provide the basic example: the Lang--Weil estimates and the
uniform counting theorem of Chatzidakis, van den Dries, and Macintyre give
the required counting laws \cite{LangWeil1954,CDM1992}.
Macpherson--Steinhorn isolated this behavior in
the notion of a one-dimensional asymptotic class, and Elwes introduced the
finite-dimensional form used here
\cite{MacphersonSteinhorn2008,Elwes2007}.  These counting properties are also
closely connected with MS-measurability and the model theory of
pseudofinite ultraproducts
\cite{ElwesMacpherson2008,GarciaMacphersonSteinhorn2015,HrushovskiWagner2008,Mirabi-MSMeas}.

\subsection*{Counting versus definability under interpretations}

The central question of this paper is whether asymptoticity passes through a
uniform interpretation.  Suppose that a class $\D$ is uniformly interpreted
in an asymptotic class $\C$.  For $A\in\D$, write $M=\alpha(A)\in\C$.  An
element of $A$ is represented by an $r$-tuple from a definable set
$X_A\subseteq M^r$, possibly modulo a definable equivalence relation $E_A$.
A target formula translates to an ambient formula on such representatives,
so ambient counting gives finitely many possible asymptotic sizes.  The
difficulty is to recognize, in the language of $A$, which parameters produce
each size.  In brief,
\[
  \begin{aligned}
  \text{full asymptotic transfer}
  &=\text{ambient counting}\\
  &\quad+\text{target-definability of the counting cases}.
  \end{aligned}
\]
Ambient counting is available quite generally, but target definability can
fail.  This is why arbitrary interpretations yield weak asymptoticity,
whereas existing full transfer results use a form of weak
bi-interpretability; see
\cite[Lemma~3.7]{Elwes2007}, the discussion in
\cite[Section~2]{ElwesMacpherson2008}, and
\cite[Theorem~2.5.4]{AMSW2024}.

The obstruction is easiest to see at the level of representatives.  Suppose
ambient counting partitions representative parameter tuples into cells
$
  \theta_1(\bar v),\ldots,\theta_t(\bar v),
$
with one asymptotic label attached to each cell.  If the interpretation has
a definable selector, every target element has a distinguished
representative, and it suffices for the pullback of each $\theta_i$ to be
definable in the target.  For a genuine quotient, the same target tuple may
have representatives in several different cells.

We remove this dependence on representatives by replacing the original
cells with their invariant \emph{profiles}.  The profile of a target tuple
records which ambient cells meet its coordinatewise $E_A$-class:
\[
  P(\bar b)=
  \left\{i:\text{some representative of $\bar b$ satisfies }\theta_i\right\}.
\]
The target fiber cardinality is independent of the chosen representative.
Consequently, on an unbounded profile, all labels appearing in the profile
must agree: different leading exponents or coefficients cannot describe the
same fiber cardinality along an unbounded sequence.  Thus the profile has a
well-defined asymptotic label.  The remaining issue is whether the profile is
definable in the target language.

This motivates \emph{trace reflection}.  Informally, an interpretation is
trace reflecting when the ambient relations needed to describe the counting
cases are visible in the target.  In the selector setting, one restricts
ambient formulas to the selected representatives.  In the quotient setting,
one considers ambient formulas invariant under changing representatives.
Trace reflection asks that these restricted or invariant relations be
parameter-free definable in the target.  Unlike bi-interpretability, it does
not require the target to reconstruct the entire ambient structure.

A simple normalization explains the denominators below.  Put $T=\abs M$.
If, on one cell,
\[
  \abs A=\lambda T^{e/N}+o(T^{e/N})
  \quad\text{and}\quad
  \abs{\varphi(A;\bar b)}
  =\mu T^{d/N}+o(T^{d/N}),
\]
then $d\leq e$ and
\[
  \abs{\varphi(A;\bar b)}
  =\mu\lambda^{-d/e}\abs A^{d/e}
   +o(\abs A^{d/e}).
\]
Thus target exponents are obtained by dividing ambient fiber exponents by
the exponent governing the size of the interpreted universe.

\smallskip
\noindent\textbf{Theorem A (Theorem~\ref{thm:selector-transfer}).}
\emph{Let $\C$ be an $N$-dimensional asymptotic class, and suppose that $\D$
is uniformly parameter-interpreted in $\C$ with arity $r$.  If the
interpretation has a uniform definable selector and is selector-trace
reflecting, then $\D$ is
an $N_*$-dimensional asymptotic class, where
$
  N_*=\lcm(1,2,\ldots,Nr).
$
The denominator can be sharpened by retaining only the positive exponents
that actually occur in the size of the selector.}
\smallskip

Definable choice excludes many natural objects, including projective points,
unordered tuples, and other quotient sorts.  The main transfer theorem
removes this hypothesis by counting equivalence classes in the ambient
structures and then applying the profile construction.

\smallskip
\noindent\textbf{Theorem B (Theorem~\ref{thm:quotient-transfer}).}
\emph{Let $\C$ be an $N$-dimensional asymptotic class, and suppose that $\D$
is uniformly parameter-interpreted in $\C$ with arity $r$.  If the
interpretation is quotient-trace reflecting, then $\D$ is an
$L_Q$-dimensional asymptotic
class.  Here $L_Q$ is the least common multiple of the positive exponents
occurring in the ambient asymptotic sizes of the interpreted universe on
target-unbounded cells.  In particular,
$
  L_Q\mid\lcm(1,2,\ldots,Nr).
$}
\smallskip

The quotient proof yields several refinements.  For a fixed target formula,
it is enough that the finitely many profiles from one chosen ambient
counting partition descend; full trace reflection is a convenient uniform
hypothesis.  Without descent one still obtains weak asymptoticity.  We also
separate the interpretation-dependent bound $L_Q$ from the denominator
actually realized by target-definable families, and prove that the latter is
the least possible asymptotic denominator of the target.  Finally, we prove
composition results and isolate the representative-choice obstruction that
appears when a later quotient interpretation uses parameters.

Trace reflection can be verified by familiar model-theoretic conditions.
Associated with the interpretation is a two-sorted pair containing the
ambient structure, the target, the interpretation parameters, and the
quotient map.  We prove a syntactic criterion using separated relative
quantifier elimination and sentence descent
(Theorem~\ref{thm:relative-qe-trace}), and a semantic criterion using pure
stable embeddedness, lifting of target automorphisms, and determination of
pair completions by target completions
(Theorem~\ref{thm:stable-embedded-descent}).  We then localize these
conditions to the finitely many profile traces needed for a single target
formula.

A parallel framed-profile theorem treats situations in which a finite frame
reconstructs coordinates uniformly but no frame is canonical.  Recording
the counting labels obtained from all good frames removes the auxiliary
parameters.  This mechanism is used in the main application.  We also show
that trace reflection is strictly weaker than weak bi-interpretability:
uniform parameter-free weak bi-interpretability implies quotient-trace
reflection, but the converse fails
(Theorem~\ref{thm:strict-weak-bi}).

%here

\subsection*{Geometric applications}

As a first test of the quotient theorem, we consider projective space as the
quotient of $F^k\setminus\{0\}$ by nonzero scalar multiplication.  In the
full quotient-trace language, these structures form a
$(k-1)$-dimensional asymptotic class, with minimal denominator $k-1$.

The main concrete application concerns low-degree polynomial graphs.  Fix
$k\geq2$.  For a finite field $F$, let $G_k(F)$ have point sort $F^2$, curve
sort $F[X]_{<k}$, and incidence
\[
  (x,y)\ I\ f
  \quad\Longleftrightarrow\quad
  y=f(x).
\]
In field coordinates, the counting estimates are immediate.  The main task
is to show that the parameter cells selecting those estimates are
definable in the \emph{pure incidence language}.

The reconstruction has a simple geometric outline.  Fix $k-2$ points with
distinct first coordinates and consider the curves through them.  These
curves form a two-dimensional affine space.  Incidence with the remaining
points supplies all affine directions except $k-1$ of them.  We recover the
missing directions uniformly and complete this partial affine plane.  A
finite affine frame then reconstructs the field and coordinatizes every
point and polynomial curve.  Finally, framed profiles remove the
noncanonical frame parameters.  Classical affine-plane coordinatization
provides the geometric background \cite{Dembowski1968}; the additional work
is the uniform reconstruction and parameter removal in the pure incidence
language.

\smallskip
\noindent\textbf{Theorem F (Theorem~\ref{thm:full-polynomial-incidence}).}
\emph{For every fixed $k\geq2$, the pure incidence class
$
  \{G_k(F):F\text{ a finite field}\}
$
is a full $k$-dimensional asymptotic class.  The denominator $k$ is minimal,
and every member is $K_{k,n}$-free for every $n\geq2$.}
\smallskip

The forbidden-biclique statement follows because two distinct polynomials
of degree less than $k$ agree at fewer than $k$ field elements.  Minimality
is also visible directly: a curve has $\abs F$ incident points, while the
whole structure has leading size $\abs F^k$, so a curve neighborhood has
relative exponent $1/k$.  Intersections of point conditions realize all
intermediate exponents.  These structured families differ sharply from the
generic $K_{m,n}$-free theories of Conant--Kruckman, which are
$\mathrm{NSOP}_1$ but not simple \cite{ConantKruckman2019}; finite Moufang
polygons provide another geometric comparison \cite{DelloStritto2011}.

Section~2 reviews finite-dimensional asymptotic classes and tuple counting.
Sections~3--7 develop interpretations, selectors, selector transfer,
denominator refinements, and composition.  Section~8 proves quotient
transfer and the descent criteria.  Sections~9--10 compare trace reflection
with weak bi-interpretability and give strictness examples and
model-theoretic consequences.  Section~11 treats projective quotient traces,
and Section~12 develops the polynomial-incidence application.

\section{Preliminaries}

Throughout, $L$ and $L'$ are countable first-order languages, and all
structures are finite unless explicitly stated otherwise.  We freely use
languages with finitely many sorts; the standard one-sorted coding by unary
sort predicates gives an equivalent formulation.  The cardinality $\abs M$
of a many-sorted structure is the sum of the cardinalities of its sorts.

If $\varphi(\bar x;\bar y)$ is an $L$-formula, $M$ is an $L$-structure, and
$\bar a\in M^{\abs{\bar y}}$, write
\[
  \varphi(M^{\abs{\bar x}};\bar a)
  \defeq
  \set{\bar b\in M^{\abs{\bar x}}:M\models\varphi(\bar b;\bar a)}.
\]

\subsection{Uniform asymptotic notation}

\begin{definition}\label{def:uniform-o}
Let $\Phi$ be a collection of pairs $(M,\bar a)$, where $M\in\C$ and
$\bar a$ is a tuple from $M$.  For a real-valued function $f$ and a
nonnegative function $g$ on $\Phi$, we write
\[
  f(M,\bar a)=o\bigl(g(M,\bar a)\bigr)
  \quad\text{as }\abs M\to\infty\text{ on }\Phi
\]
if, for every $\eps>0$, there is $Q\in\N$ such that
\[
  \abs{f(M,\bar a)}\leq \eps g(M,\bar a)
\]
whenever $(M,\bar a)\in\Phi$ and $\abs M>Q$.
\end{definition}

\begin{remark}\label{rem:bounded-o-convention}
If the ambient cardinalities occurring on $\Phi$ are bounded, then every
little-$o$ assertion in \cref{def:uniform-o} is formally vacuous: choose the
threshold above that bound.  Such a cell carries no asymptotic information.
Whenever bounded cells affect a target partition, we refine them by exact
fiber cardinality.  In particular, every comparison of leading exponents
below is made on a cell on which the relevant ambient or target sizes are
unbounded.
\end{remark}

The following elementary facts will be used repeatedly.

\begin{lemma}\label{lem:asymptotic-calculus}
Let $\Phi$ be a cell, put $T=\abs M$, and suppose all asymptotics below are
uniform on $\Phi$.
\begin{enumerate}[label=\textup{(\roman*)}]
  \item If
  $
    f=aT^\alpha+o(T^\alpha),$ and
   $ g=bT^\beta+o(T^\beta),
  $
  where $a,b>0$, then
  \[
    fg=abT^{\alpha+\beta}+o(T^{\alpha+\beta}).
  \]
  \item If, for $1\leq i\leq t$,
  $f_i=a_iT^{\alpha_i}+o(T^{\alpha_i})
  $
  with $a_i>0$, and $\alpha=\max_i\alpha_i$, then
  \[
    \sum_{i=1}^t f_i
    =\left(\sum_{\alpha_i=\alpha}a_i\right)T^\alpha+o(T^\alpha).
  \]
  \item Suppose
  $
    s=\lambda T^a+o(T^a),$ and
  $  f=\mu T^b+o(T^b),
  $
  where $\lambda,\mu>0$ and $a>0$.  Then
  \[
    f=\mu\lambda^{-b/a}s^{b/a}+o(s^{b/a}).
  \]
  Moreover, if $s\leq T^r$ for a fixed $r$, the last estimate is uniform as
  $s\to\infty$ on $\Phi$.
\end{enumerate}
\end{lemma}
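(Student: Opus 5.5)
The plan is to verify each part directly from \cref{def:uniform-o}. Write every hypothesis as a main term plus an error term $f=aT^\alpha+\eps_f$ with $\abs{\eps_f}\leq\eps T^\alpha$ once $\abs M$ exceeds a threshold depending only on $\eps$. Uniformity on $\Phi$ means a single threshold works for all pairs $(M,\bar a)\in\Phi$, so taking the maximum of finitely many thresholds preserves uniformity at every step.

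For (i), expand
\[
  fg-abT^{\alpha+\beta}=a T^\alpha\eps_g+bT^\beta\eps_f+\eps_f\eps_g .
\]
Each of the three terms is bounded by a constant multiple of $\eps T^{\alpha+\beta}$ when $\eps\leq1$. Shrinking $\eps$ by a factor depending on $a$ and $b$ gives the claim.

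For (ii), handle the indices with $\alpha_i=\alpha$ and those with $\alpha_i<\alpha$ separately. For the first group, sum the main terms and the error terms; there are $t$ summands, so the errors total at most $t\eps T^\alpha$. For $\alpha_i<\alpha$, note that $\abs{f_i}\leq(a_i+1)T^{\alpha_i}$ for large $T$. Since $T\geq1$ and $T^{\alpha_i-\alpha}\to0$, this is $o(T^\alpha)$. This last step needs $T\to\infty$, which holds because $\abs M>Q$ is the hypothesis in \cref{def:uniform-o}.

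Part (iii) is the only one needing care. Since $s/T^a\to\lambda>0$ uniformly, we have
\[
  T=\lambda^{-1/a}s^{1/a}\bigl(1+o(1)\bigr)^{1/a},
\]
and therefore
\[
  T^b=\lambda^{-b/a}s^{b/a}\bigl(1+o(1)\bigr).
\]
The key point is that $x\mapsto x^{b/a}$ is continuous at $1$, which makes $(1+o(1))^{b/a}=1+o(1)$ uniformly; this works for any real $b$, including $b=0$. Combining this with $f=\mu T^b(1+o(1))$ via (i) (or directly) gives
\[
  f=\mu\lambda^{-b/a}s^{b/a}+o(s^{b/a}),
\]
where the $o$ is measured as $\abs M\to\infty$.

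For the ``moreover'' clause, the threshold must be converted from $\abs M$ to $s$. If $s\leq T^r$, then $s>Q^r$ forces $T>Q$. So any statement that holds for $\abs M>Q$ holds whenever $s>Q^r$, and the estimate is uniform as $s\to\infty$ on $\Phi$.

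The main obstacle is only the bookkeeping in (iii): positivity of $\lambda$ and $a$ is essential to invert $s\sim\lambda T^a$, and it is the bound $s\leq T^r$ that lets a large $s$ force a large $T$.
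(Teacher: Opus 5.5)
Your proposal is correct and takes essentially the same approach as the paper. In (iii) you use the same ratio argument, $f/(\mu\lambda^{-b/a}s^{b/a})=(1+\xi)/(1+\eta)^{b/a}\to1$ uniformly, and the same threshold conversion $s>Q^r\Rightarrow T>Q$; in (i) and (ii) you bound additive error terms where the paper divides by the main term, which is only a cosmetic difference.
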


\begin{proof}
For \textup{(i)}, divide by $abT^{\alpha+\beta}$ and multiply the two ratios
$f/(aT^\alpha)$ and $g/(bT^\beta)$, both of which converge uniformly to $1$.
Part \textup{(ii)} follows after division by $T^\alpha$: terms of smaller
exponent converge uniformly to $0$, while the remaining ratios converge to
their coefficients.

For \textup{(iii)}, write
\[
  \frac{s}{\lambda T^a}=1+\eta,
  \qquad
  \frac{f}{\mu T^b}=1+\xi,
\]
where $\eta,\xi\to0$ uniformly.  Then
\[
  \frac{f}{\mu\lambda^{-b/a}s^{b/a}}
  =\frac{1+\xi}{(1+\eta)^{b/a}}\longrightarrow1
\]
uniformly.  Finally, $s\leq T^r$ implies that $s\to\infty$ forces
$T\to\infty$; quantitatively, $s>Q^r$ implies $T>Q$.  Thus ambient thresholds
may be replaced by thresholds in $s$.
\end{proof}

\begin{lemma}\label{lem:fubini-sum}
Let $B_{M,\bar a}$ be finite sets and let $w_{M,\bar a}(u)\geq0$ for
$u\in B_{M,\bar a}$.  Suppose, uniformly on a cell,
$
  \abs{B_{M,\bar a}}=\nu T^\beta+o(T^\beta)
$
with $\nu>0$, and
$
  w_{M,\bar a}(u)=\mu T^\alpha+o(T^\alpha)
$ 
uniformly in $u\in B_{M,\bar a}$ as well, with $\mu>0$.  Then
\[
  \sum_{u\in B_{M,\bar a}}w_{M,\bar a}(u)
  =\mu\nu T^{\alpha+\beta}+o(T^{\alpha+\beta}).
\]
\end{lemma}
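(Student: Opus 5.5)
The plan is to reduce the sum to a product by factoring out the common leading term $\mu T^\alpha$ and using the cardinality estimate for $B_{M,\bar a}$. The hypothesis on $w$ means the error is uniform both over the cell and over $u\in B_{M,\bar a}$. Concretely, for every $\eps>0$ there is $Q$ such that
\[
  \abs{w_{M,\bar a}(u)-\mu T^\alpha}\leq\eps T^\alpha
\]
whenever $(M,\bar a)$ is in the cell, $\abs M>Q$, and $u\in B_{M,\bar a}$. We may assume the cell is target-unbounded, in the sense of \cref{rem:bounded-o-convention}; otherwise the assertion is vacuous.

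\textbf{Step 1.} Write
\[
  \sum_{u\in B_{M,\bar a}}w_{M,\bar a}(u)
  =\mu T^\alpha\abs{B_{M,\bar a}}
   +\sum_{u\in B_{M,\bar a}}\bigl(w_{M,\bar a}(u)-\mu T^\alpha\bigr).
\]
The error sum has absolute value at most $\eps T^\alpha\abs{B_{M,\bar a}}$ once $\abs M>Q$.

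\textbf{Step 2.} Bound $\abs{B_{M,\bar a}}$. From $\abs{B_{M,\bar a}}=\nu T^\beta+o(T^\beta)$, for $\abs M$ beyond a second threshold $Q'$ we have $\abs{B_{M,\bar a}}\leq 2\nu T^\beta$. Hence the error sum is at most $2\nu\eps T^{\alpha+\beta}$. Since $\eps$ is arbitrary, the error sum is $o(T^{\alpha+\beta})$ uniformly on the cell.

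\textbf{Step 3.} Apply \cref{lem:asymptotic-calculus}(i) to the main term $\mu T^\alpha\cdot\abs{B_{M,\bar a}}$. The first factor is exactly $\mu T^\alpha$ and the second is $\nu T^\beta+o(T^\beta)$, so the product is $\mu\nu T^{\alpha+\beta}+o(T^{\alpha+\beta})$. Adding the two $o$-terms, with threshold $\max(Q,Q')$, gives the claim.

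The only delicate point is Step 1: the error must be controlled uniformly in $u$, not just for each $u$. Otherwise the sum of many individually small errors could fail to be small relative to $T^{\alpha+\beta}$. The hypothesis supplies exactly this uniformity, so no real obstacle remains; it is just a matter of stating the thresholds carefully.
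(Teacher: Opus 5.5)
Your proposal is correct and follows essentially the same argument as the paper: split the sum into the main term $\mu T^\alpha\abs{B_{M,\bar a}}$, handled by \cref{lem:asymptotic-calculus}\textup{(i)}, plus a remainder bounded by $\abs{B_{M,\bar a}}\sup_u\abs{w_{M,\bar a}(u)-\mu T^\alpha}=o(T^{\alpha+\beta})$ because $\abs{B_{M,\bar a}}=O(T^\beta)$ uniformly. The preliminary reduction to unbounded cells is unnecessary, though harmless.
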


\begin{proof}
Write $w(u)=\mu T^\alpha+r(u)$, where
$\sup_{u\in B}\abs{r(u)}=o(T^\alpha)$ uniformly.  Then
\[
  \sum_{u\in B}w(u)
  =\mu T^\alpha\abs B+\sum_{u\in B}r(u).
\]
The first term has the asserted asymptotic by
\cref{lem:asymptotic-calculus}\textup{(i)}.  Also
\[
  \left|\sum_{u\in B}r(u)\right|
  \leq \abs B\sup_{u\in B}\abs{r(u)}
  =o(T^{\alpha+\beta}),
\]
since $\abs B=O(T^\beta)$ uniformly.
\end{proof}

\subsection{\texorpdfstring{$N$}{N}-dimensional asymptotic classes}

\begin{definition}
\label{def:N-asymptotic}
Let $N\geq1$, and let $\C$ be a class of finite $L$-structures.  We say
that $\C$ is an \emph{$N$-dimensional asymptotic class} if, for every
$L$-formula $\varphi(x;\bar y)$ with one object variable $x$, there are the
following data:
\begin{enumerate}[label=\textup{(\roman*)}]
  \item a finite set
  $
    D_\varphi\subseteq
    \bigl(\set{0,\dots,N}\times\R_{>0}\bigr)\cup\set{(0,0)},
  $
  and
  \item for every $(d,\mu)\in D_\varphi$, a parameter-free $L$-formula
  $\theta_{\varphi,d,\mu}(\bar y)$,
\end{enumerate}
such that, for every $M\in\C$, the sets
$
  \theta_{\varphi,d,\mu}(M^{\abs{\bar y}}) 
  $ with $ (d,\mu)\in D_\varphi,
$
partition $M^{\abs{\bar y}}$, and, whenever
$M\models\theta_{\varphi,d,\mu}(\bar a)$,
\[
  \abs{\varphi(M;\bar a)}
  =\mu\abs M^{d/N}+o\bigl(\abs M^{d/N}\bigr)
\]
uniformly on the corresponding cell.  For the distinguished pair $(0,0)$,
the displayed equation means $\abs{\varphi(M;\bar a)}=o(1)$.
\end{definition}

\begin{definition}
A class is a \emph{weak $N$-dimensional asymptotic class} if the same finite
asymptotic alternatives exist, but the cells need not be uniformly
parameter-free definable.
\end{definition}

\begin{remark}\label{rem:zero-cell}
On a $(0,0)$-cell, the relevant cardinality is eventually zero, uniformly:
choose the little-$o$ threshold for $\eps=\tfrac12$.  More generally, suppose
an integer-valued function satisfies $n=\mu+o(1)$ on an unbounded cell.  Then
$\mu$ is a nonnegative integer and $n=\mu$ eventually, uniformly.  Indeed,
if $\mu\notin\mathbb Z$, then
$\operatorname{dist}(\mu,\mathbb Z)>0$, and convergence contradicts the fact
that every value of $n$ has distance at least
$\operatorname{dist}(\mu,\mathbb Z)$ from $\mu$.  Hence $\mu\in\mathbb Z$.
Since $n\geq0$, necessarily $\mu\geq0$; and, for sufficiently large
structures, $\abs{n-\mu}<\tfrac12$, which forces $n=\mu$.
\end{remark}

\begin{theorem}\label{thm:tuple-counting}
Suppose $\C$ is an $N$-dimensional asymptotic class.  For every formula
$\varphi(\bar x;\bar y)$ with $\abs{\bar x}=n$, there exist a finite set
$
  D_\varphi\subseteq
  \bigl(\set{0,\dots,Nn}\times\R_{>0}\bigr)\cup\set{(0,0)}
$
and a parameter-free definable partition of the parameter tuples such that,
on the cell labelled by $(d,\mu)$,
\[
  \abs{\varphi(M^n;\bar a)}
  =\mu\abs M^{d/N}+o\bigl(\abs M^{d/N}\bigr)
\]
uniformly.
\end{theorem}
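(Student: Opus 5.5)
We argue by induction on $n=\abs{\bar x}$, following the standard Elwes argument. For $n=1$ the statement is exactly \cref{def:N-asymptotic}, with the exponent bound $d\leq N$. For the induction step, write $\bar x=(x_1,\bar x')$ with $\abs{\bar x'}=n-1$. Regard $\varphi(x_1,\bar x';\bar y)$ as a formula in the single object variable $x_1$ with parameters $(\bar x',\bar y)$. \Cref{def:N-asymptotic} then gives finitely many labels $(e,\nu)\in D_1$ with $e\leq N$, together with parameter-free formulas $\theta_{e,\nu}(\bar x',\bar y)$. These formulas partition the tuples $(\bar a',\bar a)$ so that $\abs{\varphi(M;\bar a',\bar a)}=\nu\abs M^{e/N}+o(\abs M^{e/N})$ uniformly on each cell.

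Next, for each label $(e,\nu)$, apply the induction hypothesis to the formula $\psi_{e,\nu}(\bar x';\bar y)\defeq\theta_{e,\nu}(\bar x',\bar y)\wedge\exists x_1\,\varphi(x_1,\bar x';\bar y)$. For the $(0,0)$ label the existential conjunct is unnecessary, since that fiber is eventually empty by \cref{rem:zero-cell}. The induction hypothesis yields a parameter-free partition of $\bar y$-tuples into cells $\chi_{e,\nu,f,\kappa}(\bar y)$ on which $\abs{\psi_{e,\nu}(M^{n-1};\bar a)}=\kappa\abs M^{f/N}+o(\abs M^{f/N})$, with $f\leq N(n-1)$.

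We then take the common refinement over all labels $(e,\nu)\in D_1$. Its cells are the finitely many nonempty conjunctions $\bigwedge_{(e,\nu)}\chi_{e,\nu,f_{e,\nu},\kappa_{e,\nu}}(\bar y)$, which are again parameter-free and partition the $\bar y$-tuples. On such a cell we decompose
\[
\abs{\varphi(M^n;\bar a)}=\sum_{(e,\nu)}\;\sum_{\bar a'\in\psi_{e,\nu}(M;\bar a)}\abs{\varphi(M;\bar a',\bar a)}.
\]
Each inner sum is handled by \cref{lem:fubini-sum}. The fiber sizes are uniformly $\nu\abs M^{e/N}+o(\cdot)$, because the $o$-estimate from \cref{def:N-asymptotic} is uniform over the whole cell $\theta_{e,\nu}$ and hence over every $\bar a'$ in it. The index set has size $\kappa\abs M^{f/N}+o(\cdot)$. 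So each inner sum is $\nu\kappa\abs M^{(e+f)/N}+o(\cdot)$.

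The outer sum over labels is then handled by \cref{lem:asymptotic-calculus}(ii). Summands with $\kappa=0$ or with the $(0,0)$ label are eventually zero and drop out. The total has leading term $\mu\abs M^{d/N}$, where $d$ is the maximum of $e+f\leq N+N(n-1)=Nn$ and $\mu$ is the sum of the corresponding $\nu\kappa$. If every summand vanishes, the cell receives the label $(0,0)$. Cells carrying the same resulting label may be merged by disjunction, giving $D_\varphi$.

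The main obstacle is uniformity, together with bounded or degenerate cases. \Cref{lem:fubini-sum} needs the inner estimate uniform in $\bar a'$, which is exactly why the definition's uniformity over whole cells is essential. Cells on which $\abs M$ is bounded are handled by the convention of \cref{rem:bounded-o-convention}: the little-$o$ assertion is vacuous there. The zero-coefficient cases are handled by \cref{rem:zero-cell}, so that summands with $\kappa=0$ are genuinely eventually zero rather than merely small.
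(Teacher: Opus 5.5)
Your proof is correct and follows the paper's argument. The paper also inducts on the number of object variables, applies the one-variable axiom to the first variable, applies the induction hypothesis to each resulting cell formula, and takes the common refinement of the parameter partitions. It then uses \cref{lem:fubini-sum} and \cref{lem:asymptotic-calculus}\textup{(ii)}, discarding zero labels via \cref{rem:zero-cell}. The only difference is your extra conjunct $\exists x_1\,\varphi$ in $\psi_{e,\nu}$: it is harmless, since excluded tuples contribute zero and on positive-label cells it holds automatically once $\abs M$ is large, but it is unnecessary, and the paper applies the induction hypothesis directly to the cell formulas.
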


\begin{proof}
We argue by induction on $n$.  The case $n=1$ is
\cref{def:N-asymptotic}.  Assume the result for $n$ object variables and
consider
$
  \varphi(z,\bar x;\bar y),$ with $ \abs{\bar x}=n.
$
Apply the one-variable definition to $z$, treating $(\bar x,\bar y)$ as
parameters.  We obtain finitely many pairs $(e_i,\mu_i)$ and formulas
$\chi_i(\bar x,\bar y)$, indexed by $i\in I$, whose realizations partition
all pairs $(\bar b,\bar a)$, and such that on the $i$th cell
\[
  \abs{\varphi(M;\bar b,\bar a)}
  =\mu_iT^{e_i/N}+o(T^{e_i/N}),
  \qquad T=\abs M.
\]
Here $(e_i,\mu_i)=(0,0)$ is allowed.

For each $i$, apply the induction hypothesis to the formula
$\chi_i(\bar x;\bar y)$.  Thus there are finitely many pairs
$(d_{ij},\nu_{ij})$, $j\in J_i$, and formulas $\rho_{ij}(\bar y)$ which
partition the $\bar y$-tuples, such that on $\rho_{ij}$,
\[
  \abs{\chi_i(M^n;\bar a)}
  =\nu_{ij}T^{d_{ij}/N}+o(T^{d_{ij}/N}).
\]

For a choice function $f\in\prod_{i\in I}J_i$, put
$
  \rho_f(\bar y)\defeq\bigwedge_{i\in I}\rho_{i,f(i)}(\bar y).
$
The nonempty $\rho_f$ form a finite definable partition of the parameter
space.  Fix one such cell and abbreviate
$d_i=d_{i,f(i)}$ and $\nu_i=\nu_{i,f(i)}$.  Define
\[
  S_i(M,\bar a)
  \defeq
  \set{(c,\bar b)\in M^{n+1}:
  M\models\varphi(c,\bar b;\bar a)\wedge\chi_i(\bar b,\bar a)}.
\]
The sets $S_i(M,\bar a)$ are pairwise disjoint and their union is
$\varphi(M^{n+1};\bar a)$.

If either $(e_i,\mu_i)=(0,0)$ or $(d_i,\nu_i)=(0,0)$, then $S_i$ is
eventually empty, uniformly in the parameters, by
\cref{rem:zero-cell}.  Otherwise,
\cref{lem:fubini-sum} gives
\[
  \abs{S_i(M,\bar a)}
  =\mu_i\nu_iT^{(e_i+d_i)/N}
   +o\bigl(T^{(e_i+d_i)/N}\bigr).
\]
Let
$
  h_f\defeq
  \max\set{e_i+d_i:i\in I,\ \mu_i\nu_i>0}
$
when this set is nonempty, and put
$
  \lambda_f\defeq~
  \sum\limits_{e_i+d_i=h_f} \mu_i\nu_i.
$
By \cref{lem:asymptotic-calculus}\textup{(ii)}, on $\rho_f$ we have
\[
  \abs{\varphi(M^{n+1};\bar a)}
  =\lambda_fT^{h_f/N}+o(T^{h_f/N}).
\]
If all $S_i$ are eventually empty, assign the pair $(0,0)$.  Since
$0\leq e_i\leq N$ and $0\leq d_i\leq Nn$, one has
$0\leq h_f\leq N(n+1)$.  There are only finitely many choice functions.
For each resulting pair, take the disjunction of all $\rho_f$ producing it.
This gives the required finite definable partition.
\end{proof}

\begin{lemma}\label{lem:subclass}
Every subclass of an $N$-dimensional asymptotic class, in the same language,
is again an $N$-dimensional asymptotic class.
\end{lemma}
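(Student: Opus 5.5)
The plan is to reuse, for each formula, the data that witnesses asymptoticity of the larger class $\C$. Let $\C'\subseteq\C$ be a subclass in the same language $L$. Fix an $L$-formula $\varphi(x;\bar y)$. By \cref{def:N-asymptotic} applied to $\C$, we obtain a finite set $D_\varphi$ and parameter-free formulas $\theta_{\varphi,d,\mu}(\bar y)$. I propose to keep the same finite set and the same defining formulas for $\C'$. The partition property is a statement about each single structure $M\in\C$, so it holds for every $M\in\C'$ without change.

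Next I would check uniformity. The little-$o$ condition in \cref{def:uniform-o} is quantified over a collection $\Phi$ of pairs $(M,\bar a)$. The relevant collection for $\C'$ is
\[
  \Phi'=\set{(M,\bar a):M\in\C',\ M\models\theta_{\varphi,d,\mu}(\bar a)},
\]
and this is contained in the corresponding collection $\Phi$ for $\C$. For each $\eps>0$, the threshold $Q$ that works on $\Phi$ therefore also works on $\Phi'$, since the defining inequality is only required at fewer pairs. The $(0,0)$ convention transfers in the same way.

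One adjustment is cosmetic. Some labels $(d,\mu)$ may have empty cells throughout $\C'$, or cells on which $\abs M$ is bounded. An empty cell can either be dropped from $D_\varphi$ or kept, since the definition allows it. A cell with bounded $\abs M$ imposes only a vacuous condition, by \cref{rem:bounded-o-convention}. I do not expect any genuine obstacle: the whole argument is that the definition is universally quantified over members of the class, so it is inherited by subclasses.
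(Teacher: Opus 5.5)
Your proposal is correct and is essentially the paper's own proof: keep the same finite label set and parameter-free cell formulas, note that the partition property holds structure by structure, and observe that the uniformity thresholds survive restriction to the smaller collection of pairs. Your remarks about empty or bounded cells are harmless but not needed.
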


\begin{proof}
Restrict, for each formula, the same finite parameter-free partition and the
same uniform estimates to the subclass.  The partition property and all
uniformity thresholds are preserved.
\end{proof}

\begin{lemma}
\label{lem:finite-perturbation}
Let $N\geq1$.  Adding or removing finitely many isomorphism types from an
$N$-dimensional asymptotic class preserves the property of being an
$N$-dimensional asymptotic class.
\end{lemma}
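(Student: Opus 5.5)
Removing finitely many isomorphism types is a special case of \cref{lem:subclass}, so the work lies in adding them. Let $\C'=\C\cup\set{M_1,\dots,M_s}$ with each $M_j$ finite. Since every structure is finite and the language is countable, there is a bound $B=\max_j\abs{M_j}$. The key observation is \cref{rem:bounded-o-convention}: a little-$o$ assertion made uniformly on a cell only constrains structures with $\abs M>Q$ for a threshold $Q$ that we are free to enlarge. So for each formula $\varphi(x;\bar y)$ I keep the data $D_\varphi$ and the estimates from $\C$, and I have to do two things: repair the definability of the partition on the new structures, and make sure the estimates on the old cells are not disturbed.

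\textbf{Step 1 (the partition).} The sets $\theta_{\varphi,d,\mu}(M_j^{\abs{\bar y}})$ need not partition $M_j^{\abs{\bar y}}$. Each isomorphism type of a finite structure is pinned down by a single parameter-free sentence $\sigma_j$, namely its diagram up to isomorphism. Here there is a subtlety: with infinitely many symbols in $L$, a finite structure is not characterized by one sentence. I will therefore say "characterized by its complete theory." For each formula only finitely many symbols occur, so I work with the reduct to the finitely many symbols of $\varphi$ and $\theta$, and I take $\sigma_j$ to characterize the isomorphism type of that reduct. To keep things safe, I take $\sigma_j$ to be the conjunction of "exactly $\abs{M_j}$ elements" with the diagram in the symbols of $\varphi$. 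Then I define
\[
  \theta'_{\varphi,d,\mu}(\bar y)\defeq\Bigl(\theta_{\varphi,d,\mu}(\bar y)\wedge\bigwedge_j\neg\tau_j\Bigr)\vee\Bigl(\bigvee_j \tau_j\wedge\psi_{j,d,\mu}(\bar y)\Bigr),
\]
where $\tau_j$ is the sentence "there are exactly $\abs{M_j}$ elements and the $\varphi$-reduct is as in $M_j$." I choose the $\psi_{j,d,\mu}$ so that they partition $M_j^{\abs{\bar y}}$; the simplest choice is to put all of $M_j^{\abs{\bar y}}$ into one fixed cell, for instance $\psi_{j,d_0,\mu_0}=\top$ and the others $\bot$. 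The sentences $\tau_j$ might also hold in some members of $\C$, but only in members of size at most $B$.

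\textbf{Step 2 (uniform estimates).} On each new cell, the pairs $(M,\bar a)$ with $M\models\tau_j$ or $M=M_j$ all have $\abs M\leq B$. Choosing every threshold $Q\geq B$ makes the estimates hold vacuously for them. For $\abs M>B$, the formula $\theta'$ agrees with $\theta$ on members of $\C$, so the original thresholds (raised to at least $B$) still work. The same argument handles removal together with addition if one prefers not to cite \cref{lem:subclass}.

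The main obstacle I expect is making the partition-repair sentences genuinely parameter-free and first-order. This is handled by restricting to the finite sublanguage relevant to $\varphi$ and the $\theta$'s, and by noting that the cardinality bound alone suffices. One could even drop the diagram and use only "$\abs M\leq B$," sending all small structures, old and new, into a single cell. Because the estimates are vacuous below the threshold, that simpler choice already works.
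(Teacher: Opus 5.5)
Your argument is correct and is essentially the paper's. Both proofs separate the finitely many added structures with the first-order cardinality sentence $\operatorname{Small}_B$, rather than isolating isomorphism types (which, as you note, can fail in an infinite language), and both raise every uniformity threshold above $B$. The only difference is cosmetic: the paper disjointifies, adds a complement cell, and refines the small pieces by $\operatorname{Exact}_{\varphi,k}$ so that they carry honest dimension-zero labels, whereas you put all small tuples into one existing cell, which is also legitimate because by \cref{def:uniform-o} the estimates are vacuous for $\abs M\leq B$.
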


\begin{proof}
Deletion follows from \cref{lem:subclass}.
For the addition statement, let $\mathcal E$ be a finite collection of
exceptional structures.  If $\mathcal E=\varnothing$, there is nothing to
prove.  Otherwise, fix $\varphi(x;\bar y)$.  Let
$\theta_1(\bar y),\dots,\theta_t(\bar y)$ be an asymptotic partition for
$\varphi$ on the original class.  Replace these formulas by their syntactic
disjointification
$
  \theta'_1=\theta_1,
  \,\,
  \theta'_i=\theta_i\wedge\bigwedge_{j<i}\neg\theta_j
  \quad(2\leq i\leq t),
$
and put
$
  \theta'_0=\neg\bigvee_{i=1}^t\theta_i.
$
These formulas partition the parameter tuples in every structure.  On the
original class, $\theta'_i$ agrees with $\theta_i$ and $\theta'_0$ is empty.
Any discrepancy in an exceptional structure occurs only below the fixed
bound
$
  B=\max\set{\abs E:E\in\mathcal E}.
$
Let $\operatorname{Small}_B$ be the sentence asserting that the total
universe has at most $B$ elements (using the standard one-sorted coding in
the many-sorted case).  Refine each $\theta'_i$, including $\theta'_0$, by
the unbounded-size piece
$
  \theta'_i(\bar y)\wedge\neg\operatorname{Small}_B
$
and by the finitely many bounded pieces
\[
  \theta'_i(\bar y)\wedge\operatorname{Small}_B\wedge
  \operatorname{Exact}_{\varphi,k}(\bar y),
  \qquad 0\leq k\leq B.
\]
Here $\operatorname{Exact}_{\varphi,k}$ is the standard formula asserting
that exactly $k$ elements satisfy $\varphi(x;\bar y)$.
Give these bounded pieces their exact dimension-zero labels, retain the
original label of $\theta_i$ on its corresponding unbounded piece, and label
the (identically empty) unbounded piece associated with $\theta'_0$ by
$(0,0)$.  Taking the uniformity threshold larger than $B$ leaves all original
asymptotic estimates unchanged.  This proves the result without assuming
that finite isomorphism types are first-order isolated, which need not hold
in a countably infinite language.
\end{proof}

%%%%%%%%%
%%%%%%%%%%%%%%%%%%%%%%%%%%%

\section{Uniform interpretations and selectors}

For notational clarity, the proofs below are written for relational
languages.  Function and constant symbols may be replaced uniformly by their
graph relations; this is a definitional change and does not alter any of the
counting or definability statements.

\begin{definition}\label{def:interpretation}
Let $\C$ be a class of finite $L$-structures and $\D$ a class of finite
$L'$-structures.  A \emph{uniform parameter-interpretation} $\I$ of $\D$
in $\C$ consists of:
\begin{enumerate}[label=\textup{(\roman*)}]
  \item an injection $\alpha:\D\to\C$;
  \item integers $r\geq1$ and $s\geq0$, and fixed $L$-formulas
  $
    \delta(\bar u;\bar z)$ and $
    \epsilon(\bar u,\bar v;\bar z),
  $
  with $\abs{\bar u}=\abs{\bar v}=r$ and $\abs{\bar z}=s$;
  \item for every relation symbol $R\in L'$, a fixed $L$-formula
  $R^{\I}$ on the appropriate number of $r$-tuples;
  \item for each $A\in\D$, a tuple $\bar c_A\in\alpha(A)^s$ and an
  $L'$-isomorphism
  $
    \iota_A:A\longrightarrow X_A/E_A,
  $
  where
  $
    X_A=\delta(\alpha(A)^r;\bar c_A)
  $
  and $E_A$ is the equivalence relation on $X_A$ defined by
  $\epsilon(\bar u,\bar v;\bar c_A)$.
\end{enumerate}
The relation formulas are required to be $E_A$-invariant on the
interpretation domain and to induce the transported $L'$-structure.  If
$s=0$, the interpretation is
\emph{parameter-free}.
\end{definition}

\begin{definition}\label{def:selector}
A uniform interpretation admits a \emph{uniform definable selector} if there
is an $L$-formula $\sigma(\bar u;\bar z)$ such that, for every $A\in\D$,
$
  S_A\defeq\sigma(\alpha(A)^r;\bar c_A)\subseteq X_A
$
meets every $E_A$-class in exactly one element.
\end{definition}

When a selector is present, we identify $A$ with $S_A$ via the bijection
sending each element to the unique selected representative of its image
under $\iota_A$.

\begin{lemma}\label{lem:translation}
Let $\I$ be a uniform parameter-interpretation of $\D$ in $\C$.  For every
$L'$-formula $\varphi(\bar x;\bar y)$ there is an $L$-formula
$
  \varphi^{\I}(\bar u_1,\dots,\bar u_{\abs{\bar x}};
  \bar v_1,\dots,\bar v_{\abs{\bar y}};\bar z)
$
such that, for every $A\in\D$, every tuple $(\bar a,\bar b)$ from $A$, and
every choice $(\widetilde{\bar a},\widetilde{\bar b})$ of representatives in
$X_A$,
\[
  A\models\varphi(\bar a;\bar b)
  \quad\Longleftrightarrow\quad
  \alpha(A)\models
  \varphi^{\I}(\widetilde{\bar a};\widetilde{\bar b};\bar c_A).
\]
The translated formula is invariant under replacing any representative by
an $E_A$-equivalent one.
\end{lemma}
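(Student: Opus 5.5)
The proof is by induction on the complexity of $\varphi$, with the translation defined recursively. Throughout, $\varphi$ is assumed relational (as arranged at the start of this section) and, by renaming variables, each target variable $x$ is assigned a fresh $r$-tuple $\bar u_x$ of ambient variables. The invariance claim is proved simultaneously with the displayed equivalence, because the inductive step for quantifiers relies on it.

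The base cases are as follows.
\begin{enumerate}[label=\textup{(\roman*)}]
  \item For an atomic formula $R(x_1,\dots,x_m)$, set the translation to be $R^{\I}(\bar u_{x_1},\dots,\bar u_{x_m})$. By clause (iii) of \cref{def:interpretation} and the requirement that $\iota_A$ be an $L'$-isomorphism onto $X_A/E_A$, the formula $R^{\I}$ holds of representatives exactly when $R$ holds of the corresponding elements of $A$.
  \item For an equality $x_1=x_2$, use $\epsilon(\bar u_{x_1},\bar u_{x_2};\bar z)$, since equality in $A$ corresponds under $\iota_A$ to $E_A$-equivalence of representatives.
\end{enumerate}
Invariance in both base cases follows from the stipulated $E_A$-invariance of the $R^{\I}$ and from transitivity and symmetry of $E_A$. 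If the interpretation formulas do not literally define an equivalence relation outside $X_A$, this is harmless, because only representatives in $X_A$ are ever substituted.

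The inductive steps are as follows.
\begin{enumerate}[label=\textup{(\roman*)}]
  \item Boolean connectives commute with translation: set $(\neg\psi)^{\I}=\neg\psi^{\I}$ and $(\psi\wedge\chi)^{\I}=\psi^{\I}\wedge\chi^{\I}$. Both the equivalence and invariance pass through immediately.
  \item For an existential quantifier, set $(\exists x\,\psi)^{\I}\defeq\exists\bar u_x\,\bigl(\delta(\bar u_x;\bar z)\wedge\psi^{\I}\bigr)$, which relativizes the quantifier to the interpretation domain.
\end{enumerate}
The quantifier step is where invariance is essential. Suppose $A\models\exists x\,\psi(x,\bar a)$ with witness $e$. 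Pick any representative $\tilde e\in X_A$ of $\iota_A(e)$. The inductive hypothesis gives $\alpha(A)\models\psi^{\I}(\tilde e,\widetilde{\bar a};\bar c_A)$, and this holds whichever representatives $\widetilde{\bar a}$ are used. Conversely, any ambient witness $\tilde e\in X_A$ lies in some $E_A$-class. That class is $\iota_A(e)$ for a unique $e\in A$, because $\iota_A$ is a bijection, and the inductive hypothesis then gives $A\models\psi(e,\bar a)$.

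Invariance of the quantified formula is inherited: the inductive hypothesis shows that $\psi^{\I}$ is invariant in its free tuples other than $\bar u_x$, and $\bar u_x$ is now bound.

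The main obstacle is bookkeeping rather than substance. One must state the induction hypothesis for formulas with arbitrary free variables and for all choices of representatives at once, since the quantifier case needs invariance to hold before the equivalence can be used. One must also make the free parameter tuple $\bar z$ uniform across all subformulas, so that the same $\bar c_A$ is substituted everywhere. Finally, relativization to $\delta$ must be used at every quantifier so that no tuples outside $X_A$ are introduced.
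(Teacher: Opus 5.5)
Your proof is correct and follows the paper's argument essentially verbatim: induction on $\varphi$, with equality translated by $\epsilon$, atomic relations by $R^{\I}$, Boolean connectives commuting with translation, and existential quantifiers relativized to the domain formula $\delta(\bar t;\bar z)$. One small refinement: your inductive equivalence already holds for every choice of representatives, so invariance follows from it automatically, and the quantifier step does not need invariance as a separate ingredient established beforehand.
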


\begin{proof}
Proceed by induction on the construction of $\varphi$.  Equality
$x_i=x_j$ is translated as
$\epsilon(\bar u_i,\bar u_j;\bar z)$.  An atomic relation
$R(x_{i_1},\dots,x_{i_t})$ is translated by the formula $R^{\I}$ supplied by
the interpretation.  The defining assumptions on an interpretation give
truth preservation and $E_A$-invariance at the atomic stage.

Translation commutes with $\neg$, $\wedge$, and the other Boolean
connectives.  If
\[
  \varphi(\bar x;\bar y)=\exists w\,\psi(\bar x,w;\bar y),
\]
define
\[
  \varphi^{\I}
  \defeq
  \exists\bar t\,
  \bigl(\delta(\bar t;\bar z)\wedge
  \psi^{\I}(\bar u_1,\dots,\bar u_{\abs{\bar x}},\bar t;
  \bar v_1,\dots,\bar v_{\abs{\bar y}};\bar z)\bigr).
\]
The existence of an element of the quotient $X_A/E_A$ is equivalent to the
existence of a representative in $X_A$.  The induction hypothesis therefore
gives truth preservation.  Invariance is preserved by Boolean operations and
by the restricted quantifier.
\end{proof}

\begin{corollary}\label{cor:cardinality-selector}
Assume $\I$ admits a uniform definable selector.  For every
$L'$-formula $\varphi(\bar x;\bar y)$, every $A\in\D$, and every parameter
tuple $\bar b$ from $A$, the product of the selected-representative maps is a
bijection
\[
  \varphi(A^{\abs{\bar x}};\bar b)
  \longrightarrow
  \set{\bar u\in S_A^{\abs{\bar x}}:
  \alpha(A)\models
  \varphi^{\I}(\bar u;\widetilde{\bar b};\bar c_A)}.
\]
Thus the two sets have the same cardinality.
\end{corollary}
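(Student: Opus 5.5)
The plan is to write the bijection down explicitly and then check it using \cref{lem:translation}. For each $A\in\D$ let $s_A:A\to S_A$ denote the selected-representative map. It sends $a$ to the unique element of $S_A$ lying in the $E_A$-class $\iota_A(a)$. This element exists and is unique because $\sigma$ meets every $E_A$-class in exactly one point (\cref{def:selector}). The map $s_A$ is a bijection $A\to S_A$. It is injective because $\iota_A$ is injective and distinct classes are disjoint. It is surjective because every point of $S_A$ lies in some class, and that class is $\iota_A(a)$ for some $a$, since $\iota_A$ is onto $X_A/E_A$. Taking coordinatewise products, $s_A^{\abs{\bar x}}:A^{\abs{\bar x}}\to S_A^{\abs{\bar x}}$ is a bijection as well.

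Next, fix a parameter tuple $\bar b$ and any choice of representatives $\widetilde{\bar b}$; for instance, one may take $\widetilde{\bar b}=s_A(\bar b)$. For $\bar a\in A^{\abs{\bar x}}$, the tuple $s_A(\bar a)$ is a legitimate choice of representatives of $\bar a$ in $X_A$. By \cref{lem:translation},
\[
A\models\varphi(\bar a;\bar b)\iff \alpha(A)\models\varphi^{\I}(s_A(\bar a);\widetilde{\bar b};\bar c_A).
\]
The lemma's invariance clause ensures that the right-hand side does not depend on which representatives $\widetilde{\bar b}$ were chosen. So $s_A^{\abs{\bar x}}$ maps $\varphi(A^{\abs{\bar x}};\bar b)$ into the displayed set. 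Conversely, take $\bar u$ in the target set. Since $s_A^{\abs{\bar x}}$ is surjective onto $S_A^{\abs{\bar x}}$, we can write $\bar u=s_A(\bar a)$, and the same equivalence shows $\bar a\in\varphi(A^{\abs{\bar x}};\bar b)$. The restriction of $s_A^{\abs{\bar x}}$ is therefore a bijection between the two sets, and the cardinality statement follows at once.

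No step is a real obstacle here. The only point needing care is that \cref{lem:translation} applies to \emph{every} choice of representatives. This is what allows the selected representatives to be used for $\bar a$, and an arbitrary choice for $\bar b$.
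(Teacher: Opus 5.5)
Your proposal is correct and follows essentially the same route as the paper: the selector yields a bijection $A\to S_A$, you take Cartesian powers, and you apply \cref{lem:translation} with the selected representatives. You simply spell out the injectivity and surjectivity checks that the paper leaves implicit.
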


\begin{proof}
The selector gives a bijection $A\to S_A$.  Take Cartesian powers and apply
\cref{lem:translation}.
\end{proof}

\begin{definition}\label{def:trace-reflection}
Suppose $\I$ is a uniform parameter-interpretation with interpretation
parameter tuple $\bar c_A$, and suppose $\I$ admits a uniform definable
selector.  We say that $\I$ is \emph{selector-trace reflecting} if, for every
$m\geq0$ and every parameter-free $L$-formula
$
  \rho(\bar v_1,\dots,\bar v_m;\bar z),
$
where each $\bar v_i$ is an $r$-tuple and $\bar z$ has the same length as
the interpretation-parameter tuple, there is a parameter-free $L'$-formula
$\rho^\downarrow(y_1,\dots,y_m)$ such that, for every $A\in\D$ and every
$\bar a\in A^m$ with selected representatives
$\widetilde{\bar a}\in S_A^m$,
\[
  A\models\rho^\downarrow(\bar a)
  \quad\Longleftrightarrow\quad
  \alpha(A)\models\rho(\widetilde{\bar a};\bar c_A).
\]
The case $m=0$ reflects ambient sentences evaluated with the distinguished
interpretation parameters $\bar c_A$.  In a parameter-free interpretation,
$\bar z$ and $\bar c_A$ are empty.
\end{definition}

\begin{remark}\label{rem:weaker-reflection}
For the transfer theorem below, it is enough to reflect the finitely many
ambient formulas which define the asymptotic cells of translations of
$L'$-formulas, together with the cells determining the size of the selector.
The stronger formulation in \cref{def:trace-reflection} is cleaner and is
stable under Boolean combinations.
\end{remark}

%%%%%%%%%%%%%%%%%%%%%%%%%
%%%%%%%%%%

\section{Known interpretation results}

\begin{theorem}\label{thm:known-transfer}
Let $\D$ be uniformly parameter-interpretable in an asymptotic class
$\C$.
\begin{enumerate}[label=\textup{(\roman*)}]
  \item In the classical $N$-dimensional setting, $\D$ is a weak
  asymptotic class; see \cite[Lemma~3.7]{Elwes2007}.
  \item For multidimensional asymptotic classes, an $R$-m.a.c.\ transfers
  to a weak $\operatorname{Frac}(R)$-m.a.c.
  \item Under uniform weak bi-interpretability, the transferred class is
  full rather than weak.
\end{enumerate}
The last two assertions follow from \cite[Theorem~2.5.4]{AMSW2024}.
\end{theorem}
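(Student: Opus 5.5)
For part \textup{(i)} I would reprove the cited Elwes result with the counting tools of Section~2. Parts \textup{(ii)} and \textup{(iii)} I would simply quote from \cite[Theorem~2.5.4]{AMSW2024}, since the multidimensional framework is not set up here. The only thing to check for \textup{(iii)} is that a uniform weak bi-interpretation lets the target recover the ambient counting cells. This is essentially the mechanism that the later comparison in Section~9 makes precise.

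For \textup{(i)}, fix an $L'$-formula $\varphi(x;\bar y)$ and let $A\in\D$ and $M=\alpha(A)$, with $T=\abs M$. By \cref{lem:translation}, choose representatives $\widetilde{\bar b}$ of $\bar b$. Then $\abs{\varphi(A;\bar b)}$ is the number of $E_A$-classes contained in
\[
  Y=\set{\bar u\in X_A:M\models\varphi^{\I}(\bar u;\widetilde{\bar b};\bar c_A)}.
\]
Since $\varphi^{\I}$ is $E_A$-invariant, $Y$ is a union of classes, so
\[
  \abs{\varphi(A;\bar b)}=\sum_{\bar u\in Y}\abs{[\bar u]_{E_A}}^{-1}.
\]
Apply \cref{thm:tuple-counting} to $\delta(\bar v;\bar z)\wedge\epsilon(\bar u,\bar v;\bar z)$ with object variables $\bar v$. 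This yields ambient-definable cells $\kappa_j(\bar u,\bar z)$ on which the class size is $\nu_jT^{e_j/N}+o(T^{e_j/N})$. Following \cref{rem:bounded-o-convention}, I refine the bounded-size part by exact class size, which is expressible by a first-order formula.

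Next, apply \cref{thm:tuple-counting} to each formula $\varphi^{\I}\wedge\delta\wedge\kappa_j$. Here the object variables are $\bar u$, of length $r$, and the parameters are $(\widetilde{\bar b},\bar c_A)$. This gives finitely many ambient cells in $(\widetilde{\bar b},\bar z)$, each with exponent at most $Nr$. On a common refinement, a \cref{lem:fubini-sum}-type argument, applied with weights $\abs{[\bar u]}^{-1}$ that are uniformly asymptotic on each $\kappa_j$, followed by \cref{lem:asymptotic-calculus}\textup{(ii)}, gives
\[
  \abs{\varphi(A;\bar b)}=\mu T^{d/N}+o(T^{d/N})
\]
for finitely many pairs $(d,\mu)$ with rational $d$. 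The same argument with $\varphi$ replaced by $x=x$ gives $\abs A=\lambda T^{e/N}+o(T^{e/N})$. Its cells depend only on $\bar c_A$, and I restrict to cells on which this is unbounded, handling bounded ones by exact cardinalities.

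Finally, \cref{lem:asymptotic-calculus}\textup{(iii)} converts to $\abs A$-normalization. On each cell, $\abs{\varphi(A;\bar b)}=\mu\lambda^{-d/e}\abs A^{d/e}+o(\abs A^{d/e})$. Every exponent $d/e$ is a rational in $[0,1]$ whose denominator divides $\lcm(1,\dots,Nr)$. There are only finitely many labels, and they partition the target parameters. To do so I assign each $\bar b$ the label of the cell of some representative. The fiber size does not depend on the representative, so any choice gives a correct estimate. The resulting partition, however, is defined only through ambient formulas evaluated on representatives and $\bar c_A$. It therefore need not be $L'$-definable, which is exactly why only weak asymptoticity follows.

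I expect the main obstacle to be uniformity in the class-counting step. Class sizes can be bounded in some cells and growing in others. The weights $\abs{[\bar u]}^{-1}$ must therefore be uniformly asymptotic on each cell, and the estimates must stay uniform as the interpretation parameters $\bar c_A$ vary. I would handle this by splitting every cell into its exact-size bounded pieces and its unbounded pieces before summing.
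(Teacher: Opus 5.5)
The paper gives no argument for this theorem. Part (i) is quoted from \cite[Lemma~3.7]{Elwes2007} and parts (ii)--(iii) from \cite[Theorem~2.5.4]{AMSW2024}, so your treatment of (ii) and (iii) matches the paper. Your self-contained proof of (i) is correct, but it differs from the paper's own later re-derivation of (i) in \cref{cor:weak-quotient-transfer}.

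In the paper, \cref{lem:ambient-quotient-counting} first replaces the class-size cells by their $E$-invariant profiles and shows that class size is uniform on each active profile. It then counts classes in each union-of-classes stratum $Y_P$ by division (\cref{lem:uniform-class-quotient}). The target cell of $\bar b$ is taken to be the invariant profile of its representatives, and \cref{lem:quotient-profile-compatibility} shows its label is well defined.

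Your reciprocal weighting $\sum_{\bar u\in Y}\abs{[\bar u]}^{-1}$ needs no invariance of the class-size cells, so it is in effect a shorter proof of \cref{lem:ambient-quotient-counting} itself. On a non-invariant $\kappa_j$ you can have $a_j<e_j$, but such terms are only lower order. Since the sum is at least $1$ whenever $Y\neq\varnothing$, the leading exponent is a nonnegative integer, not merely a rational. Your arbitrary choice of representative also legitimately replaces the profile-compatibility step, because a weak class only needs set-theoretic cells.

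What the paper's route buys is canonicity. Its target cells are pullbacks of $E$-invariant ambient formulas, so the same partition descends to target formulas as soon as quotient-trace reflection holds (\cref{thm:quotient-transfer}). Cells obtained from an arbitrary choice of representatives need not be definable even under that hypothesis.

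For the same reason your gloss on (iii) is slightly off. A weak bi-interpretation does not let the target recover the representative-dependent counting cells. What it descends are the $E$-invariant ambient relations (\cref{prop:weak-biimplies-trace}, in the parameter-free case). An in-house proof of (iii) therefore has to pass through invariant profiles rather than through your choice step.
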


\begin{remark}\label{rem:status}
The next result gives a self-contained sufficient condition for upgrading
weak transfer to a full finite-dimensional asymptotic class without
interpreting the ambient structure back in the target.  Its hypothesis is
formulated as a descent property for ambient traces rather than as a
reconstruction of the ambient structure.  The precise comparison with weak
bi-interpretability is proved in \cref{prop:weak-biimplies-trace,thm:strict-weak-bi}.
\end{remark}

%%%should I combine Sec 4 and 5?

\section{Transfer through a trace-reflecting selector}

For a formula $\varphi(x;\bar y)$ and $k\in\N$, let
$\operatorname{Exact}_{\varphi,k}(\bar y)$ be the standard formula asserting
that exactly $k$ elements satisfy $\varphi(x;\bar y)$.  For $k=0$, this is
$\neg\exists x\,\varphi(x;\bar y)$; for $k>0$, it is expressed by $k$
pairwise distinct witnesses that exhaust the fiber.

\begin{lemma}
\label{lem:partition-reflection}
Assume that $\I$ is selector-trace reflecting.  Let
\[
  \rho_1(\bar v_1,\dots,\bar v_m;\bar z),\dots,
  \rho_t(\bar v_1,\dots,\bar v_m;\bar z)
\]
be parameter-free ambient formulas which, after substituting $\bar c_A$,
partition the tuples in $S_A^m$ for every $A\in\D$.  Then there are
parameter-free $L'$-formulas $\rho_1^\downarrow(\bar y),\dots,
\rho_t^\downarrow(\bar y)$ which partition $A^m$ and pull back the given
partition under the selected-representative bijection.
\end{lemma}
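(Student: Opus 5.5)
The plan is to apply \cref{def:trace-reflection} to each $\rho_i$ separately and then verify the partition property by transporting it through the selected-representative bijection. For each $i\le t$, selector-trace reflection yields a parameter-free $L'$-formula $\rho_i^\downarrow(y_1,\dots,y_m)$ such that, for every $A\in\D$ and every $\bar a\in A^m$ with selected representatives $\widetilde{\bar a}\in S_A^m$,
\[
  A\models\rho_i^\downarrow(\bar a)
  \quad\Longleftrightarrow\quad
  \alpha(A)\models\rho_i(\widetilde{\bar a};\bar c_A).
\]
This equivalence already says that $\rho_i^\downarrow(A^m)$ is the preimage of $\rho_i(\alpha(A)^{rm};\bar c_A)\cap S_A^m$ under the bijection $A^m\to S_A^m$ obtained as the $m$th Cartesian power of the selector identification $A\to S_A$. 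This is the pullback claim.

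Next I check that these preimages partition $A^m$. The map $\bar a\mapsto\widetilde{\bar a}$ is a bijection $A^m\to S_A^m$, since it is a Cartesian power of the bijection $A\to S_A$ from \cref{def:selector}. Preimages of a partition under a bijection form a partition, so for each $\bar a\in A^m$ there is exactly one $i$ with $\alpha(A)\models\rho_i(\widetilde{\bar a};\bar c_A)$, and hence exactly one $i$ with $A\models\rho_i^\downarrow(\bar a)$. If some $\rho_i$ has empty trace on $S_A^m$ in some $A$, then $\rho_i^\downarrow$ is empty there too, which is harmless.

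The only delicate point is that the reflected formulas partition $A^m$ for members of $\D$ but need not be syntactically disjoint. If a syntactic partition is wanted, for example for later use with finite perturbations, I would replace the formulas by the disjointification
\[
  \rho_1^\downarrow,\qquad
  \rho_i^\downarrow\wedge\bigwedge_{j<i}\neg\rho_j^\downarrow,
\]
as in the proof of \cref{lem:finite-perturbation}. This changes nothing on members of $\D$. Beyond this bookkeeping I see no real obstacle: the argument is a direct unwinding of the definitions.
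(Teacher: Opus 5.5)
Your proposal is correct and matches the paper's proof: reflect each $\rho_i$ via \cref{def:trace-reflection}, and use the fact that the selected-representative map $A^m\to S_A^m$ is a bijection, so the reflected formulas are exhaustive and pairwise disjoint and the defining equivalences give the pullback property. The extra syntactic disjointification does no harm but is not needed, since the lemma only asks for a partition on members of $\D$.
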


\begin{proof}
Reflect each $\rho_i$ using \cref{def:trace-reflection}.  For every target
tuple, the selected representatives lie in exactly one ambient cell, so the
reflected formulas are exhaustive and pairwise disjoint.  The defining
equivalences give precisely the asserted pullback property.
\end{proof}

\begin{lemma}\label{lem:bounded-cell}
Let $\gamma(\bar y)$ define a cell in a class $\D$, and suppose that there is
$B\in\N$ such that $\abs A\leq B$ whenever
$A\models\exists\bar y\,\gamma(\bar y)$.  Then, for every
$\varphi(x;\bar y)$, the formulas
$
  \gamma(\bar y)\wedge\operatorname{Exact}_{\varphi,k}(\bar y),
  $ with $0\leq k\leq B,
$
partition the realizations of $\gamma$, and on the $k$th part the fiber has
exact cardinality $k$.  Thus these parts satisfy the asymptotic alternatives
$(0,k)$ for $k>0$ and $(0,0)$ for $k=0$.
\end{lemma}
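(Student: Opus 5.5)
The plan is to check the three assertions of \cref{lem:bounded-cell} directly from the definition of $\operatorname{Exact}_{\varphi,k}$; no asymptotic estimate is needed beyond \cref{rem:bounded-o-convention}.

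\emph{Step 1: the parts partition the cell.} Fix $A\in\D$ and a tuple $\bar b$ with $A\models\gamma(\bar b)$. Then $A\models\exists\bar y\,\gamma(\bar y)$, so by hypothesis $\abs A\leq B$. Since $\varphi(A;\bar b)$ is a subset of the universe of $A$ (in the many-sorted case, of the relevant sort), the integer $k_0\defeq\abs{\varphi(A;\bar b)}$ satisfies $0\leq k_0\leq \abs A\leq B$. By construction of the Exact formulas, $A\models\operatorname{Exact}_{\varphi,k}(\bar b)$ holds exactly when $k=k_0$.
\begin{itemize}
\item \emph{Exhaustive:} $\bar b$ satisfies $\gamma\wedge\operatorname{Exact}_{\varphi,k_0}$, and $k_0$ lies in the range $0\leq k\leq B$.
\item \emph{Disjoint:} $\bar b$ satisfies no other part, since the cardinality of the fiber is a single number.
\end{itemize}
Each part is contained in $\gamma$ and is parameter-free definable. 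Hence the parts form a parameter-free definable partition of the realizations of $\gamma$, and on the $k$th part the fiber has exactly $k$ elements.

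\emph{Step 2: the asymptotic labels.} On the $k$th part, take $\mu=k$ and exponent $0$. The required statement is
\[
  \abs{\varphi(A;\bar b)}=k+o(1).
\]
This holds trivially, since the error term is identically zero. For $k=0$, this is the $(0,0)$ convention $\abs{\varphi(A;\bar b)}=o(1)$.

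Alternatively, one may appeal to \cref{rem:bounded-o-convention}: all structures occurring on the cell have size at most $B$, so any threshold $Q\geq B$ makes the little-$o$ condition vacuous. Either way, the labels $(0,k)$ with $k>0$ lie in $\set{0,\dots,N}\times\R_{>0}$, and the label $(0,0)$ is the distinguished pair. So these labels are admissible alternatives in the sense of \cref{def:N-asymptotic}.

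\emph{Where care is needed.} There is no real obstacle. The one point requiring attention is bounding $k$ by $B$ rather than by some unbounded quantity; this is exactly where the hypothesis that every structure realizing $\gamma$ has size at most $B$ enters, and it is what makes the partition finite.
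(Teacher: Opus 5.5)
Your proof is correct and follows essentially the same route as the paper. The paper argues that each fiber is a subset of $A$, so its cardinality lies in $\{0,1,\dots,B\}$, and the partition and labels are then immediate from the exact-count formulas. Your Step 2 only spells out the zero-error estimate that the paper leaves implicit.
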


\begin{proof}
Every fiber is a subset of $A$, so its size is one of
$0,1,\dots,B$.  The assertion is then immediate from the definition of the
exact-count formulas.
\end{proof}

\begin{lemma}
\label{lem:selector-normalization}
Let $\Phi$ be a cell of pairs $(A,\bar b)$, put $M=\alpha(A)$,
$T=\abs M$, and let $S_A\subseteq M^r$ be a selector, so that
$\abs A=\abs{S_A}\leq T^r$.  Suppose that, uniformly on $\Phi$,
\begin{align*}
  \abs A&=\lambda T^{e/N}+o(T^{e/N}),\\
  x(A,\bar b)&=\mu T^{d/N}+o(T^{d/N}),
\end{align*}
where $x(A,\bar b)$ is integer-valued, $0\leq x(A,\bar b)\leq\abs A$,
$\lambda,\mu>0$, and $0\leq d,e\leq Nr$.  If the target sizes are
unbounded on $\Phi$, then $e>0$, $d\leq e$, and
\[
  x(A,\bar b)
  =\mu\lambda^{-d/e}\abs A^{d/e}
   +o(\abs A^{d/e})
\]
uniformly as $\abs A\to\infty$ on $\Phi$.
\end{lemma}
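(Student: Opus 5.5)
The plan has three steps. First, $e>0$ follows from unboundedness. Second, $d\le e$ follows by comparing growth rates via $x\le\abs A$. Third, the displayed estimate follows from \cref{lem:asymptotic-calculus}\textup{(iii)} with $s=\abs A$.

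\emph{Step 1: $e>0$.} Suppose $e=0$. Then $\abs A=\lambda+o(1)$ uniformly on $\Phi$. We only need to rule out the case where $T$ stays bounded on $\Phi$, and there every size is bounded anyway. For $T$ large, $\abs A<\lambda+1$, so $\abs A$ is bounded on $\Phi$. Since $0\le x\le\abs A$, the target sizes are then bounded too, contradicting the hypothesis. (\cref{rem:zero-cell} even gives $\abs A=\lambda$ eventually, but boundedness is all we need.) Hence $e>0$.

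\emph{Step 2: $d\le e$.} Unboundedness of the target sizes gives pairs in $\Phi$ with $\abs A\to\infty$. Since $\abs A\le T^r$, this forces $T\to\infty$ along such a sequence. Suppose $d>e$. Dividing $x\le\abs A$ by $T^{d/N}$ gives
\[
  \frac{x}{T^{d/N}}\le\frac{\abs A}{T^{d/N}}=\bigl(\lambda+o(1)\bigr)T^{(e-d)/N}\to0.
\]
But the left side tends to $\mu>0$, a contradiction. So $d\le e$.

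\emph{Step 3: the estimate.} Apply \cref{lem:asymptotic-calculus}\textup{(iii)} with $s=\abs A$, $a=e/N>0$, $f=x$, $b=d/N$. The ratio $b/a$ equals $d/e$, which gives
\[
  x=\mu\lambda^{-d/e}\abs A^{d/e}+o(\abs A^{d/e})
\]
uniformly as $T\to\infty$. Since $\abs A=\abs{S_A}\le T^r$, the "moreover" clause of that lemma converts this into uniformity as $\abs A\to\infty$: $\abs A>Q^r$ forces $T>Q$.

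The only delicate point is the threshold conversion in Step 3 together with the vacuous-little-$o$ issue of \cref{rem:bounded-o-convention}. Both are handled by the selector bound $\abs A\le T^r$, so I expect no real obstacle. The case $d=0$ is included and simply yields $x=\mu+o(1)$.
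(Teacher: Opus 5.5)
Your proposal is correct and follows essentially the same route as the paper. You get $e>0$ from unboundedness and $d\le e$ by comparing growth against $x\le\abs A$ along a sequence with $T\to\infty$; you divide by $T^{d/N}$ where the paper divides by $\abs A$, which makes no difference. You then invoke \cref{lem:asymptotic-calculus}\textup{(iii)}, using its final clause together with $\abs A\le T^r$ to convert ambient thresholds into target-size thresholds.
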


\begin{proof}
Choose a sequence $(A_i,\bar b_i)\in\Phi$ with
$\abs{A_i}\to\infty$.  Since
$\abs{A_i}\leq\abs{M_i}^r$, one has $\abs{M_i}\to\infty$.  If $e=0$,
then $\abs{A_i}=\lambda+o(1)$ is bounded, a contradiction.  Thus $e>0$.
If $d>e$, then
\[
  \frac{x(A_i,\bar b_i)}{\abs{A_i}}
  =\frac{\mu}{\lambda}\abs{M_i}^{(d-e)/N}(1+o(1))
  \longrightarrow\infty,
\]
contrary to $x(A_i,\bar b_i)\leq\abs{A_i}$.  Hence $d\leq e$.
The normalized estimate now follows from
\cref{lem:asymptotic-calculus}\textup{(iii)}, with
$s=\abs A$, $a=e/N$, and $b=d/N$.  Its final assertion gives uniformity
with respect to the target-size threshold because $\abs A\leq T^r$.
\end{proof}

\begin{theorem}\label{thm:selector-transfer}
Let $\C$ be an $N$-dimensional asymptotic class, and let $\D$ be uniformly
parameter-interpretable in $\C$ by an interpretation of arity $r$. Assume
that the interpretation admits a uniform definable selector and is
selector-trace reflecting. Set $N_*\defeq\lcm(1,2,\dots,Nr)$. Then $\D$ is
an $N_*$-dimensional asymptotic class.
\end{theorem}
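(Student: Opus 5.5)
The plan is to fix an $L'$-formula $\varphi(x;\bar y)$ and build the required finite partition of target parameter tuples by pulling back an ambient counting partition. Concretely, there are three steps.

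\textbf{Step 1: ambient counting.} Let $\bar y=(y_1,\dots,y_m)$. By \cref{lem:translation}, $\varphi$ has a translation $\varphi^{\I}(\bar u;\bar v_1,\dots,\bar v_m;\bar z)$. Restrict the object variable to selected representatives:
\[
  \psi(\bar u;\bar v;\bar z)\defeq\sigma(\bar u;\bar z)\wedge\varphi^{\I}(\bar u;\bar v;\bar z).
\]
By \cref{cor:cardinality-selector}, for selected representatives $\widetilde{\bar b}$ we have $\abs{\varphi(A;\bar b)}=\abs{\psi(M^r;\widetilde{\bar b};\bar c_A)}$, where $M=\alpha(A)$. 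Apply \cref{thm:tuple-counting} to $\psi$ with $n=r$ object variables, treating $(\bar v,\bar z)$ as parameters. This gives parameter-free cells $\theta_j(\bar v;\bar z)$ with labels $(d_j,\mu_j)$, $d_j\le Nr$. Apply \cref{thm:tuple-counting} again to $\sigma(\bar u;\bar z)$. This gives cells $\kappa_l(\bar z)$ with labels $(e_l,\lambda_l)$, which control $\abs A=\abs{S_A}$.

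\textbf{Step 2: descent of the cells.} Form the common refinement $\theta_j(\bar v;\bar z)\wedge\kappa_l(\bar z)$. Restricted to $S_A^m$ with $\bar z=\bar c_A$, these formulas partition the selected tuples. By \cref{lem:partition-reflection}, they descend to parameter-free $L'$-formulas $\gamma_{jl}(\bar y)$ that partition $A^m$; the case $m=0$ of trace reflection handles the sentences $\kappa_l$. On each $\gamma_{jl}$ we therefore have, uniformly,
\[
  \abs A=\lambda_lT^{e_l/N}+o(T^{e_l/N}),
  \qquad
  \abs{\varphi(A;\bar b)}=\mu_jT^{d_j/N}+o(T^{d_j/N}).
\]

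\textbf{Step 3: normalization.} Separate each cell by whether target sizes are bounded on it. This is the main obstacle: boundedness of $\abs A$ on a cell must itself be target-definable. I would handle it as follows. If $e_l=0$, then $\abs A=\lambda_l+o(1)$, so by \cref{rem:zero-cell} $\abs A$ is eventually constant. Such cells are bounded, and \cref{lem:bounded-cell} splits them by exact counts into $(0,k)$ pieces. If $e_l>0$, then $\abs A\to\infty$ along the cell, which is the unbounded case. If a nonempty cell has $(d_j,\mu_j)=(0,0)$, the fiber is eventually empty, and finitely many small structures are again handled via exact counts as in \cref{lem:finite-perturbation}.

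On an unbounded cell with $\mu_j>0$, \cref{lem:selector-normalization} gives $d_j\le e_l$ and
\[
  \abs{\varphi(A;\bar b)}=\mu_j\lambda_l^{-d_j/e_l}\abs A^{d_j/e_l}+o(\abs A^{d_j/e_l}),
\]
uniformly in $\abs A$. Since $1\le e_l\le Nr$, we have $e_l\mid N_*$. Writing $d_j/e_l=d'/N_*$ with $d'=d_jN_*/e_l\le N_*$ gives a label in $\set{0,\dots,N_*}\times\R_{>0}$.

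Finally, merge cells that carry the same label by taking disjunctions. This produces the finite set $D_\varphi$ and the parameter-free cells required by \cref{def:N-asymptotic} for $\D$. The sharpening mentioned in the statement's remark follows by replacing $N_*$ with the lcm of the $e_l>0$ that actually occur.
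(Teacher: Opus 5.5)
Your overall route is the paper's. You restrict the translated fiber to the selector, apply \cref{thm:tuple-counting} to the selector and to the restricted fiber, and descend the joint cells with \cref{lem:partition-reflection}. You then normalize on unbounded cells with \cref{lem:selector-normalization}, rescale using $e\mid N_*$, and merge labels. The one step that fails as written is your treatment of boundedness in Step~3.

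You assert that $e_l>0$ forces $\abs A\to\infty$ along the cell, and you then normalize every such cell. But a joint cell $\gamma_{jl}$ can be realized only by structures of bounded size. For instance, $\theta_j\wedge\kappa_l$ may hold at $\bar c_A$ only when $\alpha(A)$ is small, even if $\theta_j$ and $\kappa_l$ are each unbounded. On such a cell the ambient labels are vacuous by \cref{rem:bounded-o-convention}. The hypothesis of \cref{lem:selector-normalization} then fails, $d_j\leq e_l$ need not hold, and your label $d_jN_*/e_l$ can exceed $N_*$. For example, take $N=1$, $r=2$, $d_j=2$, $e_l=1$: then $N_*=2$ and $d_jN_*/e_l=4$.

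The fix is to drop the ``obstacle'' you identified: nothing requires boundedness to be target-definable. Each of the finitely many descended cells is, as a whole, either target-bounded or not.
\begin{itemize}
\item A target-bounded cell is replaced by the definable pieces $\gamma_{jl}\wedge\operatorname{Exact}_{\varphi,k}$, $0\leq k\leq B$, via \cref{lem:bounded-cell}.
\item On a target-unbounded cell the estimate is only required for large $\abs A$, so its small members are harmless. Your observation that a selector label with $e_l=0$ forces bounded size then shows that $e_l>0$ and $\lambda_l>0$ there.
\end{itemize}
This is exactly how the paper argues. With the split made this way, the rest of your proof goes through. In particular, a $(0,0)$ fiber label on an unbounded cell needs no exact-count repair via \cref{lem:finite-perturbation}: since $\abs A\leq\abs M^r$, the fiber is eventually empty uniformly as $\abs A\to\infty$.
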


\begin{proof}
Fix an $L'$-formula $\varphi(x;\bar y)$, where
$\bar y=(y_1,\dots,y_m)$.  For $A\in\D$, put $M=\alpha(A)$ and let
$S_A\subseteq M^r$ be the selector.  If $\widetilde{\bar b}$ denotes the
selected representatives of $\bar b\in A^m$, then
\cref{cor:cardinality-selector} gives
\begin{equation}\label{eq:cardinality-equality}
  \abs{\varphi(A;\bar b)}
  =\abs{\widehat\varphi(M^r;\widetilde{\bar b})},
\end{equation}
where
\[
  \widehat\varphi(\bar u;\bar v_1,\dots,\bar v_m;\bar z)
  \defeq
  \sigma(\bar u;\bar z)\wedge
  \bigwedge_{i=1}^m\sigma(\bar v_i;\bar z)\wedge
  \varphi^{\I}(\bar u;\bar v_1,\dots,\bar v_m;\bar z).
\]

Apply \cref{thm:tuple-counting} to the selector formula
$\sigma(\bar u;\bar z)$, treating $\bar z$ as parameters.  There are
finitely many pairs $(e,\lambda)$, with $0\leq e\leq Nr$, and
parameter-free formulas $\chi_{e,\lambda}(\bar z)$ which partition the
ambient parameter tuples, such that, after substituting $\bar c_A$, on the
corresponding cell
\begin{equation}\label{eq:selector-size}
  \abs{S_A}
  =\lambda\abs M^{e/N}+o(\abs M^{e/N}).
\end{equation}
Apply the same theorem to $\widehat\varphi$, viewed as a formula in the
$r$ object variables $\bar u$ and the parameter variables
$(\bar v_1,\dots,\bar v_m,\bar z)$.  We obtain finitely many pairs $(d,\mu)$,
with $0\leq d\leq Nr$, and parameter-free formulas
$\theta_{d,\mu}(\bar v_1,\dots,\bar v_m;\bar z)$ such that, after
substituting $\bar c_A$, on the corresponding cell
\begin{equation}\label{eq:fiber-ambient-size}
  \abs{\widehat\varphi(M^r;\widetilde{\bar b})}
  =\mu\abs M^{d/N}+o(\abs M^{d/N}).
\end{equation}
The zero pair $(0,0)$ is permitted in either partition.

For each quadruple
$
  \tau=(e,\lambda,d,\mu),
$
apply \cref{lem:partition-reflection} to the conjunction
$
  \chi_{e,\lambda}(\bar z)\wedge
  \theta_{d,\mu}(\bar v_1,\dots,\bar v_m;\bar z).
$
We obtain an $L'$-formula $\gamma_\tau(\bar y)$ whose realizations are
precisely those $(A,\bar b)$ for which
$
  M\models\chi_{e,\lambda}(\bar c_A)$
  and
  $M\models\theta_{d,\mu}(\widetilde{\bar b};\bar c_A).
$
The formulas $\gamma_\tau$ form a finite definable partition of all target
parameter tuples.

Call $\tau$ \emph{target-bounded} if some $B_\tau$ satisfies
$\abs A\leq B_\tau$ whenever $A\models\gamma_\tau(\bar b)$ for some
$\bar b$.  On every target-bounded cell, refine $\gamma_\tau$ using
\cref{lem:bounded-cell}.  These refined cells already satisfy the required
asymptotic alternatives.

Now fix a cell $\gamma_\tau$ that is not target-bounded.  If the selector
pair were $(0,0)$, then \cref{rem:zero-cell} and
$\abs A=\abs{S_A}\leq\abs M^r$ would force the target sizes to be bounded.
Likewise, a selector pair $(0,\lambda)$ with $\lambda>0$ gives
$\abs A=\lambda+o(1)$ and hence bounded target sizes.  Thus $e>0$ and
$\lambda>0$.

If the fiber pair is $(0,0)$, then \cref{rem:zero-cell}, together with
$\abs A\leq\abs M^r$, shows that the target fiber is eventually empty,
uniformly as $\abs A\to\infty$ on this cell.  Assign the target pair $(0,0)$.
Suppose instead that $\mu>0$.  The set counted in
\eqref{eq:fiber-ambient-size} is a subset of $S_A$.  Applying
\cref{lem:selector-normalization} to
$x(A,\bar b)=\abs{\varphi(A;\bar b)}$ gives $d\leq e$ and
\begin{equation}\label{eq:target-normalized}
  \abs{\varphi(A;\bar b)}
  =\nu_\tau\abs A^{d/e}+o(\abs A^{d/e}),
  \qquad
  \nu_\tau\defeq\mu\lambda^{-d/e},
\end{equation}
uniformly as $\abs A\to\infty$ on $\gamma_\tau$.

Because $1\leq e\leq Nr$, the integer $e$ divides $N_*$.  Therefore
$
  k_\tau\defeq N_*\frac de
$
is an integer.  Since $0\leq d\leq e$, one has
$0\leq k_\tau\leq N_*$, and equation
\eqref{eq:target-normalized} takes the required form
\[
  \abs{\varphi(A;\bar b)}
  =\nu_\tau\abs A^{k_\tau/N_*}
   +o(\abs A^{k_\tau/N_*}).
\]

There are only finitely many original cells and finitely many exact-count
refinements of target-bounded cells.  Merge all cells producing the same
pair $(k,\nu)$ by taking their disjunction.  Uniformity is preserved under a
finite union by taking the maximum of the finitely many thresholds.  The
resulting formulas form a finite parameter-free definable asymptotic
partition for $\varphi$.  Since $\varphi$ was arbitrary, $\D$ is an
$N_*$-dimensional asymptotic class.
\end{proof}

\begin{corollary}\label{cor:full-induced}
In the setting of \cref{thm:selector-transfer}, suppose the $L'$-structure
on $A$ is uniformly definitionally equivalent, through the selector
bijection, to the full structure on $S_A$ induced by parameter-free ambient
formulas evaluated at $\bar c_A$.  Then $\D$ is an $N_*$-dimensional
asymptotic class.
\end{corollary}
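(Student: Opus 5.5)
The plan is to reduce \cref{cor:full-induced} to \cref{thm:selector-transfer}. Everything comes down to checking that the hypothesis forces selector-trace reflection in the sense of \cref{def:trace-reflection}. A uniform definable selector is already part of the setting of \cref{thm:selector-transfer}. Write $\mathcal S_A$ for the structure with universe $S_A$ whose basic relations are all sets of the form $\{\bar u\in S_A^m:\alpha(A)\models\rho(\bar u;\bar c_A)\}$, one for each parameter-free $L$-formula $\rho(\bar v_1,\dots,\bar v_m;\bar z)$. The hypothesis, read literally, says that transporting along the selector bijection $A\to S_A$ makes $A$ and $\mathcal S_A$ uniformly definitionally equivalent. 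In particular, each basic relation of $\mathcal S_A$ is defined in $A$ by one fixed parameter-free $L'$-formula, and this formula is the same for every $A\in\D$.

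So, given an ambient formula $\rho$, I would take $\rho^\downarrow(y_1,\dots,y_m)$ to be the parameter-free $L'$-formula that the uniform definitional equivalence assigns to the basic relation named by $\rho$. By construction, for every $\bar a\in A^m$ with selected representatives $\widetilde{\bar a}$,
\[
  A\models\rho^\downarrow(\bar a)\iff\widetilde{\bar a}\in\rho^{\mathcal S_A}\iff\alpha(A)\models\rho(\widetilde{\bar a};\bar c_A).
\]
This is exactly the equivalence required in \cref{def:trace-reflection}. The case $m=0$ needs care: here $\rho$ is an ambient sentence evaluated at $\bar c_A$, so the induced structure contains a $0$-ary relation, that is, a truth value. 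Definitional equivalence then gives an $L'$-sentence with the same truth value in every $A$, which is what is needed.

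With selector-trace reflection in hand, \cref{thm:selector-transfer} applies directly. It gives that $\D$ is an $N_*$-dimensional asymptotic class with $N_*=\lcm(1,\dots,Nr)$.

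The only delicate point is the uniformity in the word ``uniformly definitionally equivalent''. The $L'$-definition of each induced relation must not depend on $A$; otherwise the reflected cells built in \cref{lem:partition-reflection} would not be given by single formulas. I would state explicitly that this is how the hypothesis is meant. If one only had uniform definitions for the finitely many ambient formulas that actually occur, \cref{rem:weaker-reflection} shows this is still enough: reflect only the selector-size cells and the cells for $\widehat\varphi$ used in the proof of \cref{thm:selector-transfer}.
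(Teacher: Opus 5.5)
Your proposal is correct and follows the paper's proof: the paper simply observes that the assumed uniform definitional equivalence is exactly selector-trace reflection and then applies \cref{thm:selector-transfer}. Your extra remarks, that only the ambient-to-target direction of the equivalence is used and that the $m=0$ case corresponds to $0$-ary induced relations, just make that one-line identification explicit.
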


\begin{proof}
The stated definitional equivalence is exactly selector-trace reflection.
Apply \cref{thm:selector-transfer}.
\end{proof}

\section{Scope of the transfer theorem}

The proof of \cref{thm:selector-transfer} uses selector-trace reflection only
for the following finite collection of formulas attached to a fixed target
formula $\varphi(x;\bar y)$:
\begin{enumerate}[label=\textup{(\roman*)}]
  \item the ambient formulas defining the asymptotic cells for the selector
  $\sigma(\bar u;\bar z)$; and
  \item the ambient formulas defining the asymptotic cells for the translated
  fiber formula $\widehat\varphi(\bar u;\bar v;\bar z)$.
\end{enumerate}
Consequently, the full trace-reflection hypothesis can be replaced by the
weaker requirement that these joint cells have uniformly parameter-free
definable pullbacks to the target.  We retain the stronger hypothesis in
\cref{def:trace-reflection} because it is intrinsic, independent of the
chosen asymptotic partitions, and closed under Boolean combinations.

\begin{proposition}
\label{prop:relative-qe}
In the setting of \cref{thm:selector-transfer}, suppose that for every
parameter-free $L$-formula
$
  \rho(\bar v_1,\dots,\bar v_m;\bar z)
$
there is a parameter-free $L'$-formula $\rho^S(\bar y)$ such that, uniformly
for $A\in\D$ and selected representatives $\widetilde{\bar a}\in S_A^m$,
\[
  \alpha(A)\models\rho(\widetilde{\bar a};\bar c_A)
  \quad\Longleftrightarrow\quad
  A\models\rho^S(\bar a).
\]
Then the interpretation is selector-trace reflecting, and hence $\D$ is an
$N_*$-dimensional asymptotic class.
\end{proposition}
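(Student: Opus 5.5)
This is essentially a matter of matching the hypothesis to \cref{def:trace-reflection}. Given a parameter-free $L$-formula $\rho(\bar v_1,\dots,\bar v_m;\bar z)$ with $r$-tuples $\bar v_i$ and $\bar z$ of the length of the interpretation-parameter tuple, the hypothesis supplies a parameter-free $L'$-formula $\rho^S(\bar y)$. We set $\rho^\downarrow\defeq\rho^S$.

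The displayed equivalence in the hypothesis is, word for word, the equivalence required in \cref{def:trace-reflection}. It holds uniformly for every $A\in\D$ and every $\bar a\in A^m$, with $\widetilde{\bar a}$ the selected representatives. So the interpretation is selector-trace reflecting. We should also check the degenerate case $m=0$. There $\rho$ is a formula in $\bar z$ alone, and the hypothesis yields an $L'$-sentence $\rho^S$ with $A\models\rho^S$ if and only if $\alpha(A)\models\rho(\bar c_A)$. This is exactly the reflection of ambient sentences at $\bar c_A$ that the definition asks for. If the intended reading excludes $m=0$, we instead treat $\rho(\bar z)$ as a formula with a dummy variable $\bar v_1$. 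This gives $\rho^S(y_1)$, and then $\exists y_1\,\rho^S(y_1)$ works, provided $A\neq\varnothing$.

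The nonemptiness needed there is harmless. For structures of bounded size, the conclusion of \cref{thm:selector-transfer} is unaffected by \cref{lem:bounded-cell}, and any empty structures may be handled by \cref{lem:finite-perturbation}. Alternatively, we can adopt the usual convention that structures are nonempty.

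Once selector-trace reflection is established, the hypotheses of \cref{thm:selector-transfer} hold: $\C$ is $N$-dimensional asymptotic, $\D$ is uniformly interpreted with arity $r$, and there is a uniform definable selector. That theorem then gives that $\D$ is an $N_*$-dimensional asymptotic class with $N_*=\lcm(1,\dots,Nr)$. The only point needing any care is the $m=0$ bookkeeping just described; everything else is direct substitution.
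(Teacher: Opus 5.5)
Your proof is correct and matches the paper's: the hypothesis is verbatim the condition in \cref{def:trace-reflection} (take $\rho^\downarrow\defeq\rho^S$), and \cref{thm:selector-transfer} then applies. The dummy-variable workaround for $m=0$ is unnecessary, since the hypothesis quantifies over every parameter-free $\rho$, including those with $m=0$.
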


\begin{proof}
The displayed equivalence is precisely the condition in
\cref{def:trace-reflection}, so the conclusion follows from
\cref{thm:selector-transfer}.
\end{proof}

\begin{remark}
The counting argument is now complete under the stated hypotheses.  In a
concrete family, the main remaining task is not another asymptotic estimate:
it is to prove selector-trace reflection, or at least reflection of
the finite joint cell partitions described above.  This is a relative
quantifier-elimination or uniform induced-structure problem.
\end{remark}

\section{Sharpening and closure properties}

The universal denominator in \cref{thm:selector-transfer} is convenient, but
need not be optimal.  The proof records a smaller denominator determined
only by the asymptotic dimensions that actually occur for the interpreted
universe.

\begin{definition}\label{def:essential-selector}
Fix an asymptotic partition
$
  \set{\chi_{e,\lambda}(\bar z):(e,\lambda)\in\Pi_S}
$
for the selector formula $\sigma(\bar u;\bar z)$, as in
\eqref{eq:selector-size}.  A positive integer $e$ is an
\emph{essential selector dimension} if, for some $\lambda>0$, the target
structures satisfying $\chi_{e,\lambda}(\bar c_A)$ have unbounded
cardinality.  Put
\[
  \mathscr E_S\defeq
  \set{e>0:e\text{ is an essential selector dimension}}
\]
and, with the convention $\lcm(\varnothing)=1$, put
$
  L_S\defeq\lcm(\mathscr E_S).
$
\end{definition}

\begin{theorem}
\label{thm:essential-selector}
Under the hypotheses of \cref{thm:selector-transfer}, the class $\D$ is an
$L_S$-dimensional asymptotic class.  In particular,
$
  L_S\mid\lcm(1,2,\dots,Nr).
$
\end{theorem}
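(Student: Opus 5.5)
The plan is to rerun the proof of \cref{thm:selector-transfer} verbatim, changing only the final arithmetic step where the target exponent $d/e$ is rewritten over a common denominator. Fix $\varphi(x;\bar y)$. Use the same selector partition $\set{\chi_{e,\lambda}:(e,\lambda)\in\Pi_S}$ that was fixed in \cref{def:essential-selector} to define $\mathscr E_S$; this is important, because $L_S$ depends on that choice. Form $\widehat\varphi$, choose an asymptotic partition $\theta_{d,\mu}$ for it by \cref{thm:tuple-counting}, and reflect the joint cells $\chi_{e,\lambda}\wedge\theta_{d,\mu}$ to target formulas $\gamma_\tau$ via \cref{lem:partition-reflection}. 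Target-bounded cells $\gamma_\tau$ are again refined by exact counts using \cref{lem:bounded-cell}. These receive labels with exponent $0$, which is expressible over any denominator.

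Now take a cell $\gamma_\tau$ with $\tau=(e,\lambda,d,\mu)$ that is not target-bounded. As in the proof of \cref{thm:selector-transfer}, $e>0$ and $\lambda>0$. The step needing care is to show that $e\in\mathscr E_S$. The structures $A$ realizing $\gamma_\tau(\bar b)$ for some $\bar b$ all satisfy $\chi_{e,\lambda}(\bar c_A)$, and they have unbounded cardinality. Hence the target structures satisfying $\chi_{e,\lambda}(\bar c_A)$ have unbounded cardinality, so $e$ is an essential selector dimension by definition.

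It follows that $e\mid L_S$. If the fiber pair is $(0,0)$, we assign $(0,0)$ exactly as before. Otherwise, \cref{lem:selector-normalization} gives $d\leq e$ and the estimate \eqref{eq:target-normalized}. We then set $k_\tau\defeq L_S\,d/e$, which is an integer with $0\leq k_\tau\leq L_S$. This yields $\abs{\varphi(A;\bar b)}=\nu_\tau\abs A^{k_\tau/L_S}+o(\abs A^{k_\tau/L_S})$ uniformly. Merging cells with equal labels finishes the argument as in \cref{thm:selector-transfer}.

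In the degenerate case $\mathscr E_S=\varnothing$, every cell is target-bounded and all exponents are $0$, so the convention $L_S=1$ is consistent. For the divisibility claim, every essential $e$ satisfies $1\leq e\leq Nr$, so each such $e$ divides $\lcm(1,\dots,Nr)$. Therefore $L_S$, being the lcm of a subset of $\set{1,\dots,Nr}$, also divides it.

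The only real obstacle is bookkeeping: the selector partition used in the proof must be the same one used to define $L_S$. Boundedness must also be checked at the level of the selector cell rather than the joint cell. This is fine because unboundedness on the joint cell implies unboundedness on the selector cell.
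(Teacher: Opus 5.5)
Your proposal is correct and follows essentially the same route as the paper: reuse the fixed selector partition and the joint cells from \cref{thm:selector-transfer}, observe that every target-unbounded joint cell has a selector exponent $e\in\mathscr E_S$ (so $e\mid L_S$), and rewrite $d/e$ as $k/L_S$. The only cosmetic difference is the degenerate case $\mathscr E_S=\varnothing$. You derive it from the same cell-by-cell observation, namely that no joint cell can be target-unbounded, whereas the paper argues separately that $\D$ has uniformly bounded cardinality; both arguments are valid.
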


\begin{proof}
Use the same joint asymptotic partition as in the proof of
\cref{thm:selector-transfer}.  Suppose first that
$\mathscr E_S=\varnothing$.  A positive-exponent cell is target-bounded by
the definition of $\mathscr E_S$.  On a zero-exponent cell, the selector
size is either $o(1)$ or $\lambda+o(1)$, and is therefore bounded for
sufficiently large ambient structures.  Below the corresponding threshold, its size is bounded by a
fixed power of that threshold.  Hence every selector-size cell supports only
target structures of bounded size.  There are finitely many cells, so $\D$
has uniformly bounded cardinality.
For any formula $\varphi(x;\bar y)$, the exact-count formulas
$\operatorname{Exact}_{\varphi,k}(\bar y)$ for the finitely many possible
values of $k$ give a definable asymptotic partition.  Thus $\D$ is
$1=L_S$-dimensional.

Suppose $\mathscr E_S\neq\varnothing$.  On every target-unbounded joint cell,
the selector exponent $e$ belongs to $\mathscr E_S$, while
\cref{lem:selector-normalization} gives a target exponent $d/e$ with
$0\leq d\leq e$.  Since $e\mid L_S$,
\[
  \frac de=\frac{k}{L_S},
  \qquad k\defeq d\frac{L_S}{e}\in\set{0,\dots,L_S}.
\]
The target-bounded cells are handled by \cref{lem:bounded-cell}.  Reflecting
and merging the finitely many cells exactly as in the proof of
\cref{thm:selector-transfer} gives an $L_S$-dimensional asymptotic
partition for every $\varphi(x;\bar y)$.
\end{proof}

\begin{corollary}\label{cor:pure-selector-dim}
Assume the hypotheses of \cref{thm:selector-transfer}, and suppose that all
unbounded selector cells have the same positive exponent $e_0$.  Then $\D$
is an $e_0$-dimensional asymptotic class.
\end{corollary}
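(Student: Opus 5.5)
The corollary is a direct specialization of \cref{thm:essential-selector}: I would compute $L_S$ under the stated hypothesis and substitute it into that theorem. First, fix the asymptotic partition $\set{\chi_{e,\lambda}(\bar z):(e,\lambda)\in\Pi_S}$ for the selector formula $\sigma(\bar u;\bar z)$ used in \cref{def:essential-selector}. The hypothesis says that every selector cell that is target-unbounded carries the exponent $e_0$, with $e_0>0$. By definition, an essential selector dimension is a positive $e$ attached to such an unbounded cell. Hence $\mathscr E_S\subseteq\set{e_0}$.

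Next, I would split into cases.

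\emph{Case $\mathscr E_S=\set{e_0}$.} Then $L_S=\lcm(\set{e_0})=e_0$, and \cref{thm:essential-selector} immediately shows that $\D$ is an $e_0$-dimensional asymptotic class.

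\emph{Case $\mathscr E_S=\varnothing$.} This happens when no selector cell is unbounded. The first paragraph of the proof of \cref{thm:essential-selector} then shows that $\D$ has uniformly bounded cardinality. In that situation, for each $\varphi(x;\bar y)$, the formulas $\operatorname{Exact}_{\varphi,k}(\bar y)$ give a definable partition with labels $(0,k)$ and $(0,0)$. The dimension-$0$ exponent $0/e_0$ is admissible for every denominator, so the class is $e_0$-dimensional for any positive $e_0$.

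Finally, I would add a remark: the hypothesis quantifies over unbounded selector cells in a fixed partition, so I should check that this is compatible with \cref{def:essential-selector}. It is, because essentiality is defined relative to the same partition, and cells with exponent $0$ are never essential. That compatibility check is the only point needing attention; everything else is bookkeeping on $\lcm$.
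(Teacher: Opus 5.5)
Your proposal is correct and follows the paper's route: the paper notes that $\mathscr E_S=\set{e_0}$, hence $L_S=e_0$, and then applies \cref{thm:essential-selector}. Your separate treatment of the vacuous case $\mathscr E_S=\varnothing$, where there are no target-unbounded selector cells, is extra care that the paper's one-line proof skips; in that case $L_S=1$ and the class has bounded size, so it is trivially $e_0$-dimensional, exactly as you argue.
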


\begin{proof}
In this case $\mathscr E_S=\set{e_0}$, so $L_S=e_0$.  Apply
\cref{thm:essential-selector}.
\end{proof}

\begin{proposition}\label{prop:def-equivalence}
Suppose that classes $\D_1$ and $\D_2$, on the same finite underlying sets,
are uniformly parameter-free definitionally equivalent.  If $\D_1$ is an
$N$-dimensional asymptotic class, then so is $\D_2$.
\end{proposition}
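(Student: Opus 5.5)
We will prove the statement directly from \cref{def:N-asymptotic} by translating formulas back and forth; no transfer theorem is needed. Write $L_1$ and $L_2$ for the two languages. \emph{Uniform parameter-free definitional equivalence} means the following. For each $L_2$-symbol $R$ there is a parameter-free $L_1$-formula $R^{(1)}$ defining $R$ in every member of $\D_2$ on the same underlying set. Conversely, each $L_1$-symbol has a parameter-free $L_2$-definition $S^{(2)}$. Here the pairing $A_1\leftrightarrow A_2$ between $\D_1$ and $\D_2$ is the identity on underlying sets. In the many-sorted case the sorts are also matched, so that $\abs{A_1}=\abs{A_2}$.

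The first step is a translation lemma, proved by induction on formulas as in \cref{lem:translation}, but with $r=1$, trivial equivalence and no domain formula. Replacing each $L_2$-atomic formula by its $L_1$-definition and commuting with connectives and quantifiers gives, for every $L_2$-formula $\psi(\bar x)$, an $L_1$-formula $\psi^{(1)}(\bar x)$. It satisfies $A_2\models\psi(\bar a)\iff A_1\models\psi^{(1)}(\bar a)$ for all paired structures and all tuples. Symmetrically, every $L_1$-formula $\chi$ has an $L_2$-translation $\chi^{(2)}$. Uniformity in the class is automatic, since the translating formulas are fixed and parameter-free.

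Now fix an $L_2$-formula $\varphi(x;\bar y)$. Apply the $N$-dimensional asymptotic property of $\D_1$ to $\varphi^{(1)}(x;\bar y)$. This gives a finite set $D$ and parameter-free $L_1$-formulas $\theta_{d,\mu}(\bar y)$ partitioning parameter tuples, with the fiber estimates uniform on each cell. Take $D_\varphi\defeq D$ and the cells $\theta_{d,\mu}^{(2)}(\bar y)$. Since the translations define the same subsets of the common underlying sets, these cells still partition $A_2^{\abs{\bar y}}$. Moreover $\varphi(A_2;\bar b)=\varphi^{(1)}(A_1;\bar b)$ as sets. As $\abs{A_2}=\abs{A_1}$, the estimate $\mu\abs{A_2}^{d/N}+o(\abs{A_2}^{d/N})$ holds with the same thresholds. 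The $(0,0)$ convention also carries over verbatim.

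The only delicate point is bookkeeping: we must check that the definitional equivalence identifies both the universes and the cardinalities used in the asymptotics, which is why the sort matching was built into the definition above. If the intended notion allowed merely uniformly definable bijections, one would instead compose with those bijections using \cref{lem:translation}. Beyond that, the argument is a direct substitution with no analytic content.
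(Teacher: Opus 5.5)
Your proposal is correct and follows the paper's own argument: translate $\varphi$ into the language of $\D_1$, take its parameter-free asymptotic partition there, and translate the cell formulas back into the language of $\D_2$. The fibers and universe sizes are literally unchanged, so the labels, coefficients and uniform error thresholds carry over without modification.
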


\begin{proof}
Every formula of the language of $\D_2$ has a uniform parameter-free
translation into the language of $\D_1$, with the same free variables and
exactly the same fibers.  Translate the finitely many formulas defining an
asymptotic partition back to the language of $\D_2$.  Fiber cardinalities,
coefficients, exponents, and uniform error estimates are unchanged.
\end{proof}

\begin{lemma}
\label{lem:denominator-refinement}
Suppose that a class $\D$ is an $M$-dimensional asymptotic class for some
$M\geq1$ and is also a weak $N$-dimensional asymptotic class.  Then $\D$ is
an $N$-dimensional asymptotic class.
\end{lemma}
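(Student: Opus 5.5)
The plan is to combine the two pieces of data for each formula. The $M$-dimensional structure supplies a definable partition. The weak $N$-dimensional structure supplies the numerical alternatives with denominator $N$. Fix $\varphi(x;\bar y)$.

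First, take the $M$-dimensional asymptotic partition $\theta_{d,\mu}(\bar y)$, $(d,\mu)\in D_\varphi$, which is parameter-free definable. Take also the weak $N$-dimensional data: a finite set $D'_\varphi$ of pairs $(d',\mu')$ with $d'\in\set{0,\dots,N}$, and a (possibly nondefinable) partition of the parameter tuples into cells $C_{d',\mu'}$ carrying the corresponding uniform estimates. The goal is to show that each definable cell $\theta_{d,\mu}$ can be relabelled with a pair of the form $(d',\mu')$. The formulas $\theta_{d,\mu}$ then serve as the cells of an $N$-dimensional partition, after merging cells with equal labels and refining bounded parts by \cref{lem:bounded-cell}.

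Second, consider a cell $\theta_{d,\mu}$. If the target sizes on it are bounded, refine it by the exact-count formulas; those cells get labels $(0,k)$, which are admissible for every denominator. Otherwise, consider the intersections $\theta_{d,\mu}\cap C_{d',\mu'}$. Take any such intersection on which $\abs A$ is unbounded. On it we have simultaneously
\[
  \abs{\varphi(A;\bar b)}=\mu\abs A^{d/M}+o(\abs A^{d/M})
  \quad\text{and}\quad
  \abs{\varphi(A;\bar b)}=\mu'\abs A^{d'/N}+o(\abs A^{d'/N}).
\]
This is the uniqueness argument already used for profiles in the introduction. Along an unbounded sequence, comparing the ratio of the two expressions forces $d/M=d'/N$ and $\mu=\mu'$. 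In the zero cases, $\mu=0$ if and only if $\mu'=0$, using \cref{rem:zero-cell}. Hence $d/M$ equals $d'/N$ for some integer $d'\leq N$, and the estimate on this intersection can be rewritten as $\mu\abs A^{d'/N}+o(\abs A^{d'/N})$.

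Third, handle the bounded intersections and settle uniformity. This is the step I expect to be the main obstacle. Suppose an intersection $\theta_{d,\mu}\cap C_{d',\mu'}$ has bounded sizes, say at most $B$. The $M$-estimate still holds uniformly on the whole of $\theta_{d,\mu}$, so beyond the threshold $B$ only the unbounded intersections matter. If at least one intersection is unbounded, all unbounded ones give the same value $d/M=d'/N$. The estimate on $\theta_{d,\mu}$ is then $\mu\abs A^{d'/N}+o(\abs A^{d'/N})$, uniformly, with the same thresholds as the $M$-estimate, since it is literally the same estimate. If every intersection is bounded, then there are finitely many cells $C_{d',\mu'}$, so $\theta_{d,\mu}$ itself is target-bounded. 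That case was already treated in the previous step. One must check that "unbounded" is interpreted consistently: a cell counts as unbounded if it is realized in structures of arbitrarily large size, and the little-$o$ convention of \cref{rem:bounded-o-convention} makes the bounded parts harmless.

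Finally, merge the cells of the definable partition according to their new labels $(d',\mu)$. Thresholds stay uniform because there are finitely many cells. This gives an $N$-dimensional asymptotic partition for $\varphi$, and since $\varphi$ was arbitrary, $\D$ is an $N$-dimensional asymptotic class.
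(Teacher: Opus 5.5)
Your proposal is correct and follows essentially the same route as the paper. Both keep the definable $M$-dimensional cells and refine the bounded ones by exact counts. On each unbounded cell, both use one weak $N$-alternative that occurs along an unbounded sequence and compare leading terms, which shows that $d/M=d'/N$ and that the coefficients agree. Your third step is less of an obstacle than you expected: once $d/M=d'/N$ is known, the original uniform $M$-estimate on the whole definable cell is literally the required $N$-estimate, which is exactly how the paper concludes.
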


\begin{proof}
Fix a formula $\varphi(x;\bar y)$, and let
$
  \set{\theta_i(\bar y):i\in I}
$
be a parameter-free definable asymptotic partition supplied by the
$M$-dimensional asymptotic class.  Write the label of the $i$th
cell as $(a_i,\mu_i)$, so that its positive exponent is $a_i/M$; allow also
the zero label $(0,0)$.

Fix $i$.  Suppose first that the structures supporting realizations of
$\theta_i$ have unbounded cardinality.  The weak $N$-dimensional structure
supplies finitely many set-theoretic alternatives
\[
  \nu_j\abs A^{b_j/N}+o(\abs A^{b_j/N})
\]
and the zero alternative.  At least one weak alternative occurs on an
unbounded sequence of pairs $(A,\bar c)$ satisfying $\theta_i(\bar c)$.
Along that sequence, the full and weak estimates apply to the same
integer-valued fiber sizes.  Comparing them shows that either both labels
are zero, or
\[
  \frac{a_i}{M}=\frac{b_j}{N}
  \quad\text{and}\quad
  \mu_i=\nu_j.
\]
Indeed, if the exponents differ, the quotient of the two main terms tends
to $0$ or $\infty$; if the exponents agree but the coefficients differ, the
quotient tends to a number other than $1$.  Thus the exponent already
attached to the definable cell $\theta_i$ lies in the $N$-grid.  The same
argument shows that every weak label occurring unboundedly on this cell has
the same exponent and coefficient.

If the structures supporting $\theta_i$ have bounded cardinality, refine
$\theta_i$ by the finitely many formulas
$\operatorname{Exact}_{\varphi,k}(\bar y)$ which can occur there.  These
refined cells have exact dimension-zero labels.  Performing this operation
on the finitely many $\theta_i$ and then relabelling the unbounded cells by
the corresponding $N$-grid labels gives a parameter-free definable
$N$-dimensional asymptotic partition for $\varphi$.  Since one object
variable suffices, $\D$ is $N$-dimensional.
\end{proof}

\begin{lemma}
\label{lem:naming-parameters}
Let $\C$ be an $N$-dimensional asymptotic class and let
$\eta(\bar z)$ be a parameter-free formula.  Expand the language by constants
$\bar c$ of the sorts of $\bar z$, and put
\[
  \C_\eta
  =\bigg\{(M,\bar c):M\in\C,\ M\models\eta(\bar c)\bigg\}.
\]
Then $\C_\eta$ is an $N$-dimensional asymptotic class.
\end{lemma}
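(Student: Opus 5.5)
Fix an $L(\bar c)$-formula $\varphi(x;\bar y)$ with one object variable. Since constants may be replaced by variables, write $\varphi(x;\bar y)=\psi(x;\bar y,\bar c)$ for a parameter-free $L$-formula $\psi(x;\bar y,\bar z)$. The plan is to apply \cref{def:N-asymptotic} in $\C$ to $\psi$, treating $(\bar y,\bar z)$ as the parameter variables. This yields a finite set $D_\psi$ and parameter-free formulas $\theta_{d,\mu}(\bar y,\bar z)$ that partition all tuples in every $M\in\C$. On each cell we have $\abs{\psi(M;\bar a,\bar b)}=\mu\abs M^{d/N}+o(\abs M^{d/N})$ uniformly.

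Next, substitute the constants: put $\theta'_{d,\mu}(\bar y)\defeq\theta_{d,\mu}(\bar y,\bar c)$. These are parameter-free formulas of the expanded language. In each $(M,\bar c)\in\C_\eta$ they still partition $M^{\abs{\bar y}}$, because the original formulas partition all pairs $(\bar a,\bar b)$, and in particular those with $\bar b=\bar c$. The uniform estimate on the cell of $\theta_{d,\mu}$ restricts to the subcollection of pairs $((M,\bar c),\bar a)$ with $\bar b=\bar c$. The threshold $Q$ in \cref{def:uniform-o} depends only on $\abs M$, and $\abs{(M,\bar c)}=\abs M$ since the expansion adds no elements. So the same $Q$ works, and the labels $(d,\mu)$ are unchanged. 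The condition $M\models\eta(\bar c)$ plays no role beyond specifying the class; it simply restricts which pairs occur, exactly as in \cref{lem:subclass}.

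One bookkeeping point remains. The structures in $\C_\eta$ are pairs, so a single $M$ may appear with several choices of $\bar c$; and, in the many-sorted setting, the constants must have the sorts of $\bar z$. Neither affects the argument, since uniformity is indexed by pairs and the estimates are quantified over all parameter tuples. I do not expect a genuine obstacle. The only step needing care is checking that treating the named constants as extra parameter variables is legitimate, that is, that every formula of the expanded language arises as $\psi(x;\bar y,\bar c)$. This follows by replacing each occurrence of a constant with a fresh variable, which I would verify by a routine induction on formulas.
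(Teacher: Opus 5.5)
Your proposal is correct and matches the paper's proof: both treat the named constants $\bar c$ as extra parameter variables, apply the asymptotic-class axiom in $\C$, and substitute $\bar c$ into the cell formulas. Both also note that fibers and structure sizes are unchanged, and that restricting to pairs with $M\models\eta(\bar c)$ preserves the partition and the uniformity.
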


\begin{proof}
An expanded-language formula $\varphi(x;\bar y,\bar c)$ is the same formula
in the original language with $(\bar y,\bar c)$ treated as its parameter
tuple.  Apply the asymptotic-class axiom in $\C$.  Each original cell formula
$\theta(\bar y,\bar z)$ becomes the parameter-free expanded-language formula
$\theta(\bar y,\bar c)$, and the fiber cardinalities and ambient structure
sizes are unchanged.  Restricting to the definable condition
$\eta(\bar c)$ does not affect either the partition or its uniformity.
\end{proof}

\begin{theorem}
\label{thm:composition}
Let $\I$ uniformly parameter-interpret $\D$ in $\C$ with arity $r$, and let
$\mathcal J$ uniformly parameter-interpret $\Eclass$ in $\D$ with arity
$q$.  If both interpretations admit uniform definable selectors and are
selector-trace reflecting, then the composite interpretation of $\Eclass$
in $\C$ has arity $rq$, admits a uniform definable selector, and is
selector-trace reflecting.
\end{theorem}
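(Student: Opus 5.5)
The plan is to build the composite interpretation explicitly and then verify the three properties in turn. Write $M=\alpha(A)$ for $A\in\D$, and $A=\beta(E)$ for $E\in\Eclass$, where $\beta$ is the injection of $\mathcal J$. Its data are $\delta'(\bar x;\bar w)$, $\epsilon'$, $R^{\mathcal J}$, the selector $\sigma'(\bar x;\bar w)$ with $\abs{\bar x}=q$, and parameters $\bar d_E\in A^{s'}$. The composite $\mathcal J\circ\I$ uses $\alpha\circ\beta$. Its interpretation parameters are the tuple $\bar c_A$ concatenated with the \emph{selected} representatives $\widetilde{\bar d_E}\in S_A^{s'}$, which gives length $s+rs'$.

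The composite domain formula in $rq$ variables is
\[
  \bigwedge_{j\le q}\sigma(\bar u_j;\bar z)\wedge(\delta')^{\I}(\bar u_1,\dots,\bar u_q;\bar w^{\sim};\bar z),
\]
with equivalence $(\epsilon')^{\I}$ and relations $(R^{\mathcal J})^{\I}$, where the translations are those of \cref{lem:translation}. Restricting every coordinate block to $S_A$ makes the composite domain a copy of $X_E\subseteq A^q$ under the selector bijection $A\to S_A$. The composite selector is
\[
  \bigwedge_j\sigma(\bar u_j;\bar z)\wedge(\sigma')^{\I}(\bar u;\bar w^{\sim};\bar z).
\]
Since $S_E$ meets each $E'_E$-class once and the selector bijection is injective, this set meets each composite class exactly once. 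One checks, using \cref{cor:cardinality-selector}, that the composite isomorphism $E\to X_E/E'_E\to(\text{composite domain})/\sim$ is an $L''$-isomorphism, and that injectivity and uniformity hold because all formulas are fixed. Injectivity of $\alpha\circ\beta$ is immediate.

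For selector-trace reflection, take a parameter-free $L$-formula $\rho(\bar v_1,\dots,\bar v_m;\bar z,\bar z')$, where each $\bar v_i$ is an $rq$-tuple, i.e.\ $q$ blocks of $r$-tuples. Its arguments are composite-selected representatives, and these are $rq$-tuples of elements of $S_A$. So regard $\rho$ as a formula in $mq+s'$ blocks of $r$-variables: the block variables for the $\bar v_i$, the $s'$ blocks for $\bar z'$, and $\bar z$. Every argument block lies in $S_A$, since the $\widetilde{\bar d_E}$ are selected. Selector-trace reflection of $\I$ therefore gives a parameter-free $L'$-formula $\rho_1(\bar x_1,\dots,\bar x_m,\bar w)$. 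Evaluated at the $\mathcal J$-representatives $\widehat{\bar e_i}\in S_E\subseteq A^q$ and at $\bar d_E$, it holds in $A$ iff $\rho$ holds in $M$ at the composite representatives. Now $\rho_1$ is a parameter-free $L'$-formula on $q$-tuples with the $\mathcal J$-parameters plugged in. Applying selector-trace reflection of $\mathcal J$ yields $\rho^{\downarrow}(y_1,\dots,y_m)$ in $L''$ with the required equivalence. The case $m=0$ is the same argument.

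The main obstacle is bookkeeping rather than mathematics. First, the composite representative of $e$ must be literally the $r$-blockwise selected lift of its $\mathcal J$-selected representative, so that the two reflection steps chain. Second, the interpretation parameters of $\mathcal J$ must be lifted \emph{via the selector}, not arbitrarily. If they were not, $\rho$ would be evaluated at non-selected representatives, and $\I$-reflection, which is stated only for selected tuples, would not apply. Choosing the selected lift in the definition of the composite parameters removes this issue; it is exactly the representative-choice point the paper flags for quotient compositions.
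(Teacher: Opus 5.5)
Your proposal is correct and follows the paper's proof essentially step for step. The paper builds the composite in the same way: it translates the domain, equivalence and selector formulas of $\mathcal J$ through $\I$, restricts every $r$-block to the $\I$-selector, and takes the composite parameters to be $\bar c_A$ together with the $\I$-selected representatives of the $\mathcal J$-parameters. It then proves trace reflection in the same two stages, reflecting first through $\I$ with all blocks, parameter blocks included, treated as selector-valued variables, and then through $\mathcal J$ with the lifted parameters in the distinguished positions.
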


\begin{proof}
Let $B\in\Eclass$, let $A$ be the member of $\D$ associated with $B$ by
$\mathcal J$, and let $M$ be the member of $\C$ associated with $A$ by
$\I$.  A representative of an element of $B$ under $\mathcal J$ is a
$q$-tuple of elements of $A$.  Replace each of its coordinates by its unique
$\I$-selected representative in $M^r$.  This gives an $rq$-tuple from $M$.
Translate the domain, equivalence-relation, and relation formulas of
$\mathcal J$ through $\I$.  The resulting formulas, together with the
$\I$-selector conditions on every $r$-block, define the composite
interpretation.

Let $\bar d_B$ be the interpretation-parameter tuple for $\mathcal J$, and
take the $\I$-selected representatives of its coordinates.  The composite
interpretation parameters are the concatenation of these representatives
with the interpretation parameters for $\I$.  The formula obtained by
translating the $\mathcal J$-selector through $\I$ and restricting all
blocks to the $\I$-selector meets every composite equivalence class exactly
once.  Hence it is a uniform selector.

It remains to verify trace reflection.  Let $\rho$ be an ambient $L$-formula
on composite selected representatives and the composite interpretation
parameters.  Regard every $r$-block, including the blocks representing
$\bar d_B$, as a variable ranging over the $\I$-selector.  Trace reflection
for $\I$ turns $\rho$ into an $L'$-formula on the corresponding elements of
$A$.  Regroup its free variables into the $q$-tuples selected by
$\mathcal J$, with $\bar d_B$ occupying the distinguished parameter
positions for $\mathcal J$.  Trace reflection for $\mathcal J$ then turns
this formula into a parameter-free formula in the language of $\Eclass$.
The two defining equivalences compose, proving trace reflection for the
composite interpretation.
\end{proof}

\begin{corollary}\label{cor:composition-transfer}
If, in \cref{thm:composition}, $\C$ is $N$-dimensional, then $\Eclass$ is an
asymptotic class.  One universal denominator is
$
  \lcm(1,2,\dots,Nrq).
$
\end{corollary}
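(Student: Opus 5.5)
The plan is to combine \cref{thm:composition} with \cref{thm:selector-transfer}. By \cref{thm:composition}, the composite interpretation of $\Eclass$ in $\C$ is a uniform parameter-interpretation of arity $rq$. It admits a uniform definable selector and is selector-trace reflecting. So the hypotheses of \cref{thm:selector-transfer} are met with $\C$ $N$-dimensional, arity $rq$, and target $\Eclass$. That theorem then gives that $\Eclass$ is an $N_*$-dimensional asymptotic class with $N_*=\lcm(1,2,\dots,Nrq)$, which is the asserted universal denominator.

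One point to check is that the composite really is a uniform parameter-interpretation in the sense of \cref{def:interpretation}. Its parameter tuple is the concatenation of the $\I$-selected representatives of $\bar d_B$ with $\bar c_A$, and this has fixed length because $\bar d_B$ and $\bar c_A$ do. The map $\Eclass\to\C$ is $B\mapsto\alpha(\beta(B))$, which is injective as a composite of injections. The isomorphism $B\to X/E$ is obtained by composing $\iota_B$ with the selected-representative identifications coming from $\I$. Both facts were established in the proof of \cref{thm:composition}, so nothing further is needed.

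As an alternative route, one could apply \cref{thm:selector-transfer} twice. The first application makes $\D$ an $\lcm(1,\dots,Nr)$-dimensional class. The second, with arity $q$, gives the denominator $\lcm(1,\dots,\lcm(1,\dots,Nr)\,q)$. This is a valid but much larger denominator. The direct composite argument is therefore the one I would present, and I expect no real obstacle beyond the bookkeeping of the composite parameters just described.
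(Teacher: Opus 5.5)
Your proposal is correct and follows the paper's proof: apply \cref{thm:composition} to get a selector-trace-reflecting composite interpretation of arity $rq$ with a uniform definable selector, then apply \cref{thm:selector-transfer} with arity $rq$ to obtain the denominator $\lcm(1,2,\dots,Nrq)$. Your bookkeeping on the composite parameter tuple and the composite injection is consistent with the proof of \cref{thm:composition}.
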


\begin{proof}
Apply \cref{thm:composition} followed by
\cref{thm:selector-transfer}.
\end{proof}

\section{Transfer on definable quotients}

A selector is a convenient form of definable choice, but the counting
argument does not intrinsically require one.  It requires instead that the
ambient formulas defining the asymptotic cells descend to the interpreted
quotient.  Two issues must be separated: one first counts equivalence classes
in the ambient structures and then makes the resulting parameter partition
invariant under changing representatives.

\begin{definition}
\label{def:quotient-trace}
Let $\I$ be a uniform parameter-interpretation as in
\cref{def:interpretation}.  We say that $\I$ is
\emph{quotient-trace reflecting} if the following holds.  For every $m\geq0$
and every parameter-free $L$-formula
$
  \rho(\bar v_1,\dots,\bar v_m;\bar z)
$
which is $E_A$-invariant in each $\bar v_i$ on $X_A^m$, uniformly for
$A\in\D$, there is a parameter-free $L'$-formula
$\rho^\downarrow(y_1,\dots,y_m)$ such that, for arbitrary representatives
$\bar v_i\in X_A$ of $a_i\in A$,
\[
  A\models\rho^\downarrow(a_1,\dots,a_m)
  \quad\Longleftrightarrow\quad
  \alpha(A)\models
  \rho(\bar v_1,\dots,\bar v_m;\bar c_A).
\]
As before, $m=0$ includes ambient sentences evaluated at the interpretation
parameters.
\end{definition}

For tuples $\bar v=(\bar v_1,\dots,\bar v_m)$ and
$\bar w=(\bar w_1,\dots,\bar w_m)$ from $X_A^m$, write
\[
  E_A^{(m)}(\bar v,\bar w)
  \quad\text{for}\quad
  \bigwedge_{j=1}^m E_A(\bar v_j,\bar w_j).
\]
For $m=0$, this is the trivial relation on the singleton empty tuple.

\begin{lemma}
\label{lem:quotient-profiles}
Let $\theta_i(\bar v;\bar z)$, $i\in I$, be finitely many ambient formulas
which, after substituting $\bar c_A$, partition $X_A^m$.  For every nonempty
$P\subseteq I$ there is an $E_A^{(m)}$-invariant ambient formula
$\pi_P(\bar v;\bar z)$ saying that the $E_A^{(m)}$-class of $\bar v$ meets
exactly the cells with labels in $P$.  The formulas $\pi_P$ partition
$X_A^m$ into invariant pieces.
\end{lemma}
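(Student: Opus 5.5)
The plan is to write $\pi_P$ down explicitly and check the two required properties. For each $i\in I$ let $\mathrm{Meet}_i(\bar v;\bar z)$ be the ambient formula
\[
  \exists\bar w_1\cdots\exists\bar w_m\Bigl(\bigwedge_{j=1}^m\bigl(\delta(\bar w_j;\bar z)\wedge\epsilon(\bar v_j,\bar w_j;\bar z)\bigr)\wedge\theta_i(\bar w;\bar z)\Bigr).
\]
After substituting $\bar c_A$ and restricting to $\bar v\in X_A^m$, it says that the $E_A^{(m)}$-class of $\bar v$ contains a tuple satisfying $\theta_i$. Then set
\[
  \pi_P(\bar v;\bar z)\defeq\bigwedge_{j=1}^m\delta(\bar v_j;\bar z)\wedge\bigwedge_{i\in P}\mathrm{Meet}_i\wedge\bigwedge_{i\in I\setminus P}\neg\mathrm{Meet}_i .
\]
This formula is parameter-free, apart from the interpretation-parameter variables $\bar z$, and it is uniform in $A$.

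Invariance follows because $E_A^{(m)}$ is an equivalence relation on $X_A^m$. It is coordinatewise the relation $E_A$, which $\epsilon(\cdot,\cdot;\bar c_A)$ defines on $X_A$ by \cref{def:interpretation}. So if $E_A^{(m)}(\bar v,\bar v')$ holds, the two tuples have the same $E_A^{(m)}$-class, and each $\mathrm{Meet}_i$ takes the same truth value at $\bar v$ and $\bar v'$. Hence $\pi_P$ does too. One detail needs care: $\epsilon$ is only guaranteed to be an equivalence relation on $X_A$. That is why the conjuncts $\delta(\bar w_j;\bar z)$ and $\delta(\bar v_j;\bar z)$ are included, so that every witness stays inside $X_A$.

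For the partition, fix $\bar v\in X_A^m$ and let $P(\bar v)=\{i\in I:\ \mathrm{Meet}_i(\bar v;\bar c_A)\}$. By construction $\bar v$ satisfies $\pi_P$ exactly when $P=P(\bar v)$, so at most one $\pi_P$ holds at $\bar v$. The set $P(\bar v)$ is nonempty: $\bar v$ lies in its own class, and the $\theta_i$ partition $X_A^m$, so $\bar v$ itself satisfies some $\theta_i$. Hence the formulas $\pi_P$ with $\emptyset\neq P\subseteq I$ cover $X_A^m$, and together they form a partition of $X_A^m$ into invariant pieces. Some of these pieces may be empty, which is harmless. The case $m=0$ is degenerate: the quantifier blocks are empty, and $\pi_P$ reduces to a sentence in $\bar z$ that picks out the unique $i$ with $\theta_i(\bar c_A)$.

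I expect no serious obstacle. The only point needing attention is the bookkeeping just described, namely relativizing the witnesses to $X_A$ via $\delta$ so that invariance uses only the properties of $E_A$ on the interpretation domain.
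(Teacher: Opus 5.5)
Your proposal is correct and follows essentially the same route as the paper, which also defines $\pi_P$ as the conjunction of the formulas $\exists\bar w\,\bigl(E^{(m)}(\bar v,\bar w;\bar z)\wedge\theta_i(\bar w;\bar z)\bigr)$ for $i\in P$ and their negations for $i\notin P$, with the quantified tuples restricted to the interpretation domain. It then argues invariance and the partition property (nonempty profile because $\bar v$ lies in some cell, disjointness because distinct profiles exclude each other) just as you do; your extra conjuncts $\delta(\bar v_j;\bar z)$ and the explicit $m=0$ remark are harmless refinements of the same bookkeeping.
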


\begin{proof}
Let $E^{(m)}(\bar v,\bar w;\bar z)$ be the coordinatewise formula induced
by the interpretation formula $\epsilon$.  Put
\begin{align*}
  \pi_P(\bar v;\bar z)
  \defeq{}&
  \bigwedge_{i\in P}
  \exists\bar w\,
  \bigl(E^{(m)}(\bar v,\bar w;\bar z)
  \wedge\theta_i(\bar w;\bar z)\bigr)\\
  &\wedge
  \bigwedge_{i\notin P}
  \neg\exists\bar w\,
  \bigl(E^{(m)}(\bar v,\bar w;\bar z)
  \wedge\theta_i(\bar w;\bar z)\bigr),
\end{align*}
where every quantified $r$-tuple is restricted to the interpretation
domain.  This formula depends only on the coordinatewise equivalence class
of $\bar v$.  Since the $\theta_i$ form a partition, every class has one
nonempty profile, and distinct profiles are disjoint.
\end{proof}

\begin{lemma}
\label{lem:quotient-profile-compatibility}
Let $f_A:X_A^m\to\N$ be invariant under $E_A^{(m)}$.  Suppose the formulas
$\theta_i$, $i\in I$, partition $X_A^m$, and on the $i$th cell one has,
uniformly,
$
  f_A(\bar v)
  =\mu_iT^{d_i/N}+o(T^{d_i/N}),
  $ \, $ T=\abs{\alpha(A)},
$
where either $\mu_i>0$ and $d_i\geq0$, or the label is $(0,0)$ and the right
side means $o(1)$.  Fix a profile $P\subseteq I$.  On any subfamily of that
profile for which the ambient cardinalities $T$ are unbounded, either all
labels in $P$ are $(0,0)$, or all labels in $P$ are the same positive
pair $(d,\mu)$.
\end{lemma}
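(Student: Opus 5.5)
The plan is to compare the labels in $P$ along a single unbounded sequence. The key point is that $f_A$ takes the same value on all representatives of a class, so every label in the profile describes one common integer.

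First I make the hypothesis precise. ``A subfamily of the profile with unbounded $T$'' means a family of pairs $(A,\bar v)$ with $\bar v\in X_A^m$ satisfying $\pi_P(\bar v;\bar c_A)$, on which $T$ is unbounded. Fix $i,j\in P$. Choose a sequence $(A_n,\bar v_n)$ from this subfamily with $T_n=\abs{\alpha(A_n)}\to\infty$. By the definition of $\pi_P$ in \cref{lem:quotient-profiles}, there exist $\bar w_n,\bar w'_n$ in the $E_{A_n}^{(m)}$-class of $\bar v_n$ with $\bar w_n\models\theta_i$ and $\bar w'_n\models\theta_j$. Invariance of $f_{A_n}$ then gives
\[
  x_n\defeq f_{A_n}(\bar v_n)=f_{A_n}(\bar w_n)=f_{A_n}(\bar w'_n).
\]
Since the estimates on cells $i$ and $j$ are uniform and $T_n\to\infty$, both estimates apply to the same integer sequence $x_n$.

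Next I compare the two labels case by case, as in the proof of \cref{lem:denominator-refinement}. If label $i$ is $(0,0)$, then \cref{rem:zero-cell} gives $x_n=0$ for large $n$. If label $j$ were positive, we would have $x_n=\mu_jT_n^{d_j/N}(1+o(1))\geq\mu_j/2>0$ eventually, which is a contradiction. So label $j$ is also $(0,0)$. If both labels are positive, then
\[
  \frac{x_n}{x_n}=1=\frac{\mu_i}{\mu_j}\,T_n^{(d_i-d_j)/N}(1+o(1)).
\]
If $d_i\neq d_j$, the right side tends to $0$ or $\infty$, which is impossible. Hence $d_i=d_j$, and the limit then forces $\mu_i=\mu_j$. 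Since $i$ and $j$ were arbitrary, the labels in $P$ are either all $(0,0)$ or all equal to one positive pair $(d,\mu)$.

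The main obstacle is conceptual rather than computational. The uniform estimates are stated only as $T\to\infty$ over the whole cell, so the comparison must be made along a sequence with $T\to\infty$ inside the profile, where both representatives $\bar w_n$ and $\bar w'_n$ lie in large ambient structures. This is exactly why unboundedness is assumed. Once such a sequence exists, the invariance of $f_A$ is what allows two different cells to speak about the same number.
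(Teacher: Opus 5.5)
Your proposal is correct and follows essentially the same route as the paper's proof. Both pick a sequence with $T\to\infty$ inside the profile, take representatives of the same $E_A^{(m)}$-class in each cell of $P$, and use invariance of $f_A$ to obtain one common integer value. They then rule out a zero label coexisting with a positive one, and compare two positive labels by taking the ratio of their main terms.
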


\begin{proof}
Choose a sequence in the subfamily with $T\to\infty$.  For each $i\in P$
and each structure in the sequence, choose a representative in the $i$th
cell.  Within each structure, invariance of $f_A$ gives a common integer
value, denoted by $f$, for all these representatives.

A zero label forces $f=o(1)$ and therefore $f=0$ eventually.  This is
incompatible with a positive label, which gives
$f/T^{d/N}\to\mu>0$.  If $i,j\in P$ have positive labels, then the same
numbers $f$ satisfy
$
  \frac{f}{T^{d_i/N}}\longrightarrow\mu_i,
  $\, $
  \frac{f}{T^{d_j/N}}\longrightarrow\mu_j.
$
Different exponents would make the quotient of the left sides tend to zero
or infinity.  Thus $d_i=d_j$, and then $\mu_i=\mu_j$.
\end{proof}

\begin{lemma}
\label{lem:uniform-class-quotient}
For each parameter tuple in a fixed cell, let a finite set $Y$ be partitioned
into $q$ nonempty classes.  Put $T=\abs M$.  Suppose, uniformly on the cell,
\[
  \abs Y=\lambda T^{a/N}+o(T^{a/N}),
  \qquad \lambda>0,
\]
and suppose that every class $C\subseteq Y$ satisfies, uniformly in $C$ as
well,
\[
  \abs C=\nu T^{e/N}+o(T^{e/N}),
  \qquad \nu>0.
\]
Assume in addition that the ambient cardinalities $T$ occurring on the cell
are unbounded.  Then $a\geq e$ and
\[
  q=\frac\lambda\nu T^{(a-e)/N}
    +o(T^{(a-e)/N})
\]
uniformly.
\end{lemma}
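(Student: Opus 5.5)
The plan is to obtain an exact two-sided bound on $q$ by summing class sizes, and then to read off the asymptotics by dividing. Since the classes partition $Y$,
\[
  \abs Y=\sum_{C}\abs C,
\]
where the sum runs over the $q$ classes. Write $\abs C=\nu T^{e/N}(1+\xi_C)$, where $\sup_C\abs{\xi_C}\to0$ uniformly on the cell. This sup-form of the error is exactly the uniformity-in-$C$ hypothesis, as in the proof of \cref{lem:fubini-sum}. Summing gives
\[
  \abs Y=q\,\nu T^{e/N}(1+\bar\xi),
  \qquad
  \bar\xi\defeq\frac1q\sum_C\xi_C,
\]
and $\abs{\bar\xi}\leq\sup_C\abs{\xi_C}$ because $\bar\xi$ is an average. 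It is important that we do not assume an asymptotic for $q$ in advance. \Cref{lem:fubini-sum} requires one, so instead we use only its averaging trick, which needs nothing about $q$.

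Next we solve for $q$. Once $T$ is large enough that $\abs{\bar\xi}\leq\tfrac12$, we have
\[
  q=\frac{\abs Y}{\nu T^{e/N}(1+\bar\xi)}
   =\frac{\lambda T^{a/N}(1+\eta)}{\nu T^{e/N}(1+\bar\xi)},
\]
where $\eta\to0$ uniformly comes from the estimate for $\abs Y$. Consequently $q\big/\bigl(\tfrac\lambda\nu T^{(a-e)/N}\bigr)=(1+\eta)/(1+\bar\xi)\to1$ uniformly. This is the argument of \cref{lem:asymptotic-calculus}\textup{(iii)} adapted to a quotient, and it yields the displayed estimate with a uniform error term.

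Finally we show $a\geq e$. This is where the unboundedness hypothesis enters, and it is the only point needing care. Each class is nonempty, so $q\geq1$. Choose a sequence in the cell with $T\to\infty$. If $a<e$, the formula just derived gives $q\to0$ along this sequence, which contradicts $q\geq1$. Equivalently, every class is a subset of $Y$, so $\abs C\leq\abs Y$, and comparing leading terms along the sequence forces $e\leq a$. Without an unbounded sequence there is no limit to compare, and the little-$o$ statements are vacuous by \cref{rem:bounded-o-convention}. The denominator $1+\bar\xi$ is nonzero beyond the threshold, so the division above is legitimate. No further obstacles are expected.
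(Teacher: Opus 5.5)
Your proof is correct and is essentially the paper's argument: the paper sandwiches every $\abs C$ between $(\nu-\eta)T^{e/N}$ and $(\nu+\eta)T^{e/N}$, divides $\abs Y$ by these bounds, and lets $\eta\downarrow0$, which is your averaged-error computation written as inequalities; it proves $a\geq e$ from $\abs C\leq\abs Y$, exactly as in your alternative. One small correction: $q\geq1$ follows from $Y\neq\varnothing$, which holds for all sufficiently large $T$ because $\lambda>0$, and not from the nonemptiness of the classes; this is also what makes $\bar\xi$ well defined.
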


\begin{proof}
Since $T$ is unbounded on the cell and $\lambda>0$, the set $Y$ is
nonempty for every sufficiently large value of $T$ occurring on that cell.
If $a<e$, then for any class $C\subseteq Y$ one would have
$\abs Y/\abs C\to0$ uniformly along those values, contradicting
$\abs C\leq\abs Y$.  Hence $a\geq e$.

Fix $0<\eta<\nu$.  For all sufficiently large $T$, uniformly on the cell
and in the classes,
\[
  (\nu-\eta)T^{e/N}\leq\abs C\leq
  (\nu+\eta)T^{e/N}.
\]
Consequently,
\[
  \frac{\abs Y}{(\nu+\eta)T^{e/N}}
  \leq q\leq
  \frac{\abs Y}{(\nu-\eta)T^{e/N}}.
\]
Divide the displayed inequalities by $T^{(a-e)/N}$, use the uniform
asymptotic for $\abs Y$, and then let $\eta\downarrow0$.  This proves
uniform convergence to $\lambda/\nu$.
\end{proof}

\begin{lemma}
\label{lem:ambient-quotient-counting}
Let $\C$ be an $N$-dimensional asymptotic class.  Let
$X_{\bar c}\subseteq M^r$ be uniformly definable, and let
$E_{\bar c}$ be a uniformly definable equivalence relation on
$X_{\bar c}$.  Suppose that
\[
  \psi(\bar u;\bar v_1,\dots,\bar v_m;\bar z)
\]
has its $\bar u$-fiber contained in $X_{\bar c}$ and equal to a union of
$E_{\bar c}$-classes whenever the parameter tuples are valid.  Then there
are finitely many labels
\[
  (d_i,\mu_i)\in
  \bigl(\set{0,\dots,Nr}\times\R_{>0}\bigr)\cup\set{(0,0)}
\]
and parameter-free ambient formulas
$\theta_i(\bar v_1,\dots,\bar v_m;\bar z)$ which partition the valid
representative parameter tuples and such that, uniformly on the $i$th cell,
\[
  \left|
    \set{[\bar u]_{E_{\bar c}}:
      M\models\psi(\bar u;\bar v_1,\dots,\bar v_m;\bar c)}
  \right|
  =\mu_i\abs M^{d_i/N}+o(\abs M^{d_i/N}),
\]
with the usual interpretation of the zero label.  The formulas $\theta_i$
are not asserted to be invariant under changing the representative
parameters.
\end{lemma}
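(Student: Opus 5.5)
We count equivalence classes by splitting the fiber according to the size of the class of each element, and then dividing. Write $\bar v=(\bar v_1,\dots,\bar v_m)$.

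\emph{Step 1: class sizes.} Apply \cref{thm:tuple-counting} to the formula
\[
  \bar u'\mapsto \delta(\bar u';\bar z)\wedge\epsilon(\bar u,\bar u';\bar z),
\]
with object variables $\bar u'$ (an $r$-tuple) and parameters $(\bar u,\bar z)$. This gives finitely many labels $(e_j,\nu_j)$ with $0\leq e_j\leq Nr$, and parameter-free formulas $\kappa_j(\bar u;\bar z)$ partitioning all $(\bar u,\bar z)$. On the $j$th cell, $\abs{[\bar u]_{E_{\bar c}}}=\nu_jT^{e_j/N}+o(T^{e_j/N})$ uniformly, where $T=\abs M$.

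Because $\epsilon$ is an equivalence relation on $X_{\bar c}$, the class size depends only on the class. Hence for $\bar u\in X_{\bar c}$ we may replace $\kappa_j$ by the invariant formula $\exists\bar w\,(\epsilon(\bar u,\bar w;\bar z)\wedge\kappa_j(\bar w;\bar z))$. Distinct labels cannot both be realized in the same class once $T$ is large, by the argument of \cref{lem:quotient-profile-compatibility}. So after a finite refinement (exact-count pieces below a threshold, as in \cref{lem:finite-perturbation}), the $\kappa_j$ restricted to $X_{\bar c}$ are $E_{\bar c}$-invariant and still partition $X_{\bar c}$. Note that $\nu_j>0$ for every realized class: a class is nonempty, so a $(0,0)$ label can only occur below the threshold, where it is absorbed into the bounded pieces.

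\emph{Step 2: counting each piece.} For each $j$, put $\psi_j\defeq\psi\wedge\kappa_j(\bar u;\bar z)$. Since $\psi$'s fiber is a union of classes and $\kappa_j$ is invariant, the fiber of $\psi_j$ is a union of classes, each of size $\nu_jT^{e_j/N}+o(T^{e_j/N})$. Apply \cref{thm:tuple-counting} to $\psi_j$ with object variables $\bar u$ and parameters $(\bar v,\bar z)$. This yields cells $\theta_{j,k}(\bar v;\bar z)$ on which $\abs{\psi_j(M^r;\bar v,\bar c)}=\lambda_{jk}T^{a_{jk}/N}+o(T^{a_{jk}/N})$.

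On a cell with $\lambda_{jk}>0$ and unbounded $T$, \cref{lem:uniform-class-quotient} gives $a_{jk}\geq e_j$ and shows that the number $q_j$ of classes in the fiber of $\psi_j$ satisfies
\[
  q_j=(\lambda_{jk}/\nu_j)\,T^{(a_{jk}-e_j)/N}+o\bigl(T^{(a_{jk}-e_j)/N}\bigr).
\]
On a $(0,0)$ cell, $q_j=0$ eventually by \cref{rem:zero-cell}. The resulting exponent $a_{jk}-e_j$ lies in $\{0,\dots,Nr\}$.

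\emph{Step 3: combining.} Take the common refinement $\bigwedge_j\theta_{j,f(j)}$ over choice functions $f$, exactly as in the proof of \cref{thm:tuple-counting}. The total class count is $\sum_j q_j$, since the sets $\psi_j$ are disjoint unions of classes. \cref{lem:asymptotic-calculus}\textup{(ii)} then gives a single leading label $(d,\mu)$ on each refined cell. Cells supported only on bounded $T$ are refined by exact counts of classes, which are expressible in $L$, and given dimension-zero labels as in \cref{lem:bounded-cell}. Merging cells with equal labels produces the formulas $\theta_i$. Nothing in this construction makes them invariant in $\bar v$, which matches the statement.

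The main obstacle is Step 1, namely making the class-size partition invariant and avoiding a $(0,0)$ class-size label. The plan there is to use invariance of $\abs{[\bar u]}$ together with the profile-compatibility argument, and to push every exceptional configuration into a bounded-size region handled by exact counts.
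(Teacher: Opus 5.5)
Your proposal is correct and follows essentially the paper's proof. Both arguments classify class sizes by tuple counting and make that partition $E_{\bar c}$-invariant for large ambient size via the profile-compatibility argument; the paper does this with the profiles $\pi_P$ and ``active'' profiles, while you use existential closures of the cells plus a uniform size threshold. Both then intersect $\psi$ with each invariant stratum, count it by tuple counting, divide using \cref{lem:uniform-class-quotient}, and sum, with the cosmetic difference that you refine bounded-size cells by exact class counts where the paper gives them the vacuous label $(0,0)$.
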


\begin{proof}
The proof refines the quotient argument of Elwes
\cite[Lemma~3.4]{Elwes2007}.  Put $T=\abs M$.  First classify the sizes of
the individual equivalence classes.  Apply \cref{thm:tuple-counting} to
\[
  K(\bar w;\bar u,\bar z)
  \defeq
  \delta(\bar w;\bar z)\wedge\delta(\bar u;\bar z)
  \wedge\epsilon(\bar w,\bar u;\bar z),
\]
with the $r$ variables $\bar w$ as object variables.  We obtain a finite
partition $\rho_j(\bar u;\bar z)$, $j\in J$, of $X_{\bar c}$ and labels
$(e_j,\nu_j)$, where $0\leq e_j\leq Nr$, $\nu_j>0$, or the label is
$(0,0)$.

Replace this partition by the invariant profiles $\pi_P(\bar u;\bar z)$
from \cref{lem:quotient-profiles}.  Call a profile $P$ \emph{active} if it
is realized in valid interpretation data with arbitrarily large ambient
cardinality.  There are only finitely many profiles.  On an active profile,
\cref{lem:quotient-profile-compatibility}, applied to the size of the
$E_{\bar c}$-class of $\bar u$, shows that all labels in $P$ are the same
positive pair $(e_P,\nu_P)$.  A zero pair is impossible because every class
is nonempty.  Moreover, this estimate is uniform over all classes with
profile $P$.  Fix $j\in P$ and choose, in every such class, a
representative in the $j$th cell.  Because there are only finitely many
profiles, a single ambient-size threshold excludes all inactive profiles.
Hence inactive profiles contribute nothing once $T$ is sufficiently large.

For every active profile $P$, define
\[
  Y_P(\bar u;\bar v_1,\dots,\bar v_m;\bar z)
  \defeq
  \psi(\bar u;\bar v_1,\dots,\bar v_m;\bar z)
  \wedge\pi_P(\bar u;\bar z).
\]
This is a union of $E_{\bar c}$-classes.  Apply
\cref{thm:tuple-counting} to $Y_P$, again with the $r$ variables $\bar u$
as object variables.  It gives finitely many parameter cells and, on each,
either
$
  \abs{Y_P}=o(1)
$
or
\[
  \abs{Y_P}=\lambda_PT^{a_P/N}+o(T^{a_P/N}),
  \qquad \lambda_P>0,
\]
where $0\leq a_P\leq Nr$.  Take the common finite refinement of these
partitions over all active profiles.

Fix a cell of this joint refinement.  If the ambient cardinalities on the
cell are bounded, assign the final quotient count the label $(0,0)$.  This
is a valid uniform $o(1)$ assertion on a bounded cell, and no asymptotic
calculation is needed there.

Now suppose that the ambient cardinalities on the joint cell are unbounded.
A zero-labelled $Y_P$ is eventually empty.  For a positive-labelled $Y_P$,
every equivalence class in it has the uniform size attached to $P$, so the
unboundedness hypothesis of \cref{lem:uniform-class-quotient} is satisfied,
and that lemma gives
\[
  \abs{Y_P/E_{\bar c}}
  =\frac{\lambda_P}{\nu_P}
   T^{(a_P-e_P)/N}
   +o(T^{(a_P-e_P)/N}),
\]
with $0\leq a_P-e_P\leq Nr$.  The quotient set defined by $\psi$ is the
disjoint union of the finitely many quotient strata $Y_P/E_{\bar c}$.
By \cref{lem:asymptotic-calculus}\textup{(ii)}, its cardinality has leading
exponent
\[
  d=\max_P(a_P-e_P)
\]
and leading coefficient equal to the sum of
$\lambda_P/\nu_P$ over the profiles attaining that exponent.  If every
stratum is eventually empty, use the label $(0,0)$.

Only finitely many joint cells and resulting pairs occur.  For each output
pair, take the disjunction of the joint parameter cells producing it.  This
gives the required parameter-free formulas $\theta_i$.  Every estimate used
above was uniform, and finite sums and finite unions preserve uniformity.
\end{proof}

\begin{lemma}
\label{lem:quotient-normalization}
Let $M=\alpha(A)$, put $T=\abs M$, and suppose
$\abs A\leq T^r$.  Suppose that, uniformly on a cell,
\begin{align*}
  \abs A&=\lambda T^{e/N}+o(T^{e/N}),\\
  x(A,\bar b)&=\mu T^{d/N}+o(T^{d/N}),
\end{align*}
where $\lambda,\mu>0$, $x$ is integer-valued, and
$0\leq x(A,\bar b)\leq\abs A$.  If the target sizes are unbounded on the
cell, then $e>0$, $d\leq e$, and
\[
  x(A,\bar b)
  =\mu\lambda^{-d/e}\abs A^{d/e}
   +o(\abs A^{d/e})
\]
uniformly as $\abs A\to\infty$.
\end{lemma}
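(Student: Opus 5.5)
The statement is the quotient analogue of \cref{lem:selector-normalization}, and I will prove it the same way. The selector was used there only through the bound $\abs A\leq T^r$, and that bound is now an explicit hypothesis. So the plan is to repeat the argument with the quotient size $\abs A=\abs{X_A/E_A}\leq\abs{X_A}\leq T^r$ in place of $\abs{S_A}$.

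\emph{Step 1: $e>0$.} Since target sizes are unbounded on the cell, choose a sequence $(A_i,\bar b_i)$ in the cell with $\abs{A_i}\to\infty$. The inequality $\abs{A_i}\leq\abs{M_i}^r$ forces $T_i=\abs{M_i}\to\infty$, so the uniform estimates apply along the sequence. If $e=0$, then $\abs{A_i}=\lambda+o(1)$ stays bounded, which is a contradiction. Hence $e>0$.

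\emph{Step 2: $d\leq e$.} Suppose $d>e$. Then along any such sequence
\[
  \frac{x(A_i,\bar b_i)}{\abs{A_i}}=\frac{\mu}{\lambda}T_i^{(d-e)/N}(1+o(1))\longrightarrow\infty,
\]
which contradicts $x\leq\abs A$. Thus $d\leq e$.

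\emph{Step 3: the normalized estimate.} Apply \cref{lem:asymptotic-calculus}\textup{(iii)} with $s=\abs A$, $f=x$, $a=e/N>0$ and $b=d/N$. This gives $x=\mu\lambda^{-d/e}\abs A^{d/e}+o(\abs A^{d/e})$, uniformly in the ambient threshold. The final clause of that lemma, using $s\leq T^r$, converts ambient thresholds into target thresholds: $\abs A>Q^r$ implies $T>Q$. This yields uniformity as $\abs A\to\infty$.

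The only point needing care is uniformity. Little-$o$ is defined with respect to $\abs M$ on the cell, while the conclusion is stated with respect to $\abs A$. The comparison $\abs A\leq T^r$ is exactly what transfers one threshold to the other, so I expect no genuine obstacle. The argument never uses a selector or representatives, only the cardinality of the quotient, so the proof is essentially identical to that of \cref{lem:selector-normalization}.
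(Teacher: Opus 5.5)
Your proposal is correct and matches the paper's proof: it reruns the argument of \cref{lem:selector-normalization} with the selector replaced by the hypothesis $\abs A\leq T^r$. It rules out $e=0$ and $d>e$ along an unbounded sequence, then applies \cref{lem:asymptotic-calculus}\textup{(iii)}, whose final clause converts ambient thresholds into target-size thresholds.
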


\begin{proof}
This is the proof of \cref{lem:selector-normalization} with the selector
replaced by the assumed bound $\abs A\leq T^r$.  Unbounded target size
forces $T\to\infty$ and excludes $e=0$.  If $d>e$, the ratio
$x(A,\bar b)/\abs A$ tends to infinity, contradicting $x\leq\abs A$.
The normalized estimate follows from
\cref{lem:asymptotic-calculus}\textup{(iii)}, and its final assertion turns
ambient thresholds into target-size thresholds.
\end{proof}

\begin{definition}
\label{def:essential-quotient}
Apply \cref{lem:ambient-quotient-counting} to the formula defining the whole
interpreted universe $X_A/E_A$.  Since there are no representative
parameters, fix the resulting ambient partition
\[
  \set{\chi_{e,\lambda}(\bar z):(e,\lambda)\in\Pi_U}.
\]
Let $\mathscr E_Q$ be the set of positive $e$ for which, for some
$\lambda>0$, the target structures on the cell
$\chi_{e,\lambda}(\bar c_A)$ have unbounded cardinality, and put
$
  L_Q\defeq\lcm(\mathscr E_Q),
$
with $\lcm(\varnothing)=1$.
\end{definition}

\begin{theorem}
\label{thm:quotient-transfer}
Let $\C$ be an $N$-dimensional asymptotic class, and let $\D$ be uniformly
parameter-interpretable in $\C$ by an interpretation of arity $r$.  If the
interpretation is quotient-trace reflecting, then $\D$ is an
$L_Q$-dimensional asymptotic class.  In particular, it is an
$\lcm(1,2,\dots,Nr)$-dimensional asymptotic class.
\end{theorem}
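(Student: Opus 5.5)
The plan is to mirror the proof of \cref{thm:selector-transfer}, with the selector replaced by ambient quotient counting and invariant profiles. Fix an $L'$-formula $\varphi(x;\bar y)$ with $\bar y=(y_1,\dots,y_m)$, and write $M=\alpha(A)$, $T=\abs M$. By \cref{lem:translation}, the translated formula
\[
  \psi(\bar u;\bar v_1,\dots,\bar v_m;\bar z)\defeq\delta(\bar u;\bar z)\wedge\varphi^{\I}(\bar u;\bar v_1,\dots,\bar v_m;\bar z)
\]
has $\bar u$-fiber contained in $X_A$ and closed under $E_A$. Moreover, $\abs{\varphi(A;\bar b)}$ equals the number of $E_A$-classes in that fiber, for any choice of representatives $\widetilde{\bar b}$. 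Apply \cref{lem:ambient-quotient-counting} to $\psi$ to get cells $\theta_i(\bar v;\bar z)$ with labels $(d_i,\mu_i)$. Use the fixed partition $\chi_{e,\lambda}(\bar z)$ of \cref{def:essential-quotient} for $\abs A=\abs{X_A/E_A}$. Then form the joint cells $\chi_{e,\lambda}(\bar z)\wedge\theta_i(\bar v;\bar z)$, restricted to $\bigwedge_j\delta(\bar v_j;\bar z)$, which partition $X_A^m$.

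These joint cells are not $E_A^{(m)}$-invariant. I would therefore pass to the profiles $\pi_P(\bar v;\bar z)$ of \cref{lem:quotient-profiles}, which are invariant and partition $X_A^m$. By quotient-trace reflection, each $\pi_P$ has a parameter-free descent $\pi_P^\downarrow(\bar y)$. The $\chi$-component needs no separate treatment: it depends only on $\bar z$, so it is constant within a profile and is already recorded in $P$. Alternatively, it descends as an $m=0$ sentence. The descended formulas partition $A^m$, since every target tuple has a profile and profiles are disjoint.

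On a profile $\pi_P^\downarrow$ realized only in target structures of bounded size, refine by $\operatorname{Exact}_{\varphi,k}$ as in \cref{lem:bounded-cell}. On a profile realized in arbitrarily large targets, apply \cref{lem:quotient-profile-compatibility} to $f_A(\bar v)=\abs{\varphi(A;[\bar v])}$, which is invariant. Since $\abs A\le T^r$, target-unboundedness forces $T$ to be unbounded. Hence all labels in $P$ agree, giving either $(0,0)$ or a single pair $(d,\mu)$, and likewise a single universe label $(e,\lambda)$. A little care is needed with uniformity: the estimates hold uniformly on each of the finitely many cells, and each tuple in $P$ has representatives in every cell of $P$. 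Taking the maximum of the finitely many thresholds therefore gives uniformity on $\pi_P^\downarrow$.

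On an unbounded profile, $\lambda>0$ and $e\in\mathscr E_Q$, because a universe label of $(0,0)$ or $(0,\lambda)$ bounds $\abs A$. A fiber label $(0,0)$ gives an eventually empty fiber by \cref{rem:zero-cell}. Otherwise \cref{lem:quotient-normalization}, using $0\le\abs{\varphi(A;\bar b)}\le\abs A\le T^r$, yields $d\le e$ and the estimate $\mu\lambda^{-d/e}\abs A^{d/e}$. Because $e\mid L_Q$, this rewrites as exponent $k/L_Q$ with $k=dL_Q/e\in\{0,\dots,L_Q\}$. The case $\mathscr E_Q=\varnothing$ is handled as in \cref{thm:essential-selector}: all targets are then uniformly bounded and exact counts suffice. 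Merging cells with equal labels finishes the partition. The final clause follows since $\mathscr E_Q\subseteq\{1,\dots,Nr\}$ by the bound $d_i\le Nr$ in \cref{lem:ambient-quotient-counting}.

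The main obstacle is the profile step. One must check that a profile is unbounded in the target sense exactly when the ambient $T$ is unbounded on it. One must also check that the estimates from different representative cells combine uniformly on the descended profile.
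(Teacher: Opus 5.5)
Your proposal is correct and is essentially the paper's proof: translate $\varphi$, apply \cref{lem:ambient-quotient-counting} to the fiber and to the universe, replace the non-invariant cells by the invariant profiles of \cref{lem:quotient-profiles}, descend them by quotient-trace reflection, and finish with \cref{lem:quotient-profile-compatibility}, \cref{lem:quotient-normalization}, and $e\mid L_Q$. The differences are cosmetic: you profile the joint cells $\chi_{e,\lambda}\wedge\theta_i$ rather than conjoining $\chi_{e,\lambda}$ with the profiles of the $\theta_i$, which is equivalent since $\chi_{e,\lambda}$ depends only on $\bar z$; and in your closing remark only the implication from target-unboundedness to unbounded $T$ is needed and true---the converse fails, e.g.\ on a universe cell labelled $(0,\lambda)$.
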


\begin{proof}
Fix an $L'$-formula $\varphi(x;\bar y)$, with
$\bar y=(y_1,\dots,y_m)$.  Translate it by \cref{lem:translation} and put
\[
  \psi(\bar u;\bar v_1,\dots,\bar v_m;\bar z)
  \defeq
  \delta(\bar u;\bar z)\wedge
  \bigwedge_{i=1}^m\delta(\bar v_i;\bar z)\wedge
  \varphi^{\I}(\bar u;\bar v_1,\dots,\bar v_m;\bar z).
\]
Its $\bar u$-fiber is a union of $E_A$-classes, and its quotient
cardinality is exactly $\abs{\varphi(A;\bar b)}$.  Apply
\cref{lem:ambient-quotient-counting}.  We obtain a finite ambient partition
$\theta_i(\bar v_1,\dots,\bar v_m;\bar z)$, $i\in I$, with quotient-count
labels $(d_i,\mu_i)$.

The cells $\theta_i$ need not be invariant under changing representatives.
Replace them by the invariant profile partition $\pi_P$ from
\cref{lem:quotient-profiles}.  Apply
\cref{lem:ambient-quotient-counting} also to the whole interpreted universe,
using the fixed cells $\chi_{e,\lambda}$ from
\cref{def:essential-quotient}.  On such a cell,
\begin{equation}\label{eq:quotient-universe-size}
  \abs A=\lambda\abs M^{e/N}+o(\abs M^{e/N}),
  \qquad M=\alpha(A).
\end{equation}

Every conjunction
$
  \chi_{e,\lambda}(\bar z)\wedge
  \pi_P(\bar v_1,\dots,\bar v_m;\bar z)
$
is invariant in all representative coordinates.  Quotient-trace reflection
therefore supplies a parameter-free target formula
$\gamma_{e,\lambda,P}(\bar y)$ defining its pullback.  These formulas form
a finite partition of all target parameter tuples.

Apply \cref{lem:bounded-cell} to each target-bounded cell, and fix one of
the remaining target-unbounded cells.  Its universe label is positive, with
$e\in\mathscr E_Q$ and $e>0$.  The target fiber cardinality is invariant under $E_A^{(m)}$, so
\cref{lem:quotient-profile-compatibility} shows that either all labels in
$P$ are $(0,0)$, or they are all the same positive pair $(d,\mu)$.

In the first case, the target fiber is eventually empty, uniformly on the
cell, and we assign $(0,0)$.  In the second case, choose any $i\in P$.
Every parameter tuple in the profile has an equivalent representative tuple in the $i$th
cell, and the quotient fiber is unchanged.  Hence, uniformly on the whole
profile,
\begin{equation}\label{eq:quotient-fiber-size}
  \abs{\varphi(A;\bar b)}
  =\mu\abs M^{d/N}+o(\abs M^{d/N}).
\end{equation}
Since the fiber is a subset of $A$, apply
\cref{lem:quotient-normalization} to
\eqref{eq:quotient-universe-size} and
\eqref{eq:quotient-fiber-size}.  We obtain
$d\leq e$ and
\[
  \abs{\varphi(A;\bar b)}
  =\mu\lambda^{-d/e}\abs A^{d/e}
   +o(\abs A^{d/e})
\]
uniformly as $\abs A\to\infty$ on the target cell.

Because $e\mid L_Q$ we have
\[
  \frac de=\frac{k}{L_Q},
  \qquad
  k=d\frac{L_Q}{e}\in\set{0,\dots,L_Q}.
\]
Thus every target-unbounded cell has an allowed $L_Q$-dimensional
alternative, while the target-bounded cells have exact dimension-zero
alternatives.  Merge cells with the same target label and take the maximum
of their finitely many uniformity thresholds.  This proves that $\D$ is
$L_Q$-dimensional.

Finally, $\mathscr E_Q\subseteq\set{1,\dots,Nr}$, so
$L_Q\mid\lcm(1,2,\dots,Nr)$.  An $L_Q$-dimensional class is also an
$hL_Q$-dimensional class for every positive integer $h$.
\end{proof}

\begin{corollary}
\label{cor:weak-quotient-transfer}
Let $\C$ be an $N$-dimensional asymptotic class, and let $\D$ be uniformly
parameter-interpretable in $\C$ by an interpretation of arity $r$.  No
trace-reflection hypothesis is imposed.  Then $\D$ is a weak
$L_Q$-dimensional asymptotic class.  In particular, it is a weak
$\lcm(1,2,\dots,Nr)$-dimensional asymptotic class.
\end{corollary}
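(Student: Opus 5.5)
The plan is to rerun the proof of \cref{thm:quotient-transfer} verbatim and to delete the single step that used quotient-trace reflection, namely the descent of the invariant cells $\chi_{e,\lambda}(\bar z)\wedge\pi_P(\bar v;\bar z)$ to parameter-free target formulas. Weak asymptoticity only asks for finitely many set-theoretic alternatives, so the pullbacks of these cells can serve directly as the (possibly non-definable) partition of target parameter tuples.

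Fix $\varphi(x;\bar y)$ and form $\psi$ as in that proof. Applying \cref{lem:ambient-quotient-counting} gives ambient cells $\theta_i$ with quotient-count labels. Passing to the invariant profiles $\pi_P$ of \cref{lem:quotient-profiles} and intersecting with the universe cells $\chi_{e,\lambda}$ of \cref{def:essential-quotient} then gives a finite family of representative-invariant sets. For each pair $(e,\lambda,P)$, let $\Gamma_{e,\lambda,P}$ be the class of pairs $(A,\bar b)$ such that $\alpha(A)\models\chi_{e,\lambda}(\bar c_A)$ and some (equivalently every) representative tuple of $\bar b$ satisfies $\pi_P$. Invariance makes this well defined, and since the profiles partition $X_A^m$, the classes $\Gamma_{e,\lambda,P}$ partition all target parameter tuples. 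There are only finitely many of them.

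On each $\Gamma_{e,\lambda,P}$ I split into two cases.

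\emph{Target-unbounded case.} Here $e\in\mathscr E_Q$. \Cref{lem:quotient-profile-compatibility} shows that the labels in $P$ agree, so choosing a representative in a single cell gives the uniform estimate \eqref{eq:quotient-fiber-size}. \Cref{lem:quotient-normalization} then converts it into $\mu\lambda^{-d/e}\abs A^{d/e}+o(\abs A^{d/e})$ with $d\le e$. Writing $d/e=k/L_Q$ places the exponent on the $L_Q$-grid. None of these steps uses definability of the cell, only uniformity on the class of pairs.

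\emph{Target-bounded case.} Suppose $\abs A\le B$. I split the cell by the exact fiber size $k\le B$, as in \cref{lem:bounded-cell}. Only the set-theoretic partition is needed here, which is even easier than the definable version.

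Finally, merge the pieces that carry the same label, taking the maximum of the finitely many thresholds. This yields finitely many alternatives on a set-theoretic partition, which is exactly a weak $L_Q$-dimensional asymptotic class. Since $\mathscr E_Q\subseteq\{1,\dots,Nr\}$, we get $L_Q\mid\lcm(1,\dots,Nr)$, and a weak $L_Q$-class is also a weak $hL_Q$-class for every positive integer $h$ by rescaling the exponents.

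The one point needing care, and the main obstacle, is that \cref{def:essential-quotient} and the notion of ``target-unbounded'' must not depend on trace reflection. They do not: both refer only to the ambient partition $\chi_{e,\lambda}$ and to the cardinalities of the target structures.
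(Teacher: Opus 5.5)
Your proposal is correct and follows the paper's proof essentially verbatim. The paper likewise keeps the universe cells $\chi_{e,\lambda}$ and the invariant profiles from the proof of \cref{thm:quotient-transfer}, uses representative-independence of profile membership to obtain a well-defined set-theoretic partition of target parameter tuples, refines bounded cells by exact fiber size, and applies \cref{lem:quotient-profile-compatibility} and \cref{lem:quotient-normalization} on unbounded cells with the same denominator calculation; the only detail you leave implicit is an unbounded profile whose labels are all $(0,0)$, which is handled as in the theorem by assigning the zero alternative.
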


\begin{proof}
Retain the ambient quotient-universe cells, the quotient-counting cells for
a fixed target formula, and their invariant profiles from the proof of
\cref{thm:quotient-transfer}.  For a target parameter tuple, membership in a
profile is independent of the chosen representative tuple.  Hence the
conjunction of its quotient-universe cell and its profile determines a
well-defined set-theoretic cell of target parameter tuples, even when no
formula in the target language defines that cell.

These finitely many set-theoretic cells partition the target parameter
space.  Refine target-bounded cells by exact fiber cardinality.  On every
unbounded cell, \cref{lem:quotient-profile-compatibility} gives one ambient
fiber label, and \cref{lem:quotient-normalization} converts it to an
asymptotic in powers of the target size.  The denominator calculation in
the proof of \cref{thm:quotient-transfer} is unchanged.  Thus all required
asymptotic alternatives exist with denominator $L_Q$; only the definability
of the cells is absent.
\end{proof}

\begin{corollary}
\label{cor:profile-descent-criterion}
In \cref{thm:quotient-transfer}, full quotient-trace reflection may be
replaced by the following weaker hypotheses.
\begin{enumerate}[label=\textup{(\roman*)}]
  \item Every cell in the fixed ambient quotient-universe partition from
  \cref{def:essential-quotient} has a parameter-free pullback to the target
  class.
  \item For each target formula $\varphi(x;\bar y)$, one can choose the
  ambient quotient-counting partition supplied by
  \cref{lem:ambient-quotient-counting} so that every invariant profile of
  its parameter cells has a parameter-free pullback to the target class.
\end{enumerate}
Under these hypotheses, $\D$ is an $L_Q$-dimensional asymptotic class.
\end{corollary}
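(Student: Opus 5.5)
The plan is to rerun the proof of \cref{thm:quotient-transfer} and record exactly where quotient-trace reflection was invoked; it should enter only to descend finitely many invariant ambient formulas. Fix a target formula $\varphi(x;\bar y)$ and form the ambient formula $\psi(\bar u;\bar v_1,\dots,\bar v_m;\bar z)$ from the translation $\varphi^{\I}$, exactly as in that proof. By hypothesis (ii), choose the ambient quotient-counting partition $\theta_i$, $i\in I$, from \cref{lem:ambient-quotient-counting} so that each invariant profile formula $\pi_P$ from \cref{lem:quotient-profiles} has a parameter-free target pullback $\pi_P^\downarrow(\bar y)$. By hypothesis (i), each universe cell $\chi_{e,\lambda}(\bar z)$ from \cref{def:essential-quotient} has a parameter-free target pullback $\chi_{e,\lambda}^\downarrow$; since this is an $m=0$ condition, it is a target sentence.

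Next, define the target cells
\[
  \gamma_{e,\lambda,P}(\bar y)\defeq\chi_{e,\lambda}^\downarrow\wedge\pi_P^\downarrow(\bar y).
\]
I need to check that these partition the target parameter tuples. For each $A$, exactly one sentence $\chi_{e,\lambda}^\downarrow$ holds, because the $\chi_{e,\lambda}(\bar c_A)$ partition the parameter space. Each $\bar b\in A^m$ has exactly one profile, because the $\pi_P$ form an invariant partition of $X_A^m$ and the pullback equivalence holds for arbitrary representatives. Hence the conjunctions define exactly the same set-theoretic cells that were used in \cref{cor:weak-quotient-transfer}.

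From here the counting argument is the one already given and uses no further descent. I refine the target-bounded cells by \cref{lem:bounded-cell}. On each unbounded cell, \cref{lem:quotient-profile-compatibility} produces a single fiber label $(d,\mu)$ or the zero label. Choosing a representative in one cell $i\in P$ gives the uniform estimate \eqref{eq:quotient-fiber-size}. Then \cref{lem:quotient-normalization} converts this estimate to exponent $d/e$ with $e\in\mathscr E_Q$, so $e\mid L_Q$. Finally, merging the finitely many cells that share a label yields the $L_Q$-dimensional partition.

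The main obstacle I expect is bookkeeping rather than a new estimate. The target-boundedness and the membership $e\in\mathscr E_Q$ must be read off from the same fixed universe partition named in (i). This is why (i) fixes that partition once and for all, while (ii) allows the choice of counting partition to depend on $\varphi$. I would also remark that the profiles depend on the chosen $\theta_i$, but any choice for which the descent holds suffices. The compatibility lemma applies to any partition carrying uniform labels for the $E_A^{(m)}$-invariant function $\bar v\mapsto\abs{\varphi(A;\bar b)}$.
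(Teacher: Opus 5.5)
Your proposal is correct and follows essentially the same route as the paper. The paper observes that the proof of \cref{thm:quotient-transfer} uses quotient-trace reflection only to descend the finitely many universe cells and the invariant profiles for the chosen formula, that hypotheses \textup{(i)} and \textup{(ii)} supply exactly these, and that the counting and normalization steps then apply verbatim; your explicit check that the conjunctions $\chi_{e,\lambda}^\downarrow\wedge\pi_P^\downarrow$ partition the target parameter tuples is a detail the paper leaves implicit.
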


\begin{proof}
The proof of \cref{thm:quotient-transfer} uses quotient-trace reflection
only to descend the finitely many quotient-universe cells and the finitely
many invariant profiles attached to the chosen formula.  Hypotheses
\textup{(i)} and \textup{(ii)} provide exactly those target formulas.  Their
conjunctions give the finite definable partition used in that proof, and
all counting and normalization steps then apply verbatim.
\end{proof}

\subsection{The intrinsic denominator and interpretation dependence}

The number $L_Q$ is extracted from the size of the target universe relative
to a chosen ambient class.  It is therefore not an invariant of the target
class under an unrestricted change of interpretation.  For example, fix
$a\geq2$ and consider
$
  \D_a=\big\{\F_{q^a}:q\text{ a prime power}\big\}
$
in the pure field language.  The identity interpretation of $\D_a$ in
itself has essential exponent $1$ and hence $L_Q=1$.  On the other hand,
after choosing an irreducible polynomial of degree $a$ over $\F_q$, the
usual construction on $\F_q^a$ uniformly parameter-interprets
$\F_{q^a}$ in $\F_q$.  Relative to the ambient size $q$, the interpreted
universe has size $q^a$, so this interpretation has essential exponent $a$
and $L_Q=a$.  This example concerns unrestricted interpretations; no
trace-reflection assertion is needed for the non-invariance conclusion.

There is nevertheless an intrinsic denominator arising from the proof of
the transfer theorem.  Call a rational number $\alpha\in[0,1]$ a
\emph{realized target exponent} if there exist a target formula
$\varphi(x;\bar y)$ and a parameter-free definable cell containing target
structures of unbounded size such that, uniformly on that cell,
\[
  \abs{\varphi(A;\bar b)}
  =\mu\abs A^\alpha+o(\abs A^\alpha)
\]
for some $\mu>0$, and the fibers are unbounded when $\alpha>0$.  Write
$\operatorname{den}(\alpha)$ for the denominator of $\alpha$ in lowest
terms.

\begin{definition}
\label{def:visible-denominator}
Suppose that the interpretation satisfies quotient-trace reflection, or
the formula-wise profile descent hypotheses of
\cref{cor:profile-descent-criterion}.  Let $\Sigma_{\I}$ be the set of
fractions $d/e$ occurring on target-unbounded descended profile cells in
the proof of \cref{thm:quotient-transfer}, as the target formula varies.
Define
\[
  L_{\rm vis}(\I)
  =\lcm\left\{
    \frac{e}{\gcd(d,e)}:
    \frac de\in\Sigma_{\I}
  \right\},
\]
with the usual convention that the least common multiple of the empty set
is $1$.
\end{definition}

Although its definition uses the interpretation, the next proposition
shows that its value is intrinsic.

\begin{proposition}
\label{prop:visible-least-denominator}
Assume quotient-trace reflection, or the hypotheses of
\cref{cor:profile-descent-criterion}.  Then
$
  L_{\rm vis}(\I)\mid L_Q,
$
and $L_{\rm vis}(\I)$ is the least asymptotic denominator of $\D$.
Consequently, visible denominators computed from any two such
interpretations of the same target class are equal.
\end{proposition}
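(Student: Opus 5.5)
The proof splits into three parts: divisibility, the fact that $\D$ is $L_{\rm vis}$-dimensional, and the minimality of $L_{\rm vis}$ among all asymptotic denominators. Intrinsicness is then a formal consequence.

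\emph{Divisibility.} Every fraction $d/e\in\Sigma_{\I}$ arises on a target-unbounded cell whose universe exponent $e$ lies in $\mathscr E_Q$. Hence $e\mid L_Q$, and therefore $e/\gcd(d,e)\mid L_Q$. Taking the least common multiple over $\Sigma_{\I}$ gives $L_{\rm vis}(\I)\mid L_Q$.

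\emph{$\D$ is $L_{\rm vis}$-dimensional.} I rerun the proof of \cref{thm:quotient-transfer}, keeping the same descended cells $\gamma_{e,\lambda,P}$. On each target-unbounded cell the exponent is $d/e$. In lowest terms its denominator is $e/\gcd(d,e)$, which divides $L_{\rm vis}$. So $d/e=k/L_{\rm vis}$ with $k\in\{0,\dots,L_{\rm vis}\}$, since $d\le e$. Target-bounded cells are handled by \cref{lem:bounded-cell}, and merging cells gives the partition.

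\emph{Minimality.} Let $\D$ be an $N'$-dimensional asymptotic class. I must show $L_{\rm vis}\mid N'$, which reduces to showing $e/\gcd(d,e)\mid N'$ for each $d/e\in\Sigma_{\I}$.

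Fix such a fraction. It is realized by a target formula $\varphi$ on a definable target-unbounded cell $\gamma$. If $d>0$, the fibers are unbounded there, since $|A|\to\infty$. If $d=0$, its denominator is $1$ and there is nothing to prove.

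Apply the $N'$-dimensional axiom to $\varphi$. Its cells $\theta_j$ cover $\gamma$, and there are finitely many of them. So some $\theta_j\wedge\gamma$ contains pairs with $|A|$ unbounded.

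Along such a sequence, the two uniform estimates $\mu'|A|^{d/e}$ and $\nu|A|^{b/N'}$ describe the same integer fiber sizes. As in \cref{lem:denominator-refinement}, comparing them forces $d/e=b/N'$. A zero label is excluded when $d>0$, because the fiber is $\sim\mu'|A|^{d/e}\to\infty$.

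From $d/e=b/N'$, the denominator of $d/e$ in lowest terms divides $N'$. Hence $L_{\rm vis}\mid N'$, and in particular $L_{\rm vis}\le N'$. Combined with the previous step, $L_{\rm vis}$ is the least asymptotic denominator.

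\emph{Intrinsicness.} The least asymptotic denominator is defined from $\D$ alone. Therefore any two interpretations satisfying the hypotheses give the same value of $L_{\rm vis}$.

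The step I expect to need the most care is minimality. The exponents in $\Sigma_{\I}$ are only known on descended cells, and the comparison has to be made on a subsequence lying in a single cell of the competing $N'$-partition. The pigeonhole over finitely many cells handles this. Uniformity in $|A|$ is supplied by \cref{lem:quotient-normalization}, so the limits can legitimately be compared.
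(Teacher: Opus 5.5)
Your proposal is correct and follows the paper's proof essentially step for step. You get divisibility from $e\in\mathscr E_Q$, relabel the descended profile cells from \cref{thm:quotient-transfer} to denominator $L_{\rm vis}(\I)$, and prove minimality by intersecting a target-definable profile cell with a competing $N'$-dimensional partition and comparing leading exponents along an unbounded sequence; your separate handling of $d=0$ and of the zero label only makes explicit details the paper leaves implicit.
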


\begin{proof}
On a target-unbounded profile cell, the proof of
\cref{thm:quotient-transfer} gives an exponent $d/e$, where
$e\in\mathscr E_Q$ and $0\leq d\leq e$.  Its reduced denominator is
$e/\gcd(d,e)$, which divides $e$ and hence divides $L_Q$.  This proves
$L_{\rm vis}(\I)\mid L_Q$.

The descended profile cells, as the formula varies, constitute an
$L_Q$-dimensional presentation of $\D$.  Every positive exponent used in
that presentation has reduced denominator dividing $L_{\rm vis}(\I)$, so
the same estimates may be relabelled with denominator
$L_{\rm vis}(\I)$.  Thus $\D$ is
$L_{\rm vis}(\I)$-dimensional.

Conversely, suppose that $\D$ is $h$-dimensional.  Fix a fraction
$d/e\in\Sigma_{\I}$ and its target-definable profile cell.  Intersect this
cell with the finitely many cells in an $h$-dimensional partition for the
same formula.  At least one intersection supports structures of unbounded
size.  Uniqueness of a positive leading exponent on an unbounded sequence
forces
$
  \frac de=\frac jh
$
for one of the labels $j/h$ in that partition.  Hence
$e/\gcd(d,e)$ divides $h$.  Taking the least common multiple over
$\Sigma_{\I}$ shows that $L_{\rm vis}(\I)\mid h$.  Therefore
$L_{\rm vis}(\I)$ is the least possible denominator.
\end{proof}

Thus $L_Q$ is an interpretation-dependent upper bound, while
$L_{\rm vis}$ records precisely the portion detected by target-definable
families.  The following criterion ensures equality.

\begin{corollary}
\label{cor:coprime-denominator-witness}
Assume the hypotheses of \cref{prop:visible-least-denominator}.  Suppose
that for every $e\in\mathscr E_Q$ there is a realized descended profile
exponent $d/e$ with
$
  \gcd(d,e)=1.
$
Then $L_Q=L_{\rm vis}(\I)$,
so $L_Q$ is the least asymptotic denominator of $\D$.
\end{corollary}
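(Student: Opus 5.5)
The plan is to show the two divisibilities $L_{\rm vis}(\I)\mid L_Q$ and $L_Q\mid L_{\rm vis}(\I)$. The first is already part of \cref{prop:visible-least-denominator}. Once equality holds, the final assertion, that $L_Q$ is the least asymptotic denominator of $\D$, follows from the same proposition, since it identifies $L_{\rm vis}(\I)$ as that least denominator.

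For the reverse divisibility, fix $e\in\mathscr E_Q$. By hypothesis there is a fraction $d/e\in\Sigma_{\I}$ with $\gcd(d,e)=1$, realized on a target-unbounded descended profile cell whose universe label has exponent $e$. Its contribution to the least common multiple in \cref{def:visible-denominator} is $e/\gcd(d,e)=e$, so $e\mid L_{\rm vis}(\I)$. Taking the least common multiple over all $e\in\mathscr E_Q$ gives $L_Q=\lcm(\mathscr E_Q)\mid L_{\rm vis}(\I)$. If $\mathscr E_Q=\varnothing$, then $L_Q=1$, which trivially divides $L_{\rm vis}(\I)$. Both quantities are positive integers that divide each other, so they are equal.

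The one point needing care is that the phrase ``realized descended profile exponent $d/e$'' must be read as a member of $\Sigma_{\I}$ with the same $e$. That is, the fraction must arise on a target-unbounded descended cell whose quotient-universe exponent is exactly this $e$, as in the proof of \cref{thm:quotient-transfer}. It should not be read merely as a reduced rational equal to $d/e$. Under this reading the coprimality makes the reduced denominator exactly $e$ and the argument above goes through. I expect no substantive obstacle beyond making this reading explicit.
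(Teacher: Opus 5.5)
Your argument is correct and is essentially the paper's own: coprimality makes the reduced denominator of the witness equal to $e$, so every $e\in\mathscr E_Q$ divides $L_{\rm vis}(\I)$, giving $L_Q\mid L_{\rm vis}(\I)$, while the reverse divisibility and the minimality statement come from \cref{prop:visible-least-denominator}. Your interpretive caveat is harmless but not needed. The quantity $e/\gcd(d,e)$ is just the reduced denominator of the rational number $d/e$, so any member of $\Sigma_{\I}$ equal to a coprime fraction $d/e$ already forces $e\mid L_{\rm vis}(\I)$, whichever cell produced it.
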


\begin{proof}
The reduced denominator of the displayed exponent is $e$.  Hence every
$e\in\mathscr E_Q$ divides $L_{\rm vis}(\I)$, and therefore
$L_Q\mid L_{\rm vis}(\I)$.  The reverse divisibility is
\cref{prop:visible-least-denominator}.
\end{proof}

It is enough, more generally, that the witnesses collectively detect every
prime-power divisor of $L_Q$.  In the common single-exponent case
$\mathscr E_Q=\set{e_0}$, any target-definable family of exponent
$d/e_0$ with $\gcd(d,e_0)=1$ proves minimality.  The simplest useful
witness is an exponent $1/e_0$.  This is exactly the mechanism in the
polynomial-incidence examples: the whole structure has leading size
$q^k$, while the neighborhood of a curve has size $q$, so the target
exponent $1/k$ forces the denominator $k$ to be minimal.

\subsection{Intrinsic criteria for trace reflection}

We next give a relative quantifier-elimination criterion for trace
reflection.  To formulate it, associate with $\I$ a class of two-sorted
\emph{interpretation pairs}.  For $A\in\D$, put
$M=\alpha(A)$ and let
$
  P_A=(M,A,\bar c_A,\pi_A),
$
where the two sorts carry their $L$- and $L'$-structures, the interpretation
parameter tuple $\bar c_A$ is named, and the graph of
$
  \pi_A:X_A\longrightarrow A
$
is the quotient map induced by the fixed isomorphism
$A\cong X_A/E_A$.  Equivalently, the $A$-sort may be viewed as the
corresponding imaginary sort.  Write
$
  \mathcal P_{\I}=\set{P_A:A\in\D}.
$

\begin{definition}
\label{def:separated-relative-qe}
We say that $\mathcal P_{\I}$ has \emph{separated relative quantifier
elimination over the target sort} if, for every parameter-free formula
$\eta(\bar y)$ of the pair language whose free variables lie in the
$A$-sort, there are parameter-free $L'$-formulas
$\chi_1(\bar y),\dots,\chi_t(\bar y)$ and pair-language sentences
$\sigma_1,\dots,\sigma_t$ such that, uniformly for $A\in\D$,
\[
  P_A\models\eta(\bar a)
  \quad\Longleftrightarrow\quad
  P_A\models
  \bigvee_{i=1}^t\bigl(\sigma_i\wedge\chi_i(\bar a)\bigr).
\]
We say that the interpretation pairs have \emph{sentence descent} if, for
every parameter-free pair-language sentence $\sigma$, there is a
parameter-free $L'$-sentence $\sigma^\downarrow$ such that
\[
  P_A\models\sigma
  \quad\Longleftrightarrow\quad
  A\models\sigma^\downarrow
\]
for every $A\in\D$.
\end{definition}

\begin{theorem}
\label{thm:relative-qe-trace}
If $\mathcal P_{\I}$ has separated relative quantifier elimination over the
target sort and sentence descent, then $\I$ is quotient-trace reflecting.
Consequently, if $\C$ is an $N$-dimensional asymptotic class, then $\D$ is
an $L_Q$-dimensional asymptotic class.
\end{theorem}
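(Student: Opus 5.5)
The plan is to turn an arbitrary $E_A$-invariant ambient formula into a pair-language formula whose free variables all lie in the target sort. Then we eliminate it down to an $L'$-formula, using separated relative quantifier elimination together with sentence descent. The consequence for asymptotic classes will then follow immediately from \cref{thm:quotient-transfer}.

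Fix $m\geq0$ and a parameter-free $L$-formula $\rho(\bar v_1,\dots,\bar v_m;\bar z)$ that is $E_A$-invariant in each $\bar v_i$ on $X_A^m$, uniformly for $A\in\D$. In the pair language, with $\bar c$ denoting the named interpretation constants, we form
\[
  \eta(y_1,\dots,y_m)\defeq
  \exists\bar v_1\cdots\exists\bar v_m\,
  \Bigl(\bigwedge_{i=1}^m\pi(\bar v_i)=y_i\wedge
  \rho(\bar v_1,\dots,\bar v_m;\bar c)\Bigr),
\]
where $\pi(\bar v_i)=y_i$ refers to the graph of the quotient map $\pi_A$. This forces each $\bar v_i\in X_A$, because $\pi_A$ is defined only on $X_A$. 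The constants $\bar c$ are part of the pair language, so $\eta$ is parameter-free there.

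The key verification uses invariance. For arbitrary representatives $\bar v_i\in X_A$ of $a_i$, we have $P_A\models\eta(\bar a)$ if and only if $\alpha(A)\models\rho(\bar v_1,\dots,\bar v_m;\bar c_A)$. The forward direction holds because any witnessing tuple is coordinatewise $E_A$-equivalent to the given one, and $\rho$ is invariant. The converse is immediate. We also have to check that $\rho$ evaluated in the $L$-sort of $P_A$ agrees with its evaluation in $\alpha(A)$. This holds because the $L$-sort carries exactly the $L$-structure of $M$, so relativizing $\rho$ to that sort changes nothing. In the case $m=0$, $\eta$ is simply the sentence $\rho(\bar c)$.

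Next we apply separated relative quantifier elimination to $\eta$. This gives $L'$-formulas $\chi_i(\bar y)$ and pair sentences $\sigma_i$ with $P_A\models\eta(\bar a)$ if and only if $P_A\models\bigvee_i(\sigma_i\wedge\chi_i(\bar a))$, uniformly in $A$. Sentence descent then replaces each $\sigma_i$ by an $L'$-sentence $\sigma_i^\downarrow$. Since $\chi_i$ is an $L'$-formula evaluated in the target sort, its truth in $P_A$ coincides with its truth in $A$. We can therefore set
\[
  \rho^\downarrow(\bar y)\defeq\bigvee_{i=1}^t\bigl(\sigma_i^\downarrow\wedge\chi_i(\bar y)\bigr),
\]
which is parameter-free and satisfies the equivalence required in \cref{def:quotient-trace}. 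For $m=0$ we use sentence descent directly.

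The main point requiring care is precisely this invariance bookkeeping: existential quantification over representatives must agree with evaluation at arbitrary representatives. The other delicate point is that the target sort of $P_A$ and $A$ must be identified via $\iota_A$, so that the $L'$-formulas have the same truth values in both. Once quotient-trace reflection has been established, \cref{thm:quotient-transfer} gives that $\D$ is $L_Q$-dimensional.
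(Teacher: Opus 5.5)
Your proposal is correct and follows the paper's proof essentially step for step. You use the same pair-language formula $\eta_\rho$, built from the graph of $\pi$ and an existential over representatives, and invariance of $\rho$ to pass to arbitrary representatives. You then apply separated relative quantifier elimination and sentence descent to obtain $\rho^\downarrow=\bigvee_i(\sigma_i^\downarrow\wedge\chi_i)$, and finish with \cref{thm:quotient-transfer}.
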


\begin{proof}
Let $\rho(\bar v_1,\dots,\bar v_m;\bar z)$
be a parameter-free ambient formula which is invariant under $E_A$ in every
representative coordinate.  In the interpretation-pair language define
\[
\begin{split}
  \eta_\rho(y_1,\dots,y_m)\quad\Longleftrightarrow\quad
  \exists\bar v_1\cdots\exists\bar v_m\biggl(&
  \bigwedge_{j=1}^m\pi(\bar v_j)=y_j\\
  &{}\wedge\rho(\bar v_1,\dots,\bar v_m;\bar c_A)\biggr),
\end{split}
\]
where the quantifiers over the representative tuples are restricted to
$X_A$ and $\pi$ denotes the uniform quotient-map symbol, interpreted as
$\pi_A$ in $P_A$.  Invariance of $\rho$ gives, for arbitrary representatives
$\widetilde a_j$ of $a_j$,
\[
  P_A\models\eta_\rho(\bar a)
  \quad\Longleftrightarrow\quad
  M\models\rho(\widetilde a_1,\dots,\widetilde a_m;\bar c_A).
\]

Apply separated relative quantifier elimination to $\eta_\rho$.  Choose
$\chi_i(\bar y)$ and $\sigma_i$ such that
\[
  \eta_\rho(\bar y)
  \quad\Longleftrightarrow\quad
  \bigvee_{i=1}^t\bigl(\sigma_i\wedge\chi_i(\bar y)\bigr)
\]
uniformly on $\mathcal P_{\I}$.  By sentence descent, choose
$L'$-sentences $\sigma_i^\downarrow$ with the same truth values on the
corresponding target structures.  Then
\[
  \rho^\downarrow(\bar y)
  \defeq
  \bigvee_{i=1}^t
  \bigl(\sigma_i^\downarrow\wedge\chi_i(\bar y)\bigr)
\]
is a parameter-free $L'$-formula defining exactly the quotient trace of
$\rho$.  The case $m=0$ is the sentence-descent assumption itself.  Thus
$\I$ is quotient-trace reflecting, and the final assertion follows from
\cref{thm:quotient-transfer}.
\end{proof}

\begin{corollary}
\label{cor:parameter-free-relative-elimination}
Suppose that every parameter-free pair-language formula whose free variables
lie in the target sort is uniformly equivalent on $\mathcal P_{\I}$ to a
parameter-free $L'$-formula, including the case of no free variables.  Then
$\I$ is quotient-trace reflecting.
\end{corollary}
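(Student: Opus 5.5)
The plan is to derive the corollary directly from \cref{thm:relative-qe-trace}, or, more simply, to rerun its proof with the stronger hypothesis in place of the two separate ones. The hypothesis gives at once both separated relative quantifier elimination over the target sort and sentence descent.

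First, sentence descent. A parameter-free pair-language sentence $\sigma$ has no free variables, so it lies in the target sort vacuously. By hypothesis it is therefore uniformly equivalent on $\mathcal P_{\I}$ to a parameter-free $L'$-sentence $\sigma^\downarrow$. Since the $A$-sort of $P_A$ carries exactly the $L'$-structure $A$, this gives
\[
  P_A\models\sigma \iff A\models\sigma^\downarrow .
\]

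Second, separated relative quantifier elimination. Given $\eta(\bar y)$ with free variables in the $A$-sort, the hypothesis supplies a parameter-free $L'$-formula $\chi(\bar y)$ with $P_A\models\eta(\bar a)\iff P_A\models\chi(\bar a)$. Here one must check that an $L'$-formula evaluated in the pair agrees with its evaluation in $A$. This holds because the $L'$-symbols live only on the $A$-sort, and the quantifiers of $\chi$, read as target-sort quantifiers, range over $A$. Take $t=1$, $\chi_1=\chi$, and $\sigma_1$ a trivially true sentence such as $\forall y\,(y=y)$. This yields the required disjunctive form.

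Both hypotheses of \cref{thm:relative-qe-trace} now hold, so $\I$ is quotient-trace reflecting. Alternatively, one can argue directly. For an invariant $\rho$, form $\eta_\rho$ as in that proof and take $\rho^\downarrow$ to be the $L'$-formula equivalent to $\eta_\rho$; the case $m=0$ is the sentence case. The only point needing care is the identification of $L'$-formulas in the pair with their evaluation in $A$, i.e.\ relativization to the target sort. This is routine, and I expect no real obstacle.
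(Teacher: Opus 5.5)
Your proposal is correct and matches the paper's proof. The paper likewise takes $t=1$ with a tautological pair sentence to obtain separated relative quantifier elimination, uses the no-free-variable case for sentence descent, and applies \cref{thm:relative-qe-trace}; your remark that an $L'$-formula evaluated in $P_A$ agrees with its evaluation in $A$ makes explicit a point the paper leaves implicit.
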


\begin{proof}
Take $t=1$ and the pair sentence to be the tautology in
\cref{def:separated-relative-qe}.  The empty-tuple case gives sentence
descent.  Apply \cref{thm:relative-qe-trace}.
\end{proof}

\begin{remark}
Uniform stable embeddedness of the target sort is not, by itself, enough:
it generally represents a pair-definable relation as
$\psi(\bar y;\bar b_A)$ using parameters from $A$, whereas trace reflection
requires a parameter-free target formula.  Every target-definable trace has
a canonical parameter in $A^{\rm eq}$, without any
elimination-of-imaginaries assumption.  The semantic problem is therefore
to force that canonical parameter into
$\operatorname{dcl}_{L'}^{\rm eq}(\varnothing)$ and to make the resulting
parameter-free definition uniform across pair completions.  Sentences
require a separate completion-descent condition.
\end{remark}

Let $T_{\I}=\Th(\mathcal P_{\I})$ be the common, possibly incomplete,
theory of the interpretation pairs, and let $T_{\rm tar}$ be its reduct to
the target language.  A \emph{completion} below always means a complete
theory extending the indicated common theory.

\begin{definition}
\label{def:target-controlled-stable}
We say that $T_{\I}$ has \emph{target-controlled stable embeddedness} if the
following conditions hold in every completion $T^*\supseteq T_{\I}$ and
every sufficiently saturated
$P=(M,A,\bar c,\pi)\models T^*$.
\begin{enumerate}[label=\textup{(\roman*)}]
  \item The target $A$, with its $L'$-structure, is sufficiently saturated,
  stably embedded in $P$, and the structure induced on it by $P$ is exactly
  its $L'$-structure.
  \item The restriction homomorphism
  $
    \Aut(P)\longrightarrow\Aut_{L'}(A)
  $
  is surjective.
  \item Pair completions are determined by target completions: if
  $T_1,T_2\supseteq T_{\I}$ are complete and have the same complete
  $L'$-reduct, then $T_1=T_2$.
\end{enumerate}
\end{definition}

\begin{theorem}
\label{thm:stable-embedded-descent}
If $T_{\I}$ has target-controlled stable embeddedness, then every
parameter-free pair-language formula whose free variables lie in the target
sort is, uniformly on $\mathcal P_{\I}$, equivalent to a parameter-free
$L'$-formula.  Hence $\I$ is quotient-trace reflecting.  If, moreover,
$\C$ is an $N$-dimensional asymptotic class, then $\D$ is an
$L_Q$-dimensional asymptotic class.
\end{theorem}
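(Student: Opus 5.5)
The plan is to prove the uniform descent statement first. The remaining assertions then follow from \cref{cor:parameter-free-relative-elimination} and \cref{thm:quotient-transfer}. Fix a parameter-free pair-language formula $\eta(\bar y)$ with free variables in the target sort. I would argue completion by completion and then glue by compactness. Let $T^*\supseteq T_{\I}$ be a completion and $P=(M,A,\bar c,\pi)\models T^*$ sufficiently saturated.

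Within a single completion, the first step uses condition (i). Since $A$ is stably embedded and its induced structure is exactly the $L'$-structure, $\eta(P)\cap A^{\abs{\bar y}}$ is defined by some $L'$-formula $\psi(\bar y;\bar b)$ with $\bar b\in A$. Next, condition (ii) shows that this set is $\Aut_{L'}(A)$-invariant: every $L'$-automorphism of $A$ lifts to an automorphism of $P$, and such an automorphism fixes the parameter-free set $\eta(P)$. By saturation of $A$, a set definable in $A$ with parameters and invariant under $\Aut_{L'}(A)$ is $L'$-definable over $\varnothing$. This is the standard Galois-theoretic argument: the canonical parameter in $A^{\rm eq}$ has finite orbit, hence lies in $\operatorname{acl}^{\rm eq}$; it is in fact fixed, hence lies in $\operatorname{dcl}^{\rm eq}(\varnothing)$. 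So there is a parameter-free $L'$-formula $\chi_{T^*}(\bar y)$ with $T^*\models\forall\bar y\,(\eta\leftrightarrow\chi_{T^*})$.

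Gluing across completions is the step I expect to be the main obstacle, because $\chi_{T^*}$ depends on $T^*$, and we need a single formula valid uniformly on $\mathcal P_{\I}$, not merely on its completions. Here condition (iii) enters. Suppose the $L'$-reducts of $T_1,T_2$ are equal but $\chi_{T_1}$ fails in $T_2$. Condition (iii) forces $T_1=T_2$, so this cannot happen. Hence each equivalence $\eta\leftrightarrow\chi_{T^*}$ holds in every completion whose target reduct contains $\chi_{T^*}$'s validity, and more precisely the sentence $\forall\bar y(\eta\leftrightarrow\chi_{T^*})$ is implied by $T_{\I}\cup\Th_{L'}$-part of $T^*$. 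By (iii), the complete $L'$-reduct of $T^*$ together with $T_{\I}$ axiomatizes $T^*$. Compactness then yields an $L'$-sentence $\tau_{T^*}\in T^*$ with $T_{\I}\vdash\tau_{T^*}\to\forall\bar y(\eta\leftrightarrow\chi_{T^*})$. The sets $[\tau_{T^*}]$ cover the Stone space of completions of $T_{\I}$. Compactness gives finitely many, $\tau_1,\dots,\tau_t$, which I disjointify. The formula $\bigvee_i(\tau_i\wedge\chi_i(\bar y))$ is then a parameter-free $L'$-formula equivalent to $\eta$ modulo $T_{\I}$, hence on every $P_A$.

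The sentence case $m=0$ is the same argument with $\chi_{T^*}$ a truth value, i.e.\ $\tau_{T^*}$ itself. Since $\mathcal P_{\I}\models T_{\I}$, the equivalences hold uniformly on $\mathcal P_{\I}$. \cref{cor:parameter-free-relative-elimination} then gives quotient-trace reflection, and \cref{thm:quotient-transfer} gives the $L_Q$-dimensional conclusion.
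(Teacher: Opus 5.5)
Your proof is correct, and it differs from the paper's only in the gluing step.

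\textbf{Per-completion step.} This is the paper's argument: pure stable embeddedness gives $\psi(\bar y;\bar b)$, lifting of automorphisms makes the trace $\Aut_{L'}(A)$-invariant, and this puts its canonical parameter in $\operatorname{dcl}^{\rm eq}_{L'}(\varnothing)$. That last step needs homogeneity as well as saturation. The detour through $\operatorname{acl}^{\rm eq}$ is unnecessary, since the parameter is fixed outright.

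\textbf{Gluing step.} The paper first takes pair-language sentences $\theta_{T^*}\in T^*$ implying $\forall\bar y\,(\eta\leftrightarrow\chi_{T^*})$ and extracts a finite cover. Only then does it use (iii), topologically. The restriction map from completions of $T_{\I}$ to completions of $T_{\rm tar}$ is a continuous injection, hence a homeomorphism onto its closed image. Each clopen $[\theta_i]$ is then extended from that closed subspace to a clopen set of the target Stone space, which gives target sentences $\tau_i$ with $T_{\I}\models\theta_i\leftrightarrow\tau_i$.

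You use (iii) syntactically instead. Let $T^*_{L'}$ be the set of $L'$-sentences in $T^*$. Every completion of $T_{\I}\cup T^*_{L'}$ has $L'$-reduct $T^*_{L'}$, so by (iii) it equals $T^*$. Hence $T_{\I}\cup T^*_{L'}$ is complete, and compactness yields the target sentence $\tau_{T^*}$ directly.

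\textbf{Comparison.} Your route avoids the closed-image and clopen-extension lemmas and is the more economical argument. The paper's route isolates sentence descent as a standalone consequence of (iii). Your argument gives the same fact when run with $\eta$ a sentence, as you note for $m=0$.

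\textbf{Wording.} The clause ``holds in every completion whose target reduct contains $\chi_{T^*}$'s validity'' does not parse and should be deleted. The ``more precisely'' sentence after it is the correct statement.
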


\begin{proof}
Fix a parameter-free pair-language formula $\eta(\bar y)$ with target-sort
free variables, and first fix a completion $T^*\supseteq T_{\I}$.  Work in
a sufficiently saturated
$P=(M,A,\bar c,\pi)\models T^*$.  By pure stable embeddedness, there is
an $L'$-formula $\psi(\bar y;\bar a)$ with a tuple $\bar a$ from $A$
such that
\[
  P\models\forall\bar y\,
  \bigl(\eta(\bar y)\leftrightarrow\psi(\bar y;\bar a)\bigr).
\]
Let
\[
  E_\psi(\bar u,\bar v)
  \quad\Longleftrightarrow\quad
  \forall\bar y\,
  \bigl(\psi(\bar y;\bar u)\leftrightarrow
        \psi(\bar y;\bar v)\bigr),
\]
and let $e=\bar a/E_\psi\in A^{\rm eq}$.  This is a canonical parameter of
the trace of $\eta$.

Let $g\in\Aut_{L'}(A)$.  By
\cref{def:target-controlled-stable}\textup{(ii)}, $g$ extends to an
automorphism $\widehat g$ of $P$.  Since $\eta$ is parameter-free,
$\widehat g$ preserves $\eta(A^{\abs{\bar y}})$, and hence $g(e)=e$ by
canonicity.  Saturation and homogeneity of the target give
$
  e\in\operatorname{dcl}_{L'}^{\rm eq}(\varnothing).
$
Choose a parameter-free $L'^{\rm eq}$-formula defining $e$ uniquely and
quantify over that unique imaginary in a definition of the trace.  A formula
in $L'^{\rm eq}$ with real free variables can be translated back to the
original language, so this gives a parameter-free $L'$-formula
$\chi_{T^*}(\bar y)$ such that
\[
  T^*\models\forall\bar y\,
  \bigl(\eta(\bar y)\leftrightarrow\chi_{T^*}(\bar y)\bigr).
\]

It remains to make this definition uniform over the completions of
$T_{\I}$.  By compactness, for each $T^*$ there is a pair-language sentence
$\theta_{T^*}\in T^*$ such that
\[
  T_{\I}\models\theta_{T^*}\longrightarrow
  \forall\bar y\,
  \bigl(\eta(\bar y)\leftrightarrow\chi_{T^*}(\bar y)\bigr).
\]
Finitely many such sentences $\theta_1,\dots,\theta_s$ cover all
completions of $T_{\I}$.

Condition \textup{(iii)} implies that each $\theta_i$ descends to a target
sentence.  Indeed, consider the restriction map from the Stone space of
completions of $T_{\I}$ to the Stone space of completions of
$T_{\rm tar}$.  By \textup{(iii)}, this map is a continuous injection and
hence a homeomorphism onto its closed image.  The clopen set defined by
$\theta_i$ therefore has clopen image in that closed subspace.  A clopen
subset of a closed subspace of a Stone space extends to a clopen subset of
the whole space, defined here by a target sentence $\tau_i$.  Thus
$T_{\I}\models\theta_i\leftrightarrow\tau_i.$

If $\chi_i$ is the target formula attached to $\theta_i$, then
\[
  \chi(\bar y)
  \defeq
  \bigvee_{i=1}^s\bigl(\tau_i\wedge\chi_i(\bar y)\bigr)
\]
is parameter-free and satisfies
\[
  T_{\I}\models\forall\bar y\,
  \bigl(\eta(\bar y)\leftrightarrow\chi(\bar y)\bigr).
\]
When $\bar y$ is empty, choose in each completion the appropriate truth
constant; the same Stone-space argument then gives sentence descent.
Therefore
\cref{cor:parameter-free-relative-elimination} yields quotient-trace
reflection, and the final assertion follows from
\cref{thm:quotient-transfer}.
\end{proof}

\begin{remark}
Stable embeddedness alone does not force empty-set descent.  If a point $a$
of a pure finite set $A$ is named in the pair, then $A$ is purely stably
embedded and the trace $x=a$ has canonical parameter $a$, but it is not
parameter-free definable in $A$.  The failure is precisely that target
automorphisms moving $a$ do not lift to the pair.

Likewise, let $P_n=(A_n,B_n)$ be a disjoint two-sorted structure, with
$A_n$ as target and with the theory of $B_n$ varying independently.  The
target can be purely stably embedded and satisfy automorphism lifting, while
a sentence about $B_n$ need not descend to $A_n$.  Here pair completions are
not determined by target completions.  Thus
\cref{def:target-controlled-stable}\textup{(ii)} and \textup{(iii)} address
independent obstructions.
\end{remark}

The proof of \cref{thm:stable-embedded-descent} uses less than the global
hypotheses in \cref{def:target-controlled-stable}.  We now isolate the
formula-wise content.  Let $\Gamma$ be a collection of parameter-free
pair-language formulas $\eta(\bar y)$ whose free variables lie in the target
sort.  For a completion $T^*\supseteq T_{\I}$ and a sufficiently saturated
$P=(M,A,\bar c,\pi)\models T^*$, whenever the trace
$\eta(A^{\abs{\bar y}})$ is target-definable, let $e_\eta$ denote a
canonical parameter of that trace in $A^{\rm eq}$.  Put
\[
  G_P=\operatorname{im}\bigl(
  \Aut(P)\longrightarrow\Aut_{L'}(A)
  \bigr).
\]

\begin{definition}
\label{def:local-target-control}
The pair theory $T_{\I}$ has \emph{$\Gamma$-local target control} if the
following conditions hold for every $\eta\in\Gamma$, every completion
$T^*\supseteq T_{\I}$, and every sufficiently saturated
$P=(M,A,\bar c,\pi)\models T^*$.
\begin{enumerate}[label=\textup{(\roman*)}]
  \item The trace $\eta(A^{\abs{\bar y}})$ is definable in the pure target
  structure.
  \item The pair-automorphism image controls the orbit of a canonical
  parameter of this trace: \,
  $
    \Aut_{L'}(A)\cdot e_\eta=G_P\cdot e_\eta.
  $
  \item Pair completions are $\eta$-determined by their target reducts:
  whenever complete $T_1,T_2\supseteq T_{\I}$ have the same complete
  target reduct, then, for every parameter-free target formula
  $\chi(\bar y)$,
  \[
  \begin{split}
    T_1\models\forall\bar y\,
      (\eta(\bar y)\leftrightarrow\chi(\bar y))
    \quad\Longleftrightarrow\quad
    T_2\models\forall\bar y\,
      (\eta(\bar y)\leftrightarrow\chi(\bar y)).
  \end{split}
  \]
\end{enumerate}
For formulas with no free variables, conditions \textup{(i)} and
\textup{(ii)} are vacuous, and \textup{(iii)} has the evident sentence-wise
meaning.  The orbit condition is independent of the chosen canonical
parameter, since any two canonical parameters of the same definable set are
interdefinable.
\end{definition}

Condition \cref{def:local-target-control}\textup{(ii)} is strictly weaker
than surjectivity of the restriction homomorphism.  For each
$g\in\Aut_{L'}(A)$, it is enough to find $h\in\Aut(P)$ such that
$
  h|_A(e_\eta)=g(e_\eta);
$
the restriction of $h$ need not agree with $g$ away from this canonical
parameter.  Condition \textup{(iii)} is likewise weaker than determination
of the entire pair completion: it controls only which target formulas
define the trace of $\eta$.

\begin{theorem}
\label{thm:local-canonical-descent}
If $T_{\I}$ has $\Gamma$-local target control, then every
$\eta(\bar y)\in\Gamma$ is uniformly equivalent on
$\mathcal P_{\I}$ to a parameter-free $L'$-formula.
\end{theorem}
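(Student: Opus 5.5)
The plan is to follow the proof of \cref{thm:stable-embedded-descent}, but to substitute the three local hypotheses of \cref{def:local-target-control} at exactly the points where that proof used the global ones. Fix $\eta(\bar y)\in\Gamma$. If $\bar y$ is empty, I will run only the Stone-space part of the argument below; so assume $\bar y$ is nonempty. Fix a completion $T^*\supseteq T_{\I}$ and a sufficiently saturated and homogeneous $P=(M,A,\bar c,\pi)\models T^*$. By \textup{(i)}, the trace $\eta(A^{\abs{\bar y}})$ is defined by some $\psi(\bar y;\bar a)$ with $\bar a$ from $A$. Its canonical parameter $e_\eta=\bar a/E_\psi$ is taken as in the proof of \cref{thm:stable-embedded-descent}.

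\textbf{Step 1: $e_\eta\in\operatorname{dcl}^{\rm eq}_{L'}(\varnothing)$.} Every $h\in\Aut(P)$ fixes the parameter-free set $\eta(A^{\abs{\bar y}})$ setwise. Hence $h|_A$ fixes $e_\eta$, so $G_P\cdot e_\eta=\{e_\eta\}$. By \textup{(ii)}, $\Aut_{L'}(A)\cdot e_\eta=\{e_\eta\}$. Saturation and homogeneity of $A$ then put $e_\eta$ in $\operatorname{dcl}^{\rm eq}_{L'}(\varnothing)$. Next, choose a parameter-free $L'^{\rm eq}$-formula isolating $e_\eta$ and eliminate the imaginary. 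This produces a parameter-free $L'$-formula $\chi_{T^*}(\bar y)$ with $T^*\models\forall\bar y\,(\eta\leftrightarrow\chi_{T^*})$.

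\textbf{Step 2: uniformization.} Compactness gives sentences $\theta_{T^*}\in T^*$ with $T_{\I}\models\theta_{T^*}\to\forall\bar y(\eta\leftrightarrow\chi_{T^*})$. The clopen sets defined by these sentences cover the Stone space $S$ of completions of $T_{\I}$, so finitely many $\theta_1,\dots,\theta_s$ with formulas $\chi_1,\dots,\chi_s$ suffice. Global determination of completions is no longer available to descend the $\theta_i$. Instead, I would define, for each $j$, the set
\[
U_j=\{T\in S: T\models\forall\bar y(\eta\leftrightarrow\chi_j)\}.
\]
Each $U_j$ is clopen, since it is the set of completions containing a single sentence, and $\bigcup_j U_j=S$. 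By \textup{(iii)}, $U_j$ is a union of fibers of the continuous restriction map $r:S\to S_{\rm tar}$. A saturation set of this kind, being clopen, has a clopen image: $r$ is a closed map between compact Hausdorff spaces, and $r(U_j)$ and $r(S\setminus U_j)$ are disjoint closed sets covering the closed image $r(S)$. Each $r(U_j)$ is therefore clopen in $r(S)$ and extends to a clopen set of $S_{\rm tar}$, defined by a target sentence $\tau_j$ with $T_{\I}\models\tau_j\leftrightarrow\forall\bar y(\eta\leftrightarrow\chi_j)$. I would then disjointify, setting $\tau'_j=\tau_j\wedge\bigwedge_{i<j}\neg\tau_i$. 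The required formula is $\chi=\bigvee_j(\tau'_j\wedge\chi_j)$, which works uniformly on $\mathcal P_{\I}$ because each $P_A$ satisfies $T_{\I}$ and lies in some completion. For sentences, the same argument applies with $\chi_j$ taken to be the truth constants.

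\textbf{Main obstacle.} The delicate point is Step 1. The canonical parameter and \textup{(ii)} must refer to the same saturated model, and the passage from invariance under $\Aut_{L'}(A)$ to definability over $\varnothing$ needs $A$ itself to be saturated and strongly homogeneous as an $L'$-structure, not merely as a sort of $P$. I would handle this by choosing $P$ special or saturated of suitable cardinality, so that its target reduct is also saturated; the required reduct saturation is implicit in the phrase ``sufficiently saturated'' of \textup{(i)}.
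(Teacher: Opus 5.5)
Your proposal is correct and follows the paper's proof essentially step for step. Condition (ii) forces the canonical parameter into $\operatorname{dcl}^{\rm eq}_{L'}(\varnothing)$, and uniformity comes from the clopen, fiber-saturated sets $U_j$ (the paper's $C_\chi$), whose images under the restriction map are clopen, cut out by target sentences, and then disjointified. The differences are minor: your intermediate compactness step producing the sentences $\theta_{T^*}$ is redundant, since the $U_j$ already cover the Stone space directly, and your remark on the saturation and homogeneity of the target reduct spells out a point the paper leaves implicit.
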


\begin{proof}
Fix $\eta\in\Gamma$ and a completion $T^*\supseteq T_{\I}$.  Work in a
sufficiently saturated $P=(M,A,\bar c,\pi)\models T^*$.  Suppose first
that $\bar y$ is nonempty.  By condition \textup{(i)}, the trace of $\eta$
has a canonical parameter $e_\eta\in A^{\rm eq}$.  Every automorphism of
$P$ fixes this parameter, because $\eta$ is parameter-free.  Hence
$G_P\cdot e_\eta=\set{e_\eta}$.  Condition \textup{(ii)} now gives
$
  \Aut_{L'}(A)\cdot e_\eta=\set{e_\eta},
$
and saturation and homogeneity yield
$
  e_\eta\in\operatorname{dcl}_{L'}^{\rm eq}(\varnothing).
$
A parameter-free definition of $e_\eta$ in $A^{\rm eq}$, followed by the
standard translation of imaginary quantifiers to real target coordinates,
produces a parameter-free target formula $\chi_{T^*}(\bar y)$ such that
\[
  T^*\models\forall\bar y\,
  \bigl(\eta(\bar y)\leftrightarrow\chi_{T^*}(\bar y)\bigr).
\]
If $\bar y$ is empty, take $\chi_{T^*}$ to be the tautology or contradiction
according to the truth value of $\eta$ in $T^*$.

It remains to make the choice uniform over completions.  Let $X$ be the
Stone space of completions of $T_{\I}$, let $Y$ be the closed subspace of
target completions which occur as their reducts, and let $r:X\to Y$ be the
restriction map.  For a parameter-free target formula $\chi$, put
\[
  C_\chi=\left\{T^*\in X:
  T^*\models\forall\bar y\,
  (\eta(\bar y)\leftrightarrow\chi(\bar y))\right\}.
\]
Each $C_\chi$ is clopen.  By condition \textup{(iii)} it is saturated on
the fibers of $r$.  Since $X$ is compact and $Y$ is Hausdorff, the image
$r(C_\chi)$ is closed; its complement is the image of the saturated clopen
set $X\setminus C_\chi$, so $r(C_\chi)$ is also open.  It is therefore
defined, relative to $Y$, by a target sentence.

The sets $C_\chi$ cover $X$ by the first part of the proof.  Compactness
gives a finite subcover $C_{\chi_1},\dots,C_{\chi_s}$.  Choose target
sentences $\tau_i$ defining their images in $Y$ and disjointify these
sentences in the usual order.  Then
$
  \chi_\eta(\bar y)=
  \bigvee_{i=1}^s\bigl(\tau_i\wedge\chi_i(\bar y)\bigr)
$
is a parameter-free target formula uniformly equivalent to $\eta$ on all
interpretation pairs.
\end{proof}

For the asymptotic transfer, the family $\Gamma$ may be finite and chosen
separately for each target formula.

\begin{corollary}
\label{cor:local-semantic-profile-descent}
Assume that the fixed quotient-universe cells in
\cref{cor:profile-descent-criterion} have parameter-free target pullbacks.
For each target formula $\varphi(x;\bar y)$, choose one ambient
quotient-counting partition as in
\cref{lem:ambient-quotient-counting}, and let $\Gamma_\varphi$ be the
finite family of pair-language formulas defining its invariant parameter
profiles.  If $T_{\I}$ has $\Gamma_\varphi$-local target control for every
$\varphi$, then $\D$ is an $L_Q$-dimensional asymptotic class.
\end{corollary}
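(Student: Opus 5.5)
The plan is to reduce the statement to \cref{cor:profile-descent-criterion}. Hypothesis (i) of that corollary is assumed outright: the fixed quotient-universe cells $\chi_{e,\lambda}(\bar z)$ of \cref{def:essential-quotient} have parameter-free target pullbacks. So the work is to verify hypothesis (ii) for each target formula $\varphi(x;\bar y)$, using the chosen ambient quotient-counting partition $\theta_i$, $i\in I$, from \cref{lem:ambient-quotient-counting} and its finitely many invariant profiles $\pi_P$ from \cref{lem:quotient-profiles}.

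\textbf{Step 1: pass to pair-language formulas.} For each nonempty $P\subseteq I$, form the pair-language formula
\[
  \eta_P(y_1,\dots,y_m)\defeq
  \exists\bar v_1\cdots\exists\bar v_m
  \Bigl(\bigwedge_{j}\pi(\bar v_j)=y_j\wedge
  \pi_P(\bar v_1,\dots,\bar v_m;\bar c)\Bigr),
\]
with the quantifiers restricted to the interpretation domain. This is exactly the construction in the proof of \cref{thm:relative-qe-trace}. Because $\pi_P$ is $E_A^{(m)}$-invariant, $P_A\models\eta_P(\bar b)$ holds if and only if some, equivalently every, representative tuple of $\bar b$ satisfies $\pi_P$. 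Thus $\eta_P$ defines the target pullback of the profile, and $\Gamma_\varphi$ is the finite family $\{\eta_P\}$.

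\textbf{Step 2: descend.} The hypothesis that $T_{\I}$ has $\Gamma_\varphi$-local target control lets us apply \cref{thm:local-canonical-descent}. It gives, for each $P$, a parameter-free $L'$-formula $\chi_P(\bar y)$ uniformly equivalent to $\eta_P$ on $\mathcal P_{\I}$. Hence every invariant profile has a parameter-free target pullback, which is hypothesis (ii) of \cref{cor:profile-descent-criterion}.

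\textbf{Step 3: conclude.} That corollary yields that $\D$ is an $L_Q$-dimensional asymptotic class.

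The main delicate point is the bookkeeping in Step 1. The family $\Gamma_\varphi$ must consist of pair formulas whose traces coincide exactly with the profile pullbacks, uniformly in $A$. This needs two facts: invariance of $\pi_P$, and the named parameter $\bar c_A$ being interpreted as $\bar c$ in $P_A$. If the statement's $\Gamma_\varphi$ is read literally as these formulas $\eta_P$, the argument is immediate.

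The case $m=0$ also needs checking. There the profiles are pair sentences, and \cref{thm:local-canonical-descent} covers formulas without free variables through condition (iii), so those sentences descend to target sentences as well. The resulting descended profiles partition the target parameter tuples because the $\pi_P$ partition $X_A^m$ invariantly.
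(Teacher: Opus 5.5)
Your proposal is correct and is essentially the paper's argument. You apply \cref{thm:local-canonical-descent} to the profile formulas in $\Gamma_\varphi$ to obtain parameter-free target pullbacks, which together with the assumed pullbacks of the quotient-universe cells verify the two hypotheses of \cref{cor:profile-descent-criterion}; your explicit formulas $\eta_P$ built from the quotient map just spell out what the paper leaves implicit in the phrase ``pair-language formulas defining its invariant parameter profiles.''
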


\begin{proof}
By \cref{thm:local-canonical-descent}, every invariant profile in the
chosen partition for $\varphi$ has a parameter-free target pullback.  The
two hypotheses of \cref{cor:profile-descent-criterion} therefore hold, and
that corollary gives the conclusion.
\end{proof}

The semantic input needed for transfer is therefore local.  For each chosen
counting partition, the finitely many canonical parameters of the profile
traces must lie in $\operatorname{dcl}_{L'}^{\rm eq}(\varnothing)$, and only
the finitely many completion patches used to make those definitions uniform
must descend to target sentences.  No lifting or completion-determinacy
assumption for unrelated formulas is required.

\begin{corollary}
\label{cor:pure-quotient-dimension}
Let $\D$ be uniformly parameter-interpretable in an $N$-dimensional
asymptotic class.  Suppose that every target-unbounded cell in the fixed
quotient-universe partition has the same positive exponent $e_0/N$.  Then
$\D$ is a weak $e_0$-dimensional asymptotic class.  If the interpretation is
quotient-trace reflecting, or merely satisfies
\cref{cor:profile-descent-criterion}, then $\D$ is a full
$e_0$-dimensional asymptotic class.
\end{corollary}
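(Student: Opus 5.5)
The plan is to identify $L_Q$ under the stated hypothesis and then read off both conclusions from results already proved. By \cref{def:essential-quotient}, $\mathscr E_Q$ consists of the positive exponents $e$ attached to target-unbounded cells $\chi_{e,\lambda}$ of the fixed quotient-universe partition. The hypothesis says every such cell has the same exponent $e_0/N$, so $\mathscr E_Q\subseteq\set{e_0}$. First I dispose of the degenerate case $\mathscr E_Q=\varnothing$: then every cell is target-bounded, so $\D$ has uniformly bounded cardinality. The exact-count formulas of \cref{lem:bounded-cell} then give a full asymptotic partition in any denominator, in particular $e_0$. This is the same argument as the empty case in the proof of \cref{thm:essential-selector}. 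Otherwise $\mathscr E_Q=\set{e_0}$ and $L_Q=\lcm\set{e_0}=e_0$.

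For the weak statement, I invoke \cref{cor:weak-quotient-transfer}, which needs no trace-reflection hypothesis. It gives that $\D$ is a weak $L_Q$-dimensional asymptotic class, and $L_Q=e_0$. If one wants to see the exponents directly, on each target-unbounded profile cell \cref{lem:quotient-normalization} yields an exponent $d/e_0$ with $0\le d\le e_0$, which is already on the $e_0$-grid. Cells that are bounded on the target side are refined by exact counts.

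For the full statement, under quotient-trace reflection I apply \cref{thm:quotient-transfer}; under the weaker formula-wise descent hypotheses I apply \cref{cor:profile-descent-criterion}. Either result gives a full $L_Q$-dimensional asymptotic class, that is, a full $e_0$-dimensional one.

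The only point needing care is that $L_Q$ is computed from one fixed partition $\Pi_U$, and it must be the same partition the hypothesis refers to. The hypothesis is phrased for exactly that fixed quotient-universe partition, so the identification $L_Q=e_0$ is legitimate and nothing further is required. I expect no genuine obstacle beyond checking that zero-exponent and $(0,0)$ universe cells are target-bounded. This holds because $\abs A=\lambda+o(1)$ or $o(1)$ on such cells forces bounded target size, so these cells never contribute to $\mathscr E_Q$.
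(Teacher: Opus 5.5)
Your proposal is correct and follows the paper's proof. The hypothesis gives $\mathscr E_Q=\set{e_0}$, hence $L_Q=e_0$, and then \cref{cor:weak-quotient-transfer} gives the weak conclusion while \cref{thm:quotient-transfer} or \cref{cor:profile-descent-criterion} gives the full one. Your separate treatment of the vacuous case $\mathscr E_Q=\varnothing$, where $\D$ is uniformly bounded and is handled by exact counts, is a small extra piece of care that the paper's one-line proof tacitly omits.
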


\begin{proof}
In this case $\mathscr E_Q=\set{e_0}$, so $L_Q=e_0$.  Apply
\cref{cor:weak-quotient-transfer} for the weak conclusion and
\cref{thm:quotient-transfer,cor:profile-descent-criterion} for the full
conclusion.
\end{proof}

\begin{corollary}
\label{cor:canonical-quotient-trace}
Let $X_A/E_A$ be any uniformly definable quotient inside an
$N$-dimensional asymptotic class.  Expand the quotient by a relation symbol
for the descent of every parameter-free ambient formula which is invariant
under the relevant coordinatewise powers of $E_A$, allowing arity zero for
ambient sentences evaluated at the interpretation parameters.  The
resulting class of full quotient-trace structures is an asymptotic class,
with the dimension bound in \cref{thm:quotient-transfer}.
\end{corollary}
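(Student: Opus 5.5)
The plan is to realize the full quotient-trace structure as a target class $\D^{\rm tr}$ that is uniformly parameter-interpreted in $\C$, and then to check quotient-trace reflection directly from the way the language was built. For each $A$ (that is, each valid choice of $M\in\C$ and interpretation parameters $\bar c_A$), the target universe is $X_A/E_A$. The language $L^{\rm tr}$ has one relation symbol $R_\rho$ of arity $m$ for every parameter-free ambient formula $\rho(\bar v_1,\dots,\bar v_m;\bar z)$ that is $E_A$-invariant in each block, uniformly. We interpret $R_\rho$ on the quotient by
\[
R_\rho([\bar v_1],\dots,[\bar v_m]) \iff M\models\rho(\bar v_1,\dots,\bar v_m;\bar c_A).
\]
Invariance makes this well defined, and $m=0$ gives propositional constants. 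The interpreting formula for $R_\rho$ is $\rho$ itself, so the data of \cref{def:interpretation} are $\delta,\epsilon$ and $R_\rho^{\I}=\rho$. If we need $\alpha$ to be an injection, we index the class by the pairs $(M,\bar c_A)$, or equivalently pass to the parameter-named class of \cref{lem:naming-parameters}. This is harmless, because the theorems only use the interpretation data uniformly.

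The verification of quotient-trace reflection is then immediate: given an invariant $\rho$, take $\rho^\downarrow\defeq R_\rho(y_1,\dots,y_m)$, and the equivalence in \cref{def:quotient-trace} holds by definition for arbitrary representatives. One subtlety is that the language is countable, since there are countably many $L$-formulas, so the standing convention is respected. Another is that ``invariant uniformly for $A\in\D$'' must be read relative to this fixed class, which is exactly how the symbols were chosen.

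Finally, we apply \cref{thm:quotient-transfer} to conclude that $\D^{\rm tr}$ is an $L_Q$-dimensional asymptotic class, and hence $\lcm(1,\dots,Nr)$-dimensional. Here $r$ is the arity of $X_A$.

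The only step needing care is well-definedness and uniformity. Invariance must hold on all of $X_A^m$ for every member of the class, not merely eventually. If a given $\rho$ is invariant only for some members, we would either exclude it or replace it by its invariant hull, e.g. $\exists\bar w\,(E^{(m)}(\bar v,\bar w)\wedge\rho(\bar w))$ as in \cref{lem:quotient-profiles}. That hull is always invariant and agrees with $\rho$ whenever $\rho$ is invariant, so taking it is a safe normalization. Beyond this bookkeeping, I expect no real obstacle: the corollary is essentially the observation that the trace structure is tautologically trace reflecting.
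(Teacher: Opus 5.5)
Your proposal is correct and follows essentially the same route as the paper. Quotient-trace reflection holds tautologically by taking $\rho^\downarrow$ to be the new relation symbol for $\rho$, and the expanded language is countable because there are only countably many ambient formulas. \cref{thm:quotient-transfer} then gives an $L_Q$-dimensional, hence $\lcm(1,\dots,Nr)$-dimensional, asymptotic class. Your extra bookkeeping about injectivity of $\alpha$ and about invariant hulls is harmless, though the paper's argument does not need it.
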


\begin{proof}
There are only countably many ambient formulas, and quotient-trace
reflection holds by the definition of the expanded language.  Apply
\cref{thm:quotient-transfer}.
\end{proof}

\begin{corollary}
\label{cor:selector-special}
Every selector-trace-reflecting interpretation is quotient-trace reflecting.
Consequently, \cref{thm:quotient-transfer} recovers
\cref{thm:selector-transfer}, with the sharper essential denominator when
available.
\end{corollary}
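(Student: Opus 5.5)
The plan is to verify quotient-trace reflection directly from selector-trace reflection by restricting an invariant formula to selected representatives. Let $\rho(\bar v_1,\dots,\bar v_m;\bar z)$ be a parameter-free $L$-formula that is $E_A$-invariant in each $\bar v_i$ on $X_A^m$, uniformly for $A\in\D$. Selector-trace reflection applies to every parameter-free ambient formula, so in particular to $\rho$ itself. It supplies a parameter-free $L'$-formula $\rho^\downarrow(y_1,\dots,y_m)$ with $A\models\rho^\downarrow(\bar a)$ iff $\alpha(A)\models\rho(\widetilde{\bar a};\bar c_A)$, where $\widetilde{\bar a}\in S_A^m$ are the selected representatives.

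Next, let $\bar v_i\in X_A$ be arbitrary representatives of $a_i$. Each $\bar v_i$ is $E_A$-equivalent to the selected representative $\widetilde a_i$, since $S_A\subseteq X_A$ meets each class exactly once. Changing the coordinates one at a time and applying the invariance of $\rho$ in each coordinate gives $\alpha(A)\models\rho(\bar v_1,\dots,\bar v_m;\bar c_A)$ iff $\alpha(A)\models\rho(\widetilde{\bar a};\bar c_A)$. This is exactly the equivalence required in \cref{def:quotient-trace}. The case $m=0$ is identical, since both definitions reflect ambient sentences evaluated at $\bar c_A$ in the same way. Hence the interpretation is quotient-trace reflecting.

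For the consequence, \cref{thm:quotient-transfer} now applies and shows that $\D$ is $L_Q$-dimensional. To recover \cref{thm:selector-transfer}, and the sharper essential form \cref{thm:essential-selector}, I will compare $L_Q$ with $L_S$. The selector gives a uniform bijection $S_A\to X_A/E_A$, so $\abs{S_A}=\abs A$. The ambient cells for the selector count $\sigma(\bar u;\bar z)$ and for the quotient count of the universe are therefore two partitions of the same parameter tuples $\bar c_A$ carrying asymptotics for the same integer function $\abs A$.

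On a common refinement, \cref{lem:quotient-profile-compatibility}, or simply uniqueness of leading terms along unbounded sequences, shows that on target-unbounded cells the positive exponents agree. Hence $\mathscr E_Q=\mathscr E_S$ and $L_Q=L_S$, and the quotient theorem yields the essential selector denominator, which divides $\lcm(1,\dots,Nr)$. The one point needing care is a $\chi$-cell that is target-unbounded in one partition while its refinement pieces are not. I would handle this by noting that some refinement piece must itself be unbounded, and that exponents attached to the original cells are determined on any unbounded subfamily. No real obstacle is expected beyond this bookkeeping.
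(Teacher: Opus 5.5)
Your proof is correct and follows the paper's argument. Apply selector-trace reflection to the invariant formula $\rho$ on selected representatives, then use $E_A$-invariance one coordinate at a time to pass to arbitrary representatives. For the consequence, the paper simply invokes \cref{thm:quotient-transfer}; your extra check that $\mathscr E_Q=\mathscr E_S$ is also correct, and it makes explicit the ``sharper essential denominator'' clause that the paper leaves implicit. That check compares the two leading terms for the same function $\abs A$ along an unbounded subsequence lying in one cell of each partition, which gives $L_Q=L_S$.
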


\begin{proof}
Let $\rho$ be invariant under $E_A$.  Evaluate $\rho$ on the unique selected
representatives and apply selector-trace reflection.  Invariance shows that
the resulting target formula has the required equivalence for arbitrary
representatives.  The transfer statement follows from
\cref{thm:quotient-transfer}.
\end{proof}

\begin{theorem}
\label{thm:quotient-composition}
Let $\I$ uniformly parameter-interpret $\D$ in $\C$ with arity $r$, and let
$\mathcal J$ uniformly parameter-free interpret $\Eclass$ in $\D$ with
arity $q$.  If both interpretations are quotient-trace reflecting, then
their composite interpretation of $\Eclass$ in $\C$ has arity $rq$ and is
quotient-trace reflecting.
\end{theorem}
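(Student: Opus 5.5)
We first write down the composite interpretation explicitly, and then check quotient-trace reflection in two stages: descend through $\I$ first, then through $\mathcal J$.

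\textbf{The composite interpretation.} Let $B\in\Eclass$, let $A\in\D$ be the structure interpreting $B$ via $\mathcal J$, and let $M=\alpha(A)$. Since $\mathcal J$ is parameter-free, its domain is $\delta_{\mathcal J}(A^q)$ and its equivalence relation is $\epsilon_{\mathcal J}$ on $q$-tuples of $A$. Translating these formulas and the relation formulas of $\mathcal J$ through $\I$ by \cref{lem:translation} gives the composite data:
\begin{itemize}[label={}]
\item the domain is the set of $rq$-tuples $(\bar u_1,\dots,\bar u_q)$ with each $\bar u_j\in X_A$ and $\delta_{\mathcal J}^{\I}$ true;
\item the equivalence relation is $\epsilon_{\mathcal J}^{\I}$, which already contains the blockwise relation $E_A$;
\item the interpretation parameters are just $\bar c_A$.
\end{itemize}
By \cref{lem:translation} these formulas are $E_A$-invariant in every block. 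Hence the composite quotient is $(\text{domain})/\epsilon^{\I}_{\mathcal J}\cong\delta_{\mathcal J}(A^q)/E_{\mathcal J}\cong B$, so the composite is a uniform parameter-interpretation of arity $rq$. This step is routine.

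\textbf{Reflection.} Let $\rho(\bar w_1,\dots,\bar w_m;\bar z)$ be a parameter-free $L$-formula, with each $\bar w_i$ an $rq$-tuple, that is invariant under the composite equivalence in each $\bar w_i$. Because the composite equivalence contains blockwise $E_A$, the formula $\rho$, viewed as a formula in the $mq$ separate $r$-blocks, is $E_A$-invariant in each block. We conjoin $\rho$ with $\delta$ on every block; this keeps invariance and ensures the formula only sees valid representatives. Quotient-trace reflection for $\I$ then gives a parameter-free $L'$-formula $\rho'(\bar x_1,\dots,\bar x_m)$, with each $\bar x_i$ a $q$-tuple from $A$, such that $\rho'$ holds of a tuple exactly when $\rho$ holds of any choice of $\I$-representatives for it.

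Next we check that $\rho'$ is $E_{\mathcal J}$-invariant in each $\bar x_i$ on $\delta_{\mathcal J}(A^q)$. Take $E_{\mathcal J}$-equivalent $q$-tuples and pick $\I$-representatives of both. These representatives are composite-equivalent, by the translation of $\epsilon_{\mathcal J}$, so invariance of $\rho$ gives the same truth value. To meet the uniform invariance hypothesis on all of $A^{q}$, we may replace $\rho'$ by $\rho'\wedge\bigwedge_i\delta_{\mathcal J}(\bar x_i)$.

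Quotient-trace reflection for $\mathcal J$, which involves no parameters, now yields a parameter-free formula $\rho^\downarrow$ in the language of $\Eclass$. Composing the two defining equivalences shows that for arbitrary composite representatives of $b_1,\dots,b_m$, the formula $\rho^\downarrow(\bar b)$ holds in $B$ if and only if $\rho(\ldots;\bar c_A)$ holds in $M$. The case $m=0$ is handled the same way: sentences at $\bar c_A$ descend to $L'$-sentences and then to sentences of $\Eclass$.

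\textbf{Main obstacle.} The delicate point is the invariance bookkeeping. Trace reflection for $\I$ only applies to formulas that are uniformly $E_A$-invariant on $X_A^{mq}$, and $\mathcal J$-reflection only to formulas that are $E_{\mathcal J}$-invariant. We must verify that invariance of $\rho$ under the coarser composite relation yields both, in particular that every $q$-tuple of $A$ actually has $\I$-representatives. This holds because $\pi_A$ is surjective. Parameter-freeness of $\mathcal J$ is essential here: with parameters $\bar d_B$ we would have to choose representatives of $\bar d_B$ inside the ambient parameter tuple, and no invariant choice is available. This is exactly the representative-choice obstruction the paper mentions.
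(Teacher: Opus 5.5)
Your proposal is correct and follows the paper's proof essentially step for step. You descend through $\I$ using that blockwise $E_A$-equivalence gives composite-equivalent representatives, and you check $E_{\mathcal J}$-invariance of the descended formula by lifting equivalent $\mathcal J$-tuples to composite-equivalent representatives. You then descend through $\mathcal J$, with parameter-freeness of $\mathcal J$ ensuring that $\bar c_A$ are the only distinguished parameters.

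One small bookkeeping fix is needed, which the paper also leaves implicit. Composite invariance gives $E_A$-invariance only on the composite domain, but reflection through $\I$ needs it on all of $X_A^{mq}$. So before the first descent you should conjoin $\rho$ with $\bigwedge_i\delta_{\mathcal J}^{\I}(\bar w_i;\bar z)$, not merely with $\delta$ on each block.
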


\begin{proof}
Use the standard composite interpretation: a representative of one element
of an $\Eclass$-structure is a $q$-tuple of $\D$-elements, each represented
by an $r$-tuple from the $\I$-domain.  Translate the domain, equivalence, and
relation formulas of $\mathcal J$ through $\I$.

Let $\rho$ be an ambient $L$-formula on tuples of composite representatives
and the composite interpretation parameters, and assume that $\rho$ is
invariant under the composite equivalence relation.  Replacing any one
$r$-block by an $\I$-equivalent block leaves the represented
$\D$-element unchanged and therefore gives a composite-equivalent
representative.  Hence $\rho$ is invariant in every $\I$-quotient
coordinate.  Since $\mathcal J$ is parameter-free, the only distinguished
parameters of the composite are those belonging to $\I$.  Quotient-trace
reflection for $\I$ therefore descends $\rho$ to a parameter-free
$L'$-formula on the corresponding $\mathcal J$-domain tuples.

Let $\bar u$ and $\bar v$ be equivalent domain tuples for $\mathcal J$,
and choose representatives of their coordinates under $\I$.  These
representatives are composite-equivalent.  Thus the original invariance of
$\rho$ makes the descended $L'$-formula invariant under the equivalence
relation of $\mathcal J$.  Quotient-trace reflection for $\mathcal J$ now
descends it to a parameter-free formula in the language of $\Eclass$.
Composing the two defining equivalences proves quotient-trace reflection for
the composite.
\end{proof}

\begin{corollary}
\label{cor:iterated-quotient-transfer}
In the setting of \cref{thm:quotient-composition}, if $\C$ is an asymptotic
class, then $\Eclass$ is an asymptotic class.  The same conclusion holds for
any finite chain of parameter-free quotient-trace-reflecting
interpretations.  More generally, the first interpretation in such a chain
may use parameters.
\end{corollary}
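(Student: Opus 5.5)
The plan is to reduce all three assertions to \cref{thm:quotient-composition} and \cref{thm:quotient-transfer}, proceeding by induction on the length of the chain.

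\emph{Two-step case.} By \cref{thm:quotient-composition}, the composite interpretation of $\Eclass$ in $\C$ is a uniform parameter-interpretation of arity $rq$, and it is quotient-trace reflecting. We also need the uniformity data required by \cref{def:interpretation}:
\begin{itemize}
  \item the composite injection is $\alpha_{\I}\circ\alpha_{\mathcal J}$;
  \item the composite parameters are just $\bar c_A$, because $\mathcal J$ is parameter-free;
  \item the composite domain, equivalence, and relation formulas are the $\I$-translations (via \cref{lem:translation}) of those of $\mathcal J$.
\end{itemize}
These are fixed formulas, independent of the structure, so the composite is uniform. If $\C$ is $N$-dimensional, \cref{thm:quotient-transfer} then makes $\Eclass$ an asymptotic class, of dimension $L_Q$ for the composite and in particular $\lcm(1,\dots,Nrq)$-dimensional.

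\emph{Finite chains.} For a chain $\C=\C_0,\C_1,\dots,\C_n$ with $\mathcal J_i$ interpreting $\C_i$ in $\C_{i-1}$, I induct on $n$, composing from the bottom. Suppose the composite $\C_0\to\C_{n-1}$ is a quotient-trace-reflecting uniform interpretation that uses at most the parameters of $\mathcal J_1$. Then apply \cref{thm:quotient-composition} with this composite in the role of $\I$ and $\mathcal J_n$ in the role of $\mathcal J$. That theorem allows $\I$ to have parameters and requires only $\mathcal J$ to be parameter-free. So the induction goes through as long as $\mathcal J_2,\dots,\mathcal J_n$ are parameter-free, while $\mathcal J_1$ may carry parameters. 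This is exactly the ``more generally'' clause, and the composite arity is the product of the arities.

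\emph{Main obstacle.} The only real point to check is that the hypotheses of \cref{thm:quotient-composition} are preserved under composition. The first factor must again be a uniform parameter-interpretation in the sense of \cref{def:interpretation}: invariance of the relation formulas under the composite equivalence, and an isomorphism $\iota$ onto the composite quotient obtained by composing $\iota$'s. Its parameter tuple must consist only of the original first-stage parameters. I would verify both by unwinding the translations. The essential role of parameter-freeness for the later stages is that no new distinguished parameters have to be represented by ambient tuples. Such representatives would reintroduce the representative-choice obstruction that the paper flags, because a representative choice need not be invariant.
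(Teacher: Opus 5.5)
Your proposal is correct and follows the paper's proof: apply \cref{thm:quotient-composition} and then \cref{thm:quotient-transfer}, iterating along the chain with the accumulated composite (which carries only the first stage's parameters) in the role of $\I$ and the next parameter-free stage in the role of $\mathcal J$. The bookkeeping you spell out (uniformity of the composite data, product arity, parameters confined to the first stage) is exactly what the paper's one-line ``iterate for a longer finite chain'' leaves implicit.
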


\begin{proof}
Apply \cref{thm:quotient-composition} and then
\cref{thm:quotient-transfer}; iterate for a longer finite chain.
\end{proof}

\begin{remark}
If $\mathcal J$ uses an interpretation-parameter tuple from a
$\D$-structure, then the entries of that tuple must be represented inside
the first interpretation.  An arbitrary ambient formula may distinguish
different such representatives, even though it is invariant in the genuine
object coordinates of the composite.  Quotient-trace reflection for $\I$ is
therefore not applicable without an additional canonical-representative or
selector hypothesis.  The parameter-free assumption on $\mathcal J$ removes
precisely this obstruction.
\end{remark}

\begin{remark}
The main gain in \cref{thm:quotient-transfer} is the removal of a
definable-choice hypothesis.  The profile step is essential: ambient
asymptotic partitions supplied by quotient counting need not themselves be
invariant under changing representatives, even though the quotient fiber
cardinality is invariant.
\end{remark}

\section{Comparison with weak bi-interpretability}

We now compare trace reflection with the weak bi-interpretability theorem of
Anscombe--Macpherson--Steinhorn--Wolf
\cite[Theorem~2.5.4]{AMSW2024}.

\begin{definition}
Let $A$ and $M$ be structures.  Suppose that $A$ is parameter-free
interpretable in $M$ by $\I$, and that $M$ is parameter-free interpretable
in $A$ by $\mathcal J$.  Let
\[
  f_A:A\longrightarrow A^{\I},
  \qquad
  g_M:M\longrightarrow M^{\mathcal J}
\]
be the interpretation isomorphisms.  Interpreting $g_M$ through $\I$
induces an isomorphism
\[
  g_M^{\I}:A^{\I}\longrightarrow A^{\I\mathcal J}.
\]
The canonical round-trip comparison map is
\[
  h_A\defeq g_M^{\I}\circ f_A:
  A\longrightarrow A^{\I\mathcal J}.
\]
We say that $A$ is \emph{weakly bi-interpretable in $M$} if this canonical
map $h_A$ is parameter-free definable in $A$.
Uniform weak bi-interpretability of classes means that the two
interpretations and the formulas defining the maps $h_A$ are uniform.
No definability of the comparison map on the $M$-side is required.
\end{definition}

\begin{proposition}
\label{prop:weak-biimplies-trace}
Let $\I$ be the forward interpretation of a class $\D$ in a class $\C$ in a
uniform parameter-free weak bi-interpretation.  Then $\I$ is
quotient-trace reflecting.
\end{proposition}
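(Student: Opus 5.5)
Given an $E_A$-invariant parameter-free ambient formula $\rho(\bar v_1,\dots,\bar v_m)$, the plan is to build the target formula $\rho^\downarrow$ by going round the bi-interpretation. Since $\I$ is parameter-free, $\bar z$ and $\bar c_A$ are empty. Write $M=\alpha(A)$, and let $\mathcal J$ interpret $M$ in $A$ uniformly, via $g_M:M\to M^{\mathcal J}$.

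\emph{Step one.} Translate $\rho$ through $\mathcal J$ using \cref{lem:translation} with $\mathcal J$ in place of $\I$. This gives a parameter-free $L'$-formula $\rho^{\mathcal J}$ on tuples of $\mathcal J$-representatives. For $\bar w_i$ in the $\mathcal J$-domain of $A$ representing $g_M(\bar v_i)$ coordinatewise, we get
\[
  M\models\rho(\bar v_1,\dots,\bar v_m)
  \quad\Longleftrightarrow\quad
  A\models\rho^{\mathcal J}(\bar w_1,\dots,\bar w_m).
\]

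\emph{Step two.} We need to pass from an element $a\in A$ to such $\mathcal J$-representatives, and here the round-trip map $h_A$ is used. By hypothesis, the graph of
\[
  h_A=g_M^{\I}\circ f_A:A\to A^{\I\mathcal J}
\]
is defined in $A$ by a uniform parameter-free $L'$-formula $H(y,\bar w)$. Concretely, $\bar w$ ranges over $r$-tuples of $\mathcal J$-representatives, i.e.\ the composite $\I\mathcal J$-domain. Unwinding the definitions, $H(a,\bar w)$ holds exactly when $\bar w=(\bar w^1,\dots,\bar w^r)$ with each $\bar w^j$ a $\mathcal J$-representative of $g_M(u_j)$, for some representative $(u_1,\dots,u_r)\in X_A$ of $f_A(a)$. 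We then set
\[
  \rho^\downarrow(y_1,\dots,y_m)\defeq
  \exists\bar w_1\cdots\exists\bar w_m
  \Bigl(\bigwedge_{i}H(y_i,\bar w_i)\wedge\rho^{\mathcal J}(\bar w_1,\dots,\bar w_m)\Bigr).
\]

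\emph{Step three.} Verify the equivalence. If $H(a_i,\bar w_i)$ holds, then $\bar w_i$ codes, through $g_M$, some representative $\bar v_i$ of $a_i$. By Step one, $\rho^{\mathcal J}(\bar w)$ holds iff $\rho(\bar v)$ holds. By $E_A$-invariance of $\rho$, this truth value does not depend on which representatives were used. Hence the existential formula agrees with $\rho$ evaluated at arbitrary representatives, which is the condition in \cref{def:quotient-trace}. The case $m=0$ is the same argument with no variables: an ambient sentence $\rho$ holds in $M$ iff the $L'$-sentence $\rho^{\mathcal J}$ holds in $A$. All formulas involved are fixed independently of $A$, so the reflection is uniform.

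\emph{Main obstacle.} The delicate point is the bookkeeping in Step two: being precise about what $h_A$ and its graph are. The graph relates an element of $A$ to an element of the quotient $A^{\I\mathcal J}$, which is represented by tuples in $A$ modulo the composite equivalence. We must check that ``$\bar w$ represents $h_A(a)$'' amounts exactly to ``$\bar w$ is a coordinatewise $\mathcal J$-representative of the $g_M$-image of some $\I$-representative of $a$.'' This holds because the composite equivalence on $\bar w$ is the $\mathcal J$-translation of $\epsilon$, and $g_M$ is an isomorphism, so it transports $E_A$ correctly. Once this is settled, invariance of $\rho$ removes any dependence on choices.
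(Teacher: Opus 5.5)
Your proposal is correct and follows essentially the same route as the paper: translate $\rho$ through the reverse interpretation $\mathcal J$, precompose with the parameter-free definable round-trip map $h_A$, and use $E_A$-invariance of $\rho$ to remove any dependence on the chosen representatives. Your existential formula over the graph $H$ is just the representative-level unwinding of the paper's $R^{\I\mathcal J}(h_A(\bar a))$. You spell out the bookkeeping about what it means for $\bar w$ to represent $h_A(a)$ more explicitly than the paper does.
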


\begin{proof}
Fix $A\in\D$, put $M=\alpha(A)$, and let
$\rho(\bar v_1,\dots,\bar v_m)$ be an ambient formula invariant under the
coordinatewise interpretation equivalence relation.  It therefore induces
a relation $R^{\I}$ on the interpreted copy $A^{\I}$ inside $M$.
Translate the formula defining $R^{\I}$ through the reverse interpretation
$\mathcal J$.  This produces, uniformly and without parameters, a definable
relation $R^{\I\mathcal J}$ on the double interpretation
$A^{\I\mathcal J}$ inside $A$.

Let $h_A:A\to A^{\I\mathcal J}$ be the definable comparison isomorphism
provided by weak bi-interpretability.  Define
\[
  R(\bar a)
  \quad\Longleftrightarrow\quad
  R^{\I\mathcal J}\bigl(h_A(\bar a)\bigr),
\]
where $h_A$ is applied coordinatewise.  This is a parameter-free formula in
the language of $A$.  By \cref{lem:translation} and the fact that $h_A$ is
the canonical round-trip isomorphism, $R(\bar a)$ holds precisely when
$\rho(\bar v_1,\dots,\bar v_m)$ holds for representatives of the images of
$\bar a$ under $\I$.  Invariance of $\rho$ makes the choice of
representatives irrelevant.  This is quotient-trace reflection.  The case
$m=0$ is the same argument for sentences.
\end{proof}

\begin{corollary}
In the classical finite-dimensional setting, the full transfer assertion
under parameter-free weak bi-interpretability is a special case of
\cref{thm:quotient-transfer}.
\end{corollary}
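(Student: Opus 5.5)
The corollary should follow by combining \cref{prop:weak-biimplies-trace} with \cref{thm:quotient-transfer}. Start with a classical $N$-dimensional asymptotic class $\C$ and a class $\D$ that is uniformly parameter-free weakly bi-interpretable with $\C$, with forward interpretation $\I$ of $\D$ in $\C$ of some arity $r$. The full transfer assertion of \cite[Theorem~2.5.4]{AMSW2024} states that $\D$ is then a full (not merely weak) asymptotic class. It is specialized here to the $N$-dimensional setting, where the exponents are fractions with bounded denominator.

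The first step applies \cref{prop:weak-biimplies-trace}. Since the weak bi-interpretation is parameter-free and uniform, $\I$ is quotient-trace reflecting. The uniformity matters here: the formulas defining $h_A$ and the reverse interpretation $\mathcal J$ are fixed across the class, so each descended formula $\rho^\downarrow$ is a single parameter-free $L'$-formula. The second step applies \cref{thm:quotient-transfer} to $\I$. This gives that $\D$ is an $L_Q$-dimensional asymptotic class, with $L_Q\mid\lcm(1,\dots,Nr)$, and in particular a full finite-dimensional asymptotic class. The definable cells are the pullbacks $\gamma_{e,\lambda,P}$, which are exactly the objects whose definability the bi-interpretation was used to secure in the earlier argument.

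The third step checks that the conclusion matches the AMSW statement. Their theorem yields full asymptoticity with rational exponents in $\operatorname{Frac}(R)$. In the classical case $R=\frac1N\mathbb Z$, and the transferred exponents are ratios $d/e$ of elements of $\{0,\dots,Nr\}$. So the exponent set is contained in $\frac1{L_Q}\mathbb Z$, and our conclusion is at least as strong. If one wants the denominator to agree exactly, \cref{prop:visible-least-denominator} shows that the least denominator is intrinsic, so no discrepancy in denominators can arise.

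I expect the only real obstacle to be bookkeeping about hypotheses. \cref{prop:weak-biimplies-trace} needs the comparison map to be definable on the target side only, and this is exactly the definition of weak bi-interpretability adopted above. The proof should also note that no selector is required, since \cref{thm:quotient-transfer} handles genuine quotients. Once these hypotheses are confirmed to match, the corollary is an immediate two-line deduction.
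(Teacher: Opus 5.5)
Your proposal is correct and is exactly the paper's argument: \cref{prop:weak-biimplies-trace} makes the forward interpretation quotient-trace reflecting, and \cref{thm:quotient-transfer} then yields a full $L_Q$-dimensional asymptotic class. Your third step, matching denominators with the $\operatorname{Frac}(R)$ formulation, is not needed. In particular, identifying $R$ with $\frac1N\mathbb Z$ is only a heuristic stand-in for AMSW's classes of size functions, but nothing in your deduction depends on it.
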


\begin{proof}
Combine \cref{prop:weak-biimplies-trace} with
\cref{thm:quotient-transfer}.
\end{proof}

%%%%%%%%%%%%%%%%%%%%%%

\section{Strictness and model-theoretic consequences}

We first establish strictness relative to weak bi-interpretability.

\begin{lemma}
\label{lem:fixed-finite-sort}
Let $\C_0$ be an $N$-dimensional asymptotic class, and let $B$ be a fixed
finite structure in a disjoint language.  Then the class
$
  \set{M\mathbin{\dot\cup}B:M\in\C_0}
$
in the disjoint two-sorted language is an $N$-dimensional asymptotic class.
Moreover, every ambient formula whose free variables lie in the $M$-sort is
uniformly equivalent to a formula in the language of $\C_0$.
\end{lemma}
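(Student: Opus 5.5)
The plan is to first prove the second assertion, a quantifier-separation statement for the disjoint union, and then to obtain the asymptotic-class property from it by counting. Since the two sorts share no relation symbols, an Ehrenfeucht--Fra\"iss\'e argument (or a direct induction on formulas) shows the following. Every formula $\eta(\bar x,\bar w)$ of the two-sorted language, with $\bar x$ in the $M$-sort and $\bar w$ in the $B$-sort, is equivalent, uniformly on all disjoint unions of an $L$-structure with an $L_B$-structure, to a finite disjunction
\[
\bigvee_{j}\bigl(\alpha_j(\bar x)\wedge\beta_j(\bar w)\bigr),
\]
where each $\alpha_j$ is an $L$-formula and each $\beta_j$ an $L_B$-formula. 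I would carry out the induction directly. Atomic formulas live in one sort. Boolean combinations are handled by distributing and refining into a disjoint normal form. For an existential quantifier over an $M$-sort variable, push it into the $\alpha_j$; for one over a $B$-sort variable, push it into the $\beta_j$. Quantifiers in the one-sorted coding are restricted to sort predicates, so this stays within the same scheme. When no free $B$-variables occur, each $\beta_j$ is an $L_B$-sentence. Since $B$ is fixed, its truth value is constant, so we may delete the false disjuncts and drop the true $\beta_j$. This yields a formula of the language of $\C_0$, which proves the second claim.

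For the asymptotic assertion, I fix $\varphi(x;\bar y)$. Here $\bar y$ splits into $M$-sort parameters $\bar y_M$ and $B$-sort parameters $\bar y_B$, and the object variable $x$ lies in one of the two sorts. Since $B$ is finite and fixed, each tuple $\bar b\in B^{|\bar y_B|}$ is isolated in $\Th(B)$ by some $L_B$-formula $\beta_{\bar b}$, namely its complete diagram-type description over the finite structure. I then argue in two cases according to the sort of $x$.

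If $x$ lies in the $B$-sort, the fiber has size at most $|B|$, so the labels $(0,k)$ apply. The required cells are given by $\operatorname{Exact}_{\varphi,k}$, which are parameter-free definable.

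If $x$ lies in the $M$-sort, I substitute $\bar y_B=\bar b$ for each of the finitely many $\bar b$ and apply the separation result to get an $L$-formula $\varphi_{\bar b}(x;\bar y_M)$. The $\C_0$ axioms then give cells $\theta_{\bar b,d,\mu}(\bar y_M)$. The cells in the union are $\beta_{\bar b}(\bar y_B)\wedge\theta_{\bar b,d,\mu}(\bar y_M)$, merged over equal labels.

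The one subtlety is that the exponents must be measured against $|M|+|B|$ rather than $|M|$. Since $|B|$ is constant and the sizes are unbounded on positive-exponent cells, $(|M|+|B|)^{d/N}/|M|^{d/N}\to1$ uniformly, so the labels are unchanged. Bounded subclasses are then treated by \cref{lem:finite-perturbation}-style exact refinement.

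I expect the main obstacle to be making the separation lemma fully rigorous for many-sorted languages coded one-sortedly, in particular keeping the disjuncts uniform across all members of $\C_0$. Using induction on formulas rather than compactness avoids any uniformity issue, and that is the route I would take.
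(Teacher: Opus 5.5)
Your proposal is correct and follows the paper's proof: the separation induction for the disjoint two-sorted language, then a finite case split on the $B$-side, then the $\C_0$-axiom with exponents rescaled from $\abs M$ to $\abs M+\abs B$, and finally exact-count cells when $x$ is $B$-sorted. (For the case split you use formulas isolating the types of $B$-tuples, while the paper uses truth patterns of the finitely many $B$-formulas produced by the separation.) One detail needs adjusting: if $L_B$ is infinite, a full diagram is not a single formula, but the type of $\bar b$ is still isolated within the fixed finite $B$ by a conjunction of finitely many formulas separating it from the finitely many tuples of other types, and that is all your argument uses.
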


\begin{proof}
Because the two languages are disjoint, the standard separation
induction for disjoint many-sorted structures expresses every formula as a
finite Boolean combination of formulas confined to a single sort.  After
disjunctive normal form, it is therefore a finite disjunction of
conjunctions of an $M$-sort formula and a $B$-sort formula, with the free
variables separated by sort.  Only finitely many $B$-sort formulas occur in this separation, so only
finitely many truth patterns arise as the $B$-parameters vary.  Each pattern
is defined by the corresponding conjunction of those formulas and their
negations.  In particular, every resulting $B$-sentence
has a truth value independent of $M$.

If the object variable lies in the $M$-sort, then, after fixing the
finitely many possible truth values of the $B$-formulas, the fiber is defined
by one of finitely many $L(\C_0)$-formulas.  Apply the asymptotic-class axiom
to each of those formulas and refine by the finitely many $B$-conditions
which select it.  Writing $T=\abs M+\abs B$ for the cardinality of the
two-sorted structure, one has
$\abs M^{d/N}=T^{d/N}+o(T^{d/N})$, so the same labels and leading
coefficients give estimates relative to $T$.  If the object variable lies in
the $B$-sort, its fiber has one of the finitely many exact cardinalities
$0,1,\dots,\abs B$, and exact-count formulas give the required cells.
This proves the asymptotic-class assertion.  The same separation argument,
with all free variables in the $M$-sort, proves the final assertion.
\end{proof}

\begin{lemma}
\label{lem:aut-biinterpretation}
If two finite structures are parameter-free bi-interpretable, then their
automorphism groups are isomorphic.
\end{lemma}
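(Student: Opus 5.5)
The plan is to convert the bi-interpretation data into an isomorphism of automorphism groups by transporting automorphisms along the interpretations. Let $\I$ interpret $A$ in $M$ and $\mathcal J$ interpret $M$ in $A$, both without parameters, with interpretation isomorphisms $f_A:A\to A^{\I}$ and $g_M:M\to M^{\mathcal J}$. Assume the round-trip maps $h_A:A\to A^{\I\mathcal J}$ and $k_M:M\to M^{\mathcal J\I}$ are parameter-free definable in $A$ and in $M$ respectively. This is the usual meaning of bi-interpretability, and unlike the weak notion it requires both comparison maps to be definable.

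First, define $\Phi:\Aut(M)\to\Aut(A)$. An automorphism $g$ of $M$ preserves the parameter-free formulas $\delta$ and $\epsilon$ and the relation formulas of $\I$. Hence it maps $X$ onto $X$, respects $E$, and induces an automorphism $g^{\I}$ of the quotient structure $A^{\I}=X/E$. Put $\Phi(g)=f_A^{-1}\circ g^{\I}\circ f_A$. The construction $g\mapsto g^{\I}$ visibly respects composition and identities, so $\Phi$ is a group homomorphism. Symmetrically, define $\Psi:\Aut(A)\to\Aut(M)$ by $\Psi(a)=g_M^{-1}\circ a^{\mathcal J}\circ g_M$.

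Second, show that $\Psi\circ\Phi=\mathrm{id}$ and $\Phi\circ\Psi=\mathrm{id}$. Take $a\in\Aut(A)$. Transporting $a$ through $\mathcal J$ and then through $\I$ yields the automorphism $a^{\mathcal J\I}$ of the double interpretation $A^{\I\mathcal J}$, viewed as defined inside $A$. Unwinding the definitions gives
\[
  \Phi(\Psi(a))=h_A^{-1}\circ a^{\mathcal J\I}\circ h_A,
\]
up to the routine bookkeeping in which transport commutes with the isomorphisms $f_A$ and $g_M$. Since $h_A$ is defined without parameters in $A$, the automorphism $a$ preserves its graph. Because $a$ acts on $A^{\I\mathcal J}$ exactly as $a^{\mathcal J\I}$, this means $h_A\circ a=a^{\mathcal J\I}\circ h_A$, and therefore $\Phi(\Psi(a))=a$. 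The symmetric argument, using the definability of $k_M$ in $M$, gives $\Psi(\Phi(g))=g$. So $\Phi$ is a bijective homomorphism, which proves the lemma. Finiteness of the structures is not really used, beyond guaranteeing that all the relevant sets exist as stated.

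The main obstacle is the bookkeeping in the second step. One has to verify that the automorphism induced on the double interpretation by transporting $a$ through both interpretations coincides with the natural action of $a$ on tuples of $A$ representing elements of $A^{\I\mathcal J}$. I would check this by induction on the translation of formulas, as in \cref{lem:translation}: an automorphism of $A$ acts on the defining tuples of $\mathcal J$, hence on $M^{\mathcal J}$, and then on the $r$-tuples of $M^{\mathcal J}$ used by $\I$. The equality of the two descriptions is then literal.

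The same step also shows why the definability of both comparison maps matters. With only the $A$-side map definable, as in the weak notion, one obtains $\Phi\circ\Psi=\mathrm{id}$, so $\Psi$ is injective, but not that $\Phi$ is injective.
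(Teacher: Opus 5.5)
Your proof is correct and takes the same approach as the paper's sketch. You transport automorphisms along each parameter-free interpretation to get homomorphisms $\Phi$ and $\Psi$, then use parameter-free definability of both round-trip comparison maps to show they are mutually inverse; you also correctly note that definability of $h_A$ alone gives only $\Phi\circ\Psi=\mathrm{id}$, which is why the lemma needs full rather than weak bi-interpretability.
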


\begin{proof}
A parameter-free interpretation sends every automorphism of the ambient
structure to an automorphism of the interpreted structure.  For a
bi-interpretation, the definable comparison isomorphisms between each
structure and its double interpretation show that the two induced
homomorphisms are inverse.  Hence the automorphism groups are isomorphic.
\end{proof}

\begin{example}
\label{ex:strictness}
Let
$
  \mathcal P=\set{\F_p:p\text{ prime}}
$
and let $U_2$ be a pure two-element structure in a disjoint sort.  Put
$
  \mathcal P^+=\set{\F_p\mathbin{\dot\cup}U_2:p\text{ prime}}.
$
The class $\mathcal P$ is a one-dimensional asymptotic class by
\cref{lem:subclass}, since finite fields form a one-dimensional asymptotic
class.  By \cref{lem:fixed-finite-sort}, so is
$\mathcal P^+$.

Interpret $\mathcal P$ in $\mathcal P^+$ by taking the field sort, with
equality as the interpretation equivalence relation.  The field sort itself
is a selector.  The final assertion of \cref{lem:fixed-finite-sort} gives
selector-trace reflection.  Thus the selector transfer theorem applies.

On the other hand, these two classes are not uniformly parameter-free
bi-interpretable.  For every prime $p$,
$
  \Aut(\F_p)=1
$ and  $\Aut(\F_p\mathbin{\dot\cup}U_2)\cong S_2.
$
This contradicts \cref{lem:aut-biinterpretation} for every possible pairing
of members of the two classes.  Hence the trace-reflection criterion is not
a disguised bi-interpretability hypothesis.
\end{example}

\subsection{Strictness relative to weak bi-interpretability}

The preceding finite-sort example separates trace reflection from full
bi-interpretability.  The next example separates it from the weaker notion
used in \cref{prop:weak-biimplies-trace}.

Fix a prime $p$.  For $n\geq1$, let $U_n$ be a pure set of cardinality
$p^n$, let
$
  V_n=\F_p^n
$
in the vector-space language with addition, zero, and scalar multiplication
by every element of $\F_p$, and let
$
  M_n=(U_n,V_n)
$
be the disjoint two-sorted union.  Put
\[
  \mathcal U_p=\set{U_n:n\geq1},
  \qquad
  \mathcal M_p=\set{M_n:n\geq1}.
\]

\begin{lemma}
\label{lem:pure-vector-asymptotic}
The class $\mathcal M_p$ is a one-dimensional asymptotic class.
\end{lemma}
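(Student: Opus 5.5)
The plan is to reduce everything to the two sorts separately and then use quantifier elimination in each sort. Put $T=\abs{M_n}=2p^n$. Exactly as in the proof of \cref{lem:fixed-finite-sort}, the two sorts have disjoint languages. So every formula $\varphi(x;\bar y)$ in the two-sorted language is uniformly equivalent on $\mathcal M_p$ to a finite disjunction of conjunctions $\alpha(\cdot)\wedge\beta(\cdot)$, where $\alpha$ is a $U$-sort formula and $\beta$ is a $V$-sort formula, with free variables separated by sort. Suppose the object variable $x$ lies in the $U$-sort. The $V$-sort conjuncts then depend only on the $V$-parameters, and finitely many truth patterns of them occur; each pattern is parameter-free definable. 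On each pattern, the fiber is defined by a single pure-set formula $\alpha'(x;\bar y_U)$. The case $x$ in the $V$-sort is symmetric. Hence it suffices to treat one-variable formulas in each pure sort, and then refine by the finitely many definable truth patterns.

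For each sort I would establish uniform quantifier elimination, up to finitely many exceptional $n$. The theory of infinite pure sets has quantifier elimination, as does the theory of infinite $\F_p$-vector spaces in the stated language. Given a formula $\psi$, let $\psi'$ be its quantifier-free equivalent modulo the infinite theory. I claim $\psi\leftrightarrow\psi'$ holds in $U_n$ (respectively $V_n$) for all but finitely many $n$. Otherwise, a nonprincipal ultraproduct of the counterexamples is an infinite model of the relevant theory in which the equivalence fails, which is a contradiction. By \cref{lem:finite-perturbation}, the finitely many exceptional $n$ may be discarded and re-added at the end, so I may assume that quantifier elimination holds on all of $\mathcal M_p$.

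Now compute fibers. In $U_n$, a quantifier-free $\alpha(x;\bar b)$ is a Boolean combination of $x=b_i$. Its solution set is therefore either a set of $k$ parameters or the complement of such a set, where $k\le\abs{\bar b}$. Which of these occurs, and the value of $k$, depends only on the equality type of $\bar b$, and that type is quantifier-free definable. The fiber size is thus exactly $k$, giving label $(0,k)$ or $(0,0)$, or it is $p^n-k=\tfrac12T-k$, giving label $(1,\tfrac12)$. In $V_n$, an atomic formula in $x$ has the form $\lambda x+\sum\lambda_ib_i=0$. For $\lambda\neq0$ this says $x=-\lambda^{-1}\sum\lambda_ib_i$; for $\lambda=0$ it is a condition on $\bar b$ alone. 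So every fiber is either a finite set of at most $p^{\abs{\bar b}}$ specified linear combinations of $\bar b$, or the complement of such a set. Its exact size depends only on which linear relations hold among $\bar b$, and that data is quantifier-free definable. The labels are again $(0,k)$ and $(1,\tfrac12)$. Taking disjunctions of cells with equal labels gives the parameter-free asymptotic partition required by \cref{def:N-asymptotic} with $N=1$. Uniformity is trivial here, because the error terms are exactly the bounded constants $-k$.

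I expect the main obstacle to be the uniformity of quantifier elimination across the finite structures, since quantifier elimination is a statement about the infinite theories. The ultraproduct argument, combined with \cref{lem:finite-perturbation}, is what I would rely on, and I would check that the number of exceptions is finite for each fixed formula.
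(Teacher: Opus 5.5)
Your proposal is correct in substance, but it takes a different route from the paper. The paper never separates the sorts and never invokes quantifier elimination. It works inside each finite $M_n$ with automorphisms:
\begin{itemize}
\item For $x$ in the $U$-sort, the permutations of $U_n$ fixing the $U$-entries of $\bar a$ pointwise, extended by the identity on $V_n$, are transitive on the remaining points.
\item For $x$ in the $V$-sort, the linear automorphisms fixing the span $W_{\bar a}$ of the vector entries pointwise are transitive on $V_n\setminus W_{\bar a}$.
\end{itemize}
Hence every fiber of every formula is bounded or co-bounded. The dichotomy is detected by the parameter-free formulas $\exists x\,(x\notin C_{\bar y}\wedge\varphi)$ and $\exists x\,(x\notin C_{\bar y}\wedge\neg\varphi)$, and bounded cells are split by exact-count formulas. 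That argument holds in every member of the class, with no compactness and no formula-dependent threshold, but its cell formulas mention $\varphi$ itself.

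Your route uses the separation induction for disjoint sorts, quantifier elimination for infinite pure sets and infinite $\F_p$-spaces, and transfer to large finite members by an ultraproduct or compactness argument. It buys explicit quantifier-free cells (equality types and linear-relation types) and exact fiber sizes $k$ or $p^n-k$. It is also the natural template whenever the limit theory eliminates quantifiers. The price is the extra machinery, and the fact that it only works above a threshold that depends on $\varphi$.

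One step needs rephrasing. You cannot ``assume that quantifier elimination holds on all of $\mathcal M_p$'' after discarding finitely many $n$. The exceptional set depends on the formula, and every $M_n$ is exceptional for some formula: the $U$-sentence asserting more than $p^n$ elements has quantifier-free equivalent $\top$ in the infinite theory, yet is false in $U_n$. For the same reason \cref{lem:finite-perturbation} does not apply as stated, since it concerns a class rather than a single formula.

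The fix is to argue formula by formula. For fixed $\varphi$, only finitely many sort-confined formulas occur, so there is $n_0$ beyond which all their quantifier-free equivalents are correct. Your quantifier-free cell formulas, conjoined with the truth patterns of the other-sort conjuncts, partition the parameter tuples in every $M_n$. By \cref{def:uniform-o}, the estimates need only hold above a size threshold, which you may take larger than $2p^{n_0}$. Alternatively, reuse the formula-wise $\operatorname{Small}_B$ refinement from the proof of \cref{lem:finite-perturbation}. With that change, your argument is complete.
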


\begin{proof}
Fix a formula $\varphi(x;\bar y)$ and a parameter tuple $\bar a$ in $M_n$.
Suppose first that $x$ lies in the $U$-sort.  Let $C_{\bar a}$ be the finite
set of entries of $\bar a$ which lie in $U_n$.  The automorphism group of the
pure set $U_n$, fixing $C_{\bar a}$ pointwise and acting trivially on the
vector-space sort, is transitive on $U_n\setminus C_{\bar a}$.  Hence the
fiber $\varphi(U_n;\bar a)$ either contains all of
$U_n\setminus C_{\bar a}$ or is disjoint from it.  The two possibilities are
detected by the parameter formulas
\[
  \exists x\bigl(x\notin C_{\bar y}\wedge\varphi(x;\bar y)\bigr)
  \quad\text{and}\quad
  \exists x\bigl(x\notin C_{\bar y}\wedge\neg\varphi(x;\bar y)\bigr),
\]
where $x\in C_{\bar y}$ abbreviates the finite disjunction saying that $x$
is one of the $U$-sorted entries of $\bar y$.  These formulas cannot both
hold.  On the first cell the fiber has size $p^n+O(1)$; on the second it has
uniformly bounded size and is refined by the finitely many applicable
exact-count formulas.  If neither holds, then $U_n=C_{\bar a}$, so the whole
structure has bounded cardinality in terms of $\abs{\bar y}$ and the cell is
again split by exact fiber size.

Now suppose that $x$ lies in the $V$-sort.  Let $W_{\bar a}$ be the linear
span of the vector-sorted entries of $\bar a$.  Since $p$ and
$\abs{\bar y}$ are fixed, membership in $W_{\bar y}$ is expressed by the
finite disjunction
$
  x=\sum_i\lambda_i y_i,
  $\, \,$ \lambda_i\in\F_p,
$
over the vector-sorted coordinates.  The group of linear automorphisms of
$V_n$ fixing $W_{\bar a}$ pointwise, and acting trivially on $U_n$, is
transitive on $V_n\setminus W_{\bar a}$.  The same argument therefore shows
that the fiber is either co-bounded or bounded, with a bound depending only
on $p$ and $\abs{\bar y}$.  Refine every bounded-fiber cell by the finitely
many applicable exact-count formulas.  The exceptional case
$V_n=W_{\bar a}$ again supports only bounded structures and is handled by
the same refinement.

Finally,
$
  \abs{M_n}=2p^n.
$
A co-bounded fiber in either sort has size
\[
  p^n+O(1)=\frac12\abs{M_n}+o(\abs{M_n}),
\]
while all other unbounded-structure cells have bounded exact fibers.  The
cells just described, including their exact-count refinements, are
parameter-free definable, so the one-dimensional asymptotic class axiom
holds.
\end{proof}

\begin{lemma}
\label{lem:pure-sort-trace}
The parameter-free interpretation of $\mathcal U_p$ in $\mathcal M_p$ by the
$U$-sort is selector-trace reflecting.
\end{lemma}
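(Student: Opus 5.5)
We have to show that every parameter-free ambient formula $\rho(v_1,\dots,v_m)$ of the two-sorted language, with all free variables in the $U$-sort, is uniformly equivalent on $\mathcal M_p$ to a formula in the pure equality language of $U_n$. The interpretation has arity $1$ and no parameters, and equality is the interpretation equivalence relation, so the selector is the whole $U$-sort and the trace is just the restriction of $\rho$ to $U_n^m$. The natural route is the separation argument already used in \cref{lem:fixed-finite-sort}.

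First, since the two sorts share no symbols, the standard induction shows that $\rho$ is equivalent to a finite disjunction of conjunctions $\alpha_j(\bar v)\wedge\beta_j$. Here each $\alpha_j$ is a pure-equality formula in the $U$-sort. Each $\beta_j$ is a sentence in the vector-space language, because $\rho$ has no free $V$-variables. Consequently the trace is given on $M_n$ by $\bigvee_{j\,:\,V_n\models\beta_j}\alpha_j(\bar v)$. The truth values of the $\beta_j$ may depend on $n$, so this formula is not yet uniform.

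The key step, and the only one needing care, is to replace each truth value of $\beta_j$ in $V_n$ by a pure $U$-sentence with the same truth value in $U_n$ for every $n$. I would argue via a cardinality dichotomy. For fixed $p$, the complete theory of an $\F_p$-vector space of dimension $n$ is determined by $n$ when $n$ is finite. By the standard quantifier-elimination or Ehrenfeucht--Fra\"iss\'e analysis for vector spaces over a fixed finite field, the sentence $\beta_j$ has quantifier depth $k$ and therefore cannot distinguish dimensions $n,n'\geq k$. So there is a threshold $k_j$ such that $V_n\models\beta_j$ is constant for $n\geq k_j$. The finitely many smaller values of $n$ correspond exactly to $|U_n|=p^n$ for $n<k_j$, and each of these sizes is expressible by a pure-equality sentence on the $U$-sort. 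Hence $V_n\models\beta_j$ if and only if $U_n\models\beta_j^\downarrow$, where $\beta_j^\downarrow$ is a disjunction of the sentences ``$|U|=p^n$'' for the exceptional $n$ at which $\beta_j$ holds, together with ``$|U|\geq p^{k_j}$'' if $\beta_j$ holds for large $n$.

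Finally, set $\rho^\downarrow\defeq\bigvee_j(\beta_j^\downarrow\wedge\alpha_j)$. This is a parameter-free formula in the language of $\mathcal U_p$, and it defines the trace of $\rho$ in every $M_n$. This verifies \cref{def:trace-reflection}; the case $m=0$ is included, since then every $\alpha_j$ is a sentence.

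The main obstacle is stabilizing the $V$-sort sentences in large dimension. If the Ehrenfeucht--Fra\"iss\'e argument feels too heavy, an alternative is to cite completeness of the theory of infinite $\F_p$-vector spaces. Since that theory is complete, a compactness argument shows that each $\beta_j$ is eventually constant along $(V_n)$.
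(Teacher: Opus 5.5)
Your proof is correct, but for this lemma it is organized differently from the paper's argument.

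\textbf{The paper's route.} The paper does not separate $\rho$ syntactically. It plays a $q$-round Ehrenfeucht--Fra\"iss\'e game directly on the two-sorted expansions $(M_n,\bar u)$ and $(M_{n'},\bar u')$, where the distinguished $U$-tuples realize a fixed equality type $\tau$. On the $U$-sort the duplicator makes pure-set moves, and on the $V$-sort it makes span-preserving linear moves. This shows that the truth value of $\rho$ on each of the finitely many equality types is eventually constant. For the finitely many exceptional $n$, the paper uses that every $\mathrm{Sym}(U_n)$-invariant relation on $U_n^m$ is a union of equality types, and inserts these unions through the sentences $\abs U=p^n$.

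\textbf{Your route.} You front-load the work into the Feferman--Vaught-style separation $\rho\equiv\bigvee_j(\alpha_j\wedge\beta_j)$. After that, the $U$-side is already a pure-equality formula, so no automorphism or equality-type argument is needed. The only remaining input is eventual constancy of vector-space sentences. You can get this from the one-sorted game: your threshold $n,n'\geq k$ is right, because the span of at most $k-1$ played vectors is proper in dimension at least $k$. Alternatively, you can get it from completeness of the theory of infinite $\F_p$-vector spaces plus compactness. This is exactly the mechanism the paper itself uses later in \cref{ex:matching-incidence-solution} and in \cref{lem:fixed-finite-sort}.

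\textbf{Comparison.} The paper's version yields an explicit normal form for $\rho^\downarrow$: a Boolean combination of quantifier-free equality types and cardinality sentences. It also does not rely on the separation lemma. Your version is more modular. It applies verbatim to any disjoint union with an extra sort whose sentences stabilize, at the cost of leaving quantifiers inside the $\alpha_j$.
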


\begin{proof}
The $U$-sort is its own selector.  Fix an ambient formula
$\rho(u_1,\dots,u_m)$ with all free variables in the $U$-sort, and let $q$
be its quantifier rank.  For a fixed equality type $\tau$ of $m$-tuples,
consider two expansions $(M_n,\bar u)$ and $(M_{n'},\bar u')$ in which the
distinguished tuples realize $\tau$.

A standard $q$-round back-and-forth argument shows that these expansions are
$q$-equivalent once $n$ and $n'$ are sufficiently large.  On the pure-set
sort, the duplicator preserves equality and always has a fresh element because
the two sets have arbitrarily many elements outside the distinguished tuple and
the previously played elements.  On the vector
sort, the duplicator maintains a linear isomorphism between the spans of the
played vectors.  A new vector in the existing span is matched using the
same linear combination; a vector outside the span is matched by a new
independent vector, which is available while the two ambient dimensions
exceed the number of rounds.  Since the sorts are disjoint, the two
strategies combine.  Consequently, for every equality type $\tau$, the
truth value of $\rho$ on tuples of type $\tau$ is eventually constant as
$n\to\infty$.

There are only finitely many equality types.  Choose $n_0$ after which all
of their truth values have stabilized.  For each $n<n_0$, the pure-set
sentence asserting $\abs U=p^n$ is first-order expressible, and every
$S_{p^n}$-invariant relation on $U_n^m$ is a union of equality types.  Thus
one obtains a single pure equality formula by using the eventual union
of equality types at all other cardinalities and inserting the appropriate
union at each exceptional cardinality.  This formula has
exactly the same trace as $\rho$ in every $M_n$.  The case $m=0$ gives the
same conclusion for ambient sentences.
\end{proof}

\begin{lemma}
\label{lem:no-vector-in-pure-set}
There is no uniform parameter-free interpretation of $\mathcal M_p$ in
$\mathcal U_p$.
\end{lemma}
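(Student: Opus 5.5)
We argue by contradiction, using automorphism counts or growth of the group.

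Suppose a uniform parameter-free interpretation $\mathcal J$ of $\mathcal M_p$ in $\mathcal U_p$ exists, of some fixed arity $q$. It comes with an injection $\beta:\mathcal M_p\to\mathcal U_p$, say $\beta(M_n)=U_{k(n)}$. Since $\mathcal J$ is parameter-free, every permutation of the pure set $U_{k(n)}$ induces an automorphism of the interpreted copy of $M_n$. This gives a homomorphism
\[
  \Phi_n:\operatorname{Sym}(U_{k(n)})\longrightarrow\Aut(M_n)\cong S_{p^n}\times\mathrm{GL}_n(\F_p).
\]

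The first step is to control $k(n)$ by counting. The interpreted universe has at most $\abs{U_{k(n)}}^q=p^{k(n)q}$ elements, and $\abs{M_n}=2p^n$, so $p^{k(n)q}\geq 2p^n$. In particular $\abs{U_{k(n)}}=p^{k(n)}\geq p^{n/q}$ is unbounded, and we may take $n$ large.

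The second step is to show that $\Phi_n$ is eventually very degenerate. For $m=p^{k(n)}\geq5$, the only normal subgroups of $S_m$ are $1$, $A_m$ and $S_m$. So either $\Phi_n$ is injective or its image has order at most $2$.

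\emph{Injective case.} The order $m!$ must divide $(p^n)!\,\abs{\mathrm{GL}_n(\F_p)}$, which is possible in principle. So we need a different invariant, and we use definable sets instead. For each $\bar a$ in $U_{k(n)}$, every definable subset of the interpreted vector sort $V_n$ that is definable over $\bar a$ is invariant under the pointwise stabilizer of $\bar a$. Take $\bar a$ empty. The image $\Phi_n(S_m)$, being the action of a pure-set symmetric group through a $q$-ary interpretation, preserves the partition of $V_n$ according to the "equality type support" of representatives. Here I would use the standard fact that in a pure set every imaginary element $[\bar u]$ has a finite support $\operatorname{supp}\subseteq U$ of size at most $q$.

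\emph{Step 3, the key step.} Elements of $V_n$ correspond to imaginaries of the pure set $U_{k(n)}$. Each such imaginary is determined by its support $s\subseteq U$ with $\abs s\leq q$, together with one of boundedly many types over $s$, up to an $\operatorname{Sym}(s)$-quotient. Now consider the map $v\mapsto\operatorname{supp}(v)$. Addition in $V_n$ is $\emptyset$-definable, so $\operatorname{supp}(v+w)\subseteq\operatorname{supp}(v)\cup\operatorname{supp}(w)$, since definable closure in a pure set is trivial. Pick $v_1,\dots,v_{q+1}$ with pairwise disjoint nonempty supports; this is possible because there are only $O(m^{j})$ imaginaries with support of size at most $j$. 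Then for $q+1$ vectors whose supports are pairwise disjoint, the sum $v_1+\dots+v_{q+1}$ has support that must contain each $\operatorname{supp}(v_i)$. Indeed, $v_i$ is definable from the sum and the other $v_j$, so $\operatorname{supp}(v_i)\subseteq\operatorname{supp}(\textstyle\sum v_j)\cup\bigcup_{j\neq i}\operatorname{supp}(v_j)$, which forces containment by disjointness. The sum therefore has support of size at least $q+1>q$, which is a contradiction.

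Making "support" precise for imaginaries in a pure infinite-or-large finite set is the main obstacle. I would first pass to an ultraproduct, where the pure infinite set has weak elimination of imaginaries with canonical finite supports and $\operatorname{dcl}(B)=B$. By Łoś, the ultraproduct carries an interpreted infinite $\F_p$-vector space, and the support argument above runs there cleanly. The remaining case, where $\Phi_n$ has image of order at most $2$, is subsumed by the same support argument, which does not use $\Phi_n$ at all. So I would present only the ultraproduct support argument.
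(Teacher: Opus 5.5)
Your final argument is correct, but it takes a genuinely different route from the paper. The paper stays in the finite structures. It restricts the action of $S_{m_n}=\operatorname{Sym}(U_{t(n)})$ to the interpreted vector sort, giving $h_n:S_{m_n}\to\operatorname{GL}(n,p)$. Since $\abs{\operatorname{GL}(n,p)}<p^{n^2}<m_n!$ for large $n$, $h_n$ is not injective, so its image has order at most $2$. Then the vector sort has at least $p^n/2$ orbits, which contradicts the Bell-number bound $B_r$ on orbits of a parameter-free definable quotient of $U^r$.

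Your first attempt stalled only because you mapped into all of $\Aut(M_n)\cong S_{p^n}\times\operatorname{GL}_n(\F_p)$. Composing with the restriction to $V_n$ rules out the injective case at once. Adding the orbit count to your order-at-most-$2$ case then gives exactly the paper's proof.

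The support argument you chose uses only that the interpreted sort is an infinite group with $\emptyset$-definable operations. It therefore proves the stronger fact that a pure infinite set interprets no infinite group. The cost is an ultraproduct and some facts about imaginaries, which you should state precisely.

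\begin{itemize}
\item The property you need is that every imaginary $e$ has a least finite $s$ with $e\in\operatorname{dcl}^{\rm eq}(s)$, and that $e\in\operatorname{dcl}^{\rm eq}(A)$ forces $\operatorname{supp}(e)\subseteq A$. This follows from weak elimination of imaginaries together with $\operatorname{acl}(A)=A$. It is acl, not dcl, that is used, because the weak code of $e$ lies only in $\operatorname{acl}^{\rm eq}(e)$. This gives $\operatorname{supp}(e)\subseteq\bigcup_i\operatorname{supp}(e_i)$ whenever $e\in\operatorname{dcl}^{\rm eq}(e_1,\dots,e_k)$, which is what your addition step uses.
\item Counting imaginaries with small support does not by itself produce pairwise disjoint supports. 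Argue instead as follows. The sort has only finitely many $\emptyset$-definable elements, since each such class is a union of equality types. So some $v_1$ has nonempty support $s$. Because the interpretation is parameter-free, each permutation $\sigma$ of the infinite set sends $v_1$ to a vector with support $\sigma(s)$. Choosing $\sigma_1,\dots,\sigma_{q+1}$ with the sets $\sigma_i(s)$ pairwise disjoint gives the required vectors.
\end{itemize}

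With these points made explicit, your argument is complete.
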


\begin{proof}
Suppose toward a contradiction that such an interpretation exists.  Allow
an arbitrary reindexing of the two classes.  Along an unbounded set of
integers $n$, suppose that $M_n$ is interpreted in a pure set $U_{t(n)}$.
Let the sort representing $V_n$ be a quotient $X_n/E_n$ of a
parameter-free definable subset of $U_{t(n)}^r$, where the arity $r$ is
independent of $n$.  Put
\[
  m_n=\abs{U_{t(n)}}=p^{t(n)}.
\]
Since $\abs{X_n/E_n}=\abs{V_n}=p^n$ and
$\abs{X_n}\leq m_n^r$, one has $n\leq rt(n)$.

The symmetric group $S_{m_n}=\operatorname{Sym}(U_{t(n)})$ acts on
$X_n/E_n$.  Since the interpreted operations make this quotient isomorphic
to $V_n$, the action induces a homomorphism
\[
  h_n:S_{m_n}\longrightarrow\operatorname{Aut}(V_n)
  =\operatorname{GL}(n,p).
\]
The action of $S_{m_n}$ on $U_{t(n)}^r$ has at most $B_r$ orbits, where
$B_r$ is the $r$th Bell number: the orbit of a tuple is determined by its
equality pattern.  Hence $X_n$, and therefore its equivariant quotient
$X_n/E_n$, has at most $B_r$ orbits under $S_{m_n}$.

For all sufficiently large $n$,
\[
  \abs{\operatorname{GL}(n,p)}<p^{n^2}<m_n!.
\]
Indeed, $t(n)\geq n/r$, so $m_n\geq p^{n/r}$, while
$m_n!\geq(m_n/2)^{m_n/2}$ grows faster than $p^{n^2}$.  Thus $h_n$ is not
injective.  For $m_n\geq5$, the only normal subgroups of $S_{m_n}$ are
$1$, $A_{m_n}$, and $S_{m_n}$.  It follows that the image of $h_n$ has order
at most $2$.  Every $S_{m_n}$-orbit on the interpreted vector sort
consequently has size at most $2$, so that sort has at least $p^n/2$ orbits.
This contradicts the uniform bound $B_r$ for large $n$.
\end{proof}

\begin{theorem}
\label{thm:strict-weak-bi}
The classes $\mathcal U_p$ and $\mathcal M_p$ are one-dimensional
asymptotic classes.  The interpretation of $\mathcal U_p$ in
$\mathcal M_p$ is selector-trace reflecting, but $\mathcal U_p$ is not
uniformly weakly bi-interpretable in $\mathcal M_p$.
\end{theorem}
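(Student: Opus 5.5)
The theorem has three assertions, and I will prove them in the order stated, relying mostly on the preceding lemmas.

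\textbf{Asymptoticity of both classes.} For $\mathcal M_p$ this is \cref{lem:pure-vector-asymptotic}. For $\mathcal U_p$ there are two routes, and I would give the direct one. The parameter formulas from the $U$-sort part of that proof, namely ``some non-parameter element satisfies $\varphi$'' versus ``some non-parameter element fails $\varphi$'', are already pure equality formulas. They split the parameter tuples into a co-bounded cell, with fiber size $p^n+O(1)=\abs{U_n}+o(\abs{U_n})$ and label $(1,1)$, and bounded cells, which are refined by exact-count formulas. This makes $\mathcal U_p$ a one-dimensional asymptotic class. Alternatively, one could apply \cref{thm:essential-selector} to the $U$-sort interpretation. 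The selector is $\{u: u\in U\}$ with size $p^n=\tfrac12\abs{M_n}$, so $\mathscr E_S=\{1\}$ and $L_S=1$. This also gives one-dimensionality.

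\textbf{Trace reflection.} The trace-reflection assertion is exactly \cref{lem:pure-sort-trace}.

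\textbf{No uniform weak bi-interpretation.} This is the main step. A uniform weak bi-interpretation would in particular supply the reverse interpretation $\mathcal J$. By the definition in the preceding section, this means a uniform parameter-free interpretation of each $M=\alpha(U_n)$ in $U_n$, as members of the class $\mathcal M_p$. That is precisely what \cref{lem:no-vector-in-pure-set} forbids. The lemma explicitly allows an arbitrary reindexing $t(n)$ between the classes, so whatever injection $\alpha$ the bi-interpretation uses is covered.

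The one point I would check carefully is that $\alpha$ hits infinitely many $M_n$, so that ``along an unbounded set of integers $n$'' applies. This holds because $\alpha$ is an injection defined on the infinite class $\mathcal U_p$, so its image is an infinite subset of $\mathcal M_p$. Since no reverse interpretation exists, the definability of the comparison map $h_A$ never needs to be examined. This establishes strictness: the selector-trace-reflecting interpretation from \cref{lem:pure-sort-trace} is not the forward half of any uniform weak bi-interpretation, so the converse of \cref{prop:weak-biimplies-trace} fails.
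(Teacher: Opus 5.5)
Your proposal is correct and follows the paper's proof: $\mathcal M_p$ is one-dimensional by \cref{lem:pure-vector-asymptotic}, trace reflection is \cref{lem:pure-sort-trace}, and a weak bi-interpretation would supply a reverse interpretation contradicting \cref{lem:no-vector-in-pure-set}. The one difference is minor. The paper gets one-dimensionality of $\mathcal U_p$ from \cref{thm:selector-transfer} applied to the arity-one selector (so $N_*=1$), which is essentially your alternative route, while your primary argument checks it directly by the co-bounded/bounded automorphism argument; both are valid.
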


\begin{proof}
The class $\mathcal M_p$ is one-dimensional by
\cref{lem:pure-vector-asymptotic}.  The interpretation of $\mathcal U_p$
in it is selector-trace reflecting by \cref{lem:pure-sort-trace}; its
selector has arity one, so \cref{thm:selector-transfer} shows that
$\mathcal U_p$ is also one-dimensional.  A weak bi-interpretation would in
particular provide a uniform parameter-free reverse interpretation of
$\mathcal M_p$ in $\mathcal U_p$, contrary to
\cref{lem:no-vector-in-pure-set}.
\end{proof}

\begin{example}
\label{ex:matching-incidence-solution}
Fix the prime $p$ used above.  For $n\geq1$, let $H_n$ be the two-sorted
pure incidence structure
$
  H_n=(P_n,L_n,I_n),
$
where $\abs{P_n}=\abs{L_n}=p^n$ and $I_n$ is the graph of a bijection
from $P_n$ to $L_n$.  Thus $H_n$ is a perfect matching, viewed as an
incidence structure.  Put
$
  \mathcal H_p=\set{H_n:n\geq1}
$
and form the disjoint three-sorted expansion
\[
  \widehat M_n=(H_n,V_n),
  \qquad
  \widehat{\mathcal M}_p=\set{\widehat M_n:n\geq1},
\]
where $V_n=\F_p^n$ is in the vector-space language and there are no
relations between $V_n$ and the two incidence sorts.

Then $\mathcal H_p$ is a full one-dimensional asymptotic class, and every
member is a pure incidence structure omitting $K_{a,b}$ for all
$a,b\geq2$.  Moreover, it is obtained from the one-dimensional asymptotic
class
$\widehat{\mathcal M}_p$ by a selector-trace-reflecting interpretation,
whereas the two classes are not uniformly weakly bi-interpretable.
Consequently, trace-reflection transfer applies to this interpretation even
though it is not part of a uniform weak bi-interpretation between the two
classes.
\end{example}

\begin{proof}
We first check the ambient asymptotic class.  Given a parameter tuple in
$\widehat M_n$, close its incidence entries under the matching bijection.
This adds only boundedly many elements.  The automorphism group fixing that
finite closure pointwise is transitive on the remaining elements of
$P_n$, and also on the remaining elements of $L_n$.  On the vector sort,
the subgroup fixing the span of the vector parameters pointwise is
transitive outside that span.  Exactly as in the proof of
\cref{lem:pure-vector-asymptotic}, every one-variable definable fiber is
therefore either uniformly bounded or co-bounded in one of the three
sorts.  Bounded fibers are separated by exact-count formulas, while
$
  \abs{\widehat M_n}=3p^n
$
and every co-bounded fiber has size
\[
  p^n+O(1)=\frac13\abs{\widehat M_n}
  +o\bigl(\abs{\widehat M_n}\bigr).
\]
The cells are parameter-free definable by the same finite-closure and
finite-span formulas used there.  Hence
$\widehat{\mathcal M}_p$ is a one-dimensional asymptotic class.

Interpret $H_n$ in $\widehat M_n$ by taking the $P_n$- and $L_n$-sorts,
with equality as the interpretation equivalence relation.  The interpreted
domain is its own selector.  To prove selector-trace reflection, apply the
standard separation induction for disjoint many-sorted languages.  An
ambient formula all of whose free variables are in the incidence sorts is
a finite Boolean combination of incidence formulas and vector-space
sentences.  Each of the finitely many vector-space sentences involved has
an eventually constant truth value as $n\to\infty$: this follows, for
example, from the usual finite-round back-and-forth argument for vector
spaces of increasing dimension.  Each exceptional value of $n$ is
recognized inside $H_n$ by the first-order
sentence $\abs{P_n}=p^n$.  Replacing the vector-space sentences by their
eventual truth values and making these finitely many cardinality corrections
gives a uniform incidence formula with the same trace.  Thus the
interpretation is
selector-trace reflecting.  The selector transfer theorem now shows that
$\mathcal H_p$ is a full one-dimensional asymptotic class.

Every vertex of $H_n$ has incidence degree one.  It follows immediately
that $H_n$ contains no $K_{a,b}$ whenever $a,b\geq2$.

It remains to rule out weak bi-interpretability.  The class
$\mathcal H_p$ is uniformly parameter-free interpretable in
$\mathcal U_p$: use two interpreted copies of $U_n$, and declare a point
and a line incident exactly when their underlying elements are equal.
If $\mathcal H_p$ and $\widehat{\mathcal M}_p$ were uniformly weakly
bi-interpretable, the reverse interpretation, composed with this
interpretation of $\mathcal H_p$ in $\mathcal U_p$, would uniformly
interpret $\widehat{\mathcal M}_p$ in $\mathcal U_p$.  Taking within
$\widehat M_n$ the $P_n$-sort as a pure set and retaining the vector sort
would then give a uniform interpretation of $\mathcal M_p$ in
$\mathcal U_p$.  This contradicts
\cref{lem:no-vector-in-pure-set}.  Hence no such weak bi-interpretation
exists.
\end{proof}

\begin{theorem}
\label{thm:complete-graph-incidence}
Fix a prime $p$.  For $n\geq1$, let
$
  B_n=(P_n,L_n,I_n)
$
be the pure vertex--edge incidence structure of the complete graph on the
pure set $U_n$ of size $p^n$; thus
\[
  P_n=U_n,\qquad L_n=[U_n]^2,
  \qquad I_n(u,\ell)\Longleftrightarrow u\in\ell.
\]
Put
$
  \mathcal B_p=\set{B_n:n\geq1}.
$
Then $\mathcal B_p$ is a full two-dimensional asymptotic class, and the
denominator $2$ is minimal.  Every incidence graph $B_n$ is connected,
every point has valency $p^n-1$, and $B_n$ is
$K_{a,b}$-free for all $a,b\geq2$.

Moreover, $\mathcal B_p$ is obtained from the one-dimensional asymptotic
class $\mathcal M_p$ by a quotient-trace-reflecting interpretation, but
$\mathcal B_p$ and $\mathcal M_p$ are not uniformly weakly
bi-interpretable.  Thus the quotient interpretation cannot be replaced by
a weak bi-interpretation between these two classes.
\end{theorem}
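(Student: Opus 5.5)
We will realize $B_n$ as a parameter-free quotient interpretation in $M_n=(U_n,V_n)$ of arity $2$. The point sort is $U_n$ (arity $1$, padded if desired). The line sort is the set $X_n=\set{(u,v)\in U_n^2:u\neq v}$ modulo the relation $(u,v)E(u',v')\Leftrightarrow\set{u,v}=\set{u',v'}$. Incidence $I(w,[u,v])$ is $w=u\vee w=v$. There is no uniform definable selector, since the pure set has no definable order; this is why we need the quotient theorem. Quotient-trace reflection will follow as in \cref{lem:pure-sort-trace}. Let $\rho$ be an ambient formula with free variables in the $U$-sort, invariant under $E$ in the line coordinates. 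By the back-and-forth argument there, its truth value on a tuple depends, for $n\geq n_0$, only on the equality type of the underlying $U$-entries; for $n<n_0$ the same holds, with the type sets corrected at the finitely many cardinalities $p^n$. So it suffices to express \enquote{the underlying representatives have equality type $\tau$} in the pure incidence language. Because $\rho$ is invariant, only the $E$-invariant part of $\tau$ matters. That part is expressible by incidence. Points equal points directly. A point $w$ equals an entry of line $\ell$ iff $I(w,\ell)$. Two lines share an underlying element iff some point is incident with both. Their union pattern is recovered by quantifying over the at most two points on each line. The case $m=0$ is handled the same way, with sentences $\abs{P}=p^n$ recognizing the exceptional $n$.

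Once reflection holds, \cref{thm:quotient-transfer} applies with $N=1$, $r=2$. The universe has size $p^n+\binom{p^n}{2}=\tfrac12 T^2+o(T^2)$ where $T=\abs{U_n}$. The ambient size is $2p^n$, so the leading exponent relative to $\abs{M_n}$ is $2$. Thus $\mathscr E_Q=\set{2}$ and $L_Q=2$, and \cref{cor:pure-quotient-dimension} gives a full $2$-dimensional class. For minimality we invoke \cref{cor:coprime-denominator-witness}. The formula $I(x;\ell)$ with $x$ in the line sort and $\ell$ a point (the lines through a point) has fiber size $p^n-1$, which is of order $\abs{B_n}^{1/2}$. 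This gives the exponent $1/2$ with $\gcd(1,2)=1$, so the denominator $2$ is minimal. The combinatorial claims are immediate. Any two points lie on a common line, and every line meets a point, so $B_n$ is connected. Each point lies on $p^n-1$ lines. Two distinct lines share at most one point, so there is no $K_{2,2}$, and hence no $K_{a,b}$ for $a,b\geq2$.

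For non-weak-bi-interpretability we argue as in \cref{ex:matching-incidence-solution}. The class $\mathcal B_p$ is uniformly parameter-free interpretable in $\mathcal U_p$ by the same pairs-mod-swap construction. Suppose $\mathcal M_p$ were uniformly interpretable in $\mathcal B_p$, as the reverse half of a weak bi-interpretation would provide. Composing the two interpretations would then give a uniform parameter-free interpretation of $\mathcal M_p$ in $\mathcal U_p$. This contradicts \cref{lem:no-vector-in-pure-set}. We must check that composition of parameter-free interpretations is again a parameter-free interpretation. This is standard, and the arity multiplies, which causes no problem since \cref{lem:no-vector-in-pure-set} allows an arbitrary fixed arity.

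The main obstacle is the reflection step. We must verify carefully that an $E$-invariant union of equality types on representatives is expressible from incidence alone. In particular, the two representatives of a line are unordered, so a type distinguishing $(u,v)$ from $(v,u)$ must be symmetrized; the $E$-invariance of $\rho$ guarantees that this is harmless. We must also check that the exceptional small $n$ are still recognized by incidence sentences. If a direct description becomes messy, the fallback is the semantic route of \cref{thm:stable-embedded-descent}. Every permutation of $U_n$ is an automorphism of both $B_n$ and the pair, since $\Aut(B_n)\cong S_{p^n}$ for $p^n\geq3$. Completions would be controlled by the vector-space sentences, which are eventually constant.
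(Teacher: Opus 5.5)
Your proposal is correct and follows essentially the paper's route. You use the same pairs-modulo-swap quotient interpretation in $\mathcal M_p$, get $L_Q=2$ from $\abs{B_n}=\tfrac18\abs{M_n}^2+o(\abs{M_n}^2)$, make the same combinatorial checks, and compose with the interpretation of $\mathcal B_p$ in $\mathcal U_p$ to contradict \cref{lem:no-vector-in-pure-set}. The differences are minor: you unfold the reflection step by hand (quantifying over the two points incident with each line), where the paper reduces to a pure equality formula via \cref{lem:pure-sort-trace} and then applies \cref{prop:weak-biimplies-trace} to the weak bi-interpretation of $\mathcal B_p$ with $\mathcal U_p$; and you witness minimality with the lines through a point rather than the point predicate.
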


\begin{proof}
Recall that $M_n=(U_n,V_n)$, where $U_n$ is a pure set of size $p^n$ and
$V_n=\F_p^n$, and that $\mathcal M_p$ is one-dimensional by
\cref{lem:pure-vector-asymptotic}.  Interpret $B_n$ in $M_n$ with one
uniform domain of arity two.  A diagonal pair $(u,u)$ represents the point
$u$, while an off-diagonal pair $(u,v)$ represents the line $\{u,v\}$.
On $U_n^2$ define
\begin{align*}
  (u,v)E(u',v')\quad\Longleftrightarrow\quad{}
  &\bigl(u=v\wedge u'=v'\wedge u=u'\bigr)\\
  &{}\vee\Bigl(u\neq v\wedge u'\neq v'\\
  &\hspace{4em}{}\wedge
  \bigl((u=u'\wedge v=v')\vee(u=v'\wedge v=u')\bigr)\Bigr).
\end{align*}
The point and line predicates are respectively the images of the diagonal
and off-diagonal pairs, and incidence is defined by
\[
  I([(u,u)],[(v,w)])
  \quad\Longleftrightarrow\quad u=v\ \vee\ u=w.
\]
This gives a uniform parameter-free quotient interpretation of
$\mathcal B_p$ in $\mathcal M_p$.

We verify quotient-trace reflection.  Let
$\rho(\bar u_1,\ldots,\bar u_m)$ be an ambient formula on representative
pairs which is invariant under $E$ in every coordinate.  All its free
variables lie in the $U$-sort.  By \cref{lem:pure-sort-trace}, there is a
pure equality formula $\rho_0$ with exactly the same trace on every
$U_n^{2m}$.  The class $\mathcal B_p$ is uniformly parameter-free weakly
bi-interpretable with $\mathcal U_p$: in one direction use the quotient
interpretation just described without the vector sort, and in the other
direction take the point sort.  On returning to $B_n$, a line is sent to
the unordered pair of its two incident points, giving the required
definable round-trip comparison.  Therefore
\cref{prop:weak-biimplies-trace}, applied to the $E$-invariant formula
$\rho_0$, supplies a parameter-free incidence formula defining its quotient
trace.  This is also the trace of $\rho$, proving quotient-trace reflection.

The $E$-classes have exact size one on the diagonal and exact size two off
the diagonal.  Writing $T=\abs{M_n}=2p^n$, the interpreted universe has
size
\[
  \abs{B_n}=p^n+\binom{p^n}{2}
  =\frac18T^2+o(T^2).
\]
Consequently its only essential quotient-universe exponent is $2$, so
$L_Q=2$.  The quotient-trace transfer theorem now shows that
$\mathcal B_p$ is a full two-dimensional asymptotic class.  This
denominator is minimal: the parameter-free point predicate has size
\[
  \abs{P_n}=p^n
  =\sqrt{2}\,\abs{B_n}^{1/2}
   +o\bigl(\abs{B_n}^{1/2}\bigr),
\]
which is unbounded but sublinear in $\abs{B_n}$ and hence is incompatible
with the alternatives allowed in a one-dimensional asymptotic class.

The incidence graph is connected: two points are joined through their unique
common line, and every line is incident with its two endpoints.  Each point
lies on exactly $p^n-1$ lines.  Finally, two distinct points have exactly one
common incident line, and two distinct lines have at most one common incident
point.  Hence $B_n$ contains no $K_{2,2}$ and therefore no $K_{a,b}$ for any
$a,b\geq2$.

Suppose, toward a contradiction, that $\mathcal B_p$ and $\mathcal M_p$
were uniformly weakly bi-interpretable.  The reverse interpretation would
uniformly interpret $\mathcal M_p$ in $\mathcal B_p$.  Composing it with
the interpretation of $\mathcal B_p$ in the pure sets $\mathcal U_p$
would uniformly interpret $\mathcal M_p$ in $\mathcal U_p$, contrary to
\cref{lem:no-vector-in-pure-set}.  Thus no such weak bi-interpretation
exists.
\end{proof}

\begin{corollary}
\label{cor:ultraproduct}
Under the hypotheses of either \cref{thm:selector-transfer} or
\cref{thm:quotient-transfer}, every infinite ultraproduct of members of
$\D$ is supersimple of finite $\SU$-rank.  More precisely, the $\SU$-rank
is at most the transferred asymptotic denominator.
\end{corollary}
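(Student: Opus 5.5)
The plan is to reduce to the known theorem that every infinite ultraproduct of an $N$-dimensional asymptotic class is supersimple of $\SU$-rank at most $N$ (Elwes \cite{Elwes2007}; in the one-dimensional case Macpherson--Steinhorn \cite{MacphersonSteinhorn2008}). First, under the hypotheses of \cref{thm:selector-transfer}, \cref{thm:essential-selector} shows that $\D$ is an $L_S$-dimensional asymptotic class. Under the hypotheses of \cref{thm:quotient-transfer}, $\D$ is $L_Q$-dimensional. In either case, by \cref{prop:visible-least-denominator} we may even pass to the least denominator $L_{\rm vis}$. Call the transferred denominator $L$. The corollary then follows once we know that an infinite ultraproduct $\prod_{\mathcal U}A_i$ of members of an $L$-dimensional asymptotic class is supersimple with $\SU(x=x)\leq L$.

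For completeness, I will recall the mechanism of the cited result rather than reprove it. Given $\varphi(x;\bar y)$, the definable cells $\theta_{\varphi,d,\mu}$ pass to the ultraproduct. For $\bar b$ in the ultraproduct, the cell containing $\bar b$ determines a pair $(d,\mu)$, and we set $\dim(\varphi(x;\bar b))=d$. Definability of the cells makes this dimension definable in the parameters and invariant under automorphisms. Additivity comes from \cref{thm:tuple-counting} and the Fubini-type estimate of \cref{lem:fubini-sum}.

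Next I check that forking lowers this dimension. If $\varphi(x;\bar b)$ divides over $C$, take a $C$-indiscernible sequence $(\bar b_j)$ witnessing $k$-inconsistency. All the $\bar b_j$ lie in the same definable cell, hence have the same label $(d,\mu)$. The measure $\mu|A|^{d/L}$ together with $k$-inconsistency then forces the ambient formula over $C$ to have strictly larger dimension. This uses the standard counting: $k$-inconsistent sets of size about $\mu|A|^{d/L}$ can appear unboundedly often only inside a set of larger exponent. Thus each forking extension strictly decreases an integer-valued dimension in $\{0,\dots,L\}$. A forking chain for $x=x$ therefore has length at most $L$, and $x=x$ itself has dimension $L$ since $|A|=|A|^{L/L}$. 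This gives supersimplicity and $\SU(x=x)\leq L$. Bounds for imaginaries or tuples are not needed, since the statement concerns the home sort.

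The main obstacle I expect is the bookkeeping with multisorted or bounded cells. Cells where target sizes are bounded are irrelevant in an infinite ultraproduct, because $\mathcal U$-almost all structures are large; \cref{lem:bounded-cell} assigns them exact counts and they contribute only finite sets. The other point to check is that the cited theorem is stated for the same normalization, namely exponents $d/N$ with $0\le d\le N$ and one object variable, which matches \cref{def:N-asymptotic}. Given that, the corollary is immediate from the transfer theorems.
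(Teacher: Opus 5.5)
Your proposal is correct and is essentially the paper's proof: the relevant transfer theorem makes $\D$ an asymptotic class with the transferred denominator, and Elwes's result that infinite ultraproducts of an $L$-dimensional asymptotic class are supersimple of $\SU$-rank at most $L$ then finishes. Your sketch of Elwes's dividing-and-counting mechanism is optional, and the sharpening to $L_{\rm vis}$ is a harmless bonus; in the selector case that sharpening first needs \cref{cor:selector-special}.
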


\begin{proof}
Apply the relevant transfer theorem and then
\cite[Corollary~2.8]{Elwes2007}, which states that an ultraproduct of an
$L$-dimensional asymptotic class is supersimple of $\SU$-rank at most $L$.
\end{proof}

\begin{remark}
For complementary results connecting pseudofinite counting dimensions with
simplicity and forking, see \cite{GarciaMacphersonSteinhorn2015}.
\end{remark}

\begin{corollary}
\label{cor:nonsimple-obstruction}
If a sequence of structures from $\D$ has an infinite non-simple
ultraproduct, then $\D$ cannot satisfy the conclusion of either transfer
theorem; in particular, no interpretation into an asymptotic class can
satisfy the corresponding hypotheses.
\end{corollary}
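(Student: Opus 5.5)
The argument is by contraposition against \cref{cor:ultraproduct}. Suppose that some sequence $(A_i)_{i\in\N}$ of members of $\D$ has an infinite ultraproduct $\prod_i A_i/\mathcal U$ that is not simple. Suppose also, toward a contradiction, that $\D$ satisfies the conclusion of \cref{thm:selector-transfer} or of \cref{thm:quotient-transfer}, namely that $\D$ is an $L$-dimensional asymptotic class for some $L\geq1$ (for instance $L=N_*$ or $L=L_Q$).

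First I would make sure the non-principal case is the relevant one. If the ultraproduct is infinite, then $\abs{A_i}$ is unbounded along $\mathcal U$. Otherwise $\mathcal U$-almost all $A_i$ would lie among finitely many isomorphism types of bounded size (in a countable language it suffices that their sizes are bounded, since a bounded-size ultraproduct is finite). So the ultraproduct is a genuine infinite ultraproduct of members of an $L$-dimensional asymptotic class.

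Next I would apply \cite[Corollary~2.8]{Elwes2007}, exactly as it is used in the proof of \cref{cor:ultraproduct}. It gives that the ultraproduct is supersimple of $\SU$-rank at most $L$, and in particular simple. This contradicts the assumption, so $\D$ is not an asymptotic class of any finite dimension.

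For the ``in particular'' clause, suppose some interpretation $\I$ of $\D$ in an $N$-dimensional asymptotic class $\C$ satisfied the hypotheses of either transfer theorem: a uniform definable selector together with selector-trace reflection, or quotient-trace reflection. Then the corresponding theorem would make $\D$ an asymptotic class, and we have just excluded that.

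The only delicate point is the reduction from ``infinite ultraproduct'' to ``sizes unbounded,'' which is needed because \cite[Corollary~2.8]{Elwes2007} concerns infinite ultraproducts; beyond that, the statement is a formal consequence of \cref{cor:ultraproduct}.
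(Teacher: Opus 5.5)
Your argument is correct and matches the paper's proof, which simply observes that \cref{cor:ultraproduct} (Elwes's Corollary~2.8 applied to the transferred asymptotic class) would make the infinite ultraproduct supersimple, hence simple. Your reduction to unbounded sizes is harmless but unnecessary, since \cref{cor:ultraproduct} is already phrased for infinite ultraproducts. The aside about ``finitely many isomorphism types'' is not accurate in an infinite countable language, but the Łoś argument you fall back on (bounded sizes give a finite ultraproduct) is correct.
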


\begin{proof}
Otherwise \cref{cor:ultraproduct} would make that ultraproduct supersimple,
hence simple, a contradiction.
\end{proof}

\begin{remark}
Conant and Kruckman study the theory $T_{m,n}$ of existentially closed
incidence structures omitting $K_{m,n}$, where $K_{m,n}$ has $m$ points and
$n$ lines.  For $m,n\geq2$, they prove that $T_{m,n}$ is $\mathrm{NSOP}_1$
but not simple \cite{ConantKruckman2019}.  Consequently, a sequence of
finite structures whose non-principal ultraproduct is a model of $T_{m,n}$
cannot lie in an asymptotic class.  This observation alone does not prove
that the class of all finite $K_{m,n}$-free incidence structures is not
asymptotic: one must separately produce, or identify, such an ultraproduct
from that finite class.
\end{remark}

\section{Projective quotient-trace structures}

The general quotient theorem applies without a definable normalization of
projective coordinates.  The following examples illustrate the quotient
mechanism.  The underlying pure incidence geometries are classical.

\begin{definition}
\label{def:projective-trace}
Fix $k\geq2$.  For a finite field $F$, put
$
  X_k(F)=F^k\setminus\set{\bar0}
$
and define
\[
  \bar u\sim\bar v
  \quad\Longleftrightarrow\quad
  \exists t\in F^\times\ (\bar v=t\bar u).
\]
Let $\mathbb P_{\mathrm{tr}}^{k-1}(F)$ be the quotient $X_k(F)/{\sim}$
expanded by the descent of every parameter-free ring formula on tuples of
nonzero vectors which is invariant under independent nonzero scalar
multiplication in each tuple coordinate.  The empty tuple is allowed, so
the expansion also records the descents of parameter-free ring sentences.
\end{definition}

\begin{theorem}
\label{thm:projective-trace}
For each fixed $k\geq2$, the class
\[
  \mathcal P_k^{\mathrm{tr}}
  \defeq
  \set{\mathbb P_{\mathrm{tr}}^{k-1}(\F_q):q\text{ a prime power}}
\]
is a $(k-1)$-dimensional asymptotic class.  Moreover, $k-1$ is its least
possible asymptotic denominator.
\end{theorem}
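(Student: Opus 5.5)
We plan to apply \cref{cor:canonical-quotient-trace}, or directly \cref{thm:quotient-transfer}, to the uniform parameter-free interpretation of $\mathcal P_k^{\mathrm{tr}}$ in the class of finite fields. The ambient class $\C=\set{\F_q}$ is a one-dimensional asymptotic class by \cite{CDM1992}, so $N=1$. The interpretation has arity $r=k$, domain formula $\delta(\bar u)\colon \bar u\neq\bar0$, and equivalence formula $\epsilon(\bar u,\bar v)\colon\exists t\,(t\neq0\wedge\bar v=t\bar u)$; no parameters are needed. Quotient-trace reflection holds by construction: by \cref{def:projective-trace}, every parameter-free ring formula invariant under independent scaling of each coordinate, including sentences in the case $m=0$, is named by a relation symbol of the target language. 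Thus \cref{thm:quotient-transfer} shows that $\mathcal P_k^{\mathrm{tr}}$ is an $L_Q$-dimensional asymptotic class, and it remains to compute $L_Q$.

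For $L_Q$, we take the quotient-universe counting of \cref{lem:ambient-quotient-counting} for $X_k(F)/{\sim}$. Every class has exactly $q-1$ elements, and $\abs{X_k(\F_q)}=q^k-1$. So the universe has size $(q^k-1)/(q-1)=q^{k-1}+\dots+1=T^{k-1}+o(T^{k-1})$, where $T=q=\abs F$. Because there are no parameters and the count is exact, we may take the partition to consist of the single cell $\chi_{k-1,1}$ given by a tautology. This cell is target-unbounded, so $\mathscr E_Q=\set{k-1}$ and $L_Q=k-1$; alternatively one can invoke \cref{cor:pure-quotient-dimension} with $e_0=k-1$. One small point needs checking. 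The partition in \cref{def:essential-quotient} is whatever \cref{lem:ambient-quotient-counting} produces, and it might in principle contain other cells. However, any cell supporting unbounded structures carries the label $(k-1,1)$, since the leading term of an exact count is unique. Hence $\mathscr E_Q=\set{k-1}$ regardless of the partition chosen.

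For minimality, we use \cref{cor:coprime-denominator-witness}, or argue directly as in \cref{prop:visible-least-denominator}. We need a target-definable family with exponent $d/(k-1)$ and $\gcd(d,k-1)=1$; the natural choice is $d=1$. In the target, consider the formula $\varphi(x;y_1,\dots,y_{k-2})$ asserting that $x$ lies in the projective span of $y_1,\dots,y_{k-2}$ and one further fixed point. The cleaner choice, however, is the formula $\varphi(x;y_1,y_2)$ saying that $x$ lies on the projective line through $y_1$ and $y_2$. This formula is the descent of the scale-invariant ring formula ``$\bar u$ is in the span of $\bar v_1,\bar v_2$'', so it is in the target language. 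For $k=2$ we instead use $x=x$, giving exponent $1=1/1$, and minimality is trivial because the denominator is $1$. For $k\geq3$, on the parameter-free cell $y_1\neq y_2$ (equality descends from $\epsilon$), the fiber has exactly $q+1$ elements. Since $\abs A=q^{k-1}+O(q^{k-2})$, this gives $\abs{\varphi(A;\bar b)}=\abs A^{1/(k-1)}+o(\abs A^{1/(k-1)})$. The cell supports unbounded structures and the fibers are unbounded, so we have a realized exponent $1/(k-1)$. Then \cref{prop:visible-least-denominator}, via the uniqueness of leading exponents on unbounded sequences, forces $k-1\mid h$ for any asymptotic denominator $h$.

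The main obstacle is bookkeeping rather than substance. We must confirm that the witness cell is among, or compatible with, the descended profile cells used in defining $L_{\mathrm{vis}}$. We avoid this by arguing directly, as in the converse half of the proof of \cref{prop:visible-least-denominator}: intersect the definable cell $y_1\neq y_2$ with an arbitrary $h$-dimensional partition for $\varphi$. Some intersection is unbounded, and there the exponent $1/(k-1)$ must equal $j/h$, so $(k-1)\mid h$.
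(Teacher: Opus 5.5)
Your proposal is correct and follows the paper's proof. Quotient-trace reflection holds by construction, the exact count $(q^k-1)/(q-1)$ gives the single essential exponent $k-1$ (as in \cref{cor:pure-quotient-dimension}), and minimality follows from uniqueness of leading exponents on an unbounded cell with an exponent whose numerator is coprime to $k-1$. The only difference is the witness: the paper uses the parameter-free unary relation $H$ (the descent of $u_1=0$, exponent $(k-2)/(k-1)$), whereas you use the projective line through two distinct points (exponent $1/(k-1)$); both work equally well.
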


\begin{proof}
Finite fields form a one-dimensional asymptotic class
\cite{CDM1992,MacphersonSteinhorn2008}.  The scalar relation is a uniformly
definable equivalence relation on $X_k(F)$, and quotient-trace reflection
holds by the definition of the target language.  Thus
\cref{cor:canonical-quotient-trace} applies.

Writing $q=\abs F$, the quotient universe has the exact cardinality
\[
  \frac{q^k-1}{q-1}=1+q+\cdots+q^{k-1}
  =q^{k-1}+o(q^{k-1}).
\]
Hence the quotient-universe partition has the single positive exponent
$e_0=k-1$, and \cref{cor:pure-quotient-dimension} gives the asserted
$(k-1)$-dimensional bound.

For minimality, the case $k=2$ is immediate.  Suppose $k\geq3$.  The
invariant ambient formula $u_1=0$ descends to a unary relation $H$ in the
full trace language, and
\[
  \abs{H(\mathbb P_{\mathrm{tr}}^{k-1}(\F_q))}
  =1+q+\cdots+q^{k-2}
  =q^{k-2}+o(q^{k-2}).
\]
Since the whole structure has size $q^{k-1}+o(q^{k-1})$, any
$N'$-dimensional asymptotic presentation would have, on an unbounded cell,
$
  \frac d{N'}=\frac{k-2}{k-1}
$
for some integer $d$.  The two integers $k-2$ and $k-1$ are coprime, so
$k-1$ divides $N'$.  Thus no smaller denominator is possible.
\end{proof}

\begin{corollary}
\label{cor:projective-incidence-trace}
For each finite field $F$, take a point copy and a line copy of
$(F^3\setminus\set{\bar0})/{\sim}$ and define
\[
  I([\bar x],[\bar a])
  \quad\Longleftrightarrow\quad
  a_1x_1+a_2x_2+a_3x_3=0.
\]
Expand the resulting two-sorted incidence structure by all quotient-trace
relations.  As $F$ ranges over the finite fields, these expansions form a
two-dimensional asymptotic class, and two is the least possible asymptotic
denominator.  Their incidence graphs are $K_{2,2}$-free and therefore
$K_{m,n}$-free for every $m,n\geq2$.
\end{corollary}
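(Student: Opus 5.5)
The plan is to present the two-sorted structure as a single uniform parameter-free quotient interpretation in the finite fields, and then invoke \cref{cor:canonical-quotient-trace} and \cref{cor:pure-quotient-dimension} exactly as in the proof of \cref{thm:projective-trace}. The interpretation has arity $4$. A representative is $(s,x_1,x_2,x_3)$, where the tag coordinate $s\in\set{0,1}$ records the sort: $s=0$ for points and $s=1$ for lines. The domain is defined by $(s=0\vee s=1)\wedge\bar x\neq\bar0$, and the equivalence relation is equality of tags together with $\exists t\neq0\ (\bar x'=t\bar x)$. Incidence between $[(0,\bar x)]$ and $[(1,\bar a)]$ is the invariant formula $a_1x_1+a_2x_2+a_3x_3=0$. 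The sort predicates are the invariant formulas $s=0$ and $s=1$. The ``expansion by all quotient-trace relations'' is then precisely the full quotient-trace language of this interpretation. I will remark that a formula invariant under the two-sorted projective relation is the same thing as an invariant formula on tagged representatives, because tags are fixed by the equivalence. Quotient-trace reflection therefore holds by definition, and \cref{cor:canonical-quotient-trace} gives an asymptotic class.

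For the denominator I count the universe. It consists of two copies of $\mathbb P^2(\F_q)$, so it has exact size $2(q^2+q+1)=2q^2+o(q^2)$ with ambient size $T=q$. Every unbounded cell of the quotient-universe partition therefore has the single exponent $e_0=2$, and \cref{cor:pure-quotient-dimension} yields a full two-dimensional asymptotic class.

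For minimality I use the point-sort predicate, or equivalently the set of points incident with a fixed line. Take the formula $I(x;y)$ with $y$ a line. Its fibers have size $q+1=q+o(q)$, so relative to $\abs A\sim2q^2$ this gives exponent $1/2$ with $\gcd(1,2)=1$. By \cref{cor:coprime-denominator-witness} this shows $L_Q=2$ is least. I could also argue directly: if the class were $N'$-dimensional, then on an unbounded cell $1/2=d/N'$, so $2\mid N'$. The only check needed is that the cell consisting of all line parameters is parameter-free definable, and it is, by the sort predicate.

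Finally, $K_{2,2}$-freeness comes from linear algebra. Two distinct projective points span a unique projective line in $\mathbb P^2$, since two independent vectors in $F^3$ have a one-dimensional orthogonal complement under the dot-product pairing. So two distinct points lie on at most one common line, and dually two distinct lines meet in at most one point. Any $K_{m,n}$ with $m,n\geq2$ contains a $K_{2,2}$, so these graphs are $K_{m,n}$-free as well. The main obstacle is minor bookkeeping: justifying that the two-sorted language with its tag coding fits \cref{def:interpretation}, whose constant tags $0,1$ are definable without parameters, and that the many-sorted size convention gives the stated universe count. Everything else is immediate from the earlier corollaries.
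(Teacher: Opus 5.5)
Your proposal is correct and is essentially the paper's proof: tag-coded quotient interpretation, \cref{cor:pure-quotient-dimension} from the universe size $2(q^2+q+1)$, an exponent-$1/2$ witness for minimality, and the annihilator argument for $K_{2,2}$-freeness. The paper's witness is the descended relation ``point class with first homogeneous coordinate zero'' rather than your line neighbourhood (both have size $q+1$); also drop the phrase ``the point-sort predicate, or equivalently'', since the point sort has size $q^2+q+1$, hence exponent $1$, and witnesses nothing, so only the incidence fiber $I(x;y)$ carries the minimality argument.
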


\begin{proof}
Code the two copies using a definable tag in $\set{0,1}$.  The quotient
universe has size
\[
  2(q^2+q+1)=2q^2+o(q^2),
\]
so \cref{cor:pure-quotient-dimension} gives dimension two.  The incidence
formula is invariant under independent scalar multiplication and hence is
one of the quotient-trace relations.  The unary quotient-trace relation
selecting point-copy classes with first homogeneous coordinate zero has
$q+1$ elements.  Relative to the total size $2q^2+o(q^2)$, its exponent is
$1/2$.  Thus any possible asymptotic denominator is even, and two is least.

If two distinct projective points lay on two distinct projective lines,
their two-dimensional span in $F^3$ would be annihilated by two linearly
independent covectors.  But the annihilator of a two-dimensional subspace
of $F^3$ is one-dimensional.  Thus no $K_{2,2}$ occurs.  Every
$K_{m,n}$ with $m,n\geq2$ contains a $K_{2,2}$.
\end{proof}

\begin{remark}
The full quotient-trace conclusion does not by itself imply asymptoticity of
the reduct to the pure incidence language, because the formulas defining
the asymptotic cells in the expansion need not be definable in the reduct.
For finite Moufang polygons, including the Desarguesian projective-plane
family above, pure-language asymptoticity is already known by work of Dello
Stritto \cite{DelloStritto2011}.
\end{remark}

%%%%%%%
%%%%%%%%%%%%%%%%%%%%%%%%

%%%%%%%%%%%%%%%%%%%%%%%%%%%%
\section{A polynomial incidence family}

We now turn to a concrete forbidden-biclique family.  We first derive weak
and quantifier-free counting results from the field coordinates.  We then use
a definable finite-frame reconstruction and a frame-profile descent argument
to prove full asymptoticity in the pure incidence language.

\begin{definition}
\label{def:polynomial-incidence}
Fix $k\geq2$. For a finite field $F$, put $P(F)\defeq F^2$ and
$C_k(F)\defeq F[X]_{<k}$. The coefficient map
$(a_0,\dots,a_{k-1})\mapsto\sum_{i=0}^{k-1}a_iX^i$ identifies $C_k(F)$
with $F^k$. Define an incidence relation between $P(F)$ and $C_k(F)$ by
\[
  I_k\bigl((x,y),f\bigr)
  \quad\Longleftrightarrow\quad
  y=f(x).
\]
Write $G_k(F)\defeq\bigl(P(F),C_k(F);I_k\bigr)$ for the resulting two-sorted
incidence structure, and set
$\Gclass_k\defeq\set{G_k(F):F\text{ a finite field}}$. We use the oriented
incidence convention: $K_{m,n}$ consists of $m$ point vertices and $n$ curve
vertices.
\end{definition}

\begin{lemma}\label{lem:polynomial-intersection}
Two distinct polynomials in $C_k(F)$ have at most $k-1$ common neighbors in
$P(F)$.
\end{lemma}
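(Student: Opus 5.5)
The plan is to reduce the statement to the fact that a nonzero polynomial over a field has at most as many roots as its degree. Let $f\neq g$ be elements of $C_k(F)$, both of degree less than $k$. By \cref{def:polynomial-incidence}, a point $(x,y)\in P(F)$ is a common neighbor of $f$ and $g$ exactly when
\[
  y=f(x)
  \quad\text{and}\quad
  y=g(x).
\]
Equivalently, $(f-g)(x)=0$ and $y=f(x)$. The second coordinate is therefore determined by the first. So the map $(x,y)\mapsto x$ is a bijection from the set of common neighbors onto the zero set of $f-g$ in $F$.

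Next I would note that $h\defeq f-g$ is a nonzero polynomial of degree at most $k-1$. It is nonzero because the coefficient map of \cref{def:polynomial-incidence} is injective and $f\neq g$. Over the field $F$, such a polynomial has at most $\deg h\leq k-1$ roots; this follows from the factor theorem and the fact that $F[X]$ is an integral domain. Hence $f$ and $g$ have at most $k-1$ common neighbors.

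The only point needing care is the degenerate case in which $h$ is a nonzero constant. Then $h$ has no roots at all, and the bound $k-1\geq1$ holds trivially. I do not expect any real obstacle. I would close by recording the consequence used later for the $K_{k,n}$-freeness claim in Theorem F: in the oriented convention, a copy of $K_{k,2}$ would consist of $k$ points incident with two distinct curves, which the lemma forbids. Every $K_{k,n}$ with $n\geq2$ contains such a $K_{k,2}$.
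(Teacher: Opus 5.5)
Your proposal is correct and follows essentially the same argument as the paper. Both reduce a common neighbor $(x,y)$ to a root $x$ of the nonzero polynomial $f-g$ of degree at most $k-1$, with $y$ then uniquely determined. Your closing remark about $K_{k,n}$-freeness also matches how the paper derives \cref{cor:polynomial-biclique} from the lemma.
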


\begin{proof}
Let $f,g\in C_k(F)$ be distinct. A common neighbor $(x,y)$ satisfies
$f(x)=g(x)$, so $x$ is a root of the nonzero polynomial $f-g$, whose degree
is at most $k-1$. There are at most $k-1$ such values of $x$, and for each
$x$ the common value of $y$ is uniquely determined.
\end{proof}

\begin{corollary}
\label{cor:polynomial-biclique}
For every $n\geq2$ and every finite field $F$, the incidence structure
$G_k(F)$ is $K_{k,n}$-free.
\end{corollary}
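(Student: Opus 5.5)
We argue by contradiction, directly from \cref{lem:polynomial-intersection}.

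In the oriented convention, a copy of $K_{k,n}$ in $G_k(F)$ consists of $k$ distinct points $p_1,\dots,p_k\in P(F)$ and $n$ distinct curves $f_1,\dots,f_n\in C_k(F)$. It also requires $I_k(p_i,f_j)$ for all $i\le k$ and $j\le n$. Suppose such a configuration exists with $n\geq2$.

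Since $n\geq2$, the curves $f_1$ and $f_2$ are distinct polynomials of degree less than $k$. Every point $p_i$ is incident with both, so $f_1$ and $f_2$ share the $k$ distinct common neighbors $p_1,\dots,p_k$. This contradicts \cref{lem:polynomial-intersection}, which allows at most $k-1$ common neighbors. Hence $G_k(F)$ contains no $K_{k,n}$.

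The argument needs no assumption on the size of $F$ and holds for every finite field, so the conclusion is uniform.

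The only point needing care is that the $k$ points in a biclique are distinct as elements of $P(F)=F^2$. The proof of \cref{lem:polynomial-intersection} already covers this: distinct common neighbors must have distinct first coordinates, because for each $x$ the value $y=f(x)$ is determined. There is no real obstacle here.
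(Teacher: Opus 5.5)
Your proposal is correct and matches the paper's proof: two distinct curves in a copy of $K_{k,n}$ with $n\geq2$ would share $k$ distinct point neighbors, contradicting \cref{lem:polynomial-intersection}. Your closing remark about distinct first coordinates is true but not needed, since the lemma already bounds the number of distinct common points.
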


\begin{proof}
A copy of $K_{k,n}$ would contain two distinct curve vertices with at least
$k$ common point neighbors, contradicting
\cref{lem:polynomial-intersection}.
\end{proof}

\begin{definition}
\label{def:coordinate-expansion}
Let $\widehat{G}_k(F)$ be the expansion of $G_k(F)$ obtained by adding:
\begin{enumerate}[label=\textup{(\roman*)}]
  \item a field sort carrying the ring structure of $F$;
  \item a coordinate map $P(F)\to F^2$; and
  \item the coefficient-coordinate map $C_k(F)\to F^k$.
\end{enumerate}
The coordinate maps are required to be bijections onto the indicated
Cartesian powers, and $I_k$ is related to them by the equation in
\cref{def:polynomial-incidence}. Put
$\widehat{\Gclass}_k\defeq
\set{\widehat{G}_k(F):F\text{ a finite field}}$.
\end{definition}

\begin{theorem}
\label{thm:coordinate-polynomial}
For every $k\geq2$, the class $\widehat{\Gclass}_k$ is a
$k$-dimensional asymptotic class.  Moreover, $k$ is its least possible
asymptotic denominator.
\end{theorem}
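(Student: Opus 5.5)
The plan is to present $\widehat{\Gclass}_k$ as the target of a parameter-free interpretation in the class of finite fields with a definable selector, and then apply \cref{cor:pure-selector-dim}.

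For the interpretation, let $\C$ be the one-dimensional asymptotic class of finite fields \cite{CDM1992,MacphersonSteinhorn2008}. Interpret $\widehat G_k(F)$ in $F$ with arity $r=k$, using definable tags to code the three sorts inside $F^k$:
\begin{enumerate}[label=\textup{(\roman*)}]
  \item the field sort is the set of tuples $(a,0,\dots,0)$ carrying a tag;
  \item the point sort is the set of tuples $(x,y,0,\dots,0)$;
  \item the curve sort is the set of coefficient tuples $(a_0,\dots,a_{k-1})$.
\end{enumerate}
A clean way to tag is to use a few extra coordinates equal to $0$ or $1$, so the arity becomes $k+2$. Since $0\neq1$ is parameter-free definable, disjointness of the sorts holds uniformly. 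The equivalence relation is equality, so the domain is its own selector. The ring operations, the coordinate maps and the incidence $y=\sum a_ix^i$ are all field formulas.

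The key step is selector-trace reflection, and I expect it to be the only point requiring thought. Every ambient field formula $\rho$ on selected representatives can be rewritten in $\widehat G_k(F)$. Each field coordinate of a representative is obtained by applying the coordinate maps, and the tag coordinates are constants. Ambient quantifiers over $F$ become quantifiers over the field sort. So $\rho$ translates to a parameter-free formula in the expanded language. This is just the observation that the expansion is uniformly definitionally equivalent to the full induced structure, so \cref{cor:full-induced} applies. The subtle point is only that the coordinate maps are bijections onto $F^2$ and $F^k$, which lets each representative coordinate be recovered definably.

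For the size, the selector has size $q+q^2+q^k$ up to the bounded tags, which is $q^k+o(q^k)$. Its unique essential exponent is $e_0=k$ (with $N=1$). By \cref{cor:pure-selector-dim}, $\widehat{\Gclass}_k$ is a $k$-dimensional asymptotic class.

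For minimality, I would use \cref{cor:coprime-denominator-witness}, or argue directly. The field sort is a parameter-free definable set of size $q=\abs{\widehat G_k(F)}^{1/k}(1+o(1))$, which is unbounded. So any $h$-dimensional presentation must have $1/k=j/h$ for some integer $j$, hence $k\mid h$. Alternatively one can use the fiber $I_k(\cdot\,;f)$ of a curve, which has $q$ points and gives the exponent $1/k$ with $\gcd(1,k)=1$. Either witness forces $k$ to be the least denominator.
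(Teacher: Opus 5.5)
Your proposal is correct and follows the paper's own proof essentially step for step. Both use a tagged, parameter-free interpretation in finite fields with the domain as its own selector, get selector-trace reflection from the field sort and coordinate maps, apply \cref{cor:pure-selector-dim} with the single exponent $k$, and obtain minimality from the definable field sort of size $q$, which realizes the exponent $1/k$. The only slip is at $k=2$: there the size is $q+2q^2=2q^2+o(q^2)$, so the leading coefficient is $2$ rather than $1$ (the paper treats $k=2$ and $k>2$ separately for this reason). Only the exponent is used, so neither conclusion is affected.
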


\begin{proof}
The class of finite fields is a one-dimensional asymptotic class by the
finite-field counting theorem of Chatzidakis, van den Dries, and Macintyre
\cite{CDM1992}.  Uniformly in $F$, the three sorts of $\widehat{G}_k(F)$ are the field
sort $F$, the point sort $P(F)=F^2$, and the curve sort
$C_k(F)=F[X]_{<k}\cong F^k$. Every relation and function of the coordinate
expansion is definable in the field.

To fit the one-sorted interpretation convention, use three fixed definable
tags, for example $(0,0)$, $(0,1)$, and $(1,0)$ in $F^2$, and pad coordinate
tuples with zeros.  This gives a parameter-free interpretation with unique
representatives in a fixed Cartesian power.  Conversely, because the target
contains the field sort with its ring structure and the coordinate maps,
every ambient field formula on selected representatives translates uniformly
back to a target formula.  Thus the interpretation is selector-trace
reflecting.

Writing $q=\abs F$, the selected universe has size
$s(F)=q+q^2+q^k$.
If $k=2$, then $s(F)=2q^2+q=2q^2+o(q^2)$; if $k>2$, then
$s(F)=q^k+o(q^k)$.  Hence every unbounded selector cell has the single
ambient exponent $k$.  By \cref{cor:pure-selector-dim},
$\widehat{\Gclass}_k$ is $k$-dimensional.

For minimality, the field sort is parameter-free definable and has size
$q$, while $s(F)=\lambda q^k+o(q^k)$ for a fixed $\lambda>0$.  If the class
were $N'$-dimensional, the defining formula for the field sort would have,
on an unbounded cell, an exponent $d/N'$ satisfying
\[
  \frac{d}{N'}=\lim_{q\to\infty}
  \frac{\log q}{\log s(F)}=\frac1k.
\]
Thus $k$ divides $N'$.  Since $k$ itself works, it is the least possible
denominator.
\end{proof}

\begin{proposition}
\label{prop:weak-pure-dimension}
Let $\C$ be an $N$-dimensional asymptotic class, and let $\D$ be uniformly
parameter-interpretable in $\C$ by an interpretation admitting a uniform
definable selector.  Suppose that, in one asymptotic partition for the
selector size, every target-unbounded cell has exponent $e_0/N$ for the
same positive integer $e_0$.  Then $\D$ is a weak $e_0$-dimensional asymptotic class.
No trace-reflection hypothesis is required.
\end{proposition}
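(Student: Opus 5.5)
The plan is to rerun the proof of \cref{thm:selector-transfer} and simply drop the step that needs trace reflection, keeping the cells as set-theoretic partitions of target parameter tuples. This mirrors \cref{cor:weak-quotient-transfer}. Alternatively, one could note that a selector interpretation is a special quotient interpretation whose classes are singletons on the selector. In that view the selector-size partition is a quotient-universe partition with $\mathscr E_Q=\set{e_0}$, so $L_Q=e_0$, and \cref{cor:weak-quotient-transfer} already gives the conclusion. I would present the direct argument and mention the reduction.

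Fix $\varphi(x;\bar y)$. By \cref{cor:cardinality-selector} we have \eqref{eq:cardinality-equality}. We then apply \cref{thm:tuple-counting} twice: to $\sigma(\bar u;\bar z)$, which gives cells $\chi_{e,\lambda}(\bar z)$, and to $\widehat\varphi$, which gives cells $\theta_{d,\mu}(\bar v;\bar z)$.

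For each target pair $(A,\bar b)$, the selected representatives $\widetilde{\bar b}$ are unique. Hence the quadruple $\tau=(e,\lambda,d,\mu)$ with $M\models\chi_{e,\lambda}(\bar c_A)\wedge\theta_{d,\mu}(\widetilde{\bar b};\bar c_A)$ is well defined. The sets
\[
  \Gamma_\tau=\set{(A,\bar b):(A,\bar b)\text{ has quadruple }\tau}
\]
therefore form a finite partition of all target parameter tuples. They are not asserted to be definable in $L'$.

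On each $\Gamma_\tau$ there are two cases. If the target sizes are bounded, refine by exact fiber size; only finitely many values occur, as in \cref{lem:bounded-cell}. If the target sizes are unbounded, then, exactly as in the proof of \cref{thm:selector-transfer}, the selector label must be positive, $(e,\lambda)$ with $e>0$, and the cell $\chi_{e,\lambda}$ is target-unbounded. By hypothesis this forces $e=e_0$.

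Within an unbounded cell, if the fiber label is $(0,0)$, the target fiber is eventually empty, uniformly. Otherwise $\mu>0$, and \cref{lem:selector-normalization} gives
\[
  \abs{\varphi(A;\bar b)}=\mu\lambda^{-d/e_0}\abs A^{d/e_0}+o(\abs A^{d/e_0})
\]
with $0\leq d\leq e_0$. This is an allowed $e_0$-dimensional label, $(d,\mu\lambda^{-d/e_0})$.

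Finally, merge the set-theoretic cells that carry the same label and take the maximum of the finitely many uniformity thresholds. This produces the finitely many asymptotic alternatives required of a weak $e_0$-dimensional class.

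The one point needing care is the bounded cells. A target-bounded set-theoretic cell must still be split into exact counts $0,\dots,B$. This is legitimate here because no definability is needed, and fibers are subsets of $A$ with $\abs A\leq B$. One should also check that a zero-exponent selector cell never supports unbounded targets; this holds since $\abs A=\lambda+o(1)$ or $o(1)$ there. With these two points, no step uses trace reflection.
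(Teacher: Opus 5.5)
Your proposal is correct and follows essentially the same route as the paper's proof. Both translate fibers through the selector, apply tuple counting to the selector and to $\widehat\varphi$, and pull the joint cells back set-theoretically via selected representatives; both then split target-bounded cells by exact count and apply \cref{lem:selector-normalization} on the unbounded cells, where the selector exponent is forced to be $e_0$.
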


\begin{proof}
Fix $\varphi(x;\bar y)$.  Use the selector to translate its fibers to
ambient definable sets with exactly the same cardinality, as in
\cref{cor:cardinality-selector}.  Apply tuple counting to the selector and
to the translated fiber, and pull the resulting joint cells back
set-theoretically to target parameter tuples by selected representatives.
These cells need not be definable in the target language, which is why the
conclusion is only weak asymptoticity.

Target-bounded cells are split by exact fiber cardinality.  On every
unbounded cell, the selector exponent is $e_0$, and
\cref{lem:selector-normalization} gives
\[
  \abs{\varphi(A;\bar b)}
  =\nu\abs A^{d/e_0}+o(\abs A^{d/e_0})
\]
for some $0\leq d\leq e_0$ and $\nu>0$, or yields the zero alternative.
Thus every exponent is of the form $d/e_0$, and the finitely many
set-theoretic cells form a weak $e_0$-dimensional asymptotic partition.
\end{proof}

\begin{corollary}
\label{cor:pure-polynomial-weak}
For every $k\geq2$, the pure two-sorted incidence class $\Gclass_k$ is a
weak $k$-dimensional asymptotic class, and every member is
$K_{k,n}$-free for every $n\geq2$.
\end{corollary}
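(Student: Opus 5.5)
The plan is to apply \cref{prop:weak-pure-dimension} to a parameter-free interpretation of $\Gclass_k$ in the class of finite fields, and then to quote \cref{cor:polynomial-biclique} for the forbidden-biclique assertion.

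\textbf{Step 1: the interpretation.} Finite fields form a one-dimensional asymptotic class \cite{CDM1992}, so we take $N=1$. We interpret $G_k(F)$ in $F$ exactly as in the proof of \cref{thm:coordinate-polynomial}, but keep only the point and curve sorts. A point $(x,y)$ is represented by a tagged, zero-padded tuple carrying its two coordinates. A curve $\sum_i a_iX^i$ is represented by a tagged tuple carrying its coefficient vector $(a_0,\dots,a_{k-1})$. The tags are fixed constants of the field, such as $0$ and $1$, in a designated coordinate. Incidence is translated by the ring formula $y=\sum_i a_ix^i$, with equality as the interpretation equivalence relation. Since each element has exactly one representative, the domain formula is itself a uniform definable selector. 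The interpretation is parameter-free and uniform in $F$.

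\textbf{Step 2: the selector size.} Writing $q=\abs F$, the selected universe has exact size $q^2+q^k$. If $k=2$ this is $2q^2$; if $k>2$ it is $q^k+o(q^k)$. The selector formula has no parameters, so a single cell suffices, and there the exponent is $k/1$. Hence every target-unbounded selector cell has the same exponent $e_0=k$.

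\textbf{Step 3: conclusion.} \cref{prop:weak-pure-dimension} now applies directly, with no trace-reflection hypothesis, and shows that $\Gclass_k$ is a weak $k$-dimensional asymptotic class. The biclique statement is \cref{cor:polynomial-biclique}. The only point needing care is the cardinality convention. The paper counts a many-sorted structure by the sum of its sorts, so $\abs{G_k(F)}=q^2+q^k$, and this matches the selector size exactly. I therefore expect no real obstacle: the substantive difficulty, definability of the cells in the pure incidence language, is precisely what the weak conclusion avoids.
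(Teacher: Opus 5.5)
Your proposal is correct and matches the paper's proof essentially step for step. Both interpret $G_k(F)$ in finite fields by a tagged, parameter-free coding in which the domain is its own selector. Both note that the selector size $q^2+q^k$ has the single exponent $k$, apply \cref{prop:weak-pure-dimension} with $N=1$, and cite \cref{cor:polynomial-biclique} for the biclique assertion.
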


\begin{proof}
The forbidden-biclique assertion is \cref{cor:polynomial-biclique}. Interpret
$G_k(F)$ directly in $F$ using the two definable sorts $P(F)=F^2$ and
$C_k(F)=F[X]_{<k}\cong F^k$, with fixed tags in a standard one-sorted coding.
Equality is the interpretation equivalence relation, so the coded domain
itself is a uniform selector. Writing $q=\abs F$, its size is $q^2+q^k$,
which has the single unbounded ambient exponent $k$ (with leading
coefficient $2$ when $k=2$ and $1$ when $k>2$).  Apply
\cref{prop:weak-pure-dimension} to the one-dimensional asymptotic class of
finite fields.
\end{proof}

\subsection{The linear case: biaffine planes}

For $k=2$, the pure incidence structure is the Desarguesian biaffine plane:
it is obtained from an affine plane by deleting one parallel class of
lines.  In this case the pure-language problem can be solved completely.

\begin{definition}
For a finite field $F$, let
$
  \Pi^{\mathrm{fl}}(F)
  =\bigl(\mathrm{PG}(2,F),p_\infty,\ell_\infty\bigr),
$
where $p_\infty$ is a point, $\ell_\infty$ is a line, and
$p_\infty I\ell_\infty$.  Let $\mathcal{PG}^{\mathrm{fl}}_2$ be the class of
all such flagged Desarguesian projective planes.
\end{definition}

\begin{proposition}
\label{prop:flagged-PG2}
The class $\mathcal{PG}^{\mathrm{fl}}_2$ is a two-dimensional asymptotic
class.
\end{proposition}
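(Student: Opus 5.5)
We will prove that flagged Desarguesian projective planes form a two-dimensional asymptotic class by interpreting them in finite fields and checking selector-trace reflection via a weak bi-interpretation. The plan is to apply \cref{thm:selector-transfer} in its sharpened form, \cref{cor:pure-selector-dim}, to an interpretation of $\Pi^{\mathrm{fl}}(F)$ in $F$ that has a uniform definable selector. Since $\mathrm{PGL}_3(F)$ is transitive on flags, we may fix the flag $p_\infty=[0:1:0]$ and $\ell_\infty=\{z=0\}$. We then use the standard affine normal forms for projective points, namely $[x:y:1]$, $[1:m:0]$ and $[0:1:0]$. Lines get the dual normal forms: $y=mx+b$, $x=c$, and $\ell_\infty$. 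Tagging these pieces with fixed definable tags gives a parameter-free interpretation in $F$ of arity some fixed $r$, with a uniform definable selector. The incidence relation and the two constants are quantifier-free definable on the selected representatives. The selected universe has size $2(q^2+q+1)=2q^2+o(q^2)$, so every unbounded selector cell has ambient exponent $2$.

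The substantive step is to verify selector-trace reflection: every parameter-free field formula on selected representatives must be equivalent to a parameter-free formula in the flagged incidence language. We will do this through a uniform parameter-free reverse interpretation of $F$ in $\Pi^{\mathrm{fl}}(F)$, by classical coordinatization. The key difficulty is that coordinatization needs a frame of four points in general position, and with only the flag named no frame is canonical. We will deal with this as in the framed-profile mechanism described in the introduction. Fix additionally an origin $o\notin\ell_\infty$, a second point $e$ determining a unit, and a point at infinity for the $x$-direction. From such a frame, the planar ternary ring (here the field, by Desargues) is defined uniformly on the points of a line through $o$, and each point and line gets its affine coordinates. The coordinatization map is uniformly definable from the frame parameters, and the frame conditions are first-order.

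Now let $\rho$ be a field formula on selected representatives. For a good frame $\bar w$, there is a formula $\rho_{\bar w}$ in the incidence language with parameters $\bar w$ expressing $\rho$ in frame coordinates. The problem is that the field coordinates of the \emph{selected} representatives refer to the fixed canonical frame, not to $\bar w$. These coordinates are related by a definable projective change of coordinates. The set $\rho(\ldots)$ is preserved by every collineation fixing the flag only if $\rho$ is invariant, which it need not be. So instead of descending $\rho$ directly, we plan to use \cref{cor:profile-descent-criterion}-style local descent: it suffices to descend the finitely many counting cells for each target formula.

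We will argue that these cells can be chosen invariant under the stabilizer $G$ of the flag in $\mathrm{P\Gamma L}_3(F)$. The target counting function is $\mathrm{Aut}$-invariant, and by \cref{lem:quotient-profile-compatibility}-type reasoning, taking profiles over the $G$-orbits of the cells (the orbit union is definable in $F$ using the definable group $G\cap\mathrm{PGL}_3$, plus the Frobenius coset issue) yields invariant cells with well-defined labels. A $G$-invariant field-definable relation is then defined in the incidence language by $\exists\bar w\,(\text{good frame}(\bar w)\wedge\rho'_{\bar w})$, or equivalently $\forall\bar w$. This works because $G$ acts transitively on good frames modulo field automorphisms. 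Field automorphisms ($\mathrm{Gal}$ is not uniformly definable) are the expected main obstacle. We intend to handle them by observing that field-definable sets over $\varnothing$ are already Galois-invariant, so the profile over $\mathrm{PGL}$-orbits is automatically $\mathrm{P\Gamma L}$-invariant. Once the cells descend, \cref{cor:pure-selector-dim} with $e_0=2$ gives the conclusion.
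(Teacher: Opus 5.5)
Your route is correct in substance but quite different from the paper's. The paper never proves pure-language definability of the counting cells. It quotes Dello Stritto's theorem that the pure planes $\mathrm{PG}(2,F)$ form an asymptotic class for some denominator. It then uses the quotient interpretation $(F^3\setminus\{0\})/F^\times$, which has the single universe exponent $2$, to get weak two-dimensionality via \cref{cor:pure-quotient-dimension}. It fixes the denominator with \cref{lem:denominator-refinement} and finally names the flag with \cref{lem:naming-parameters}.

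You instead give a self-contained argument in three steps. First, interpret the flagged plane in $F$ by tagged affine normal forms. Second, replace each tuple-counting cell by its profile over orbits of the flag stabilizer $G_0\le\mathrm{PGL}_3(F)$; labels are consistent because fiber counts are automorphism-invariant. Third, descend these $G_0$-invariant sets by quantifying over frames $(o,x_\infty,e)$ and translating through classical coordinatization. This is exactly the paper's framed-profile mechanism of \cref{thm:framed-profile-transfer}. Since $G_0$ acts transitively (indeed regularly) on such frames, the set defined by $\exists g\in G_0\,(g\bar b\in\theta_i)$ coincides with the set defined by $\exists\bar w\,(\mathrm{Frame}(\bar w)\wedge\Theta_i(\bar y;\bar w))$. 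Your route removes the dependence on Dello Stritto, and quantifying over the flag as well would reprove the unflagged Desarguesian case. The price is the uniform coordinatization; the paper's route is a few lines given the citation.

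Two points in your bookkeeping need correcting. First, your interpretation is not selector-trace reflecting. For example, the field formula ``first coordinate $=0$'' on affine-point representatives picks out the points of one vertical line. Translations in $G_0$ move that line, so it has no parameter-free incidence definition. Hence you cannot close with \cref{cor:pure-selector-dim}.

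Nor does \cref{cor:profile-descent-criterion} help: its profiles are taken over $E_A$-classes, and here $E_A$ is equality, so those profiles are just the original non-invariant cells. Your $G_0$-orbit profiles are a genuinely new partition, and you must rerun the normalization on it directly:
\begin{itemize}
\item consistency of labels on unbounded profiles;
\item uniformity over a whole profile, via invariance of the fiber count;
\item exact-count refinement of bounded profiles;
\item division by the universe exponent $2$.
\end{itemize}
Equivalently, use a variant of \cref{thm:framed-profile-transfer} in which each sort is coordinatized by a ring-definable subset of $K^n$ (the tagged normal forms). This variant is needed because that theorem requires bijections onto full Cartesian powers, and $q^2+q+1$ is not a power of $q$.

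Second, the Frobenius worry is moot. $G_0$ already acts transitively on frames, and a parameter-free ring formula is preserved by the induced field isomorphisms $K_{\bar w}\to K_{g\bar w}$. So no Galois-invariance argument is required.
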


\begin{proof}
Dello Stritto proved that the pure incidence class
$\set{\mathrm{PG}(2,F):F\text{ finite}}$ is an asymptotic class
\cite{DelloStritto2011}.  This class is also uniformly parameter-free
interpretable in finite fields by taking both points and
lines to be the quotient
$
  (F^3\setminus\set{0})/F^\times,
$
with incidence given by the vanishing of the usual dot product between a
point vector and a line vector.  Its total universe has size
$ 2(q^2+q+1)=2q^2+O(q),
  $ and $ q=\abs F$.
Thus the quotient universe has the single positive ambient exponent $2$.
By \cref{cor:pure-quotient-dimension}, the pure projective-plane class is a
weak two-dimensional asymptotic class.  Combining this with Dello Stritto's
full asymptoticity and applying \cref{lem:denominator-refinement} shows that
it is in fact a full two-dimensional asymptotic class.

Finally, naming an incident point-line pair preserves the denominator by
\cref{lem:naming-parameters}, giving the assertion for
$\mathcal{PG}^{\mathrm{fl}}_2$.
\end{proof}

\begin{proposition}
\label{prop:biaffine-completion}
The class $\Gclass_2$ is uniformly parameter-free bi-interpretable with
$\mathcal{PG}^{\mathrm{fl}}_2$.
\end{proposition}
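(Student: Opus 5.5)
We construct the two interpretations explicitly and then check that both round-trip maps are definable without parameters.

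\emph{Interpreting $\Gclass_2$ in $\mathcal{PG}^{\mathrm{fl}}_2$.} Choose coordinates with $p_\infty=(0:1:0)$ and $\ell_\infty=\{z=0\}$. The affine points off $\ell_\infty$ are then the pairs $(x,y)\in F^2$. The affine lines are the lines distinct from $\ell_\infty$. Among them, the lines through $p_\infty$ are exactly the vertical lines $x=c$, and every other affine line has the form $y=a_0+a_1x$. So the interpretation is:
\begin{itemize}
\item point sort: the points not incident with $\ell_\infty$;
\item curve sort: the lines distinct from $\ell_\infty$ and not incident with $p_\infty$;
\item incidence: the restricted incidence.
\end{itemize}
These conditions use only the named flag, so the interpretation is parameter-free. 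The equivalence relation is equality.

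\emph{Interpreting $\mathcal{PG}^{\mathrm{fl}}_2$ in $\Gclass_2$.} This is the classical completion of a biaffine plane. Call two curves \emph{parallel} if they are equal or have no common point. In $G_2(F)$ the curves are $y=a_0+a_1x$, and parallelism means equal slope. Since distinct lines meet in at most one point (\cref{lem:polynomial-intersection}), this is a definable equivalence relation, and its classes are the slope classes.

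The missing vertical lines are recovered as follows. Two distinct points are \emph{vertically aligned} if no curve passes through both. This relation is also definable, and its classes are the sets $\{x=c\}$.

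Now build the projective plane as a disjoint union of definable sorts, coded by tags:
\begin{itemize}
\item points: the affine points, together with one ideal point for each parallel class, together with one new point $p_\infty$;
\item lines: the curves, together with the vertical classes (as lines), together with a new line $\ell_\infty$.
\end{itemize}
Incidence is defined in the expected way:
\begin{itemize}
\item an affine point lies on a curve or vertical class via $I$ or membership;
\item an ideal point lies on each curve of its class and on $\ell_\infty$;
\item $p_\infty$ lies on every vertical class and on $\ell_\infty$.
\end{itemize}
The singletons $p_\infty$ and $\ell_\infty$ have to be encoded inside a quotient. For this, use the tag sort together with a trivially definable equivalence class, for example all points identified. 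The result is a uniform parameter-free interpretation. The isomorphism with $\mathrm{PG}(2,F)$ holds because this is exactly the standard coordinate description, and it sends the distinguished pair to the flag.

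\emph{Round-trip maps.} Going from $\Gclass_2$ to $\mathcal{PG}^{\mathrm{fl}}_2$ and back, each affine point returns as itself and each curve returns as itself. The comparison map is therefore the identity on tagged copies, hence parameter-free definable.

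Going from $\mathcal{PG}^{\mathrm{fl}}_2$ to $\Gclass_2$ and back, the map is defined casewise:
\begin{itemize}
\item an affine point is sent to itself;
\item a point $P$ on $\ell_\infty$ other than $p_\infty$ is sent to the parallel class of the non-vertical affine lines through $P$;
\item $p_\infty$ is sent to the tag;
\item a line through $p_\infty$ other than $\ell_\infty$ is sent to the vertical class of its affine points;
\item every other line is sent to itself.
\end{itemize}
Each clause uses only incidence and the named flag, so the map is definable in the flagged plane.

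\emph{Main obstacle.} The delicate step is checking that these definitions are correct uniformly in all $G_2(F)$, including the smallest fields. For $F=\mathbb{F}_2$ we must make sure that vertical alignment, parallelism and the tagging do not collapse. The underlying facts are that every point lies on at least one curve and that each parallel class is nonempty. Both hold for every field, since there are $q^2$ curves and each point lies on exactly $q$ of them.
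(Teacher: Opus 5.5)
Your proposal is correct and follows the paper's proof essentially step for step. It uses the same restriction to the affine part off the named flag, the same completion by slope classes, vertical classes and two definable singleton sorts, and the same casewise round-trip maps (the identity on one side; ideal points to parallel classes and lines through $p_\infty$ to vertical classes on the other). One small correction: parallelism is transitive because of your explicit equal-slope computation, not because distinct lines meet in at most one point, which alone would not make ``equal or disjoint'' an equivalence relation.
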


\begin{proof}
We describe both interpretations and their round trips.

Start with $G_2(F)=(P,L,I)$.  On the point sort define
\[
  E_{\mathrm v}(u,v)
  \quad\Longleftrightarrow\quad
  u=v\ \vee\
  \neg\exists\ell\bigl(I(u,\ell)\wedge I(v,\ell)\bigr).
\]
For $u=(x,y)$ and $v=(x',y')$, two distinct points lie on a common
nonvertical line exactly when $x\neq x'$.  Hence $E_{\mathrm v}$ is the
equivalence relation of having the same first coordinate.  Its quotient
$V=P/E_{\mathrm v}$ serves as the sort of vertical affine lines.

On the line sort define
\[
  E_\parallel(\ell,m)
  \quad\Longleftrightarrow\quad
  \ell=m\ \vee\
  \neg\exists u\bigl(I(u,\ell)\wedge I(u,m)\bigr).
\]
Two distinct lines $y=ax+b$ and $y=a'x+b'$ are disjoint exactly when
$a=a'$, so $E_\parallel$ is equality of slope.  Its quotient
$H=L/E_\parallel$ serves as the set of nonvertical ideal points.

Use two definable singleton quotient sorts, denoted
$\set{p_\infty}$ and $\set{\ell_\infty}$.  Interpret a projective point sort
and line sort by
$ P^*=P\mathbin{\dot\cup}H\mathbin{\dot\cup}\set{p_\infty},
 $ and $
  L^*=L\mathbin{\dot\cup}V\mathbin{\dot\cup}\set{\ell_\infty}.
$
Incidence extends the original relation as follows:
\begin{enumerate}[label=\textup{(\roman*)}]
  \item $u\in P$ is incident with $[v]_{E_{\mathrm v}}\in V$ precisely when
  $E_{\mathrm v}(u,v)$;
  \item $[\ell]_{E_\parallel}\in H$ is incident with $m\in L$ precisely
  when $E_\parallel(\ell,m)$;
  \item every element of $H$ and the point $p_\infty$ are incident with
  $\ell_\infty$;
  \item $p_\infty$ is incident with every element of $V$;
  \item there are no further incidences.
\end{enumerate}
All of these clauses are parameter-free definable on the relevant
quotients.

The resulting flagged plane is isomorphic to $\Pi^{\mathrm{fl}}(F)$.  An
explicit isomorphism is given by
\begin{align*}
  (x,y)&\longmapsto[x:y:1],\\
  [\ell_{a,b}]_{E_\parallel}&\longmapsto[1:a:0],\\
  p_\infty&\longmapsto[0:1:0],\\
  \ell_{a,b}&\longmapsto\set{aX-Y+bZ=0},\\
  [(c,d)]_{E_{\mathrm v}}&\longmapsto\set{X-cZ=0},\\
  \ell_\infty&\longmapsto\set{Z=0}.
\end{align*}
The displayed formulas also verify every incidence clause.

Conversely, start with a flagged projective plane
$(\Pi,p_\infty,\ell_\infty)$.  Define
\[
  P_0=\set{u:u\not I\ell_\infty},
  \qquad
  L_0=\set{\ell:\ell\neq\ell_\infty\text{ and }p_\infty\not I\ell},
\]
and retain the induced incidence relation.  After sending the distinguished
flag to the standard flag by a projective collineation, this reduct is
exactly $G_2(F)$.

The round trip from $G_2(F)$ to the completion and back is the identity on
$P$ and $L$.  For the other round trip, affine points and nonvertical lines
are again fixed.  An ideal point
$h\in\ell_\infty\setminus\set{p_\infty}$ is sent definably to the
$E_\parallel$-class of any line in $L_0$ incident with $h$; all such lines
belong to one class.  A line
$v\neq\ell_\infty$ through $p_\infty$ is sent to the
$E_{\mathrm v}$-class of any affine point incident with $v$; all such
points belong to one class.  The distinguished point and line go to the two
singleton sorts.  These descriptions give uniform parameter-free definable
comparison isomorphisms, proving bi-interpretability.
\end{proof}

\begin{theorem}
\label{thm:biaffine-asymptotic}
The pure incidence class $\Gclass_2$ is a two-dimensional asymptotic class,
and two is its least possible asymptotic denominator.  Every member is
$K_{2,n}$-free for every $n\geq2$.
\end{theorem}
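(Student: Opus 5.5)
The plan is to transfer asymptoticity along the bi-interpretation of \cref{prop:biaffine-completion}, using the forward interpretation of $\Gclass_2$ in $\mathcal{PG}^{\mathrm{fl}}_2$.

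First, the forward interpretation is easy to read off. Starting from the flagged plane $(\Pi,p_\infty,\ell_\infty)$, take the definable subsets $P_0$ and $L_0$ with induced incidence. The distinguished flag consists of named constants of the ambient language, so the interpretation is parameter-free. Equality serves as the interpretation equivalence relation, so the domain is its own uniform selector, of arity $1$ after the standard tagging of the two sorts.

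Second, the same proposition gives a uniform parameter-free bi-interpretation, and hence a uniform weak bi-interpretation. So \cref{prop:weak-biimplies-trace} shows that this forward interpretation is quotient-trace reflecting. Because there is a selector, it is equally selector-trace reflecting: invariance under equality is automatic, so every ambient formula on selected representatives descends.

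Third, \cref{prop:flagged-PG2} says $\mathcal{PG}^{\mathrm{fl}}_2$ is two-dimensional, so $N=2$. The selector has size $\abs{P_0}+\abs{L_0}=q^2+q^2=2q^2$. Ambient size is $2(q^2+q+1)$ with $q=\abs F$, so the selector's single unbounded exponent is $e_0=2$, namely $2q^2=1\cdot T^{2/2}+o(T)$. Hence the essential selector dimension is $2$, and \cref{cor:pure-selector-dim} (or \cref{thm:essential-selector}, with $L_S=2$) makes $\Gclass_2$ a full $2$-dimensional asymptotic class.

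The one point needing care is the exponent bookkeeping relative to $N=2$. The selector cell has label $(2,1)$, and the normalization of \cref{lem:selector-normalization} then yields denominators dividing $2$. So I expect this to go through.

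Fourth, minimality. A fixed curve $f$ has exactly $q$ incident points, whereas $\abs{G_2(F)}=2q^2$. Take the formula $I(x,y)$ with $y$ in the curve sort, on the parameter-free cell ``$y$ is a curve''. The fiber size is $q=2^{-1/2}\abs{G_2(F)}^{1/2}$, which is unbounded and has exponent $1/2$. Comparing leading exponents on an unbounded sequence, exactly as in \cref{prop:visible-least-denominator} or \cref{cor:coprime-denominator-witness}, forces any admissible denominator $N'$ to be even. So $2$ is least.

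Finally, $K_{2,n}$-freeness is \cref{cor:polynomial-biclique} with $k=2$: two distinct lines meet in at most one point.
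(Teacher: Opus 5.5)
Your proposal is correct, but it reaches full two-dimensionality by a different route from the paper's.

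The paper feeds \cref{prop:flagged-PG2} and the bi-interpretation of \cref{prop:biaffine-completion} into the external transfer theorem \cite[Theorem~2.5.4]{AMSW2024}. That theorem gives full asymptoticity but no control on the denominator. The paper then invokes, separately, the weak two-dimensional result \cref{cor:pure-polynomial-weak}, which comes from the direct field interpretation, and \cref{lem:denominator-refinement} to fix the denominator at $2$.

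You stay inside the paper's own machinery:
\begin{enumerate}
\item You use only the $G_2$-side round-trip map, so only weak bi-interpretability is needed, and obtain trace reflection from \cref{prop:weak-biimplies-trace}.
\item You note that with equality as the interpretation equivalence every ambient formula is invariant, so this is the same as selector-trace reflection.
\item You read the denominator off the selector size $2q^2=T+o(T)$, where $T=2(q^2+q+1)$.
\end{enumerate}
Under the one-sorted coding by sort predicates the arity is $r=1$ and $N=2$. So even the crude bound $\lcm(1,\dots,Nr)=2$ of \cref{thm:selector-transfer} already gives denominator $2$, and the sharpening via \cref{cor:pure-selector-dim} is not needed.

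What your route buys is self-containment and a one-step denominator computation. It is exactly the mechanism of the corollary following \cref{prop:weak-biimplies-trace}, which presents full transfer under weak bi-interpretability as a special case of the paper's transfer theorems. The paper's route treats the bi-interpretation as a black box for a published theorem and gets the denominator from an independent source.

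Your minimality argument, via a line neighbourhood of size $2^{-1/2}\abs{G_2(F)}^{1/2}$, and your biclique argument coincide with the paper's.
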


\begin{proof}
By \cref{prop:flagged-PG2,prop:biaffine-completion} and the transfer of full
asymptoticity under parameter-free bi-interpretability
\cite[Theorem~2.5.4]{AMSW2024}, $\Gclass_2$ is an asymptotic class.  By
\cref{cor:pure-polynomial-weak}, it is also a weak two-dimensional
asymptotic class.  The denominator-refinement lemma now makes it a full
two-dimensional asymptotic class.

Its total cardinality is
$
  \abs{G_2(F)}=2q^2.
$
For every line $\ell$, the definable neighborhood
$\set{p:I(p,\ell)}$ has cardinality
$
  q=2^{-1/2}\abs{G_2(F)}^{1/2}.
$
Thus no one-dimensional asymptotic presentation is possible, and the least
denominator is two.  Finally, any two distinct points lie on at most one
line, so a $K_{2,n}$ with $n\geq2$ cannot occur.
\end{proof}

\subsection{Quantifier-free counting in all degrees}

Before proving full pure-language asymptoticity in every degree, we record a
direct one-variable quantifier-free counting result.

\begin{lemma}
\label{lem:vertical-equivalence-general}
For every $k\geq2$, the formula
\[
  E_x(u,v)
  \quad\Longleftrightarrow\quad
  u=v\ \vee\
  \neg\exists c\bigl(I_k(u,c)\wedge I_k(v,c)\bigr)
\]
defines on $P(F)$ the equivalence relation of having the same first
coordinate.
\end{lemma}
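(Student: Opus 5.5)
The plan is a direct case analysis on the coordinates of $u=(x,y)$ and $v=(x',y')$. We show that two distinct points have a common incident curve exactly when $x\neq x'$. Then $E_x(u,v)$ holds iff $u=v$ or $x=x'$, which is just $x=x'$, since $u=v$ already forces $x=x'$. This agrees with the biaffine argument in \cref{prop:biaffine-completion}; only the existence step needs care, because the curves now have degree less than $k$.

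First, suppose $u\neq v$ and $x=x'$. Then $y\neq y'$. A common curve $f$ would satisfy $y=f(x)=f(x')=y'$, which is impossible. Hence no curve passes through both points, so $\neg\exists c\,(I_k(u,c)\wedge I_k(v,c))$ holds and $E_x(u,v)$ holds.

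Second, suppose $x\neq x'$. We must produce a common curve, so that $E_x(u,v)$ fails. Since $k\geq2$, the space $F[X]_{<k}$ contains all polynomials of degree at most one. The Lagrange interpolant
\[
  f(X)=y+\frac{y'-y}{x'-x}(X-x)
\]
lies in $C_k(F)$ and satisfies $f(x)=y$ and $f(x')=y'$. Therefore $I_k(u,f)\wedge I_k(v,f)$ holds. Moreover $u\neq v$, so neither disjunct of $E_x$ is true.

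Combining the two cases, $E_x(u,v)$ holds iff $x=x'$. In particular, $E_x$ is an equivalence relation on $P(F)$, since equality of first coordinates is reflexive, symmetric and transitive; no separate verification is needed. The formula is parameter-free and in the pure incidence language, and it does not depend on $F$, so the description is uniform over all finite fields. The only step requiring the hypothesis $k\geq2$ is the existence of the linear interpolant in the second case. I do not expect a genuine obstacle; the main care point is to keep the trivial case $u=v$ separate, since the second disjunct fails there.
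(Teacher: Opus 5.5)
Your proof is correct and follows the paper's argument: two distinct points with the same first coordinate cannot both lie on one polynomial graph, and for distinct first coordinates a common curve exists. The only difference is cosmetic: you exhibit an explicit linear interpolant, whereas the paper notes that the two interpolation conditions are independent linear conditions on the $k$ coefficients and so have $q^{k-2}\geq1$ solutions.
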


\begin{proof}
If $u=(x,y)$ and $v=(x,y')$ are distinct, no polynomial graph contains both.
If their first coordinates are distinct, prescribing the two values gives
two independent linear conditions on the $k$ coefficients.  There are
$q^{k-2}$ solutions, in particular at least one, which proves the claim.
\end{proof}

\begin{lemma}
\label{lem:qf-point-counting}
Fix $k\geq2$ and a quantifier-free formula $\psi(u;\bar y)$ with $u$ in the
point sort. There is a finite parameter-free definable partition such that,
on every cell, with $q=\abs F$, the fiber has one of the forms
$q^2+O(q)$, $cq+O(1)$, or $O(1)$, where $c$ is a positive integer from a
finite set depending only on $\psi$. Every bounded cell may be refined to
exact cardinality.
\end{lemma}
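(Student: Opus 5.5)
Since $\psi$ is quantifier-free in the pure incidence language, with $u$ a point variable, it is a Boolean combination of three kinds of atoms.

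\begin{itemize}
\item Equalities $u=b_j$ with point parameters $b_j$.
\item Incidences $I_k(u,c_j)$ with curve parameters $c_j$.
\item Atoms not involving $u$, which depend only on $\bar y$.
\end{itemize}

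We first partition the parameter space by the parameter-free quantifier-free type of $\bar y$. On each piece the atoms free of $u$ have constant truth values. On the same piece, the equalities among the curve parameters are fixed, and so are the equalities among the point parameters. After putting $\psi$ into disjunctive normal form over the $u$-atoms, the fiber becomes a finite disjoint union of sets of the form
\[
  S_{J}=\bigcap_{j\in J}N(c_j)\setminus\bigcup_{j\notin J}N(c_j)
\]
minus or plus finitely many of the points $b_j$. Here $N(c)$ is the set of points incident with $c$, and $J$ ranges over subsets of the distinct curve parameters. The point-equality atoms change the fiber by at most $\abs{\bar y}$ elements, a bounded error. So the task reduces to counting the atoms $S_J$.

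If $J=\varnothing$, then $S_J$ is the complement of a union of at most $t$ curves, where $t$ is the number of curve parameters. Each curve has exactly $q$ points, so $\abs{S_J}=q^2+O(q)$. If $\abs J=1$, say $J=\set{j}$, then $S_J=N(c_j)\setminus\bigcup_{i\neq j}N(c_i)$. By \cref{lem:polynomial-intersection}, distinct curves meet in at most $k-1$ points, so $\abs{S_J}=q-O(1)$, with error at most $(t-1)(k-1)$. If $\abs J\geq2$, then $S_J$ lies in the common neighbourhood of two distinct curves and has at most $k-1$ elements. The fiber is a disjoint union of such atoms together with a bounded correction. It therefore has size $q^2+O(q)$ when the atom $J=\varnothing$ occurs. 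Otherwise it has size $cq+O(1)$, where $c$ is the number of singleton atoms that occur, a positive integer at most $t$. If neither kind of atom occurs, the size is $O(1)$.

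The key definability step is to make the cells parameter-free definable. Which atoms occur is already fixed on each piece of the quantifier-free-type partition, so the leading term is constant there. To avoid a vacuous empty large atom in small fields, we note that the large-atom estimates are genuine uniform asymptotics with explicit error bounds depending only on $k$ and $\abs{\bar y}$. For the bounded case, the fiber size is at most a constant $B$ depending only on $k$ and $\psi$. We then refine that cell by $\operatorname{Exact}_{\psi,n}(\bar y)$ for $0\leq n\leq B$, as in \cref{lem:bounded-cell}. The same refinement handles the finitely many small fields, using the sentence bounding the size of the structure, as in \cref{lem:finite-perturbation}.

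The main obstacle is the bookkeeping. The error terms must depend only on $k$ and $\psi$, not on $F$, so that the $O$-constants are uniform. The exact $k-1$ intersection bound from \cref{lem:polynomial-intersection} provides this. Once that bound is in place, the coefficient $c$ of the middle case is exactly the number of singleton atoms, so it lies in a finite set determined by $\psi$.
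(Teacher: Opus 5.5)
Your proof is correct and follows essentially the same route as the paper. Both arguments reduce to the Boolean algebra generated by the distinct curve neighbourhoods, which has one outside atom of size $q^2+O(q)$, one-curve atoms of size $q+O(1)$, and multi-curve atoms bounded by $k-1$ via the polynomial-intersection lemma, and both get definability from the equality and incidence pattern of the parameters, with exact-count refinement on bounded cells. Partitioning by the full quantifier-free type of $\bar y$ is only a more uniform packaging of the paper's ``equality pattern of curve parameters plus incidences of named points.''
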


\begin{proof}
After separating the finitely many cases in which $u$ equals a named point
parameter, the formula is a Boolean combination of sets
$
  N(\ell)=\set{u:I_k(u,\ell)}
$
for the finitely many curve parameters $\ell$ occurring in $\psi$.  Replace
repeated curve parameters by their equality classes.  Each $N(\ell)$ has
size $q$, while the intersection of two distinct such sets has size at most
$k-1$ by \cref{lem:polynomial-intersection}.

The Boolean algebra generated by $t$ distinct curve neighborhoods has one
atom outside their union, of size $q^2-tq+O(1)$; for each curve, the atom
lying on that curve and on no other has size $q+O(1)$; every remaining atom
is contained in the intersection of two distinct curves and is uniformly
bounded.  A Boolean combination is a disjoint union of a fixed selection of
these atoms.  It therefore has one of the three displayed forms, with $c$
the number of selected one-curve atoms when the outside atom is absent.

The equality pattern of the curve parameters is definable.  Every bounded
intersection has size bounded in terms of $k$ and the formula, and its size
can be separated by an exact-count formula.  The incidences of the
finitely many named point parameters with the relevant curves are also
first-order conditions.  These data give the required finite
parameter-free definable partition.
\end{proof}

\begin{lemma}
\label{lem:qf-curve-counting}
Fix $k\geq2$ and a quantifier-free formula $\psi(c;\bar y)$ with $c$ in the
curve sort. There is a finite parameter-free definable partition such that,
on every cell, with $q=\abs F$, the fiber has one of the forms
$c_0q^d+O(q^{d-1})$, where $1\leq d\leq k$, or $O(1)$. Here $c_0$ is a
positive integer, and the finitely many possible pairs $(d,c_0)$ depend only
on $\psi$. Every bounded cell may be refined to exact cardinality.
\end{lemma}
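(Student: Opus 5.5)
Mirroring \cref{lem:qf-point-counting}, I first handle equalities of the curve variable $c$ with named curve parameters. Up to that, the formula $\psi(c;\bar y)$ is a Boolean combination of the incidence conditions $I_k(p_j,c)$, where $p_1,\dots,p_t$ are the point parameters occurring in $\psi$, with coincident parameters identified. So the fiber is a disjoint union of a fixed selection of atoms
\[
  A_S=\set{c:I_k(p_j,c)\text{ for } j\in S,\ \neg I_k(p_j,c)\text{ for } j\notin S},
\]
with $S\subseteq\set{1,\dots,t}$, modified by finitely many named curves. The basic counting fact is the following. For a set $S$ of points with pairwise distinct first coordinates and $\abs S\leq k$, the curves through all of $S$ form an affine subspace of $F^k$ of dimension $k-\abs S$. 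This is Lagrange interpolation: the evaluation map is surjective because the Vandermonde system has full rank. If two points of $S$ share a first coordinate but are distinct (detected by $E_x$ from \cref{lem:vertical-equivalence-general}), there are no such curves. If $S$ has $k$ or more points with distinct first coordinates, there is at most one curve.

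Next I evaluate each atom by inclusion–exclusion:
\[
  \abs{A_S}=\sum_{S\subseteq R}(-1)^{\abs{R\setminus S}}\abs{N(R)},
  \qquad N(R)=\set{c:I_k(p_j,c)\ \forall j\in R}.
\]
Every $\abs{N(R)}$ is either $q^{k-\abs R}$ (when $R$ is admissible, i.e.\ distinct first coordinates and $\abs R<k$), or lies in $\set{0,1}$. The leading term of $A_S$ is $q^{k-\abs S}$ when $S$ is admissible with $\abs S<k$. Every other term has strictly smaller exponent, so $\abs{A_S}=q^{k-\abs S}+O(q^{k-\abs S-1})$. When $S$ is inadmissible or $\abs S\geq k$, we get $\abs{A_S}\leq 1$, hence bounded. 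Summing over the selected atoms gives $c_0q^d+O(q^{d-1})$: here $d$ is the maximal exponent among the selected unbounded atoms, and $c_0$ counts the selected atoms attaining it, so $c_0$ is a positive integer. Removing or adding finitely many named curves changes the count only by $O(1)$. The possible $d$ satisfy $1\leq d\leq k$ or give a bounded fiber, except in one case: the full fiber on an admissible $S$ with $\abs S=k-1$... wait, that gives $d=1$, which is fine. When $\abs S=k$ the atom is bounded and is handled by exact counting.

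The last step is definability of the cells in the base language. A cell is determined by the following data: the equality pattern among the parameters, the $E_x$-pattern among the point parameters, which named point parameters are incident to which named curve parameters, and, for each atom with $\abs S\geq k$ or $S$ inadmissible, the exact size in $\set{0,1}$ via $\operatorname{Exact}$ formulas. All of these are finitely many first-order conditions; note $E_x$ is itself a formula of the incidence language. On each such cell the pairs $(d,c_0)$, or the exact bounded count, are constant. This gives the partition, and bounded cells are refined to exact cardinalities with $\operatorname{Exact}_{\psi,m}$.

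The main obstacle is the uniform $O(q^{d-1})$ error together with the claim that only the admissible leading term survives. This requires care when $\abs R\geq k$ mixes with lower terms. I would check it by noting that every nonleading term of the inclusion–exclusion has exponent at most $k-\abs S-1$ or is bounded, and that the number of terms depends only on $t$.
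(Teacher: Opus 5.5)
Your proof is correct and follows essentially the same route as the paper. You separate the named-curve equalities, decompose the fiber into Boolean atoms of the interpolation hyperplanes $H_u=\set{c:I_k(u,c)}$, size them via Vandermonde rank, and define the cells by equality and $E_x$-patterns plus exact counts. The one variation is how each atom is estimated. You use inclusion--exclusion over supersets $R\supseteq S$. The paper instead writes the atom as $N(S)\setminus\bigcup_{v\notin S}H_v$ (in your notation), bounds the removed part by a union of proper affine hyperplanes of $N(S)$, and adds a definable containment test $N(S)\subseteq H_v$. Your exact count shows that test is vacuous for positive-dimensional atoms once coincident parameters are identified.
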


\begin{proof}
First separate the finitely many cases in which $c$ equals a named curve
parameter.  The remaining formula is a Boolean combination of the affine
hyperplanes
\[
  H_u=\set{c:I_k(u,c)}\subseteq C_k(F)
\]
attached to the finitely many point parameters $u$ occurring in $\psi$.
For a subset $S$ of those parameters, put
$
  A_S=\bigcap_{u\in S}H_u.
$
The set $A_S$ is empty exactly when the corresponding interpolation
conditions are inconsistent.  If it is nonempty and the points in $S$
occupy $r$ distinct $E_x$-classes, then
\[
  \dim A_S=
  \begin{cases}
    k-r,&r<k,\\
    0,&r\geq k.
  \end{cases}
\]
Indeed, evaluation at $r$ distinct first coordinates has Vandermonde rank
$\min(r,k)$; for $r\geq k$, a nonempty fiber consists of the unique
polynomial satisfying all the conditions.

Every atom of the Boolean algebra generated by the $H_u$ has the form
$
  A_S\setminus\bigcup_{v\notin S}H_v.
$
It is empty if some $H_v$ contains $A_S$.  Otherwise, when
$d=\dim A_S\geq1$, each nonempty intersection $A_S\cap H_v$ is a proper
affine hyperplane of $A_S$.  A union of finitely many proper affine
hyperplanes has $O(q^{d-1})$ points, so the atom has size
$
  q^d+O(q^{d-1}).
$
When $d=0$, the atom has size zero or one.  Since distinct atoms are
disjoint, a union of selected atoms has leading term $c_0q^d$, where $d$ is
the largest dimension among the selected nonempty atoms and $c_0$ is their
number.

All required intersection-lattice data are uniformly definable in the pure
incidence language.  Nonemptiness of $A_S$ is expressed by
$
  \exists c\bigwedge_{u\in S}I_k(u,c),
$
containment $A_S\subseteq H_v$ by the corresponding universal implication,
and $r$ by the equality pattern modulo the definable relation $E_x$ from
\cref{lem:vertical-equivalence-general}.  There are only finitely many
subsets $S$, so these conditions give a finite parameter-free definable
partition.  Exact-count formulas handle the bounded cells.
\end{proof}

\begin{theorem}
\label{thm:qf-polynomial}
For every $k\geq2$, the class $\Gclass_k$ satisfies the
$k$-dimensional asymptotic-class axiom for every quantifier-free formula
$\varphi(x;\bar y)$ with one object variable.  More precisely, the cells are
parameter-free definable and every positive fiber has an asymptotic of the
form
\[
  \mu\abs{G_k(F)}^{d/k}
  +o\bigl(\abs{G_k(F)}^{d/k}\bigr),
  \qquad 0\leq d\leq k.
\]
\end{theorem}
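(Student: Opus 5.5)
The plan is to combine the two quantifier-free counting lemmas with a normalization to the total size $\abs{G_k(F)}=q^2+q^k$. Fix a quantifier-free $\varphi(x;\bar y)$. The object variable lies either in the point sort or in the curve sort. In a many-sorted formula with sort-typed variables, an object variable of one sort is never compared with an element of the other sort. So these two cases are exhaustive, and we treat them separately. Lemmas~\ref{lem:qf-point-counting} and~\ref{lem:qf-curve-counting} each give a finite parameter-free definable partition on which the fiber has one of finitely many shapes: $q^2+O(q)$, $cq+O(1)$ or $O(1)$ in the point case, and $c_0q^d+O(q^{d-1})$ with $1\leq d\leq k$, or $O(1)$, in the curve case. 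On the $O(1)$ cells we refine by the exact-count formulas $\operatorname{Exact}_{\varphi,j}$. These are quantifier-free-agnostic formulas in the pure incidence language, and the cell formulas themselves may use quantifiers. The exact-count refinement gives the labels $(0,j)$ for $j>0$ and $(0,0)$ for $j=0$, with exact equality for large structures.

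For the unbounded shapes, normalize using $T\defeq\abs{G_k(F)}=q^2+q^k$. For $k>2$ we have $T=q^k+o(q^k)$, and for $k=2$ we have $T=2q^2$. In both cases $T=\lambda q^k+o(q^k)$ with $\lambda\in\set{1,2}$. By Lemma~\ref{lem:asymptotic-calculus}(iii) with $s=T$, $a=k$ and $b=d$, a fiber of size $c_0q^d+o(q^d)$ equals $c_0\lambda^{-d/k}T^{d/k}+o(T^{d/k})$. This gives the label $(d,c_0\lambda^{-d/k})$ with $0\leq d\leq k$. The point-sort shapes correspond to $d=2$ and $d=1$; the exponent $2\leq k$ is admissible. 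The $O(q^{d-1})$ error terms are $o(q^d)$ uniformly on each cell, because the implied constants in those lemmas depend only on $\psi$. Since $T\to\infty$ if and only if $q\to\infty$, the uniformity thresholds transfer.

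Finally, we merge cells with equal labels by disjunction and take the maximum of the finitely many thresholds. We also run the finite-perturbation bookkeeping of Lemma~\ref{lem:finite-perturbation} for small $q$, where the error constants are not yet dominated. This yields the required parameter-free definable partition for $\varphi$.

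The main point to check carefully is that the cells produced in the two counting lemmas really are uniform. In those lemmas the leading coefficients $c$ and $c_0$ are counts of selected atoms. Each such count is determined by definable data: the equality pattern of the parameters, incidences, emptiness and containment of the sets $A_S$, and $E_x$-class patterns. Hence each coefficient is constant on a definable cell, and the $O(\cdot)$ constants are bounded in terms of $k$ and $\abs{\bar y}$ alone. I would verify this first. Once it holds, the remainder is the routine normalization described above.
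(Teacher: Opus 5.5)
Your proposal is correct and follows the paper's proof. Like the paper, you split by the sort of the object variable, apply \cref{lem:qf-point-counting} and \cref{lem:qf-curve-counting}, normalize using $\abs{G_k(F)}=q^2+q^k=\lambda q^k+o(q^k)$ with $\lambda\in\set{1,2}$, and refine the bounded cells by exact-count formulas. The one difference is that your appeal to \cref{lem:finite-perturbation} for small $q$ is unnecessary, since the uniform little-$o$ thresholds already disregard small fields.
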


\begin{proof}
Let $s=\abs{G_k(F)}=q^2+q^k$.  If $k>2$, then
$
  s=q^k(1+o(1)),
$ and $
  q^d=s^{d/k}(1+o(1))
$
for every fixed $0\leq d\leq k$.  If $k=2$, then $s=2q^2$ and
$
  q^d=2^{-d/2}s^{d/2}.
$
Apply \cref{lem:qf-point-counting} when the object variable is a point and
\cref{lem:qf-curve-counting} when it is a curve.  Their error terms are
little-$o$ of the corresponding positive main terms, and their parameter
partitions are finite and parameter-free definable.  Refining bounded cells
by exact cardinality gives the dimension-zero alternatives.
\end{proof}

\begin{corollary}
\label{cor:pure-denominator-rigid}
If $\Gclass_k$ is an asymptotic class with any finite denominator, then it is
a $k$-dimensional asymptotic class and $k$ is its least possible
denominator.
\end{corollary}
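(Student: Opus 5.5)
The plan is to combine three earlier results: the weak $k$-dimensional structure of $\Gclass_k$ (\cref{cor:pure-polynomial-weak}), the denominator-refinement lemma (\cref{lem:denominator-refinement}), and a minimality witness of the same kind as in \cref{thm:coordinate-polynomial} and \cref{thm:biaffine-asymptotic}.

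First I show that $\Gclass_k$ is $k$-dimensional. Suppose $\Gclass_k$ is an $M$-dimensional asymptotic class for some $M\geq1$. By \cref{cor:pure-polynomial-weak}, it is also a weak $k$-dimensional asymptotic class. \Cref{lem:denominator-refinement} then gives directly that $\Gclass_k$ is a full $k$-dimensional asymptotic class. The only point to check is that the lemma's hypotheses are exactly these two: full $M$-dimensionality and weak $k$-dimensionality. It needs nothing further; it only compares labels along unbounded sequences.

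Next I show that no smaller denominator works. Suppose $\Gclass_k$ is $h$-dimensional for some $h$. As a witness I take the quantifier-free formula $I_k(x;c)$, with $x$ in the point sort and $c$ a curve parameter. For every curve, its fiber has exactly $q$ elements, and the total size is $s=q^2+q^k$. If $k>2$, then $q=s^{1/k}(1+o(1))$. If $k=2$, then $s=2q^2$ and $q=2^{-1/2}s^{1/2}$.

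Now take the $h$-dimensional partition for this formula. Since there are finitely many cells and $F$ ranges over unboundedly large fields, some cell contains curve parameters from structures of unbounded size. On that cell the fiber equals $\mu s^{j/h}+o(s^{j/h})$ for some label $j/h$. The fiber is also $q$, which is unbounded and has exponent exactly $1/k$ relative to $s$. Uniqueness of leading exponents along an unbounded sequence therefore forces $j/h=1/k$. Since $\gcd(1,k)=1$, it follows that $k\mid h$, so $k$ is the least possible denominator. Equivalently, the result follows from the coprime-witness reasoning after \cref{cor:coprime-denominator-witness}.

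I do not expect a real obstacle here, since the heavy lifting is already done in \cref{lem:denominator-refinement}. The one subtle point is the case $k=2$, where the leading coefficient $2^{-1/2}$ differs from $1$. This only affects the coefficient, not the exponent comparison, so the argument is unchanged.
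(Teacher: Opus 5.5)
Your proposal is correct and follows essentially the same route as the paper. The paper likewise obtains weak $k$-dimensionality from \cref{cor:pure-polynomial-weak}, upgrades any full finite-dimensional presentation to denominator $k$ via \cref{lem:denominator-refinement}, and uses the curve-neighborhood formula $I_k(x;c)$, which has exactly $q$ solutions, to force the reduced exponent $1/k$ and hence $k\mid h$, with the same separate treatment of $k=2$.
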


\begin{proof}
The class is weakly $k$-dimensional by
\cref{cor:pure-polynomial-weak}.  Hence any full finite-dimensional
asymptotic presentation may be refined to denominator $k$ by
\cref{lem:denominator-refinement}.  On the other hand, for a curve parameter
$c$ the neighborhood formula $I_k(x,c)$ has exactly $q$ point solutions.
Since $q=s^{1/k}(1+o(1))$ for $k>2$ and
$q=2^{-1/2}s^{1/2}$ for $k=2$, any denominator $N$ must realize the reduced
exponent $1/k$ as $d/N$.  Therefore $k\mid N$.
\end{proof}

\subsection{Framed profile descent}

The next theorem isolates the descent mechanism used in the polynomial
application.  It applies when a finite frame defines coordinates uniformly,
but no frame is canonical.

\begin{theorem}
\label{thm:framed-profile-transfer}
Let $\D$ be a class of finite $L'$-structures with finitely many sorts
$S_1,\dots,S_t$.  Suppose there are fixed integers
$n_1,\dots,n_t\geq1$, a fixed tuple of frame variables $\mathfrak f$, and a
parameter-free formula $\operatorname{Good}(\mathfrak f)$ with the following
properties.
\begin{enumerate}[label=\textup{(\roman*)}]
  \item Every $A\in\D$ has a good frame.
  \item For every good frame $\mathfrak f$ in $A$, uniformly in
  $(A,\mathfrak f)$, there is an interpreted finite field
  $K_{\mathfrak f}$ and definable bijections
  $
    h_{\mathfrak f,j}:S_j(A)\longrightarrow
    K_{\mathfrak f}^{n_j}
    \quad(1\leq j\leq t).
  $
  The field interpretation and the graphs of the bijections are given by
  fixed $L'$-formulas with $\mathfrak f$ in the displayed parameter
  positions.  The cardinality $q_A=\abs{K_{\mathfrak f}}$ is independent
  of the good frame.
  \item Under the union of these bijections, the $L'$-structure on $A$ is
  isomorphic to a fixed, uniformly ring-definable multisorted structure on
  the products $K_{\mathfrak f}^{n_j}$.  Equivalently, every $L'$-formula
  has a uniform ring-formula translation in these coordinates.
\end{enumerate}
Put
$
  e=\max_{1\leq j\leq t}n_j.
$
Then $\D$ is an $e$-dimensional asymptotic class.
\end{theorem}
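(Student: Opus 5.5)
Fix an $L'$-formula $\varphi(x;\bar y)$, with $x$ in some sort $S_j$. I will count in coordinates relative to an arbitrary good frame, and then remove the frame by recording all counting labels that good frames produce. This is the framed-profile idea described in the introduction.

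\textbf{Counting relative to one frame.} By (iii), $\varphi$ has a uniform ring translation $\varphi^{\mathrm{ring}}(\bar\xi;\bar\eta)$, where $\bar\xi$ is an $n_j$-tuple of field variables and $\bar\eta$ lists the coordinates of the parameters. Since finite fields form a one-dimensional asymptotic class, \cref{thm:tuple-counting} gives finitely many labels $(d_i,\mu_i)$ with $0\le d_i\le n_j\le e$, together with parameter-free ring formulas $\theta_i(\bar\eta)$ partitioning the coordinate tuples. On the cell $\theta_i$,
\[
  \abs{\varphi(A;\bar b)}=\mu_i q_A^{d_i}+o(q_A^{d_i}).
\]
Because the interpretation of $K_{\mathfrak f}$ and the graphs of the $h_{\mathfrak f,j}$ are fixed $L'$-formulas in $\mathfrak f$, each $\theta_i$ pulls back to a parameter-free $L'$-formula $\Theta_i(\bar y;\mathfrak f)$. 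For each good $\mathfrak f$, these formulas partition the parameter tuples $\bar b$. Note that $\Theta_i$ quantifies over the interpreted field, which sits in some power of $A$ modulo a definable equivalence, and this is fine.

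\textbf{Removing the frame.} For nonempty $P\subseteq I$, let $\Pi_P(\bar y)$ be the formula
\[
  \bigwedge_{i\in P}\exists\mathfrak f\,\bigl(\operatorname{Good}(\mathfrak f)\wedge\Theta_i(\bar y;\mathfrak f)\bigr)
  \wedge\bigwedge_{i\notin P}\neg\exists\mathfrak f\,\bigl(\operatorname{Good}(\mathfrak f)\wedge\Theta_i(\bar y;\mathfrak f)\bigr).
\]
By (i), these formulas form a parameter-free partition. The fiber size does not depend on $\mathfrak f$, and by (ii) neither does $q_A$. So the argument of \cref{lem:quotient-profile-compatibility} applies, with frames in place of representatives and $q_A$ in place of $T$. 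On any subfamily of a profile with $q_A$ unbounded, all labels in $P$ agree, either all equal to $(0,0)$ or all equal to one positive pair $(d,\mu)$. Uniformity on the profile holds because each $\bar b$ in it lies in the $i$th cell for some good frame, and the $o$-estimate on that cell is uniform in all data.

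\textbf{Converting to $\abs A$.} Next I express $\abs A$ in terms of $q_A$. We have
\[
  \abs A=\sum_j q_A^{n_j}=\lambda q_A^e+o(q_A^e),
\]
where $\lambda$ is the number of sorts with $n_j=e$. This is exact, so no cells are needed for the universe. Bounded $q_A$ forces bounded $\abs A$, and those structures are handled by refining with $\operatorname{Exact}_{\varphi,k}$ and $\neg\mathrm{Small}_B$, as in \cref{lem:finite-perturbation}. I must check that the set of $A$ with $q_A\le B$ is captured by a sentence bounding $\abs A$; this holds since $\abs A\le t\,q_A^e$ and $q_A\le\abs A$. Then \cref{lem:asymptotic-calculus}\textup{(iii)} gives
\[
  \abs{\varphi(A;\bar b)}=\mu\lambda^{-d/e}\abs A^{d/e}+o(\abs A^{d/e}),
\]
with $d\le n_j\le e$. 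This label lies in the $e$-grid. Merging profiles with equal labels finishes the argument.

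The main obstacle is the profile step. Each tuple must be assigned one well-defined label, uniformly, even though different good frames can place it in different cells, and that step needs care about uniformity over both the tuple and the frame.
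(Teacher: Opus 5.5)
Your proposal is correct and follows essentially the same route as the paper's proof: ring-translate $\varphi$, tuple-count in finite fields, pull the cells back to frame-parametrized formulas $\Theta_i(\bar y;\mathfrak f)$, form the frame profiles $\Pi_P$, show that labels agree on profiles with unbounded $q_A$, and normalize using $\abs{A}=\sum_j q_A^{n_j}$ together with $q_A^e\leq\abs{A}\leq t\,q_A^e$. The only difference is cosmetic: you handle small structures with a $\mathrm{Small}_B$ split, whereas the paper refines each bounded profile directly by exact fiber counts; both work.
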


\begin{proof}
Fix an $L'$-formula $\varphi(x;\bar y)$, with $x$ in the sort $S_j$, and
let
$
  \widehat\varphi(\bar X;\bar Y),
  \,\, \abs{\bar X}=n_j,
$
be its uniform ring-formula translation.  Since finite fields form a
one-dimensional asymptotic class, tuple counting gives a finite index set
$I$, parameter-free ring formulas $\theta_i(\bar Y)$ partitioning the
coordinate parameter tuples, and labels
\[
  (d_i,\mu_i)\in
  \bigl(\set{0,\dots,n_j}\times\R_{>0}\bigr)\cup\set{(0,0)}
\]
such that, on the $i$th cell in a field of size $q$,
\[
  \abs{\widehat\varphi(K^{n_j};\bar b)}
  =\mu_iq^{d_i}+o(q^{d_i})
\]
uniformly, with the usual meaning for the zero label.

For each $i\in I$, translate
$\theta_i(h_{\mathfrak f}(\bar y))$ back through the framed coordinate
system to an $L'$-formula $\Theta_i(\bar y;\mathfrak f)$.  For every
nonempty $P\subseteq I$, define the frame profile
\[
\begin{split}
  \Pi_P(\bar y)\quad\Longleftrightarrow\quad{}
  &\bigwedge_{i\in P}\exists\mathfrak f\,
    \bigl(\operatorname{Good}(\mathfrak f)
      \wedge\Theta_i(\bar y;\mathfrak f)\bigr)\\
  &{}\wedge
  \bigwedge_{i\notin P}\neg\exists\mathfrak f\,
    \bigl(\operatorname{Good}(\mathfrak f)
      \wedge\Theta_i(\bar y;\mathfrak f)\bigr).
\end{split}
\]
Because good frames exist and each good frame places the coordinate tuple in
exactly one field cell, the formulas $\Pi_P$ form a finite parameter-free
partition of the target parameter tuples.

Fix a profile on which the numbers $q_A$ are unbounded.  The target fiber
cardinality is independent of the chosen frame.  For each $i\in P$ and each
parameter tuple in the profile, choose a good frame witnessing $i$.  The
uniform finite-field estimate for the $i$th cell then holds for that same
target fiber.  Comparing the estimates along an unbounded sequence shows
that either all labels in $P$ are $(0,0)$ or all have the same positive
pair $(d_P,\mu_P)$.  Fixing one $i\in P$ and choosing a witnessing frame for
each tuple also shows that the resulting estimate is uniform on the whole
profile.  Thus the fiber is eventually empty, uniformly on the profile, in
the zero case; in the positive case,
\[
  \abs{\varphi(A;\bar b)}
  =\mu_Pq_A^{d_P}+o(q_A^{d_P}).
\]

If the numbers $q_A$ on a profile are bounded, then so are the target
structures, since
$
  \abs A=\sum_{j=1}^t q_A^{n_j}.
  $
Refine such a profile by exact fiber cardinality.  On an unbounded profile,
put
$\lambda=\abs{\set{j:n_j=e}}.
$
Then
$
  \abs A=\lambda q_A^e+O(q_A^{e-1}),
$
where the error is $O(1)$ if $e=1$.  Since $0\leq d_P\leq n_j\leq e$,
uniform normalization gives
\[
  \abs{\varphi(A;\bar b)}
  =\mu_P\lambda^{-d_P/e}\abs A^{d_P/e}
   +o\bigl(\abs A^{d_P/e}\bigr).
\]
The target-size uniformity follows from
$q_A^e\leq\abs A\leq t q_A^e$ for $q_A\geq1$.  Finally, merge the
finitely many profiles and exact-count refinements that
produce the same label.
This gives an $e$-dimensional asymptotic partition for $\varphi$.
\end{proof}

\subsection{Finite-direction completion and polynomial frames}

We now pass from the weak and quantifier-free results to full asymptoticity
in the pure incidence language.  The first ingredient is a uniform
completion
of an affine plane from which finitely many parallel classes have been
removed.

\begin{definition}
Fix $t\geq1$.  Let $F$ be a finite field and let
$T\subseteq\mathbb P^1(F)$ have cardinality $t$.  Write
$\mathcal A(F,T)$ for the two-sorted incidence structure whose point sort is
$F^2$ and whose line sort consists of precisely those affine lines whose
direction is not in $T$.
\end{definition}

\begin{theorem}
\label{thm:finite-direction-completion}
Fix $t\geq1$.  If $q=\abs F\geq t^2+3$, then the full affine plane
$\operatorname{AG}(2,F)$ is parameter-free interpretable in
$\mathcal A(F,T)$ by formulas uniform in the $t$-element set $T$.
The original point sort and the retained-line sort are carried identically
into the interpreted completion.
\end{theorem}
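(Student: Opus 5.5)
The plan is to reconstruct the missing parallel classes directly from the incidence structure of $\mathcal A(F,T)$. Then we append them, together with their lines, as definable quotient sorts. The construction works throughout with parameter-free formulas whose correctness is checked in coordinates for every $T$ of size $t$ once $q\geq t^2+3$.

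\textbf{Step 1: recover the existing parallel classes.} Define on retained lines
\[
  \ell\parallel m \iff \ell=m\vee\neg\exists u\,(I(u,\ell)\wedge I(u,m)).
\]
Two distinct retained affine lines are disjoint exactly when they have the same direction, so this is equality of direction on directions outside $T$. The quotient is the sort $D_0=\mathbb P^1(F)\setminus T$ of retained directions.

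\textbf{Step 2: recover the missing directions.} For distinct points $u,v$, let $J(u,v)$ say that no retained line passes through both. This holds exactly when the direction of $u-v$ lies in $T$. The relation "$u=v$ or $J(u,v)$" is not transitive, since it is a union of $t$ equivalence relations (same line of direction $\tau$, for $\tau\in T$). I would separate the directions using a definable ternary collinearity test for missing-direction pairs.

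\emph{Collinearity test.} Call $u,v,w$ with $J(u,v)$ "$T$-collinear" if the following holds: for every retained direction class, the lines of that class through $u$, $v$, $w$ determine points on any transversal in a fixed ratio pattern. Concretely, the three points are collinear iff every retained line through $w$ meets every line through $u$ and every line through $v$ in a configuration forced by a line. This is the main obstacle.

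The simplest concrete route I would try first is a counting characterization. For distinct $u,v$ with $J(u,v)$, consider points $w$ with $J(u,w)$ and $J(v,w)$. If $u,v,w$ lie on a common line of direction $\tau\in T$, this holds for all $q-2$ further points of that line. Points off that line satisfy both conditions only if $w$ is an intersection of a $T$-line through $u$ with a different $T$-line through $v$, which gives at most $t(t-1)$ such $w$.

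To distinguish, define $w\in\Lambda(u,v)$ if $J(u,w)$, $J(v,w)$ and, additionally, there are at least $t^2$ points $z$ with $J(u,z)\wedge J(v,z)\wedge J(w,z)$. Points on the common $T$-line have $q-3\geq t^2$ such $z$, which is where $q\geq t^2+3$ enters. Off-line points have at most about $t(t-1)$ such $z$, and I would verify this bound carefully, perhaps adjusting the threshold. Then $\Lambda(u,v)\cup\{u,v\}$ is the missing line through $u,v$.

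This gives a definable relation on pairs $(u,v)$: "same missing line." Its quotient is the set of missing lines. A further equivalence on missing lines, "disjoint or equal," yields the $t$ missing directions.

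\textbf{Step 3: assemble the completion.} Interpret the line sort of $\operatorname{AG}(2,F)$ as the disjoint union of the retained lines and the quotient of pairs by "same missing line." Incidence with missing lines is membership in $\Lambda(u,v)\cup\{u,v\}$. Points and retained lines are carried identically, as the statement requires. All formulas are fixed and independent of $T$, so uniformity holds.

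The main verification, and the likely obstacle, is the exact combinatorial bound in Step 2 that separates on-line from off-line witnesses uniformly in $t$. That bound is what requires the hypothesis $q\geq t^2+3$, so I would carry out that incidence count in coordinates first.
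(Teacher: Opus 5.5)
Your proposal is correct and is essentially the paper's proof. Your $J$ is the paper's $R$, and your threshold-$t^2$ witness formula for $\Lambda(u,v)$ is the paper's $\Lambda_t$; the extra conjuncts $J(u,w)\wedge J(v,w)$ are harmless. The deleted lines are then, as in the paper, the quotient of $J$-pairs by equality of these witness sets, with incidence given by the same formula. Step~1, the collinearity-test digression, and the recovery of the missing directions are not needed.

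The one count you left loose should be $t(t-1)+(t-1)=t^2-1$, not ``about $t(t-1)$''. An off-line $w$ can also have up to $t-1$ witnesses $z$ on the line $uv$ itself, one for each direction of $T$ other than that of $uv$. So the threshold $t^2$ works exactly as you chose it, with no adjustment, and $q-3\geq t^2$ is precisely the matching hypothesis.
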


\begin{proof}
For distinct points $p,q$, put
\[
  R(p,q)
  \quad\Longleftrightarrow\quad
  p\neq q\ \wedge\
  \neg\exists\ell\bigl(I(p,\ell)\wedge I(q,\ell)\bigr).
\]
Thus $R(p,q)$ holds exactly when the direction of the full affine line
$pq$ lies in $T$.

For $R(p,q)$, define $\Lambda_t(p,q;r)$ by
\[
\begin{aligned}
  \Lambda_t(p,q;r)\quad\Longleftrightarrow\quad{}
  &r=p\ \vee\ r=q\\
  &{}\vee\exists z_1\dots z_{t^2}\Biggl(
    \bigwedge_{i<j}z_i\neq z_j\\
  &\hspace{8em}{}\wedge
    \bigwedge_{i=1}^{t^2}
    \bigl(R(z_i,p)\wedge R(z_i,q)\wedge R(z_i,r)\bigr)
  \Biggr).
\end{aligned}
\]
We claim that $\Lambda_t(p,q;r)$ says exactly that $r$ belongs to the
missing affine line through $p$ and $q$.

Let that line have direction $\delta\in T$.  If
$r\in pq\setminus\{p,q\}$, every point of $pq\setminus\{p,q,r\}$ is a
witness, so there are $q-3\geq t^2$ witnesses.

Suppose instead that $r\notin pq$, and let $z$ satisfy all three
$R$-conditions in the definition of $\Lambda_t$.  Put
$
  \alpha=\operatorname{dir}(pz)
 $ and $
  \beta=\operatorname{dir}(qz).
$
Both $\alpha$ and $\beta$ lie in $T$.  If $\alpha\neq\beta$, the two
corresponding lines through $p$ and $q$ meet in at most one point; hence
there are at most $t(t-1)$ possibilities for $z$.  If
$\alpha=\beta$, then necessarily
$\alpha=\beta=\delta$ and $z\in pq$.  For each direction
$\gamma\in T\setminus\{\delta\}$, the $\gamma$-line through $r$ meets
$pq$ in at most one point, so this case contributes at most $t-1$
possibilities.  Thus an off-line point has at most
$
  t(t-1)+(t-1)=t^2-1
$
witnesses, proving the claim.

Let
$
  X=\{(p,q):R(p,q)\}.
$
Define an equivalence relation on $X$ by
\[
  (p,q)\sim(p',q')
  \quad\Longleftrightarrow\quad
  \forall r\,
  \bigl(\Lambda_t(p,q;r)\leftrightarrow
        \Lambda_t(p',q';r)\bigr).
\]
The quotient $X/{\sim}$ is precisely the set of deleted affine lines.
Incidence between a point $r$ and the class of $(p,q)$ is defined by
$\Lambda_t(p,q;r)$.  Taking the disjoint union of this quotient with the
original retained-line sort gives all affine lines, with the original
incidence relation on the retained part.  This is a uniform
parameter-free interpretation of $\operatorname{AG}(2,F)$.
\end{proof}

\begin{lemma}
\label{lem:framed-affine-coordinates}
Let $\mathcal A=\operatorname{AG}(2,F)$, and let $O,U,V$ be noncollinear
points.  Uniformly over all finite fields $F$, the framed incidence
structure $(\mathcal A,O,U,V)$ interprets the field $F$ on the affine line
$OU$, with $O$ representing $0$ and $U$ representing $1$.  The following
relations are uniformly definable from the frame:
\begin{enumerate}[label=\textup{(\roman*)}]
  \item the two coordinates of every affine point;
  \item the direction of every affine line;
  \item the slope and intercept of every line not parallel to $OV$.
\end{enumerate}
The resulting coordinate maps are definable bijections with $F^2$, with
$\mathbb P^1(F)$, and with $F^2$, respectively.
\end{lemma}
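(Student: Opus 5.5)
The plan is to carry out the classical von Staudt--Hilbert coordinatization of a Desarguesian affine plane. Every step is a fixed first-order formula with $O,U,V$ in the parameter positions, so uniformity over $F$ comes automatically. First, parallelism of lines is parameter-free definable as $\ell=m$ or ``no common point.'' This gives parallel projection and lines through a point in a given direction. Choose the axes $OU$ (the $x$-axis, which will carry $F$) and $OV$. For a point $P$, let $x(P)$ be the intersection of $OU$ with the line through $P$ parallel to $OV$. Let $P'$ be the intersection of $OV$ with the line through $P$ parallel to $OU$. Transport $P'$ to $OU$ by the line through $P'$ parallel to $VU$, and call the result $y(P)$. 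These are compositions of ``meet'' and ``parallel through'', hence definable, and they give a definable bijection $\mathcal A\to (OU)^2$.

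Next, the field operations on $OU$ are defined in the usual way. For addition of $a,b\in OU$: let $a'$ be the intersection of the line through $a$ parallel to $OV$ with the line through $V$ parallel to $OU$. Then $a+b$ is the intersection of $OU$ with the line through $a'$ parallel to $Vb$. The degenerate case $b=O$ is handled by cases. For multiplication: let $U^*$ be the intersection of $OV$ with the line through $U$ parallel to $\ldots$. Equivalently, take $b^*\in OV$ with $Ub^*\parallel$ fixed and then $ab$ the intersection of $OU$ with the line through $a$-image parallel to $Ub^*$. The standard construction is $b^*=OV\cap$ (line through $b$ parallel to $UV$), followed by $ab=OU\cap$ (line through $a^*$ parallel to $Ub^*$), with case splits when $a$ or $b$ is $O$.

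I would verify correctness directly in coordinates. Since $\mathcal A=\operatorname{AG}(2,F)$ for a field $F$, apply the affine map sending $O,U,V$ to $(0,0),(1,0),(0,1)$. This map is an incidence automorphism. In standard position each construction is a one-line linear computation giving $a+b$ and $ab$. No appeal to Desargues/Pappus theorems is needed, because we already know the plane is coordinatized by the field $F$. We only need that the definable operations agree with those coordinates, and this holds by affine invariance.

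For (ii), a direction is the parallel class of a line, and the parallel class of a line $\ell$ is represented canonically by the line through $O$ parallel to $\ell$. The lines through $O$ correspond bijectively to $\mathbb P^1(F)$: the line $OV$ maps to $\infty$, and any other line through $O$ maps to the $y$-coordinate of its intersection with the line $x=1$ (through $U$ parallel to $OV$). This is definable. For (iii), a line $\ell$ not parallel to $OV$ meets $OV$ in a unique point, and its $y$-coordinate is the intercept. The slope is the $\mathbb P^1$-coordinate from (ii), which is finite in this case. Then $\ell=\{(x,y): y=sx+b\}$, and bijectivity with $F^2$ follows.

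I expect the only real obstacle to be bookkeeping the degenerate cases uniformly: constructions involving $O$ itself, points on the axes, and the case $|F|=2$, where some auxiliary lines coincide. I would handle these by explicit finite case distinctions in the defining formulas. Since ``$x=O$'' and similar conditions are definable from the frame, the formulas remain fixed and uniform in $F$.
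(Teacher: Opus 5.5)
Your overall route is the paper's: define parallelism, project along the axes, transport $OV$ to $OU$ by parallels to $UV$, and verify every diagram after moving $O,U,V$ to $(0,0),(1,0),(0,1)$. Your coordinate maps are fine, and so are your treatments of (ii) and (iii). Your addition diagram is a correct alternative to the paper's: through $a'=(a,1)$ you draw the parallel to $Vb$, whereas the paper goes through $C=(a,b)$ parallel to $UV$.

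The step that fails is the multiplication diagram you finally commit to. Take $b^*=(0,b)$ and $a^*=(0,a)$. The line $Ub^*$ has direction $(-1,b)$, and the parallel to it through $a^*$ meets $OU$ at $(a/b,0)$, not $(ab,0)$. When $b=O$, the line $Ub^*$ is $OU$ itself, so the parallel through $a^*$ misses $OU$ unless $a=O$. As written, your formula therefore defines division (for $b\neq O$), not multiplication. Your own coordinate check would reject it.

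The repair is local and needs no new idea. One option: draw the parallel to $Ub^*$ through $a$ itself. It meets $OV$ at $(0,ab)$, and you then transport back along a parallel to $UV$. This is exactly the paper's construction. The other option, where your abandoned $U^*$ sentence seems to have been heading: draw through $a^*$ the parallel to $Vb$, which meets $OU$ at $(ab,0)$. Both versions are correct for all $a,b$, including $a=O$ or $b=O$, so the case splits you anticipate are unnecessary. Alternatively, since your diagram does define $a/b$ for $b\neq O$, you could recover $ab=a/(U/b)$, but fixing the diagram is cleaner.
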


\begin{proof}
Parallelism is definable in the affine incidence language: two lines are
parallel if they are equal or have no common point.  Put
$
  X=OU$ and
$  Y=OV.
$
The line $UV$ defines a bijection $\tau:X\to Y$: for $A\in X$,
$\tau(A)$ is the intersection with $Y$ of the line through $A$ parallel to
$UV$.  To verify the constructions, choose affine coordinates in which
\[
  O=(0,0),\qquad U=(1,0),\qquad V=(0,1).
\]
Then $\tau(a,0)=(0,a)$.

For $A=(a,0)$ and $B=(b,0)$ on $X$, let $B_Y=\tau(B)$.  Let $C$ be the
intersection of the line through $A$ parallel to $Y$ with the line through
$B_Y$ parallel to $X$.  The line through $C$ parallel to $UV$ meets $X$ at
$(a+b,0)$.  This gives a uniform incidence construction of addition.

For multiplication, let $B_Y=\tau(B)$ and take the line $UB_Y$.  The line
through $A$ parallel to $UB_Y$ meets $Y$ at $(0,ab)$; applying
$\tau^{-1}$ gives $(ab,0)$.  Hence the induced operations on $X$ are the
field operations of $F$.

The coordinates of an arbitrary point are obtained by projecting to $X$
and $Y$ along the two axis directions and then using $\tau^{-1}$ on the
$Y$-coordinate.  The line through $O$ parallel to a given line determines
its direction.  A nonvertical line has a unique slope and a unique
intersection with $Y$, giving its slope and intercept.  Every construction
uses a fixed finite incidence diagram and is therefore uniformly
first-order definable from $O,U,V$.
\end{proof}

We next extract a partial affine plane from the polynomial incidence
structure.  Recall the definable relation $E_x$ from
\cref{lem:vertical-equivalence-general}.

\begin{lemma}
\label{lem:polynomial-slice}
Let $k\geq2$, let
$
  B=(p_1,\dots,p_{k-2})
$
be a tuple of points in pairwise distinct $E_x$-classes, and put
$
  S_B=\left\{c:\bigwedge_{i=1}^{k-2}I_k(p_i,c)\right\},
$
with $S_B=C_2(F)$ when $k=2$.  Let
$
  P_B=~\left\{u:\bigwedge_{i=1}^{k-2}\neg E_x(u,p_i)\right\}.
$
With point sort $S_B$, line sort $P_B$, and incidence inherited from
$I_k$, the resulting structure is an affine plane of order $q$ with
exactly $k-1$ parallel classes deleted.
\end{lemma}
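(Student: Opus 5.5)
The plan is to coordinatize everything explicitly and reduce the statement to linear algebra over $F$. Write $p_i=(x_i,y_i)$ with $x_1,\dots,x_{k-2}$ pairwise distinct; by \cref{lem:vertical-equivalence-general} this is exactly the hypothesis that the $p_i$ lie in distinct $E_x$-classes. For $k=2$ there are no conditions and $S_B=C_2(F)$. In general, the interpolation conditions $f(x_i)=y_i$ are $k-2$ independent affine conditions on $F^k$, by the Vandermonde rank computation used in \cref{lem:qf-curve-counting}. Hence $S_B$ is an affine plane (a $2$-dimensional affine subspace) of $C_k(F)\cong F^k$. I will parametrize it as
\[
  f_{s,t}=f_0+(s+tX)\,W(X),\qquad W(X)=\prod_{i=1}^{k-2}(X-x_i),
\]
where $f_0$ is the Lagrange interpolant of degree $<k-2$ (with $f_0=0$ and $W=1$ when $k=2$). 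The map $(s,t)\mapsto f_{s,t}$ is then an affine bijection $F^2\to S_B$. It is injective and has the right size: $\abs{S_B}=q^2$, since the kernel of evaluation at the $x_i$ on $C_k(F)$ is $\{gW:\deg g<2\}$.

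The next step is to compute the lines. For a point $u=(x,y)\in P_B$ we have $x\notin\{x_1,\dots,x_{k-2}\}$, so $W(x)\neq0$. The incidence $I_k(u,f_{s,t})$ becomes
\[
  s+tx=\frac{y-f_0(x)}{W(x)},
\]
which is a nonvertical affine line in the $(s,t)$-plane with direction determined by $x$; I regard it as the line of slope $-x$ in suitable coordinates. The map $u\mapsto$ this line is injective, because $(x,y)$ is recovered from the coefficient $x$ and the intercept. For fixed $x$, as $y$ ranges over $F$ the right-hand side ranges over all of $F$. Thus the image is exactly the set of all affine lines of the form $s+tx=c$ with $x\in F\setminus\{x_1,\dots,x_{k-2}\}$ and $c\in F$.

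It then remains to count directions. The affine lines in the $(s,t)$-plane have $q+1$ directions: the lines $s+tx=c$ for $x\in F$, together with the lines $t=c$ (the direction ``$x=\infty$''). The retained lines are those with $x\in F\setminus\{x_1,\dots,x_{k-2}\}$, and every line of each such direction occurs. The deleted parallel classes are therefore exactly the $k-2$ classes $x=x_i$ together with the class $t=\mathrm{const}$, giving exactly $k-1$ deleted classes. Incidence is inherited, so the structure is $\mathcal A(F,T)$ with $\abs T=k-1$ and order $q$.

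The main obstacle is the bookkeeping of the identification. I will check that the parametrization is a bijection onto $S_B$ (the Vandermonde step), that distinct points of $P_B$ give distinct lines, and that the missing direction coming from $\deg<k$ (the $t=\mathrm{const}$ class) is counted correctly, including the degenerate case $k=2$. In that case $T$ consists only of the vertical direction, giving $1=k-1$ deleted class, as in the biaffine plane.
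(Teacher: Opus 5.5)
Your proposal is correct and follows essentially the same route as the paper. You parametrize $S_B$ as $f_0+g_A(X)(\alpha X+\beta)$, rewrite incidence with $u=(x,y)\in P_B$ as the affine line $\alpha x+\beta=(y-f_0(x))/g_A(x)$, and read off the deleted directions: the $k-2$ base slopes together with the class $\alpha=\text{const}$. Your extra checks, uniqueness of the representation via the kernel of evaluation and injectivity of the point-to-line map, make explicit details that the paper leaves implicit.
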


\begin{proof}
Write $p_i=(a_i,b_i)$ and put $A=\{a_1,\dots,a_{k-2}\}$.  Choose
$c_0\in S_B$, with polynomial $f_0$, and set
\[
  g_A(X)=\prod_{a\in A}(X-a),
\]
where $g_A=1$ when $A$ is empty.  Every polynomial in $S_B$ has a unique
expression
\[
  f(X)=f_0(X)+g_A(X)(\alpha X+\beta),
  \qquad (\alpha,\beta)\in F^2.
\]
Thus $S_B$ is an affine plane with coordinate pair $(\alpha,\beta)$.

For $u=(x,y)\in P_B$, incidence with $f$ is equivalent to
\[
  \alpha x+\beta
  =\frac{y-f_0(x)}{g_A(x)}.
\]
As $y$ varies with $x$ fixed, these are all affine lines in one parallel
class.  The retained directions are indexed by $x\in F\setminus A$.
The omitted directions are the $k-2$ values in $A$, together with the
vertical direction.  Hence exactly $k-1$ parallel classes are absent.
\end{proof}

Fix $k\geq2$. We now describe a finite parameter tuple that recovers
polynomial coordinates intrinsically. Put
$q_k^0\defeq\max\set{(k-1)^2+3,k+1}$. All constructions below are made in
fields of size at least $q_k^0$; the finitely many smaller structures will be
restored at the end.

For $c,d\in S_B$, let
$
  \operatorname{Del}_B(c,d)
$
mean that $c\neq d$ and no retained line of the partial affine plane is
incident with both $c$ and $d$.  Equivalently, the unique line through $c$
and $d$ in the interpreted affine completion is one of the deleted lines.
This is a uniformly definable relation in the incidence language, with the
base tuple $B$ displayed among its parameters.

\begin{definition}
\label{def:polynomial-preframe}
A \emph{polynomial preframe} in $G_k(F)$ consists of the following data.
\begin{enumerate}[label=\textup{(\roman*)}]
  \item A curve $c_0$ and points
  $B=(p_1,\dots,p_{k-2})$ incident with $c_0$, in pairwise distinct
  $E_x$-classes.
  \item Three further points $r,s,t$ incident with $c_0$, whose
  $E_x$-classes are pairwise distinct and disjoint from the classes of
  the $p_i$.
  \item Curves $U,V\in S_B$ such that $U\neq c_0$, $I_k(r,U)$, and
  $\operatorname{Del}_B(c_0,V)$.  Thus $(c_0,U,V)$ is an affine frame in
  the completion, with $c_0U$ as the horizontal axis and $c_0V$ as the
  vertical direction.
  \item Curves $D_1,\dots,D_{k-2}\in S_B$ such that
  $\operatorname{Del}_B(c_0,D_i)$ for every $i$, and such that the deleted
  lines through $c_0$ represented by\\
  $
    c_0V, c_0D_1, \dots, c_0D_{k-2}
  $
  are pairwise distinct and exhaust all deleted directions.
  \item Nonzero elements
  $
    \lambda_0,\lambda_1,\dots,\lambda_{k-2}
  $
  of the field interpreted on the horizontal axis $c_0U$ by
  \cref{lem:framed-affine-coordinates}.
\end{enumerate}
When $k=2$, the tuples $B$, $(D_i)$, and $(\lambda_i)_{i\geq1}$ are empty.
All of these conditions are understood in the interpreted completion from
\cref{thm:finite-direction-completion}.
\end{definition}

Because $k$ is fixed, a single parameter-free formula in the language of
$G_k$ expresses the domain, quotient, exhaustiveness, and nondegeneracy
conditions in \cref{def:polynomial-preframe}.  We next describe the
coordinate map induced by a preframe.

Let $K_{\mathfrak f}$ be the field on $c_0U$.  For each $i$, let
$a_i\in K_{\mathfrak f}$ be the finite slope of the deleted line $c_0D_i$.
For $p\in P_B$, let $m(p)$ and $b(p)$ be the slope and intercept of the
retained line represented by $p$.  Put
$
  G_{\mathfrak f}(T)=\prod_{i=1}^{k-2}(T-a_i),
$
with $G_{\mathfrak f}=1$ when $k=2$.

For $E_x(u,p_i)$, let $c_i(u)$ be the unique curve incident with
$
  u,\, p_j\ (j\neq i),\, r,\, s.
$
These are exactly $k$ conditions at distinct first coordinates.  Let
$v_i(u)$ be the unique point on $c_i(u)$ satisfying $E_x(v_i(u),t)$, and
let
$
  h_i(u)=b\bigl(v_i(u)\bigr).
$
Define the point-coordinate relation
$
  \operatorname{Coord}_{\mathfrak f}(p;x,y)
$
by the following cases:
\[
  \begin{array}{ll}
  p\in P_B:
    &x=m(p),\qquad
     y=\lambda_0G_{\mathfrak f}(m(p))b(p),\\[2mm]
  E_x(p,p_i):
    &x=a_i,\qquad y=\lambda_i h_i(p).
  \end{array}
\]

\begin{lemma}
\label{lem:preframe-point-bijection}
For every polynomial preframe $\mathfrak f$, the relation
$\operatorname{Coord}_{\mathfrak f}$ is a uniformly definable bijection
$
  P(F)\longrightarrow K_{\mathfrak f}^2.
$
\end{lemma}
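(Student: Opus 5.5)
The plan is to prove bijectivity fibrewise. I will show that the $x$-component of $\operatorname{Coord}_{\mathfrak f}$ induces a bijection from the set of $E_x$-classes of $P(F)$ onto $K_{\mathfrak f}$. I will then show that, on each $E_x$-class, the $y$-component is a bijection onto $K_{\mathfrak f}$. Definability and uniformity are not the issue. Every ingredient is a fixed finite composite of formulas already available uniformly in the frame parameters:
\begin{itemize}
\item $E_x$ from \cref{lem:vertical-equivalence-general};
\item the completion of the partial plane on $S_B$ from \cref{thm:finite-direction-completion} (applicable since $q\geq q_k^0\geq(k-1)^2+3$);
\item the coordinatization, slope and intercept maps from \cref{lem:framed-affine-coordinates};
\item the unique-curve and unique-point definitions of $c_i(u)$ and $v_i(u)$.
\end{itemize}
The polynomial $G_{\mathfrak f}$ and the products with $\lambda_i$ are computed in the interpreted field. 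The case split in $\operatorname{Coord}_{\mathfrak f}$ is definable because $P_B$ and the sets $\{u:E_x(u,p_i)\}$ partition $P(F)$ (the $p_i$ lie in distinct classes). So it remains to check that the relation is total, functional, injective and surjective, and I will deliberately avoid computing an explicit isomorphism $K_{\mathfrak f}\cong F$.

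For the $x$-coordinate, use \cref{lem:polynomial-slice}. In the affine plane on $S_B$, each point $u\in P_B$ is a retained line whose direction depends only on the $E_x$-class of $u$, and distinct classes give distinct directions. The direction set of the completed plane is in bijection with $K_{\mathfrak f}\cup\{\infty\}$, where the frame's vertical direction $c_0V$ corresponds to $\infty$. By clause (iv) of \cref{def:polynomial-preframe}, the $k-1$ deleted directions are exactly $c_0V$ and the lines $c_0D_i$, and the latter have the pairwise distinct finite slopes $a_1,\dots,a_{k-2}$. Hence $m$ maps the $q-k+2$ classes meeting $P_B$ bijectively onto $K_{\mathfrak f}\setminus\{a_1,\dots,a_{k-2}\}$. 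The remaining classes, those of $p_i$, are sent to the distinct values $a_i$. Thus $x$ is a bijection from classes to $K_{\mathfrak f}$.

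For the $y$-coordinate, there are two cases.
\begin{itemize}
\item On a class inside $P_B$, the points are pairwise parallel retained lines, so the intercept $b$ is a bijection onto $K_{\mathfrak f}$. The factor $\lambda_0G_{\mathfrak f}(m(p))$ is constant on the class and nonzero, because $\lambda_0\neq0$ and $m(p)\neq a_i$.
\item On the class of $p_i$, I will argue in genuine field coordinates over $F$. A curve through $p_j$ $(j\neq i)$, $r$ and $s$ is constrained at $k-1$ distinct first coordinates. By interpolation, these curves form a one-parameter family $f_\gamma=f_*+\gamma L$, where $L$ vanishes at those $k-1$ nodes and $L\neq0$ at every other node. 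Then $u\mapsto c_i(u)$ is a bijection from the class of $p_i$ onto this family, since its value at the first coordinate $a$ of $p_i$ is $f_*(a)+\gamma L(a)$ with $L(a)\neq0$. Evaluating at the first coordinate of $t$, which differs from all these nodes, gives $L\neq0$ there as well. Hence $u\mapsto v_i(u)$ is a bijection onto the $E_x$-class of $t$.
\end{itemize}
That class lies in $P_B$ (clause (ii)), so it is a single parallel class of retained lines. Therefore $b$ restricted to it is a bijection onto $K_{\mathfrak f}$, and multiplying by $\lambda_i\neq0$ preserves bijectivity.

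The step I expect to need the most care is the interpolation argument for $c_i(u)$ and $v_i(u)$. There one must check that the $k$ conditions $u$, $p_j$ $(j\neq i)$, $r$, $s$ really sit at distinct first coordinates, so that $c_i(u)$ exists and is unique. One must also check that $t$'s column is disjoint from all the interpolation nodes, so that $L$ does not vanish there. Both checks follow from the distinctness conditions in clauses (i) and (ii). The combinatorial bookkeeping of deleted directions in the $x$-step is the other point to state carefully, but it is immediate from \cref{lem:polynomial-slice} and clause (iv).
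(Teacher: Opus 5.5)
Your proposal is correct and follows essentially the paper's argument. The slope map sends the $E_x$-classes in $P_B$ bijectively onto $K_{\mathfrak f}\setminus\{a_1,\dots,a_{k-2}\}$ and the base classes to the $a_i$; the nonzero-scaled intercept handles retained classes; and on the class of $p_i$, interpolation makes $u\mapsto v_i(u)$ a bijection onto the class of $t$, after which the $\lambda_i$-scaled intercept finishes. Your explicit family $f_*+\gamma L$ is just a concrete form of the interpolation step the paper invokes (the paper also gives an injectivity-plus-cardinality variant).
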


\begin{proof}
The slice retains every finite slope except
$a_1,\dots,a_{k-2}$, while the remaining deleted direction $c_0V$ is
vertical.  Consequently, the first coordinate sends the $E_x$-classes
outside $B$ bijectively to
$
  K_{\mathfrak f}\setminus\{a_1,\dots,a_{k-2}\},
$
and sends the class of $p_i$ to $a_i$.

Fix a retained direction.  Lines of that slope are bijectively indexed by
their intercepts.  Since
$G_{\mathfrak f}(m)\neq0$ on a retained direction and $\lambda_0\neq0$,
the second-coordinate map is a bijection on each non-base $E_x$-class.

Fix $i$.  The map $u\mapsto v_i(u)$ is a bijection from the class of
$p_i$ to the class of $t$.  Indeed, interpolation gives existence and
uniqueness.  Alternatively, if $v_i(u)=v_i(u')$, the two curves
$c_i(u)$ and $c_i(u')$ agree at the $k$ distinct first-coordinate classes
represented by $p_j$ for $j\neq i$, together with $r$, $s$, and $t$,
so they are equal and hence $u=u'$.  Both classes have size $q$.
The intercept map on the fixed retained direction represented by $t$ is a
bijection onto $K_{\mathfrak f}$, and multiplication by
$\lambda_i\neq0$ preserves bijectivity.  Combining the classes proves the
claim.
\end{proof}

\begin{definition}
\label{def:good-polynomial-frame}
Let $\mathfrak f$ be a polynomial preframe.  For a curve $c$ and
$\bar e=(e_0,\dots,e_{k-1})\in K_{\mathfrak f}^k$, let
$\operatorname{Graph}_{\mathfrak f}(c;\bar e)$ assert
\[
  \forall p\,\forall x\,\forall y\,
  \left(
    \operatorname{Coord}_{\mathfrak f}(p;x,y)
    \longrightarrow
    \left[I_k(p,c)\leftrightarrow
    y=\sum_{j=0}^{k-1}e_jx^j\right]
  \right).
\]
The preframe is \emph{good} if
\[
  \forall c\,\exists!\bar e\,
  \operatorname{Graph}_{\mathfrak f}(c;\bar e)
\]
and
\[
  \forall\bar e\,\exists!c\,
  \operatorname{Graph}_{\mathfrak f}(c;\bar e).
\]
\end{definition}

All quantification over $K_{\mathfrak f}$ in
\cref{def:good-polynomial-frame} takes place in the uniformly interpreted
field and therefore translates into a first-order formula in the original
incidence language.  Hence goodness is a uniform parameter-free property of
the frame tuple.

\begin{lemma}
\label{lem:good-frame-existence}
For every fixed $k\geq2$ and every finite field $F$ with
$\abs F\geq q_k^0$, the structure $G_k(F)$ contains a good polynomial
frame.
\end{lemma}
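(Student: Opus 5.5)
The plan is to exhibit an explicit preframe whose coordinate map $\operatorname{Coord}_{\mathfrak f}$ agrees, after the canonical identification of $K_{\mathfrak f}$ with $F$, with the actual coordinates $(x,y)$ of points. Goodness then reduces to the fact that the coefficient map $F^k\to C_k(F)$ is a bijection. Assume $\abs F\geq q_k^0$, so that $\abs F\geq k+1$ and $\abs F\geq (k-1)^2+3$. The first bound lets us choose distinct $a_1,\dots,a_{k-2},\rho,\sigma,\tau\in F$. The second bound makes \cref{thm:finite-direction-completion} applicable to the slice of \cref{lem:polynomial-slice}, which has $t=k-1$ deleted classes.

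Take $c_0$ to be the zero polynomial, $p_i=(a_i,0)$, $r=(\rho,0)$, $s=(\sigma,0)$ and $t=(\tau,0)$. In the proof of \cref{lem:polynomial-slice} we may take $f_0=0$, so $S_B=\set{g_A(X)(\alpha X+\beta)}$ with affine coordinates $(\alpha,\beta)$. A point $u=(x,y)\in P_B$ then becomes the retained line $\alpha x+\beta=y/g_A(x)$. Its intercept with $\alpha=0$ is $y/g_A(x)$, and its direction is determined by $x$. The deleted directions are those with $x=a_i$, together with the direction $\alpha=0$ coming from the vertical class.

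Next choose $U$ and $V$, that is, an affine frame $(O,U,V)$ in the $(\alpha,\beta)$-plane. We take $O=c_0$ and require $V$ to lie on the deleted line $\alpha=0$ through $O$, so $OV$ is the $\beta$-axis and the vertical direction becomes the direction $\alpha=0$. We take $U$ on the retained line through $O$ indexed by $r$, so $OU$ is the line $\beta=-\rho\alpha$. Among the points of $OU$ we choose $U$ (its $\alpha$-coordinate $\pm1$, possibly after a harmless affine change) so that the slope assigned by \cref{lem:framed-affine-coordinates} to the line of $u=(x,y)$ is exactly $x$. We expect an affine reparametrization $x\mapsto$ slope of the form $x-\rho$ or $-x$. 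If this happens, we rechoose the coordinates so the slope equals $x$; alternatively we allow $\rho$ as a normalizing constant and observe that any affine change of the first coordinate only changes $\bar e$ by an invertible linear map. With that choice we get $m(p)=x$ and $a_i$ equal to the given base abscissae. The intercept $b(p)$ is then a fixed nonzero multiple $\kappa\, y/g_A(x)$, so $\lambda_0=\kappa^{-1}$ gives second coordinate $y$ on $P_B$, since $G_{\mathfrak f}=g_A$. The curves $D_i$ are taken on the deleted lines $x=a_i$ through $O$.

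For the base classes, let $u=(a_i,y)$. The curve $c_i(u)$ vanishes at $a_j$ ($j\neq i$), at $\rho$ and at $\sigma$, and takes the value $y$ at $a_i$. Hence $c_i(u)=y\,\ell_i(X)$, where $\ell_i$ is the Lagrange basis polynomial for these $k$ nodes. Its point above $\tau$ is $v_i(u)=(\tau,y\,\ell_i(\tau))$ with $\ell_i(\tau)\neq0$. By the previous paragraph, $h_i(u)=b(v_i(u))$ is then a fixed nonzero multiple $\kappa_i y$, and we set $\lambda_i=\kappa_i^{-1}$. So $\operatorname{Coord}_{\mathfrak f}(p)=(x,y)$ for every point $p$. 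Then $\operatorname{Graph}_{\mathfrak f}(c;\bar e)$ holds exactly when $\bar e$ is the coefficient vector of $c$: sufficiency is clear, and necessity holds because a polynomial of degree less than $k$ is determined by its graph, using $q\geq k$. The two $\exists!$ conditions follow.

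The main obstacle is the bookkeeping in the middle step. One must verify that the frame yields slope exactly $x$ (rather than an affine or projective image of it), and that the field interpreted on $c_0U$ is identified with $F$ compatibly with both slopes and intercepts. I would first compute everything in the $(\alpha,\beta)$-coordinates using the explicit constructions of \cref{lem:framed-affine-coordinates}. If an unavoidable affine distortion $x\mapsto \omega x+\eta$ appears, I would absorb it: a good frame only needs $\operatorname{Graph}_{\mathfrak f}$ to induce a bijection between curves and $K_{\mathfrak f}^k$, and composing polynomials with an invertible affine substitution and scaling preserves the space $F[X]_{<k}$ bijectively.
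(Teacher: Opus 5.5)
Your proposal is correct and is essentially the paper's proof. The paper allows a general base curve $f_0$, but with $f_0=0$ its choices $U=g_A(X)(X-r)$, $V=g_A$ and $D_i=g_A(X)(X-a_i)$ coincide with your frame, and so do its Lagrange polynomials $L_i$ and its normalizations $\lambda_0$ and $\lambda_i=\kappa_i^{-1}$. The distortion you anticipate does occur: $OU$ is forced to be the line represented by $r$, so $r$ always receives first coordinate $0$, and slope exactly $x$ would require $\rho=0$. The paper simply uses your fallback. It obtains $\operatorname{Coord}_{\mathfrak f}(x,y)=(r-x,\,y-f_0(x))$, absorbs $G_{\mathfrak f}(m(p))=(-1)^{k-2}g_A(x)$ into $\lambda_0=(-1)^{k-2}$, and observes that a curve's coordinate graph becomes $Y=f(r-X)-f_0(r-X)$, which is a bijection of $F[X]_{<k}$.
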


\begin{proof}
Work temporarily in the given field coordinates.  Choose a polynomial
$f_0$ of degree $<k$ and distinct elements
$
  a_1,\dots,a_{k-2},r,s,t\in F.
$
%In this
Put $A=\{a_1,\dots,a_{k-2}\}$, let $c_0=f_0$, and set
$p_i=(a_i,f_0(a_i))$.  We also use $r,s,t$ for the corresponding points on
the graph of $f_0$.  Set
\[
  g_A(X)=\prod_{i=1}^{k-2}(X-a_i).
\]
Choose
\[
\begin{aligned}
  U(X)&=f_0(X)+g_A(X)(X-r),\\
  V(X)&=f_0(X)+g_A(X),\\
  D_i(X)&=f_0(X)+g_A(X)(X-a_i).
\end{aligned}
\]
Every curve in $S_B$ has a unique representation
\[
  f_0(X)+g_A(X)\bigl(\alpha(X-r)+\beta\bigr).
\]
Relative to the affine frame $(c_0,U,V)$, its coordinates are
$(\alpha,\beta)$.  If $p=(x,y)$ and $x\notin A$, the line represented by
$p$ has equation
\[
  \beta=-(x-r)\alpha+\frac{y-f_0(x)}{g_A(x)}.
\]
Hence
$
  m(p)=r-x  
  $ and $
  b(p)=\frac{y-f_0(x)}{g_A(x)}.
$
The direction $c_0V$ is vertical, while $c_0D_i$ has slope $r-a_i$.
Thus these curves provide the required exhaustive list of deleted
directions.

Put $a_i^\sharp=r-a_i$.  Then
\[
  \prod_i\bigl(m(p)-a_i^\sharp\bigr)
  =\prod_i(a_i-x)
  =(-1)^{k-2}g_A(x).
\]
Choose
$
  \lambda_0=(-1)^{k-2}.
$
For a base class, let
\[
  L_i(X)=
  \left(\prod_{j\neq i}\frac{X-a_j}{a_i-a_j}\right)
  \frac{X-r}{a_i-r}
  \frac{X-s}{a_i-s}.
\]
This polynomial has degree $k-1$, takes value $1$ at $a_i$, and vanishes
at every $a_j$ with $j\neq i$, as well as at $r$ and $s$.  If
$u=(a_i,y)$, then
$
  c_i(u)=f_0+(y-f_0(a_i))L_i.
$
Consequently,
\[
  h_i(u)=\kappa_i\bigl(y-f_0(a_i)\bigr),
  \qquad
  \kappa_i=\frac{L_i(t)}{g_A(t)}\neq0.
\]
Choose $\lambda_i=\kappa_i^{-1}$.  With these choices,
$
  \operatorname{Coord}_{\mathfrak f}((x,y))
  =\bigl(r-x,\,y-f_0(x)\bigr)
$
for every point $(x,y)$.

If $c$ corresponds to a polynomial $f$, its coordinate graph is therefore
\[
  Y=f(r-X)-f_0(r-X),
\]
a polynomial in $X$ of degree $<k$.  Conversely, every polynomial of degree
$<k$ arises uniquely in this way.  Hence the frame is good.
\end{proof}

\begin{theorem}
\label{thm:framed-polynomial-reconstruction}
Fix $k\geq2$.  Let $F$ be a finite field with $\abs F\geq q_k^0$, and let
$\mathfrak f$ be a good polynomial frame in $G_k(F)$.  The completed slice
uniformly interprets a field $K_{\mathfrak f}$ of cardinality $\abs F$, and
there is a uniformly $\mathfrak f$-definable incidence isomorphism
$
  h_{\mathfrak f}:G_k(F)\longrightarrow G_k(K_{\mathfrak f}).
$
Conversely, $G_k(K)$ is uniformly parameter-free interpretable in every
finite field $K$.
\end{theorem}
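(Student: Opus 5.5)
The plan is to assemble the pieces already in place: the slice of \cref{lem:polynomial-slice}, its completion from \cref{thm:finite-direction-completion}, the framed field of \cref{lem:framed-affine-coordinates}, and the bijection of \cref{lem:preframe-point-bijection}. Goodness then supplies the curve coordinates.

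First, the field. Given $\mathfrak f$, the base tuple $B$ singles out the slice of \cref{lem:polynomial-slice}. This slice is an affine plane of order $q=\abs F$ with exactly $k-1$ parallel classes deleted. Since $q\geq (k-1)^2+3$, \cref{thm:finite-direction-completion} with $t=k-1$ completes it uniformly and parameter-free, given $B$, to $\operatorname{AG}(2,F)$. The points $c_0,U,V$ of this completion are noncollinear: $U\neq c_0$ lies on a retained line through $c_0$, while $c_0V$ is a deleted line. So \cref{lem:framed-affine-coordinates} interprets a field $K_{\mathfrak f}$ on the axis $c_0U$. This field is isomorphic to $F$ and hence has cardinality $\abs F$, because the coordinatizing field of a Desarguesian affine plane is determined up to isomorphism. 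All formulas are fixed, with $\mathfrak f$ in the parameter positions.

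Second, the map. On points, take $h_{\mathfrak f}=\operatorname{Coord}_{\mathfrak f}$, which is a definable bijection $P(F)\to K_{\mathfrak f}^2$ by \cref{lem:preframe-point-bijection}. On curves, send $c$ to the unique $\bar e\in K_{\mathfrak f}^k$ with $\operatorname{Graph}_{\mathfrak f}(c;\bar e)$, identified with the polynomial $\sum_j e_jX^j$. The first clause of goodness in \cref{def:good-polynomial-frame} makes this a function, and the second makes it a bijection onto $K_{\mathfrak f}[X]_{<k}$. Incidence is preserved by the very definition of $\operatorname{Graph}_{\mathfrak f}$: for any $p$ with coordinates $(x,y)$ we have $I_k(p,c)\leftrightarrow y=\sum e_jx^j$, which is incidence in $G_k(K_{\mathfrak f})$. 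The graph of $h_{\mathfrak f}$ is thus a single formula in the incidence language with parameters $\mathfrak f$, once the interpreted field quantifiers are translated back. This establishes the uniform isomorphism.

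For the converse, interpret $G_k(K)$ in $K$ exactly as in \cref{cor:pure-polynomial-weak}. Use tags to separate the point sort $K^2$ from the curve sort $K^k$, take equality as the equivalence relation, and let incidence be the ring formula $y=\sum e_jx^j$. The only delicate point is uniformity over all $\mathfrak f$, that is, one formula rather than one per frame. This holds because every step is a fixed finite incidence construction and $k$ is fixed. Beyond that, the main substantive step is already absorbed in goodness, so no further obstacle arises.
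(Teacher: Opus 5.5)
Your proposal is correct and follows essentially the same route as the paper. The field $K_{\mathfrak f}$ is interpreted on the axis $c_0U$ of the completed slice via \cref{lem:framed-affine-coordinates}. The map $h_{\mathfrak f}$ is the union of $\operatorname{Coord}_{\mathfrak f}$ with the curve map $c\mapsto\bar e$ supplied by goodness, and incidence is preserved by the definition of $\operatorname{Graph}_{\mathfrak f}$. The converse is the direct coefficient interpretation.

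The differences are cosmetic. The paper reads off $\abs{K_{\mathfrak f}}=\abs F$ simply from the axis having $\abs F$ points, whereas you invoke the uniqueness of the coordinatizing field. You also spell out the completion step and the noncollinearity of $c_0,U,V$, which the paper builds into the definition of a preframe.
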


\begin{proof}
Given $\mathfrak f$, the completed slice and
\cref{lem:framed-affine-coordinates} uniformly interpret the field
$K_{\mathfrak f}$ on the horizontal axis $c_0U$.  That axis has exactly
$\abs F$ points.  By \cref{lem:preframe-point-bijection,def:good-polynomial-frame}, the maps
$p\mapsto\operatorname{Coord}_{\mathfrak f}(p)$ and $c\mapsto\bar e$, where
$\operatorname{Graph}_{\mathfrak f}(c;\bar e)$, are uniformly
$\mathfrak f$-definable bijections from the point and curve sorts onto
$K_{\mathfrak f}^2$ and $K_{\mathfrak f}^k$, respectively.
By the definition of $\operatorname{Graph}_{\mathfrak f}$, these bijections
preserve incidence.  Their union is therefore the stated
isomorphism $h_{\mathfrak f}$.

Conversely, in a field $K$, take the point sort to be $P(K)=K^2$ and the
curve sort to be $C_k(K)=K[X]_{<k}$, identified with $K^k$ by coefficients,
and define incidence by $y=\sum_{j=0}^{k-1}a_jx^j$. This is a uniform
parameter-free interpretation of $G_k(K)$.
\end{proof}

\begin{theorem}
\label{thm:frame-profile-polynomial}
Fix $k\geq2$, and let $\Gclass_k^{\geq q_k^0}$ be the subclass of all
$G_k(F)$ with $F$ finite and $\abs F\geq q_k^0$. Then
$\Gclass_k^{\geq q_k^0}$ is a $k$-dimensional asymptotic class.
\end{theorem}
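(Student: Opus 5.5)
The plan is to apply \cref{thm:framed-profile-transfer} to the class $\Gclass_k^{\geq q_k^0}$. That class has two sorts: the point sort, with $n_1=2$, and the curve sort, with $n_2=k$. This gives $e=\max(2,k)=k$, which is the required dimension.

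The frame variables $\mathfrak f$ will be the tuple of data in \cref{def:polynomial-preframe}: $c_0$, $B$, $r,s,t$, $U,V$, the $D_i$, and the $\lambda_i$. Each $\lambda_i$ is an element of the interpreted field. I represent it by a point on the axis $c_0U$ in the interpreted completion, that is, by a curve in $S_B$. For the sake of \cref{thm:framed-profile-transfer}, the frame should be a tuple of actual elements of the target sorts rather than imaginaries. The completion of \cref{thm:finite-direction-completion} keeps the original point sort $S_B$ (here the curves) identically, so axis points are genuine curves and this representation is legitimate.

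Take $\operatorname{Good}(\mathfrak f)$ to be the conjunction of three conditions: the preframe conditions, nonvanishing of the $\lambda_i$, and the two unique-existence clauses of \cref{def:good-polynomial-frame}. This formula is parameter-free. The quantifiers over $K_{\mathfrak f}$ and the completion translate back into the incidence language, because $k$ is fixed and the completion uses the fixed number $t=k-1$ of deleted directions.

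The completion is valid because $\abs F\geq q_k^0\geq (k-1)^2+3$. The condition $\abs F\geq k+1$ guarantees that there are enough distinct first coordinates to choose $a_i,r,s,t$.

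Now I verify the three hypotheses of \cref{thm:framed-profile-transfer}.

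\begin{enumerate}[label=\textup{(\roman*)}]
  \item Every structure has a good frame, by \cref{lem:good-frame-existence}.
  \item The field $K_{\mathfrak f}$ is interpreted uniformly by \cref{lem:framed-affine-coordinates}, applied to the completed slice of \cref{lem:polynomial-slice} and \cref{thm:finite-direction-completion}. The bijections onto $K_{\mathfrak f}^2$ and $K_{\mathfrak f}^k$ come from \cref{lem:preframe-point-bijection} together with goodness. Their graphs are fixed formulas with $\mathfrak f$ in the parameter positions. We have $\abs{K_{\mathfrak f}}=\abs F$, since the axis $c_0U$ is a line of an affine plane of order $\abs F$. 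So $q_A$ does not depend on the frame.
  \item Under $h_{\mathfrak f}$ from \cref{thm:framed-polynomial-reconstruction}, incidence becomes the fixed ring-definable relation $y=\sum_j e_jx^j$. Hence every $L'$-formula has a uniform ring translation.
\end{enumerate}

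The conclusion of \cref{thm:framed-profile-transfer} is then exactly the statement, with $\abs A=q^2+q^k$.

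The main obstacle is the uniformity bookkeeping in (ii). The field and its coordinatization pass through two nested interpretations: first the slice with its finite-direction completion, which depends on the parameters $B$, and then the affine coordinatization. The graphs of $h_{\mathfrak f,j}$ must be given by single $L'$-formulas in $\mathfrak f$ that do not vary with $F$. I would handle this by composing the translations explicitly. The point of each layer is that the lemma it rests on is proved with a fixed finite incidence diagram or a fixed counting formula ($\Lambda_{k-1}$). So the composite formulas are independent of $F$ once $\abs F\geq q_k^0$.

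A secondary check concerns how field elements and points are coded. The definition of $\operatorname{Coord}_{\mathfrak f}$ uses $c_i(u)$, $v_i(u)$, the slope and intercept maps, and $G_{\mathfrak f}$, and I must confirm that all of these are definable uniformly from $\mathfrak f$. Field elements are points of the completion, which are curves, and the target points in $P(F)$ serve as lines of the slice. Each of these constructions is a bounded composition of definable functions, so I expect no difficulty here.
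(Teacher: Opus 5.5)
Your proposal is correct and is essentially the paper's proof. Like the paper, you verify the hypotheses of \cref{thm:framed-profile-transfer} with sort exponents $2$ and $k$. You use \cref{lem:good-frame-existence} for existence of good frames, and \cref{thm:framed-polynomial-reconstruction} (built from \cref{thm:finite-direction-completion}, \cref{lem:framed-affine-coordinates}, and \cref{lem:preframe-point-bijection}) for the field, the coordinate bijections, and the ring translation of incidence.

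Your additional bookkeeping makes explicit what the paper leaves implicit: you code each $\lambda_i$ as a genuine curve on the axis $c_0U$, and you check that $\abs{K_{\mathfrak f}}=\abs F$ does not depend on the frame.
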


\begin{proof}
By \cref{lem:good-frame-existence}, every structure in the displayed class
has a good polynomial frame.  For every such frame,
\cref{thm:framed-polynomial-reconstruction} interprets a finite field
$K_{\mathfrak f}$ of cardinality $q=\abs F$ and gives uniformly definable
bijections from the point and curve sorts to
$K_{\mathfrak f}^2$ and $K_{\mathfrak f}^k$.  Under these bijections, the
pure incidence relation is given by the fixed
ring-definable evaluation relation
$
  y=\sum_{j=0}^{k-1}a_jx^j.
$
Thus the hypotheses of \cref{thm:framed-profile-transfer} hold with
sort exponents $2$ and $k$.  Their maximum is $k$, so
\cref{thm:framed-profile-transfer} gives the
conclusion.
\end{proof}

\begin{theorem}
\label{thm:full-polynomial-incidence}
For every $k\geq2$, the pure incidence class
$\Gclass_k=\set{G_k(F):F\text{ a finite field}}$ is a $k$-dimensional
asymptotic class, and $k$ is its least possible denominator. Moreover, every
member is $K_{k,n}$-free for every $n\geq2$.
\end{theorem}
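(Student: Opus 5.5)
The plan is to assemble the theorem from three results already in hand: \cref{thm:frame-profile-polynomial} for large fields, \cref{lem:finite-perturbation} for the small ones, and \cref{cor:pure-denominator-rigid} together with \cref{cor:polynomial-biclique} for minimality and the biclique statement.

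First, \cref{thm:frame-profile-polynomial} already shows that the subclass $\Gclass_k^{\geq q_k^0}$ is a $k$-dimensional asymptotic class. The complement $\Gclass_k\setminus\Gclass_k^{\geq q_k^0}$ consists of the structures $G_k(F)$ with $\abs F<q_k^0$. There are only finitely many finite fields of bounded size up to isomorphism, and isomorphic fields give isomorphic incidence structures. Hence this complement contains finitely many isomorphism types, and \cref{lem:finite-perturbation} (the addition case) shows that $\Gclass_k$ itself is a $k$-dimensional asymptotic class.

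Second, minimality of the denominator. Since $\Gclass_k$ is now an asymptotic class with a finite denominator, \cref{cor:pure-denominator-rigid} applies directly and shows that $k$ is least. The witness is the curve neighborhood $I_k(x;c)$, which has exactly $q$ points. Its exponent is $1/k$ relative to $s=q^2+q^k$, and when $k=2$ the factor $2^{-1/2}$ appears. The corresponding cell is the parameter-free cell of all curve parameters, which supports structures of unbounded size, so any admissible denominator $N$ must satisfy $k\mid N$. Third, the biclique assertion is exactly \cref{cor:polynomial-biclique}, which rests on \cref{lem:polynomial-intersection}.

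The only delicate point is the small-field step. Good frames need not exist when $\abs F<q_k^0$, so the framed-profile partition does not cover those structures. \cref{lem:finite-perturbation} handles this without requiring that finite isomorphism types be first-order isolated: it refines the cells by $\operatorname{Small}_B$ and by exact-count formulas. I would check that the lemma's proof works in the two-sorted setting through the standard one-sorted coding, which its proof explicitly allows. With that confirmed, the argument is complete.
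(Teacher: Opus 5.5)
Your proposal is correct and is essentially the paper's own proof: \cref{thm:frame-profile-polynomial} for the cofinal subclass, \cref{lem:finite-perturbation} to restore the finitely many isomorphism types over fields of size below $q_k^0$, \cref{cor:pure-denominator-rigid} for minimality, and \cref{cor:polynomial-biclique} for the biclique assertion. Your description of the minimality witness is slightly loose: in an arbitrary $N$-dimensional partition you take some cell containing curve parameters from arbitrarily large members, not ``the cell of all curve parameters''; the cited corollary already covers this.
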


\begin{proof}
The cofinal subclass $\Gclass_k^{\geq q_k^0}$ is $k$-dimensional by
\cref{thm:frame-profile-polynomial}.  There are only finitely many
isomorphism types arising from the smaller finite fields.  Adding them back
preserves $k$-dimensional asymptoticity by
\cref{lem:finite-perturbation}.  Minimality is
\cref{cor:pure-denominator-rigid}, and the forbidden-biclique assertion is
\cref{cor:polynomial-biclique}.
\end{proof}

\begin{example}
\label{ex:quadratic-graphs}
For $k=3$, the curve sort consists of the quadratic polynomials, and
$\abs{G_3(F)}=q^3+q^2$.
Choose one base point $(a,f_0(a))$.  The curves through it have the form
\[
  f_0(X)+(X-a)\bigl(\alpha(X-r)+\beta\bigr),
\]
so the associated affine slice is missing exactly the directions $a$ and
$\infty$.  Completing those two parallel classes and choosing the frame
described above
reconstructs the field.  Thus $\Gclass_3$ is three-dimensional and
$K_{3,n}$-free for every $n\geq2$.  A curve neighborhood has size $q$, so
it realizes the relative exponent $1/3$.
\end{example}

\begin{example}
\label{ex:cubic-graphs}
For $k=4$, choose two base points with distinct first coordinates
$a_1,a_2$.  The slice consists of
$
  f_0(X)+(X-a_1)(X-a_2)
  \bigl(\alpha(X-r)+\beta\bigr)
$
and is an affine plane missing the three directions
$a_1,a_2,\infty$.  The resulting pure incidence class is
four-dimensional and $K_{4,n}$-free for every $n\geq2$.
\end{example}

\begin{example}
For the first few values of $k$, the main parameters are:
\[
\begin{array}{c|c|c|c|c}
 k & \abs{C_k(F)} & \abs{G_k(F)} &
 \text{deleted slice directions} & \text{forbidden bicliques}\\ \hline
 2 & q^2 & 2q^2 & 1 & K_{2,n}\ (n\geq2)\\
 3 & q^3 & q^3+q^2 & 2 & K_{3,n}\ (n\geq2)\\
 4 & q^4 & q^4+q^2 & 3 & K_{4,n}\ (n\geq2)
\end{array}
\]
In each row the minimal asymptotic denominator is exactly $k$.
\end{example}

\begin{example}
For every $c\in C_k(F)$,
$
  \abs{\set{u:I_k(u,c)}}=q.
$
This formula detects the reduced exponent $1/k$.  It therefore forces the
minimal denominator asserted in \cref{cor:pure-denominator-rigid}.
\end{example}

\begin{example}
Let $u_1,\dots,u_r$ have pairwise distinct first-coordinate classes, with
$r\leq k$.  Then
$
  \abs{\set{c:\bigwedge_{i=1}^r I_k(u_i,c)}}=q^{k-r}.
$
These formulas realize exponents
$1/k,2/k,\dots,(k-1)/k$ on the curve sort.
\end{example}

\begin{example}
For fixed pairwise distinct curves $c_1,\dots,c_t$, we have
$\abs{\bigcup_{i=1}^t\set{u:I_k(u,c_i)}}=tq+O(1)$ because every pairwise
intersection has at most $k-1$ points.  Hence the
leading coefficient at point dimension $1/k$ can take every fixed positive
integer value.
\end{example}

\begin{remark}
The coordinate expansion of \cref{thm:coordinate-polynomial} gives the
asymptotic estimates immediately, while
\cref{thm:full-polynomial-incidence} supplies the missing pure-language
definability.  A finite incidence frame reconstructs field coordinates, and
\cref{thm:framed-profile-transfer} removes the noncanonical frame parameters
through a target-definable profile partition.  The reconstruction uses only a
two-dimensional slice of the $q^k$-element curve sort.  Its role is to
supply pure-language definability, not merely the coordinate estimates
already visible in the field expansion.
\end{remark}


\begin{thebibliography}{99}

\bibitem{AMSW2024}
S.~Anscombe, D.~Macpherson, C.~Steinhorn, and D.~Wolf,
\emph{Multidimensional asymptotic classes},
arXiv:2408.00102v1 [math.LO], 2024.

\bibitem{CDM1992}
Z.~Chatzidakis, L.~van den Dries, and A.~Macintyre,
\emph{Definable sets over finite fields},
J.\ Reine Angew.\ Math.~\textbf{427} (1992), 107--135.

\bibitem{ConantKruckman2019}
G.~Conant and A.~Kruckman,
\emph{Independence in generic incidence structures},
J.\ Symbolic Logic \textbf{84} (2019), no.~2, 750--780.

\bibitem{Dembowski1968}
P.~Dembowski,
\emph{Finite geometries},
Ergebnisse der Mathematik und ihrer Grenzgebiete, vol.~44,
Springer-Verlag, Berlin--Heidelberg--New York, 1968.

\bibitem{DelloStritto2011}
P.~Dello Stritto,
\emph{Asymptotic classes of finite Moufang polygons},
J.\ Algebra \textbf{332} (2011), 114--135.

\bibitem{Elwes2007}
R.~Elwes,
\emph{Asymptotic classes of finite structures},
J.\ Symbolic Logic \textbf{72} (2007), no.~2, 418--438.

\bibitem{ElwesMacpherson2008}
R.~Elwes and D.~Macpherson,
\emph{A survey of asymptotic classes and measurable structures},
in: Z.~Chatzidakis, D.~Macpherson, A.~Pillay, and A.~Wilkie (eds.),
\emph{Model Theory with Applications to Algebra and Analysis}, vol.~2,
London Math.\ Soc.\ Lecture Note Ser., vol.~350,
Cambridge Univ.\ Press, Cambridge, 2008, pp.~125--160.

\bibitem{GarciaMacphersonSteinhorn2015}
D.~Garc\'ia, D.~Macpherson, and C.~Steinhorn,
\emph{Pseudofinite structures and simplicity},
J.\ Math.\ Log.~\textbf{15} (2015), no.~1, 1550002.

\bibitem{HrushovskiWagner2008}
E.~Hrushovski and F.~O. Wagner,
\emph{Counting and dimensions},
in: Z.~Chatzidakis, D.~Macpherson, A.~Pillay, and A.~Wilkie (eds.),
\emph{Model Theory with Applications to Algebra and Analysis}, vol.~2,
London Math.\ Soc.\ Lecture Note Ser., vol.~350,
Cambridge Univ.\ Press, Cambridge, 2008, pp.~161--176.

\bibitem{LangWeil1954}
S.~Lang and A.~Weil,
\emph{Number of points of varieties in finite fields},
Amer.\ J.\ Math.~\textbf{76} (1954), no.~4, 819--827.

\bibitem{MacphersonSteinhorn2008}
D.~Macpherson and C.~Steinhorn,
\emph{One-dimensional asymptotic classes of finite structures},
Trans.\ Amer.\ Math.\ Soc.~\textbf{360} (2008), no.~1, 411--448.

\bibitem{Mirabi-MSMeas}
M.~Mirabi,
\emph{MS-measurability via Coordinatization},
arXiv:2109.11760v3 [math.LO], 2026.

\end{thebibliography}
\end{document}